\newcommand\quotient[2]{
        \mathchoice
            {
                \text{\raise1ex\hbox{$#1$}\Big/\lower1ex\hbox{$#2$}}%
            }
            {
                #1\,/\,#2
            }
            {
                #1\,/\,#2
            }
            {
                #1\,/\,#2
            }
    }
\newcommand{\R}{\mathbb{R}} 
\newcommand{\Z}{\mathbb{Z}}
\newcommand{\Q}{\mathbb{Q}}
\newcommand{\C}{\mathbb{C}}
\newcommand{\bbH}{\mathbb{H}}
\newcommand{\bbI}{\mathbb{I}}
\newcommand{\bma}{\bm{a}}
\newcommand{\bmA}{\bm{A}}
\newcommand{\bmB}{\bm{B}}
\newcommand{\bmc}{\bm{c}}
\newcommand{\bmC}{\bm{C}}
\newcommand{\bmD}{\bm{D}}
\newcommand{\bme}{\bm{e}}
\newcommand{\bmf}{\bm{f}}
\newcommand{\bmF}{\bm{F}}
\newcommand{\bmG}{\bm{G}}
\newcommand{\bmH}{\bm{H}}
\newcommand{\bmL}{\bm{L}}
\newcommand{\bmM}{\bm{M}}
\newcommand{\bmN}{\bm{N}}
\newcommand{\bmo}{\bm{o}}
\newcommand{\bmp}{\bm{p}}
\newcommand{\bmP}{\bm{P}}
\newcommand{\bmS}{\bm{S}}
\newcommand{\bmT}{\bm{T}}
\newcommand{\bmU}{\bm{U}}
\newcommand{\bmv}{\bm{v}}
\newcommand{\bmV}{\bm{V}}
\newcommand{\bmW}{\bm{W}}
\newcommand{\bmX}{\bm{X}}
\newcommand{\bmY}{\bm{Y}}
\newcommand{\bmZ}{\bm{Z}}
\newcommand{\frakg}{\frak{g}}
\newcommand{\frakh}{\frak{h}}
\newcommand{\frakz}{\frak{z}}
\newcommand{\scrA}{\mathscr{A}}
\newcommand{\scrD}{\mathscr{D}}
\newcommand{\scrF}{\mathscr{F}}
\newcommand{\scrP}{\mathscr{P}}
\newcommand{\DD}{\mathcal{D}}
\newcommand{\calD}{\mathcal{D}}
\newcommand{\calF}{\mathcal{F}}
\newcommand{\calH}{\mathcal{H}}
\newcommand{\calI}{\mathcal{I}}
\newcommand{\calO}{\mathcal{O}}
\newcommand{\calP}{\mathcal{P}}
\newcommand{\calR}{\mathcal{R}}
\newcommand{\calW}{\mathcal{W}}
\newcommand{\widev}{\widetilde{v}}
\newcommand{\whmu}{\widehat{\mu}}
\newcommand{\ep}{\varepsilon}
\newcommand{\bs}{\backslash}  
\newcommand{\la}{\langle}
\newcommand{\ra}{\rangle}
\newtheorem{thm}{Theorem}[section]
\newtheorem{conj}[thm]{Conjecture}
\newtheorem{coro}[thm]{Corollary}
\newtheorem{defi}[thm]{Definition}
\newtheorem{lem}[thm]{Lemma}
\newtheorem{prop}[thm]{Proposition}
\newtheorem{rmk}[thm]{Remark}
\DeclareMathOperator{\SL}{SL}
\DeclareMathOperator{\GL}{GL}
\DeclareMathOperator{\SO}{SO}
\DeclareMathOperator{\Sp}{Sp}
\newcommand{\fraksl}{\frak{sl}}
\newcommand{\fraksp}{\frak{sp}}
\DeclareMathOperator{\Ad}{Ad}
\DeclareMathOperator{\Cone}{Cone}
\DeclareMathOperator{\diff}{d}
\DeclareMathOperator{\diffa}{da}
\DeclareMathOperator{\diffA}{dA}
\DeclareMathOperator{\diffb}{db}
\DeclareMathOperator{\diffB}{dB}
\DeclareMathOperator{\diffh}{dh}
\DeclareMathOperator{\diffk}{dk}
\DeclareMathOperator{\difft}{dt}
\DeclareMathOperator{\diffu}{du}
\DeclareMathOperator{\diffx}{dx}
\DeclareMathOperator{\diffy}{dy}
\DeclareMathOperator{\diffz}{dz}
\DeclareMathOperator{\diffZ}{dZ}
\DeclareMathOperator{\diffalpha}{d\alpha}
\DeclareMathOperator{\diffmu}{d\mu}
\DeclareMathOperator{\diffnu}{d\nu}
\DeclareMathOperator{\diag}{diag}
\DeclareMathOperator{\Lie}{Lie}
\DeclareMathOperator{\Map}{Map}
\DeclareMathOperator{\Top}{Top}
\DeclareMathOperator{\Tr}{Tr}
\DeclareMathOperator{\Vol}{Vol}
\begin{document}

\title[Translates of homogeneous measures]{Translates of homogeneous measures associated with observable subgroups on some homogeneous spaces}
\author[R.Zhang]{Runlin Zhang}


\begin{abstract}
    In the present article we study the following problem. Let $\bmG$ be a linear algebraic group over $\Q$, $\Gamma$ be an arithmetic lattice and $\bmH$ be an observable $\Q$-subgroup. There is a $H$-invariant measure $\mu_H$ supported on the closed submanifold $H\Gamma/\Gamma$. Given a sequence $\{g_n\}$ in $G$ we study the limiting behavior of $(g_n)_*\mu_H$ under the weak-$*$ topology. In the non-divergent case we give a rather complete classification. We further supplement this by giving a criterion of non-divergence and prove non-divergence for arbitrary sequence $\{g_n\}$ for certain large $\bmH$. We also discuss some examples and applications of our result. This work can be viewed as a natural extension of the work of Eskin--Mozes--Shah and Shapira--Zheng.
\end{abstract}

\maketitle

\tableofcontents

\section*{Convention}

We will use the following notations throughout the paper.

\begin{defi}
A \textbf{standard triple} $(\bmG,\bmH,\Gamma)$ consists of the following data:
\begin{itemize}
    \item $\bmG$ is a connected linear algebraic group defined over $\Q$;
    \item $\bmH$ is a connected $\Q$-subgroup of $\bmG$;
    \item $\Gamma\leq \bmG(\Q)$ is an arithmetic subgroup commensurable with $\bmG(\Z)$;
\end{itemize}
If furthermore $\bmH$ is an observable subgroup of $\bmG$, then we say that the standard triple is observable.
\end{defi}

 To simplify notations we use $\bmG$ also for $\bmG(\C)$  for a linear algebraic group $\bmG$ over $\C$. If $\bmG$ is defined over $\R$, then the corresponding Roman letter $G$ denotes the analytic identity connected component of $\bmG(\R)$. And $\Gamma_H$ is defined to be $\Gamma \cap H$ for a $\Q$-subgroup $\bmH\leq \bmG$. We always assume $\Gamma$ to be contained in $G$ and write $\pi_{\Gamma}$ for the natural projection $G \to G/\Gamma$.

Given a standard triple $(\bmG,\bmH,\Gamma)$,
there exists a left $H$-invariant measure supported on $H/H\cap \Gamma$(see \cite[Lemma 1.4]{Ragh72}), which is denoted by $\mu_{H}$. If $\bmH$ is observable in $\bmG$, then the natural map from $H/H\cap \Gamma$ to $H\Gamma/\Gamma$ is a closed embedding into $G/\Gamma$ and we may push $\mu_H$ to a locally finite measure on $G/\Gamma$ supported on $H\Gamma/\Gamma$.
For a non-empty open bounded subset $\calO\subset H$, let $\mu_{\calO}$ denote the restriction of $\mu_H$ to $\pi_{\Gamma_H}(\calO)$. One does not need $\bmH$ to be observable to push $\mu_{\calO}$ to a (finite) measure on $G/\Gamma$.

Consider the collection of non-zero locally finite positive measures on a locally compactly second countable space $X$. Two such measures $\mu$ and $\nu$ are said to be equivalent iff there exists a positive real number $a>0$ such that $a\mu=\nu$. 
The equivalence class containing $\mu$ is denoted by $[\mu]$. Whenever a measure $\mu$ is known to be finite we let $\widehat{\mu}$ denote the unique probability measure in the equivalence class containing $\mu$.
A sequence of classes $\{[\mu_n]\}_{n\in \Z^+}$ is said to
converge to 
$[\nu]$ 
if and only if one of the following equivalent conditions is satisfied (see \cite[Proposition 3.3]{ShaZhe18}):
\begin{enumerate}
    \item  For all $f_1,f_2\in C_c(X)$, compactly supported continuous functions on $X$ such that $(f_2,\nu)\neq 0$ we have
\begin{equation*}
    \lim_{n\to \infty}
    \frac{ \int f_1(x) \diffmu_n(x)  }  
    {\int f_2(x) \diffmu_n(x)} =\frac{\int f_1(x) \diffnu(x)}{\int f_2(x) \diffnu(x)}.
\end{equation*}
    \item There exists a sequence of positive real numbers $\{a_n\}$ such that for all $f\in C_c(X)$, $\frac{1}{a_n} \la f,\mu_n\ra\to \la f, \nu \ra$.
\end{enumerate}
The sequence $\{a_n\}$ is also interesting so sometimes we keep track of this too. Note that by \cite[Proposition 3.3]{ShaZhe18}, the asymptotic of $\{a_n\}$ is uniquely determined once representatives $\mu_n$, $\nu$ are fixed.

\section{Introduction}

In the present article we study the following problem.
Given a standard triple $(\bmG,\bmH,\Gamma)$ and a sequence $\{g_n\}$ in $G$, what is the limit of $(g_n)_*\mu_H$ under the weak-$*$ topology? The original interest in such a problem comes from the study of the asymptotics of integer points on an affine homogeneous variety. After the pioneering work of Duke--Rudnick--Sarnak \cite{DukRudSar93} where harmonic analysis method is used, Eskin--McMullen \cite{EskMcM93}(compare \cite{BenOh12}) give a simpler proof using mixing. They assume that $\bmH$ is symmetric, i.e., consisting of the fixed point of some involution, and has no non-trivial $\Q$-characters. The latter condition is equivalent to that $\Gamma_H$ is of finite covolume in $H$. Based on the unipotent rigidity theorem of Ratner \cite{Rat91} and linearization technique developed by Dani--Margulis \cite{DanMar93}, Eskin--Mozes--Shah in \cite{EskMozSha96} and \cite{EskMozSha97} make a non-effective generalization assuming $\Gamma_H$ to be of finite covolume in $H$. On the one hand, they prove that, assuming the non-divergence of $(g_n)_*\mu_H$, any limit measure has to be a homogeneous measure. On the other hand, they complement this by showing that when $\bmG$ and $\bmH$ are both reductive and $\bmH$ is not contained in any proper $\Q$-parabolic subgroup of $\bmG$, then non-divergence of $(g_n)_*\mu_H$ automatically holds for each sequence $\{g_n\}$ in $G$.
In recent years, there are interests in removing the condition of $\Gamma_H$ being of finite covolume in $H$. In the work of Oh--Shah \cite{OhSha14}, such a generalization is obtained for $\bmG=\SL_2$ and $\bmH$ equal to the diagonal torus. A different proof is given later by Kelmer--Kontorovich \cite{KelKon18}(c.f. \cite{KelKon182}) which yields stronger result. These two results are effective. Shapira--Zheng \cite{ShaZhe18} generalize the original approach of \cite{EskMozSha96} to treat the case when $\bmG=\SL_n$ and $\bmH$ is a maximal $\Q$-split torus. From their work, the key is to define certain family of polytopes and show that they grow in all directions. Zhang \cite{Zha18} further generalizes their work by allowing $\bmH$ to be an arbitrary maximal $\Q$-torus in $\SL_n$(or after applying restriction of scalar to them). The main new difficulty there is to show that the polytopes defined indeed give non-divergence.

On the other hand one should be careful when dropping the condition that $\bmH$ has no non-trivial $\Q$-characters. For instance, when $\bmG$ is $\SL_2$ with the standard $\Q$-structure and $\bmH$ is the subgroup of upper triangular matrices, by duality one sees that $\pi_{\Gamma}(H)$ is dense in $G/\Gamma$. Hence $\mu_H$, even if defined, would not be a locally finite measure. So it does not live inside the dual of compactly supported continuous functions.
A sufficient group theoretical condition to guarantee the closedness of $\pi_{\Gamma}(H)$ is that $\bmH$ is an observable subgroup of $\bmG$. The converse is also true and is due to Barak Weiss \cite[Corollary 7]{Weiss98}. It is healthy to keep this in mind, but we shall not make use of this fact and would actually deduce it from our analysis(see Corollary \ref{coroWeiss}).

One may also decide not to consider the full orbit of $H$, but rather a bounded piece. And one does not have to require $\bmH$ to be defined over $\Q$. See the work of Richard--Zamojski \cite{RicZam17} in the case when $\bmH$ is reductive.

One may also consider the similar question in the adelic setting. As we shall not touch upon this, the reader is referred to \cite{Zam10}, \cite{EinMarMohVen15}, \cite{GorOhMau08}, \cite{GoroOh11}, \cite{DavSha18} and \cite{DavSha19} for more information.

Let us start with definitions of an observable subgroup. All representations are assumed to be finite-dimensional (algebraic) linear representations. It is helpful to keep in mind a theorem of Chevalley which asserts that one can always find a representation of $\bmG$ and a \textit{line} whose stabilizer is equal to $\bmH$.

\begin{defi}\label{defiObs}
Let $\bmG$ be a linear algebraic group over $\Q$ and $\bmH$ be a $\Q$-subgroup. $\bmH$ is said to be an 
\textbf{observable subgroup} of $\bmG$ if and only if one of the following equivalent conditions is satisfied:
\begin{enumerate}
    \item there exists a $\Q$-representation $(\rho,\bmV)$ of $\bmG$ and a non-zero vector $v\in \bmV(\Q)$ such that $\bmH$ is the stabilizer of $v$ in $\bmG$;
    \item same statement as in (1) replacing $\Q$ by $\overline{\Q}$;
    \item each $\Q$-representation  $(\rho_0,\bmV_0)$ of $\bmH$ is contained in a $\Q$-representation of $\bmG$;
    \item same statement as in $(3)$ replacing $\Q$ by $\overline{\Q}$;
     \item for any one-dimensional $\Q$-representation  $(\rho_0,\bmV_0)$ of $\bmH$ that is contained in some $\Q$-representation of $\bmG$, the dual of $\rho_0$ is also contained in some $\Q$-representation of $\bmG$;
     \item same statement as in $(5)$ replacing $\Q$ by $\overline{\Q}$.
\end{enumerate}
We say that a $\Q$-representation $\rho: \bmG \to \SL(\bmV)$ is observable if and only if the image of $\bmG$ is an observable subgroup of $\SL(\bmV)$.
\end{defi}

The equivalences between $(3)$ and $(5)$ and between $(4)$ and $(6)$ are proved in \cite[Theorem 1]{BiaHochMos63}. 
The equivalence between $(3)$ and $(4)$ is proved in \cite[Theorem 5]{BiaHochMos63}. 
The equivalence between $(2)$ and $(4)$ is proved in \cite[Theorem 8]{BiaHochMos63} where it is also proved that $(3)$ implies $(1)$. As $(1)$ trivially implies $(2)$ we have all the equivalences above. 

Examples of observable $\Q$-subgroups include all reductive groups(see \cite[Corollary 2.4]{Grosshans97}) and all groups with no non-trivial $\Q$-characters. Proper parabolic subgroups $\bmH$ are always examples of non-observable subgroups as otherwise there would be non-constant global (regular) functions on $\bmG/\bmH$.
Also, being observable is a relative notion. For instance, every group is an observable subgroup of itself. We shall also show that any $\Q$-linear group can be realized as an observable subgroup of some $\SL_n$(see Lemma \ref{lemObsRepExi}). 

If $(\bmG,\bmH,\Gamma)$ is an observable standard triple and $v\in \bmV(\Q)$ is as in the above definition, then $\Gamma \cdot v $ is discrete and hence closed in $G\cdot v$. 
As the map $g \mapsto g\cdot v$ induces a continuous map from $G/H$ and $G\cdot v$, by taking preimage we see that $\Gamma H$(note this may not be exactly the preimage but the argument still works) is closed in $G$ hence $\pi_{\Gamma}(H)$ is closed in $G/\Gamma$. Therefore, the natural map from $H/\Gamma_H$  to $H\Gamma/\Gamma$ is a homeomorphism and so the latter supports a unique up-to-a-scalar locally finite $H$-invariant measure(see \cite[Lemma 1.7]{Ragh72}), which we also denote by $\mu_H$.

Now we take a sequence $\{g_n\}$ in $G$ and ask what the possible limits of $(g_n)_*[\mu_H]$ are.

\subsection{Main theorem and its refinements}
\begin{thm}\label{transbyg_n}
Let $(\bmG,\bmH,\Gamma)$ be an observable triple and $\{g_n\}$ be a sequence in $G$. Then either one of the following is true:\\
(1) $\pi_{\Gamma}(g_nH)$ diverges set-theoretically, that is to say, for any compact set $C\subset G/\Gamma$, there exists $n_C$ such that for all $n\geq n_C$, $\pi_{\Gamma}(g_nH)\cap C=\emptyset$;\\
(2) there exists a subsequence $\{g_{n_k}\}$, an element $\delta\in G$ and an observable $\Q$-subgroup $\bmL$ of $\bmG$ such that $\lim_k (g_{n_k})_*[\mu_H]= (\delta)_*[\mu_{L}]$. In other words, there exists a sequence of $\{a_k\}\subset \R^+$ such that $\lim_k \frac{1}{a_k}(g_{n_k})_*\mu_H= (\delta)_*\mu_L$.
\end{thm}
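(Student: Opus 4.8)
The plan is to follow the strategy pioneered by Eskin--Mozes--Shah and extended by Shapira--Zheng, adapted to the observable (not necessarily finite-covolume) setting. The first step is a reduction to the finite-covolume case. Write $\bmH = \bmH_1 \ltimes \bmR_u(\bmH)$ or, more to the point, consider the observable envelope structure: since $\bmH$ is observable, there is a $\Q$-representation $(\rho, \bmV)$ and $v \in \bmV(\Q)$ with $\bmH = \Stab_{\bmG}(v)$. The measure $\mu_H$ pushed to $G/\Gamma$ is governed by the orbit $G \cdot v$, and $\Gamma \cdot v$ is discrete. I would first analyze how the translates $g_n \cdot v$ behave in $\bmV(\R)$: either $\rho(g_n) v$ leaves every compact set (after rescaling, this is what one needs to rule out, or rather what leads to divergence), or along a subsequence $\rho(g_n) v \to$ some $w$ in the closure of $G \cdot v$. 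The orbit of $w$ under $G$ will be lower-dimensional in the degenerate case, and its stabilizer $\bmL := \Stab_{\bmG}(w)$ (or rather a $\Q$-form of it, after using that $w$ can be taken rational by a density/rationality argument) is again observable. This is the mechanism producing the group $\bmL$ and the element $\delta$ in alternative (2).

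The second and technically central step is to upgrade set-theoretic non-divergence into convergence of measures. Assuming we are not in case (1), fix a compact set $C$ and a subsequence (still called $g_n$) with $\pi_\Gamma(g_n H) \cap C \neq \emptyset$ for all $n$. The heart of the matter is to control the measures $(g_n)_* \mu_{\calO}$ for bounded open $\calO \subset H$: one needs the ``no escape of mass'' on a fixed-size piece, and one needs to patch these together. Here I would invoke the linearization technique of Dani--Margulis together with Ratner's theorem. Concretely, consider the action of $\bmH$ (or a unipotent one-parameter subgroup inside it when $\bmH$ has unipotent elements; when $\bmH$ is a torus one needs the torus analogues as in Shapira--Zheng and Zhang) and the family of polytopes / tube domains measuring how long an $H$-piece stays near the boundary of a compact set. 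The key quantitative input is that these polytopes ``grow in all directions'': equivalently, for the representation-theoretic data attached to each $\Q$-parabolic of $\bmG$ whose associated function on $G$ detects escape of mass, the relevant functions along $g_n H$ are non-constant and grow, which forces the fraction of $\calO$ mapped near the cusp to vanish. Combining with Ratner/linearization, any weak-$*$ limit of $(g_n)_* \hat\mu_{\calO_n}$ (for an exhausting sequence $\calO_n \uparrow H$) is a homogeneous probability-like measure, i.e.\ proportional to $(\delta)_* \mu_L$ for some observable $\Q$-subgroup $\bmL \supseteq$ (a conjugate of the relevant part of) $\bmH$ and some $\delta \in G$.

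The third step is matching the two descriptions of $\bmL$ and checking the bookkeeping: that the $\bmL$ arising from the measure-theoretic limit coincides (up to the subsequence and conjugation by $\delta$) with the stabilizer of the limiting vector $w$, that $\bmL$ is defined over $\Q$, and that it is genuinely observable in $\bmG$ — the last point following because a limit of translates of an observable homogeneous measure ``should'' remain closed-orbit-supported, and observability of $\bmL$ is exactly the condition that $\pi_\Gamma(L)$ is closed (Weiss's theorem, which the paper promises to re-derive). I would also need to verify that the normalizing constants $a_k$ are the ones predicted by the polytope volumes, using \cite[Proposition 3.3]{ShaZhe18} to pin down their asymptotics uniquely.

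The main obstacle I expect is the non-divergence/no-escape-of-mass estimate on a fixed $H$-piece when $\bmH$ does \emph{not} have finite covolume, i.e.\ carries non-trivial $\Q$-characters trivial on $\Gamma$ only virtually — one must show that the family of polytopes attached to the $g_n$-translates of $\bmH$ indeed expands in all directions, which in the earlier works (Shapira--Zheng for split tori, Zhang for general tori) was the crux and required delicate representation theory of $\bmG$ restricted to $\bmH$. Handling this uniformly for an arbitrary observable $\bmH$ — in particular when $\bmH$ is neither reductive nor unipotent — is where the real work lies, and I would expect to need a careful decomposition of $\bmH$ relative to $\Q$-parabolics of $\bmG$ together with Grosshans-type results on observable subgroups to make the polytope argument go through.
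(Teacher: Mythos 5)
Your high-level strategy (reduce to translations by $\Gamma$, use linearization/Ratner plus Kleinbock--Margulis non-divergence, identify a limiting $\Q$-subgroup $\bmL$) is in the right family, and you correctly flag the real difficulty: handling infinite-covolume $\mu_H$. But there are three places where the plan as written would not go through, and where the paper does something different.

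First, your way of producing $\bmL$ --- via a limit $w$ of $\rho(g_n)v$ and $\bmL := \Stab_\bmG(w)$ --- is not how the paper does it, and it has genuine problems: $w$ need not be rational, its stabilizer need not contain anything useful, and this construction does not plug into the measure-convergence machinery. The paper instead writes $g_n = \delta_n\gamma_n h_n$ (bounded $\delta_n$, $\gamma_n \in\Gamma$, $h_n\in H$), reduces everything to $(\gamma_n)_*\mu_H$, and then defines $\bmL$ via the \emph{minimality} condition (Definition \ref{defminimal}): $\bmL$ is an observable $\Q$-subgroup minimal among those containing $\gamma_{n_k}\bmH\gamma_{n_k}^{-1}$ along a subsequence. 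This is the structure that feeds into the Eskin--Mozes--Shah theorem and the polytope analysis; it is not recovered by your stabilizer construction.

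Second, observability of $\bmL$ cannot be obtained by invoking Weiss's theorem as you suggest. To use Weiss you would need to already know that $\pi_\Gamma(L)$ is closed, i.e.\ precisely what observability furnishes; the argument is circular. The paper establishes observability group-theoretically (Lemma \ref{obslemma}): after adjusting the sequence, one shows $\Phi_1(\{\lambda_n\})=\emptyset$, and then a weight-vector/tensor argument (going through the radical of $\bmL$) proves $\bmL$ equals its observable hull. This step is essential and is entirely absent from your sketch.

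Third, the claim that ``the polytopes grow in all directions'' is exactly the statement the paper does \emph{not} prove in general --- indeed Section \ref{secNondivMaxSplTori} states it as a conjecture (proved only for maximal $\Q$-split tori in semisimple groups). The paper sidesteps this by using a \emph{superfaithful} representation $\rho''$ and the exterior-power trick (Proposition \ref{exteriortrick}): non-divergence is proved for $h$ lying in the larger polytope $\calP(g,\eta,\rho'',\Phi_{\rho''})$ cut out by \emph{all} weight vectors of $\rho''$, not only those coming from parabolics. Combined with Lemma \ref{lemonPolytope}, which splits the polytope into a bounded direction $U_0(\ep)$ and an unbounded direction inside $W = \ker\Phi_0$, one obtains convergence of the normalized restricted measures to $\mu_L$ without ever needing the parabolic polytopes to grow in all directions. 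Your sketch also never addresses the normalization step (Proposition \ref{partIofsec3}) that upgrades ``some $^{\circ}L$-invariant limit'' to ``$=\mu_L$ up to scalar,'' which is nontrivial precisely because $\mu_H$ may be infinite: one cannot exhaust by $\calO_n\uparrow H$ as you propose, but must instead fibre $H/\Gamma_H$ over $S_\bmH$ and work with the split-polytope decomposition.
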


Suppose that we are in the second case and have already passed to the subsequence. 
Then we wish to know how to pin down what $\bmL$ is. To see this, take a compact set $C$ that intersects non-trivially with $\pi_{\Gamma}(g_nH)$ for all $n$. It follows that one can write $g_n = \delta_n \gamma_n h_n$ for some sequence $\{\delta_n\}$ bounded in $G$, $\{\gamma_n\} \subset \Gamma $ and $\{h_n\}\subset H$. Then $(g_{n})_*\mu_H=(\delta_n\gamma_n)_*\mu_H$. By passing to a further subsequence one may assume that $\lim \delta_n = \delta$(this $\delta$ turns out to be the same $\delta$ as appeared in the last theorem). So it remains to understand the limit of $(\gamma_n)_*\mu_H$.

\begin{thm}\label{thmTranslatebyGamma}
Let $(\bmG,\bmH,\Gamma)$ be an observable triple and $\{\gamma_n\}$ be a sequence in $\Gamma$. Assume that $\bmL$ is an observable $\Q$-subgroup of $\bmG$ with $(\{\gamma_n\},\bmL)$ being potentially minimal for $\bmH$. Then $\lim_n (\gamma_{n})_*[\mu_H]=[\mu_{L}]$.
\end{thm}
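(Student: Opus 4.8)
The plan is to rewrite the translates as homogeneous measures attached to a varying family of $\Q$-subgroups, to show that any weak-$*$ limit is again homogeneous by the Ratner--Mozes--Shah mechanism together with the non-divergence input developed in the paper, and then to use the minimality encoded in ``potentially minimal'' to pin down the limit. Concretely, set $\bmH_n:=\gamma_n\bmH\gamma_n^{-1}$. Since $\gamma_n\in\Gamma$ one has $\gamma_nH\Gamma=H_n\Gamma$ with $H_n=\gamma_nH\gamma_n^{-1}$, and $\Gamma\cap H_n=\gamma_n\Gamma_H\gamma_n^{-1}$, so $\bmH_n$ is an observable $\Q$-subgroup and $(\gamma_n)_*[\mu_H]=[\mu_{H_n}]$ is precisely the homogeneous measure attached to $\bmH_n$. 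Moreover $e\Gamma\in\pi_\Gamma(\gamma_nH)$ for every $n$ (take the identity element of $H$), so $\pi_\Gamma(\gamma_nH)$ never diverges set-theoretically. Feeding this into Theorem \ref{transbyg_n} with $g_n=\gamma_n$ --- whose divergence alternative is thereby excluded, and which one can instead re-derive from the underlying non-divergence estimates and the Mozes--Shah argument of \cite{EskMozSha96} if circularity is a concern --- every subsequence of $\{[\mu_{H_n}]\}$ admits a further subsequence converging to $(\delta)_*[\mu_{L_0}]$ for some $\delta\in G$ and some observable $\Q$-subgroup $\bmL_0$. By the elementary principle that a sequence converges to $x$ as soon as every subsequence has a sub-subsequence converging to $x$, it now suffices to show that each such limit equals $[\mu_L]$.

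That the limit is homogeneous is the Ratner--Mozes--Shah principle: a weak-$*$ limit of algebraic measures which does not concentrate on a proper homogeneous subvariety is itself algebraic, and Ratner's measure classification \cite{Rat91} together with the linearization technique of Dani--Margulis \cite{DanMar93} realizes it as $(\delta)_*[\mu_{L_0}]$ with $\bmL_0$ an observable $\Q$-subgroup. The essential new ingredient that makes this go through in the present generality --- where $\Gamma\cap H_n$ need not be a lattice in $H_n$, for instance when $\bmH$ is a torus --- is non-divergence of the normalized measures $a_n^{-1}\mu_{H_n}$, extracted from the family of polytopes associated with the twisted orbits $\bmH_n$ and the fact that these polytopes expand in every direction; this is precisely the mechanism of \cite{ShaZhe18} and \cite{Zha18} that the present work generalizes.

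It remains to identify $(\delta)_*[\mu_{L_0}]$ with $[\mu_L]$, and this is where the assumption that $(\{\gamma_n\},\bmL)$ is potentially minimal for $\bmH$ enters. Fix Chevalley data for $\bmH$ inside $\bmG$: a $\Q$-representation $(\rho,\bmV)$ and a vector $w\in\bmV(\Q)$ whose stabilizer in $\bmG$ is $\bmH$, so that $\bmH_n$ is the stabilizer of $w_n:=\rho(\gamma_n)w$, a vector lying in a fixed $\Gamma$-stable lattice of $\bmV(\Q)$. The defining condition of potential minimality is a condition on the limiting configuration of the $w_n$ --- equivalently, on how the subgroups $\bmH_n$ sit inside $\bmG$ asymptotically --- relative to $\bmL$. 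On the one hand, the linearization analysis that produced $(\delta,\bmL_0)$ shows that the translate $\delta$ can be absorbed (using $\gamma_n\in\Gamma$, a $\Gamma$-conjugation of $\bmL_0$, and the fact that left translation inside $L_0$ fixes its homogeneous measure, one may arrange $\delta=e$) and that the resulting observable $\Q$-subgroup satisfies the same compatibility condition with $(\{\gamma_n\},\bmH)$, so by minimality of $\bmL$ it contains $\bmL$. On the other hand, the polytope control on the shape of the orbits $\bmH_n\Gamma/\Gamma$ forbids the limit from being more spread out than the minimal compatible subgroup permits, which yields the reverse inclusion. Hence $(\delta)_*[\mu_{L_0}]=[\mu_L]$, and since this holds for every subsequential limit, $\lim_n(\gamma_n)_*[\mu_H]=[\mu_L]$.

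The main obstacle is twofold and sits in the two middle steps. Analytically, the genuinely new difficulty --- exactly as in \cite{Zha18} --- is to prove non-divergence of the normalized $\mu_{H_n}$ when $\bmH$ is not generated by unipotents, equivalently to show that the polytopes attached to the twisted orbits $\bmH_n$ grow uniformly in all directions. Structurally, the delicate point is to transport the ``extra invariance'' of the homogeneous limit back through the linearization to a bona fide observable $\Q$-subgroup that is neither too small --- which would leave the limit with strictly less than $\bmL$-invariance --- nor too large, while at the same time disposing of the auxiliary translate $\delta$ by hand; making this matching precise is exactly the work that the definition of ``potentially minimal'' is designed to carry.
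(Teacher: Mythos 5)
Your outline correctly identifies the three ingredients (non-divergence, homogeneity of limits via the Eskin--Mozes--Shah machinery, and minimality pinning down the limit group), but as written it has a circularity and, more importantly, it skips the two steps that carry the actual content of the paper's proof. The circularity: you feed the sequence into Theorem \ref{transbyg_n} to extract subsequential limits $(\delta)_*[\mu_{L_0}]$, but in the paper Theorem \ref{transbyg_n} and Theorem \ref{thmTranslatebyGamma} are both deduced from the same Proposition \ref{partIIofsection3}; your parenthetical ``one can re-derive it from \cite{EskMozSha96} if circularity is a concern'' is not a fix, because the adapted EMS theorem (Theorem \ref{EMSTheorem}) only produces, for a bounded piece $\calO\subset H$, limits of $(\bmc_n)_*\widehat{\mu}_{\calO}$ that are ${}^{\circ}L$-invariant probability measures. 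When $\mu_H$ is infinite this is very far from the statement you need. The first missing step is exactly how to pass from these ${}^{\circ}L$-invariant limits of bounded pieces to convergence of the full locally finite measure $(\gamma_n)_*\mu_H$ to $\mu_L$: the paper does this (Proposition \ref{partIofsec3}) by splitting the character set $\Phi=\Phi_\infty\sqcup\Phi_1\sqcup\Phi_0$, replacing the polytope $\Omega(\bmc_{\gamma_n},\ep,\Phi)$ by the product region $\Omega^{spl}_{n,\ep}=U_0(\ep)\oplus(W\cap\cdots)$ of asymptotically full volume (Lemma \ref{lemonPolytope}), and proving that $\overline{\bmp}:S_{\bmH}/\exp(W)\to S_{\bmL}$ is an isomorphism when $(\{\gamma_n\},\bmL)$ is minimal, so that the fibrewise ${}^{\circ}L$-invariant limits integrate over the bounded base $U_0(\ep)$ to recover $\mu_L$. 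Your proposal contains no substitute for this; ``the polytope control forbids the limit from being more spread out'' is not an argument, and set-theoretic non-divergence through $e\Gamma$ says nothing about where the mass of the renormalized measures goes.

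The second missing step is the identification of the limit group, where the distinction between ``minimal'' and ``potentially minimal'' lives. The paper takes the minimal group $\bmL_1$ generated by the conjugates (so $\bmL$ is its observable hull), uses non-divergence to write $\gamma_nh_no_n\gamma_n^{-1}=\delta_n\gamma_n'$, passes to the corrected sequence $\lambda_n=\gamma_n'\gamma_n$, and proves via Claims \ref{claPhi_1EmptyA}--\ref{claPhi_1EmptyB} that $\Phi_1(\{\lambda_n\})=\emptyset$. That vanishing is what feeds Lemma \ref{obslemma} (a tensor-product trick turning $\Phi_0$-relations into genuine $\bmL'$-fixed vectors) to show the minimal group for $\{\lambda_n\}$ is \emph{observable}, and then an epimorphicity argument with discrete bounded orbits $\delta_nv=\gamma_n'^{-1}v$ identifies it with the observable hull $\bmL$. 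Your sketch asserts that ``the resulting observable $\Q$-subgroup satisfies the same compatibility condition, so by minimality it contains $\bmL$,'' but observability of the limit group is precisely what has to be proved, not assumed, and without the $\Phi_1=\emptyset$ mechanism there is no route to it. As it stands the proposal is a restatement of the theorem's architecture rather than a proof.
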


Potentially minimal means that $\bmL$ contains conjugates of $\bmH$ by $\gamma_n$ for all $n$ and is minimal among observable subgroups that contains $\gamma_n \bmH \gamma_n^{-1}$ for infinitely many $n$(see Definition \ref{defminimal}). Such an $\bmL$ always exists after passing to a subsequence. Note that this theorem says that the convergence is essentially from ``inside'' and this fact implies the following topological statement. The reader is referred to \cite[Chapter E.1]{BenePetr92}, especially Proposition E.1.2, for the definition and basic properties of Chabauty topology.

\begin{coro}\label{thmConChabau}
Notation as in Theorem \ref{transbyg_n} above and assume that case (1) does not happen. Then in the Chabauty topology $\pi_{\Gamma}(g_{n_k}H)$ converges to $\pi_{\Gamma}(\delta L)$.
\end{coro}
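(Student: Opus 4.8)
The statement to prove is Corollary \ref{thmConChabau}: assuming we are in case (2) of Theorem \ref{transbyg_n}, the sets $\pi_\Gamma(g_{n_k}H)$ converge in the Chabauty topology to $\pi_\Gamma(\delta L)$. The plan is to translate the measure-theoretic convergence from Theorem \ref{thmTranslatebyGamma} (via the reduction $g_n = \delta_n\gamma_nh_n$ described in the text) into a statement about Chabauty convergence of closed subsets. The key mechanism is that, as emphasized in the remark following Theorem \ref{thmTranslatebyGamma}, the convergence $\lim_n(\gamma_n)_*[\mu_H] = [\mu_L]$ happens ``from inside'': the translated orbits $\gamma_n H\Gamma/\Gamma$ are eventually contained in (small neighborhoods of) $L\Gamma/\Gamma$, and they fill up $L\Gamma/\Gamma$ in the limit.

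**Step-by-step outline.**

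First I would recall the definition of Chabauty convergence for a sequence of closed subsets $F_n$ of a locally compact second countable space $Y$: $F_n \to F$ iff (i) every $y\in F$ is a limit of points $y_n\in F_n$, and (ii) whenever $y_{n_j}\in F_{n_j}$ converges to some $y$, then $y\in F$. Here $Y = G/\Gamma$, $F_{n_k} = \pi_\Gamma(g_{n_k}H)$, and $F = \pi_\Gamma(\delta L)$; note all these are closed by the observability discussion in the introduction. Second, I would establish the lower bound (i): since $\frac{1}{a_k}(g_{n_k})_*\mu_H \to (\delta)_*\mu_L$ weakly-$*$ with $(\delta)_*\mu_L$ supported on $\pi_\Gamma(\delta L)$ and of full support there, any point $y = \pi_\Gamma(\delta\ell)$ has every neighborhood $U$ with $\mu_L$-positive preimage, so $\liminf \mu_H(g_{n_k}^{-1}U) > 0$ forces $\pi_\Gamma(g_{n_k}H)\cap U\neq\emptyset$ for large $k$; picking $y_k$ in these intersections (shrinking $U$) gives $y_k\to y$. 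Third, for the upper bound (ii), I would use the reduction to $\gamma_n$: suppose $\pi_\Gamma(g_{n_{k_j}}h_j)\to y$ for some $h_j\in H$; writing $g_{n}h = \delta_n\gamma_n h_n h$ and using boundedness of $\delta_n\to\delta$, it suffices to show that accumulation points of $\pi_\Gamma(\gamma_n H)$ lie in $\pi_\Gamma(L)$. This is exactly where the ``convergence from inside'' is needed: because $\bmL$ is potentially minimal, $\gamma_n\bmH\gamma_n^{-1}\subset\bmL$ for all $n$, hence $\gamma_n H \subset \gamma_n H \gamma_n^{-1}\cdot\gamma_n \subset L\cdot\gamma_n \subset L\Gamma$ — wait, this needs care since $\gamma_n\in\Gamma$ but perhaps not in $\Gamma_L$; rather $\gamma_n H\gamma_n^{-1}\subseteq L$ gives $\gamma_n H \subseteq L\gamma_n$, so $\pi_\Gamma(\gamma_n H)\subseteq \pi_\Gamma(L\gamma_n) = \pi_\Gamma(L)$ (using $\gamma_n\in\Gamma$). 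Thus $\pi_\Gamma(\gamma_n H)\subseteq \pi_\Gamma(L)$ for every $n$, and consequently $\pi_\Gamma(g_n H)\subseteq \delta_n\pi_\Gamma(L)$; taking limits with $\delta_n\to\delta$ and using closedness of $\pi_\Gamma(L)$ yields that any accumulation point lies in $\delta\pi_\Gamma(L) = \pi_\Gamma(\delta L)$.

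**The main obstacle.**

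The main subtlety, and where I would spend the most care, is matching up the $\delta$ produced abstractly in Theorem \ref{transbyg_n} with the $\delta$ coming from the decomposition $g_n = \delta_n\gamma_n h_n$, and ensuring the Chabauty limit is genuinely $\pi_\Gamma(\delta L)$ and not something smaller or a translate. Specifically: (a) one must check that $\delta_n\to\delta$ can indeed be arranged along the subsequence and that $\delta$ is the same element, which the text asserts but which relies on the uniqueness of limit measures and the fact that $(\delta)_*\mu_L = (\delta')_*\mu_L$ forces $\delta^{-1}\delta'$ to normalize appropriate structure; (b) the inclusion $\pi_\Gamma(\gamma_nH)\subseteq\pi_\Gamma(L)$ only gives the upper bound, and one needs the measure convergence of Theorem \ref{thmTranslatebyGamma} to guarantee that the orbits do not collapse to a proper closed subset of $L\Gamma/\Gamma$ — i.e., that condition (i) produces \emph{all} of $\pi_\Gamma(\delta L)$. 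The cleanest route is: full-support of $\mu_L$ on $\pi_\Gamma(L\Gamma/\Gamma)$ plus weak-$*$ convergence gives (i); potential minimality plus $\gamma_n\in\Gamma$ gives (ii); and the compatibility of the two descriptions of $\delta$ follows from the rigidity already used in deriving Theorem \ref{transbyg_n} from Theorem \ref{thmTranslatebyGamma}. I would also remark that one should pass to the same subsequence throughout and invoke that in a second countable space Chabauty convergence can be checked sequentially, citing \cite[Proposition E.1.2]{BenePetr92}.
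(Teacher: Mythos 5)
Your proposal is correct and follows the same route the paper implicitly suggests. The paper does not supply a detailed proof of this corollary but only remarks that it follows because the convergence in Theorem \ref{thmTranslatebyGamma} is ``from inside''; your argument fills in exactly those details: the lower (approximation) half of Chabauty convergence from the weak-$*$ convergence of measures together with full support of $\mu_L$ on $L\Gamma/\Gamma$, and the upper (containment) half from the inclusion $\gamma_n\bmH\gamma_n^{-1}\subseteq\bmL$ coming from potential minimality, which gives $\pi_\Gamma(\gamma_nH)\subseteq\pi_\Gamma(L)$ and hence $\pi_\Gamma(g_nH)\subseteq\delta_n\pi_\Gamma(L)$ with $\delta_n\to\delta$. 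Your worry in point (a) about matching the two descriptions of $\delta$ is actually not an independent rigidity issue: once one passes to the subsequence where $\delta_n\to\delta$ and $(\gamma_n)_*[\mu_H]\to[\mu_L]$, the chain $(g_n)_*[\mu_H]=(\delta_n)_*(\gamma_n)_*[\mu_H]\to\delta_*[\mu_L]$ identifies $\delta$ automatically, so the $\delta$ from Theorem \ref{transbyg_n} is forced to be the same element (up to the freedom $\delta\mapsto\delta\ell$, $\ell\in L$, which does not change $\pi_\Gamma(\delta L)$).
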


We also wish to know how to compute the asymptotic of $\{a_n\}$ as this is uniquely determined once a normalization of $\mu_H$ and $\mu_L$ is fixed.
When $\mu_H$ is finite, $\{a_n\}$ can be taken to be a constant sequence. In general, let $\bmS_{\bmH}$ be its maximal quotient $\Q$-split torus($\bmS_{\bmH}=\bmH$ if $\bmH$ itself is a $\Q$-split torus) of $\bmH$. Then the projection $H\to S_{\bmH}$ is a principal ${}^{o}H$(see Section \ref{secPolytopes} for definitions)-homogeneous bundle over $S_{H}$ which factors through $\pi_{{}^{\circ}\bmH}: H/H\cap \Gamma \to S_{\bmH}$ with fibres isomorphic to (translates of) ${}^{o}H/{}^{o}H\cap \Gamma$ which have finite volume. Note that for a bounded set $\calR$ in $S_{H}$, its preimage $\calP(\calR)$ in $H$ is right-invariant by $\Gamma_H$ and has finite measure in the quotient. And if $\calR$ is compact then $\calP(\calR)/\Gamma_H$ regarded as contained in $H\Gamma/\Gamma$ is closed in $G/\Gamma$ even when $\bmH$ is not observable. Then we hope to find a sequence of $\calR_n$ such that the contribution of $(g_{n})_*[\mu_H]$ to an integration of a compactly supported function ``essentially'' comes from its restriction to $\calP(\calR_n)/\Gamma_H$. Then we wish to set $a_n$ to be the $\mu_H$-measure of $\calP(\calR_n)/\Gamma_H$.  

Of course, a necessary condition for $\calP(\calR_n)/\Gamma_H$ to contribute to the integral is that it comes back to a compact set containing the support of the function. To give a nice family of compact sets we make use of Mahler's criterion. So take a faithful observable $\Q$-representation $\rho$ of $\bmG$ into some $\SL_N$, whose existence will be guaranteed in Section \ref{secRepandNondiv}. We may assume that $\Gamma$ preserves the standard $\Z^N$ in the representation. Then we have $G/\Gamma\to \SL_N(\R)/\SL_N(\Z)$ a proper map.
According to Mahler's criterion, for $\eta>0$,
\begin{equation*}
    K_{\eta}(\rho):=\{[g]\in G/\Gamma, ||\rho(g)v|| \geq \eta ,\,\forall v_{\neq 0} \in\Z^N\}
\end{equation*}
would be a nice family of compact sets. So naively, one might define (for some $\eta>0$) the region $\calP_n$, which may not be of the form $\calP(\calR_n)$, to be those $h$ such that  $||g_nhv||>\eta$ for all non-zero integral vectors $v$. This would be a good definition except that:
\begin{itemize}
    \item to prove our main theorems above, we will need to go ``deeper'' in this $\calP_n$. And to justify our arguments we will need $\calP_n$ to be of the form $\calP(\calR_n)$  with $\calR_n$ being (the image under exponential map of) \textit{convex polytopes};
    \item when applied to counting problems, we would like to explicitly compute the asymptotics of $a_n$ for ``generic'' $\{g_n\}$ and with this naive definition it is difficult.
\end{itemize}

To get such a region we relax ourselves to demand $||g_nhv||>\eta$ only for all integral \textit{weight vectors} with respect to $\bmH$. Now $\calP_n=\calP(\calR_n)$ are nice except that we may lose the non-divergence. We will remedy this by
\begin{itemize}
    \item looking at more weight vectors from a bigger representation $\rho''$;
    \item allowing perturbation by a fixed nonempty open bounded set $\calO$ in $H$.
\end{itemize}
Then we will define  $\calP_n=\calP(g,\eta,\rho'',\Phi_{\rho''})$(see Definition \ref{defPolytope} and Equation \ref{defPreImaPoly} for the definition) and prove the following.
The reader is referred to Section \ref{secRepandNondiv} for precise definitions of undefined terms below.

\begin{thm}\label{thmNondiv}
Given a standard triple $(\bmG,\bmH,\Gamma)$. Let $\rho''$ be a superfaithful $\Q$-representation of $\bmG$. Take $\calO$ to be a non-empty open bounded subset of $H$. For each sequence $\{g_n\}$ in $G$, $\eta>0$ and $h_n$ in $\calP(g_n,\eta,\rho'',\Phi_{\rho''})$, all weak-$*$ limits of $\{(g_nh_n)_*\mu_{\calO}\}$ in $G/\Gamma$ have the same total mass as $\mu_{\calO}$.
\end{thm}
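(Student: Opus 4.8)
The plan is to reduce the non-divergence statement to a quantitative non-divergence estimate in the style of Dani--Margulis, applied not to a single representation but to the weight-vector data encoded in $\Phi_{\rho''}$. First I would unwind the definition of $\calP(g_n,\eta,\rho'',\Phi_{\rho''})$: by construction $h_n$ lies in this region precisely because $\|g_nh_n w\|>\eta$ (up to the fixed constants coming from $\rho''$) for every integral weight vector $w$ of $\bmH$ appearing in $\rho''$, and because we have allowed the perturbation by $\calO$. The point of demanding superfaithfulness of $\rho''$ is that the $\bmH$-weight spaces in $\rho''$ separate enough directions that controlling the norms of $g_nh_n$ on integral weight vectors forces $\rho''(g_nh_n\gamma)\Z^N$ to stay non-degenerate for a definite proportion of the orbit $h_n\cdot\calO$; this is what rules out escape of mass.

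The key steps, in order, would be: (i) Fix a compactly supported continuous $f\geq 0$ on $G/\Gamma$ and, via Mahler's criterion through the fixed faithful representation $\rho$, produce for every $\ep>0$ a compact set $K=K_\eta(\rho)$ capturing all but $\ep$ of the mass of $\mu_{\calO}$ regardless of translation; equivalently, show $\limsup_n (g_nh_n)_*\mu_{\calO}(G/\Gamma \setminus K)<\ep$. (ii) Translate membership in $\calP(g_n,\eta,\rho'',\Phi_{\rho''})$ into the statement that the pushed-forward lattice $g_nh_n\Gamma$ — tested on the relevant weight vectors — has a uniform lower bound on the lengths of the \emph{rational} vectors that could witness non-compactness; combine this with the $\calO$-perturbation to get the bound not just at $h_n$ but on the whole set $h_n\calO$. (iii) Invoke (a version of) the Dani--Margulis quantitative non-divergence theorem for the unipotent (or more generally ${}^oH$-) flow: because $\calO$ is a fixed bounded neighbourhood and the weight data controls every ``dangerous'' subspace, the proportion of $h\in\calO$ for which $g_nh$ leaves $K$ is bounded by a quantity going to $0$ as $\eta\to 0$, uniformly in $n$. (iv) Integrate: since $\mu_{\calO}$ is a fixed finite measure and the excess mass outside $K$ is uniformly small, every weak-$*$ limit has total mass equal to $\mu_{\calO}(H/\Gamma_H \cap \calO)$, i.e.\ no mass escapes.

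The main obstacle I expect is step (iii): making the Dani--Margulis machinery interact correctly with the combinatorics of $\Phi_{\rho''}$ and the maximal split quotient torus $\bmS_{\bmH}$. The subtlety is that ${}^oH$ need not be unipotent, so one cannot directly quote the classical $(C,\alpha)$-good estimates for unipotent orbits; one has to exploit that the weight decomposition under $\bmH$ is compatible with the split torus action, so that along the split directions the relevant functions $t\mapsto \|g_nh\exp(tX)w\|$ are (products of) exponentials — hence trivially log-convex and ``good'' — while along the ${}^o\bmH$ directions one uses polynomial ``goodness'' in the usual way. Reconciling these two behaviours, and checking that the superfaithfulness of $\rho''$ indeed supplies weight vectors detecting \emph{all} intermediate $\Q$-subgroups that could cause divergence (so that no rational subspace slips through the net), is where the real work lies; the remaining steps are essentially bookkeeping with Mahler's criterion and the definition of the polytopes $\calR_n$.
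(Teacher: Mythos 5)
Your outer shell is right (Mahler's criterion via an observable faithful representation, then a Kleinbock--Margulis quantitative nondivergence estimate, then integrate), but you are missing the central mechanism that actually makes the argument close, and your proposed fix for the obstacle you identify in step (iii) goes in the wrong direction. The polytope $\calP(g_n,\eta,\rho'',\Phi_{\rho''})$ only gives lower bounds $\|g_n h_n w\|\geq\eta$ for integral \emph{weight} vectors $w$ of $\bmH$; an arbitrary integral $v\in\bmV(\Z)$ need not be close to a weight vector, so step (ii) as stated does not actually ``translate membership in $\calP$ into a uniform lower bound on lengths of rational vectors''. The paper bridges this gap with the exterior-product trick (Proposition \ref{exteriortrick}): given arbitrary nonzero $v\in\bmV(\Z)$, one looks at the subspace $W$ spanned by $\bmH(\Q)\cdot v$, extracts a basis $\lambda_1 v,\dots,\lambda_k v$ with the $\lambda_i$ drawn from a \emph{fixed finite} subset $\Lambda\subset\calO\cap\bmH(\Q)$ of bounded denominator, and observes that $w:=\wedge_i\lambda_i v$ \emph{is} an integral $\bmH$-weight vector in $\bigwedge^k\bmV$. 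The chain $\eta\leq N^k\|gh w\|\leq N^k\prod_i\|gh\lambda_i v\|\leq N^k\sup_{o\in\calO}\|gho\cdot v\|^k$ then produces the needed lower bound $\sup_{o\in\calO}\|gho\cdot v\|\geq\eta^{1/k}N$ for arbitrary $v$. This is exactly why ``superfaithful'' is defined to contain all \emph{double} exterior powers: one level of wedging passes from arbitrary $v$ to a weight vector, and the second is what the Kleinbock--Margulis input (Theorem \ref{kleinbockMargulis}) requires, since it tests on pure wedges. Without this trick you cannot convert the polytope condition into the hypotheses of any quantitative nondivergence theorem.

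On your worry in step (iii): you suggest splitting $\Lie(H)$ into split-torus directions (handled by log-convexity of exponentials) and ${}^o\bmH$ directions (handled by polynomial goodness). The paper does not need this case split. Instead it observes that the family of matrix-coefficient functions $x\mapsto\langle g\exp(x)v,l\rangle$ on a fixed bounded $\mathcal D\subset\Lie(H)$, as $(g,v,l)$ varies, spans a \emph{finite-dimensional} space of \emph{analytic} functions, so \cite[Proposition 3.4]{KleMar98} immediately gives uniform $(C,\alpha)$-goodness with no reference to unipotence. This simultaneously handles the split and non-split directions, so the reconciliation you flag as ``the real work'' is actually unnecessary — but the exterior trick you omit is not.
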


There is a natural question about whether it is possible to reduce the amount of weight vectors that are needed. This will make the computation of $a_n$ easier in practice. We will state what we think is true and provide some evidence towards this in Section \ref{secRepandNondiv}.  

Now we can state what $a_n$ is in the special case when the limit measure is known to be finite. The reader is referred to Section \ref{secGammaEqui} for general cases.

\begin{thm}\label{thmCompA_n}
In the set-up of Theorem \ref{thmTranslatebyGamma} above. Assume also that $\mu_L$ is finite and normalized to be a probability measure. Then we may take 
$$a_n:= \mu_H(\calP(\gamma_n,\eta,\rho'',\Phi_{\rho''})/\Gamma_H)$$
 where $\eta>0$ is any small enough positive number. That is to say, 
 $\lim \frac{1}{a_n}(\gamma_n)_*\mu_H = \mu_L$.
\end{thm}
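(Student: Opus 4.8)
The plan is to combine the qualitative convergence statement of Theorem~\ref{thmTranslatebyGamma} with the non-divergence estimate of Theorem~\ref{thmNondiv}, using the latter to pin down the normalizing constant. By Theorem~\ref{thmTranslatebyGamma} we already know $\lim_n(\gamma_n)_*[\mu_H]=[\mu_L]$, so there \emph{exists} a sequence $\{b_n\}\subset\R^+$ with $\frac{1}{b_n}(\gamma_n)_*\mu_H\to\mu_L$ weak-$*$; since $\mu_L$ is a probability measure, the asymptotic of any such normalizing sequence is uniquely determined, and it suffices to show that the proposed $a_n:=\mu_H\big(\calP(\gamma_n,\eta,\rho'',\Phi_{\rho''})/\Gamma_H\big)$ satisfies $a_n/b_n\to 1$ (equivalently, that $\frac{1}{a_n}(\gamma_n)_*\mu_H\to\mu_L$ directly). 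So the whole task reduces to two mass-comparison statements: (i) no mass escapes to infinity along the polytope region, i.e.\ the contribution of $\calP(\gamma_n,\eta,\rho'',\Phi_{\rho''})/\Gamma_H$ converges to the \emph{full} mass of $\mu_L$; and (ii) the contribution of the complement $H/\Gamma_H\setminus\calP(\gamma_n,\eta,\rho'',\Phi_{\rho''})/\Gamma_H$ is asymptotically negligible after normalizing by $a_n$.

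For step (i), fix $f\in C_c(G/\Gamma)$. I would choose $\eta>0$ small enough that $\Supp f$ is contained in the compact set $K_\eta(\rho)$ coming from Mahler's criterion (shrinking $\eta$ only enlarges the polytope, which is harmless), and then write
\begin{equation*}
\la f,(\gamma_n)_*\mu_H\ra=\int_{H/\Gamma_H} f\big(\pi_\Gamma(\gamma_n h)\big)\,\diff\mu_H(h).
\end{equation*}
Because $f$ is supported in $K_\eta(\rho)$ and $\rho$ is faithful, the integrand vanishes off the set of $h$ with $\|\rho(\gamma_n h)v\|\geq\eta$ for \emph{all} nonzero $v\in\Z^N$ — in particular off the (larger) weight-vector region, so up to the perturbation by $\calO$ the integral is concentrated on $\calP(\gamma_n,\eta,\rho'',\Phi_{\rho''})/\Gamma_H$. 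Theorem~\ref{thmNondiv} (applied with $\rho''$ superfaithful and the chosen $\calO$) then guarantees that along this region no mass leaks out: every weak-$*$ limit of the restricted measures has total mass equal to that of $\mu_\calO$, and running this over an exhaustion of $H$ by translates of $\calO$ upgrades to the statement that the restriction of $\frac{1}{a_n}(\gamma_n)_*\mu_H$ to $\calP(\gamma_n,\eta,\rho'',\Phi_{\rho''})/\Gamma_H$ converges weak-$*$ to a probability measure, which by Theorem~\ref{thmTranslatebyGamma} must be $\mu_L$.

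For step (ii), I would argue that the part of $(\gamma_n)_*\mu_H$ outside the polytope region contributes nothing in the limit once divided by $a_n$. The point is that $a_n$ is precisely the $\mu_H$-mass of the polytope piece, so the normalized measures $\frac{1}{a_n}(\gamma_n)_*\mu_H$ restricted to the polytope are probability measures; combined with step (i) showing they converge to the probability measure $\mu_L$, and with the fact that $\mu_L$ is itself a probability measure, the total mass of the \emph{normalized} translates converges to $1$ as well, so no extra mass can survive in the limit from the complement — any such mass would push the limit's total mass above $1$, contradicting weak-$*$ lower semicontinuity of mass against nonnegative test functions together with the already-established convergence on a co-bounded family of open sets. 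Thus $\frac{1}{a_n}(\gamma_n)_*\mu_H\to\mu_L$, which is the claim.

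The main obstacle I anticipate is step (i), specifically the passage from ``the integral is supported on the region where all \emph{integral} vectors are large'' to ``the integral is essentially supported on $\calP(\gamma_n,\eta,\rho'',\Phi_{\rho''})$, which only controls integral \emph{weight} vectors.'' The region defined by weight vectors is a priori \emph{larger} than the Mahler-criterion region (fewer inequalities), so the support of $f$ sits inside it for free; the real content is in the other direction — ensuring that going deep into the weight-vector polytope does not lose the compactness needed to invoke Theorem~\ref{thmNondiv}, which is exactly why $\rho''$ is taken superfaithful and the perturbation $\calO$ is allowed. Carefully checking that the error introduced by replacing $\calP(g_n,\eta,\rho,\Phi_\rho)$ by $\calP(\gamma_n,\eta,\rho'',\Phi_{\rho''})$ (and by the $\calO$-thickening) is uniformly negligible in $\mu_H$-mass relative to $a_n$ — i.e.\ that $a_n$ computed with the two representations are comparable up to $1+o(1)$ — is where the technical weight of the argument lies, and it will rely on the polytope growth estimates and the structure of $\calP(\calR)$ as a $\Gamma_H$-invariant bundle over (a convex polytope in) $S_H$ discussed in Section~\ref{secPolytopes}.
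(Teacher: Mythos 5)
Your proposal takes a genuinely different route from the paper and contains a real gap, specifically in the step you do not flag explicitly.

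The paper proves Theorem~\ref{thmCompA_n} as a direct corollary of Proposition~\ref{partIIofsection3} (and hence Proposition~\ref{partIofsec3}): the normalising sequence emerges from the polytope analysis itself, as $\Vol(\Omega^{vert}_{n,\ep})$, with Lemma~\ref{lemonPolytope} providing the volume split $\mu_H(\calP^{spl}_{n,\ep}/\Gamma_H)=\Vol(U_0(\ep))\cdot\Vol(\Omega^{vert}_{n,\ep})$, the asymptotic identification $\Vol(\Omega^{spl}_{n,\ep})\sim\Vol(\Omega(\bmc_{\gamma_n},\ep,\Phi))$, and — crucially — the observation that the asymptotic of $\Vol(\Omega^{vert}_{n,\ep})$ does not depend on $\ep$. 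When $\mu_L$ is finite, $\bmS_{\bmL}$ is trivial, so $W=\Lie(S_{\bmH})$, $U_0(\ep)$ is a point, and the polytope mass is asymptotically $\Vol(\Omega^{vert}_{n,\ep})$ itself. You instead try to bootstrap from Theorems~\ref{thmTranslatebyGamma} and~\ref{thmNondiv} by an abstract mass comparison: let $b_n$ be the (determined-up-to-asymptotics) normaliser from Theorem~\ref{thmTranslatebyGamma} and show $a_n/b_n\to1$.

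Here is the gap. Write $\mu_n^{(1)}(\eta)$ for the restriction of $(\gamma_n)_*\mu_H$ to $\calP(\gamma_n,\eta,\rho'',\Phi_{\rho''})/\Gamma_H$, so $a_n(\eta)=\mu_n^{(1)}(\eta)(G/\Gamma)$. Your step~(i) correctly observes that for $f$ supported in $K_\eta(\rho'')$ one has $\la f,(\gamma_n)_*\mu_H\ra=\la f,\mu_n^{(1)}(\eta)\ra\sim b_n\la f,\mu_L\ra$. Pushing this as far as it goes, with $f$ sandwiched between $1_{K_{2\eta}}$ and $1_{K_\eta}$, gives $\liminf_n a_n(\eta)/b_n\geq\mu_L(K_{2\eta}(\rho''))$. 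But for a \emph{fixed} $\eta$ this quantity is strictly less than $1$ whenever $L\Gamma/\Gamma$ is noncompact, and the theorem asserts the result for each fixed small $\eta$, not in the limit $\eta\to0$. Your remark ``shrinking $\eta$ only enlarges the polytope, which is harmless'' is exactly where this breaks: shrinking $\eta$ changes $a_n(\eta)$, so the theorem's content is precisely that $a_n(\eta)/a_n(\eta')\to1$ for different small $\eta,\eta'$. This $\eta$-independence is not derivable from Theorems~\ref{thmTranslatebyGamma} and~\ref{thmNondiv} alone; it is the content of the paper's explicit polytope estimates ($\omega_n\to\infty$ in Lemma~\ref{lemonPolytope} plus the $\ep$-independence noted at the end of the proof of Proposition~\ref{partIofsec3}). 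You do gesture toward ``polytope growth estimates'' in your final paragraph, but you frame the missing step as a comparison between $\rho$ and $\rho''$; the actual obstruction is the $\eta$-uniformity, which is logically prior. A similar issue also affects your step~(ii): the claim that extra mass from the complement ``would push the limit's total mass above~$1$'' presupposes the very normaliser comparison you are trying to prove, since weak-$*$ limits can lose but not gain mass, and the upper bound $\limsup a_n(\eta)/b_n\leq1$ already needs the non-divergence boundary control you only sketch.
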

Similarly there is a corresponding statement for Theorem \ref{transbyg_n}.

In the set-up of Theorem \ref{transbyg_n}, there are certain cases when non-divergence is automatically guaranteed for all possible sequences $\{g_n\}$. Indeed, when $\bmG$ is reductive and $\bmZ_{\bmG}\bmH$ is assumed to be $\Q$-anisotropic, this has been achieved in \cite[Theorem 1.9]{EskMozSha96}. Note that under these conditions, $\mu_H$ is implied to be finite. We prove a generalization when $\mu_H$ may not necessarily be finite. 
Maximal $\Q$-split tori in a reductive $\Q$-group $\bmG$ would be examples of such an $\bmH$ and there are also examples that do not contain a maximal $\Q$-split torus. 

\begin{thm}\label{thmTransReduc}
Same notations as in Theorem \ref{transbyg_n} or \ref{thmTranslatebyGamma} above. We assume in addition that $\bmG$ and $\bmH$ are both reductive and $\bmZ_{\bmG}\bmH/\bmZ_{\bmG}\bmH\cap \bmH$ is $\Q$-anisotropic. Then 
\begin{itemize}
    \item case (1) in Theorem \ref{transbyg_n} never happens;
    \item $\bmL$ is reductive.
\end{itemize}
Moreover, if $\{\gamma_n\}$(or equivalently, $\{g_n\}$) diverges in $G/Z_{\bmG}\bmS$ for all $\Q$-split tori $\bmS$ contained in the center of $\bmH$, then $\bmL$ is not contained in any proper $\Q$-parabolic subgroup of $\bmG$ and $\bmG$ has no $\Q$-character. In particular, $\mu_L$ is finite.
\end{thm}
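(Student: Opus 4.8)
The plan is to bootstrap from the main classification theorems (Theorems \ref{transbyg_n} and \ref{thmTranslatebyGamma}) and the non-divergence result (Theorem \ref{thmNondiv}), feeding in the structure theory of reductive groups under the hypothesis that $\bmZ_{\bmG}\bmH/\bmZ_{\bmG}\bmH\cap\bmH$ is $\Q$-anisotropic. First I would establish the non-divergence claim: since $\bmG$ and $\bmH$ are reductive, a faithful $\Q$-representation of $\bmG$ decomposes (after extending scalars, or already over $\Q$ by semisimplicity) into irreducibles, and I want to show that for a weight vector $v$ with respect to $\bmH$, the norm $\|\rho(g_n)v\|$ cannot go to zero along the relevant polytope region. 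The key mechanism should be: if it did, then (using linearization / the Dani–Margulis type estimates packaged in Theorem \ref{thmNondiv}, or directly) the escape of mass would be detected by a $\Q$-parabolic $\bmP$ such that $\bmH$ drifts into its unipotent radical, which would force a nontrivial $\Q$-character relation incompatible with $\bmZ_{\bmG}\bmH/(\bmZ_{\bmG}\bmH\cap\bmH)$ being anisotropic. Concretely, I would argue that the only way weight vectors can shrink is via the action of a nontrivial $\Q$-split subtorus of $\bmZ_{\bmG}\bmH$ modulo $\bmH$ (since central tori act by scalars on $\bmH$-isotypic pieces), and the anisotropy hypothesis kills exactly this. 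So case (1) of Theorem \ref{transbyg_n} cannot occur, and we are in case (2).

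Next I would show $\bmL$ is reductive. By Theorem \ref{transbyg_n}(2) (resp. \ref{thmTranslatebyGamma}), $\bmL$ is an observable $\Q$-subgroup containing some $\gamma_n\bmH\gamma_n^{-1}$ and minimal with this property (potentially minimal). The point is that the limit measure $(\delta)_*\mu_L$ is a limit of translates of $\mu_H$, and $\mu_H$ is finite under our hypotheses — indeed the anisotropy of $\bmZ_{\bmG}\bmH/(\bmZ_{\bmG}\bmH\cap\bmH)$ together with $\bmH$ reductive forces $\bmH$ to have no nontrivial $\Q$-character, so $\Gamma_H$ is a lattice in $H$ and $\mu_H$ is finite; hence by the analysis keeping track of $a_n$ (or simply because finite measures of fixed total mass limit to finite measures of the same mass, using the non-divergence just proved) $\mu_L$ is finite too. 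A finite-volume homogeneous measure forces the unipotent radical $\bmR_u(\bmL)$ to satisfy $\Gamma\cap\bmR_u(\bmL)$ cocompact in $R_u(\bmL)$, but $\bmR_u(\bmL)$, being normal in $\bmL\supseteq \gamma_n\bmH\gamma_n^{-1}$ with $\bmH$ reductive, is centralized by a conjugate of $\bmH$; combined with minimality of $\bmL$ and the reductivity of $\bmH$, I would argue $\bmR_u(\bmL)$ must be trivial. (Alternatively: $\bmL$ is generated by $\gamma_n\bmH\gamma_n^{-1}$ and $\bmR_u(\bmL)$; if $\bmR_u(\bmL)\neq 1$ then $\bmL':=(\gamma_n\bmH\gamma_n^{-1})\cdot(\text{smaller normal subgroup})$ or rather the reductive part would already be observable and contain the conjugates, contradicting minimality — this needs the observability of reductive groups cited in the excerpt.)

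For the last assertion, assume $\{\gamma_n\}$ diverges in $G/Z_{\bmG}\bmS$ for every $\Q$-split torus $\bmS$ in the center of $\bmH$. Suppose for contradiction $\bmL\subseteq\bmP$ for a proper $\Q$-parabolic $\bmP$. Since $\mu_L$ is finite (from the previous step), $\bmL$ has no nontrivial $\Q$-character, so $\bmL$ lies in the kernel of every $\Q$-character of $\bmP$, hence in a Levi-type situation $\bmL$ is contained in $\bmR_u(\bmP)\rtimes(\text{semisimple part of Levi})$; but $\bmL$ is reductive, so up to conjugacy $\bmL$ sits inside the derived group of a Levi $\bmM$ of $\bmP$. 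This means $\gamma_n\bmH\gamma_n^{-1}\subseteq \bmL$ has a fixed-point vector (the line defining $\bmP$ via Chevalley) that is not moved, which should translate into $\gamma_n$ staying bounded in $G$ modulo the centralizer of a central $\Q$-split torus of $\bmH$ — contradicting the divergence hypothesis. I expect \emph{this last implication — extracting a boundedness statement for $\gamma_n$ modulo $Z_{\bmG}\bmS$ from the containment $\bmL\subseteq\bmP$} — to be the main obstacle, because it requires carefully relating the "direction of escape" of $\gamma_n$ to the parabolic and using that the only room for escape, after anisotropy has been imposed, lives in central split tori of $\bmH$. Once $\bmL$ is not contained in any proper $\Q$-parabolic, the conclusion that $\bmG$ has no $\Q$-character follows since $\bmG\supseteq\bmL$ would otherwise... — more precisely, a nontrivial $\Q$-character of $\bmG$ would make $\bmG$ itself contained in no proper parabolic yet $G/\Gamma$ non-compact in a way forcing divergence; here I would instead note $\mu_L$ finite plus $\bmL$ not in a proper parabolic gives, via the Borel–Harish-Chandra / Godement criterion propagated to $\bmG$, that $\bmG$ has no $\Q$-character and $\mu_L$ is finite, closing the argument.
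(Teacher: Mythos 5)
The proposal contains a fatal factual error that propagates through the argument, and the non-divergence step is not actually carried out with the tools the paper uses.

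\textbf{The central error.} You assert that the anisotropy of $\bmZ_{\bmG}\bmH/(\bmZ_{\bmG}\bmH\cap\bmH)$ together with $\bmH$ reductive forces $\bmH$ to have no nontrivial $\Q$-character, hence $\mu_H$ finite. This is false, and the paper emphasizes this explicitly in the paragraph before the statement: the motivating example is $\bmH$ a maximal $\Q$-split torus of a reductive $\bmG$, for which $\bmZ_{\bmG}\bmH=\bmH$ (so the quotient is trivial and vacuously anisotropic) while $\bmH$ has a full rank of $\Q$-characters and $\mu_H$ is infinite. The hypothesis $\bmZ_{\bmG}\bmH\cap\bmH=\bmZ(\bmH)$ controls split tori \emph{outside} $\bmH$ that centralize it; it says nothing about the split part of $\bmZ(\bmH)$. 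Since your proof of ``$\bmL$ reductive'' runs through ``$\mu_H$ finite $\Rightarrow\mu_L$ finite $\Rightarrow\Gamma\cap R_u(\bmL)$ a lattice in $R_u(\bmL)\Rightarrow R_u(\bmL)$ trivial,'' the whole chain collapses. The paper instead deduces ``$\bmL$ reductive'' (Proposition \ref{ProEquiRed} plus Theorem \ref{corObConRedisRed}) as a consequence of showing $\bmL$ is not contained in any proper $\Q$-parabolic, not by a finite-measure argument; reductivity is an output of the parabolic analysis, not an input.

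\textbf{Non-divergence.} Your sketch says escape of mass would produce a $\Q$-parabolic $\bmP$ into whose unipotent radical $\bmH$ drifts, contradicting anisotropy. This is morally the right picture but it is not an argument: you need a uniform-in-$g$ lower bound on $\inf_{0\neq v\in\bmV_\alpha(\Z)}\|gv\|$ to make a polytope $\Omega(g,\ep,\rho,\Phi_\rho)$ non-empty before Theorem \ref{thmNondiv} can be invoked, and you don't identify how to get it. The paper's mechanism is genuinely GIT: under the anisotropy hypothesis, any $\bmH$-fixed $\Q$-vector $v_0$ has \emph{closed} $\bmG$-orbit (Kempf, Lemma \ref{lemFixByHisGClose}); then a real Ness argument (Lemma \ref{LemMiniAtCen}) shows a vector of minimum length on $Z_{\bmG}(\bmH)\cdot v_0$ is already minimal on $\bmG\cdot v_0$, and cocompactness of $Z_{\bmG}\bmH(\Z)$ mod $\bmH$ gives the uniform lower bound (Lemma \ref{LemUniLowBouFixed}). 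You'd need to reproduce this, or cite a substitute (the paper notes an alternative via Richard--Shapira and Kleinbock--Margulis), before you can legitimately say case (1) never happens.

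\textbf{The ``Moreover'' part.} Your direction (suppose $\bmL\subset\bmP$, derive boundedness of $\gamma_n$ modulo some $Z_{\bmG}\bmS$) is the reverse of what the paper does and you acknowledge that the key implication is the ``main obstacle.'' The paper first shows $\Phi_{bdd}\subseteq\{0\}$ by combining Kempf's uniqueness of the optimal $\Q$-parabolic attached to a destabilized vector with the relation $\sum a_i\alpha_i=0$ from the definition of $\Phi_0$, forcing $\gamma_1^{-1}\gamma_n$ into a centralizer $Z_{\bmG}\bmS_\beta$ and contradicting the divergence hypothesis; only then does it conclude no proper parabolic contains $\bmF$. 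Without $\Phi_{bdd}\subseteq\{0\}$ you don't have the leverage to extract the boundedness you want. So there is a genuine missing idea here, not just a missing calculation.
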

Note that conversely, if $\bmZ_{\bmG}\bmH/\bmZ_{\bmG}\bmH\cap \bmH$ is not $\Q$-anisotropic, then case (1) in Theorem \ref{transbyg_n} does happen for certain $\{g_n\}$.

In the process of showing Theorem \ref{thmTransReduc}, we also obtain the following group-theoretical result(note that this is not a corollary of the unipotent rigidity theory):

\begin{thm}\label{corObConRedisRed}
For two reductive $\Q$-groups $\bmH\leq \bmG$. Assume the center of $\bmG$ is $\Q$-anisotropic.
Then all observable $\Q$-subgroups of $\bmG$ that contain $\bmH$ are reductive if and only if $\bmZ_{\bmG}\bmH/\bmZ_{\bmG}\bmH\cap \bmH$ is $\Q$-anisotropic.
\end{thm}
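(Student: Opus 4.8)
Write $\bmZ:=\bmZ_{\bmG}\bmH$ and $\overline{\bmZ}:=\bmZ/(\bmZ\cap\bmH)$. As $\bmH$ is reductive and we are in characteristic zero, $\bmZ$ is reductive, $\bmZ\cap\bmH$ is the center of $\bmH$ and is a diagonalizable subgroup of the center of $\bmZ$, and $\overline{\bmZ}$ is reductive. I record one lemma: \emph{a $\Q$-subgroup of a $\Q$-anisotropic reductive group is reductive}, since a nontrivial unipotent $\Q$-subgroup would contain a copy of the one-parameter additive group, which by Jacobson--Morozov lies in a $\Q$-subgroup isogenous to $\SL_{2}$ and so yields a nontrivial $\Q$-split torus. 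I will also freely use that, by Definition~\ref{defiObs}, a $\Q$-subgroup equal to the stabilizer of a vector in a $\Q$-representation is observable.

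Assume first that $\overline{\bmZ}$ is $\Q$-anisotropic, and let $\bmH\subseteq\bmM\subseteq\bmG$ with $\bmM$ observable; I must show $R_{u}(\bmM)=1$. Write $\bmM=\operatorname{Stab}_{\bmG}(v)$ with $v\in\bmV(\Q)$ for a $\Q$-representation $\bmV$ of $\bmG$. Then $v\in\bmV^{\bmH}$, and since $\bmZ\cap\bmH\subseteq\bmH$ acts trivially on $\bmV^{\bmH}$, the latter is a $\Q$-representation of $\overline{\bmZ}$ with $\operatorname{Stab}_{\overline{\bmZ}}(v)=(\bmM\cap\bmZ)/(\bmZ\cap\bmH)$; by the lemma this quotient is reductive, hence so is $\bmM\cap\bmZ$. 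Therefore $R_{u}(\bmM)\cap\bmZ=R_{u}(\bmM)^{\bmH}$, a normal unipotent subgroup of $\bmM\cap\bmZ$, is trivial; equivalently $\mathfrak{u}^{\bmH}=0$, where $\mathfrak{u}:=\Lie\bigl(R_{u}(\bmM)\bigr)$. Suppose now $R_{u}(\bmM)\neq1$. By the Borel--Tits theorem there is a proper $\Q$-parabolic $\bmP$ with $\bmM\subseteq N_{\bmG}\bigl(R_{u}(\bmM)\bigr)\subseteq\bmP$ and $R_{u}(\bmM)\subseteq R_{u}(\bmP)$; since $\bmM\cap R_{u}(\bmP)$ is a normal unipotent subgroup of $\bmM$ containing $R_{u}(\bmM)$, in fact $R_{u}(\bmM)=\bmM\cap R_{u}(\bmP)$. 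After conjugating $(\bmH,\bmM)$ by an element of $R_{u}(\bmP)(\Q)$ — an inner automorphism of $\bmG$, harmless to all hypotheses — we may take $\bmH$ inside a $\Q$-Levi $\bmL$ of $\bmP$; fix a $\Q$-cocharacter $\lambda$ with $\bmP=\bmP(\lambda)$, $\bmL=\bmZ_{\bmG}(\lambda)$, and $R_{u}(\bmP)$ the sum of the positive $\lambda$-weight spaces. Then $\lambda$ centralizes $\bmH$, so the image of $\lambda$ lies in $\bmZ$; if it were not contained in $\bmH$, its image in $\overline{\bmZ}$ would be a nontrivial $\Q$-split torus, contradicting anisotropy. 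Hence the image of $\lambda$ lies in $\bmZ\cap\bmH$.

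It is this last case that I expect to be the crux. Here observability of $\bmM$ enters: since $\bmM$ normalizes $R_{u}(\bmM)$, the subspace $\mathfrak{u}\subseteq\frakg$ is an $\bmM$-submodule, so $\bmM$ acts on the line $\Lambda^{\dim\mathfrak{u}}\mathfrak{u}\subseteq\Lambda^{\dim\mathfrak{u}}\frakg$ by a character $\chi$ that is trivial on $R_{u}(\bmM)$ and satisfies $\chi(\lambda(t))=t^{N}$ with $N=\sum(\text{positive }\lambda\text{-weights on }\mathfrak{u})>0$. Thus $\chi$ occurs in some $\Q$-representation of $\bmG$ restricted to $\bmM$, so by observability $\chi^{-1}$ does too: there is a nonzero $w$ in a $\Q$-representation $\bmW$ of $\bmG$ fixed by $R_{u}(\bmM)$ and of $\lambda$-weight $-N<0$. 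One must derive a contradiction from the existence of such a $w$ — a nonzero vector of strictly negative $\lambda$-weight annihilated by $\mathfrak{u}$, which lies in the positive $\lambda$-weight part of $\frakg$, and semi-invariant under the reductive Levi part of $\bmM$ (which contains $\bmH$, acting without invariants on $\mathfrak{u}$). I would obtain this either from the precise form of Sukhanov's structure theorem for observable subgroups of reductive groups, which constrains how $R_{u}(\bmM)$ can sit inside $R_{u}(\bmP)$ relative to the $\bmL$-action, or by an induction on $\dim\bmG$ carried out inside the proper Levi $\bmL\supsetneq\bmH$.

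For the converse, assume $\overline{\bmZ}$ is not $\Q$-anisotropic. Lifting a nontrivial $\Q$-split torus of $\overline{\bmZ}$, choose a $\Q$-split torus $\bmS\cong\GL_{1}$ in $\bmZ$ with $\bmS\cap\bmH$ finite, so $\bmS\not\subseteq\bmH$. Put $\bmP:=\bmP(\bmS)$, with $\Q$-Levi $\bmL:=\bmZ_{\bmG}(\bmS)\supseteq\bmH$ and $\bmU:=R_{u}(\bmP)$; since the center of $\bmG$ is $\Q$-anisotropic, $\bmS$ is not central in $\bmG$, so $\bmP\neq\bmG$ and $\bmU\neq1$. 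As $\bmH$ is reductive and normalizes $\bmU$, the group $\bmH\cap\bmU$ is a normal unipotent subgroup of $\bmH$, hence trivial, and $\bmM:=\bmH\ltimes\bmU$ is a $\Q$-subgroup containing $\bmH$ with unipotent radical $\bmU\neq1$, so $\bmM$ is not reductive. To see $\bmM$ is observable, note that, $\bmU$ being unipotent, every character of $\bmM$ factors through $\bmM\to\bmH$; and for $\psi\in X^{*}(\bmH)$, $\psi$ occurs as a subrepresentation of some $\Q$-representation of $\bmG$ restricted to $\bmM$ exactly when $\psi$ occurs in $(V_{\mu})|_{\bmH}$ for some $\bmG$-dominant weight $\mu$, where for an irreducible $\Q$-representation $\bmW$ of $\bmG$ the space $\bmW^{\bmU}$ is the irreducible $\Q$-representation $V_{\mu}$ of $\bmL$ with $\mu$ the highest weight of $\bmW$ (and $\bmU=R_{u}(\bmP)$). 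Call $\Psi\subseteq X^{*}(\bmH)$ the set of such $\psi$; it is a sub-semigroup, since the tensor product of $\bmL$-irreducibles with $\bmG$-dominant highest weights has only $\bmG$-dominant highest weights among its constituents. Let $\theta\in X^{*}(\bmL)$ be the character of $\bmL$ on $\det(\Lie\bmU)$; then $\theta$ is $\bmG$-dominant and $\langle\theta,\alpha^{\vee}\rangle>0$ for exactly the simple roots $\alpha$ outside $\bmL$, so $\nu+k\theta$ is $\bmG$-dominant for every $\bmL$-dominant $\nu$ and $k\gg0$. Given $\psi\in\Psi$, observability of $\bmH$ in $\bmL$ provides an $\bmL$-dominant $\nu$ with $\psi^{-1}$ occurring in $(V_{\nu})|_{\bmH}$, whence $\psi^{-1}(\theta|_{\bmH})^{k}\in\Psi$ for $k\gg0$ (use $V_{\nu+k\theta}=V_{\nu}\otimes\theta^{\otimes k}$); taking $\psi=(\theta|_{\bmH})^{m}\in\Psi$ and letting $m\to\infty$ shows $(\theta|_{\bmH})^{-k}\in\Psi$ for all $k\ge1$, and therefore $\psi^{-1}=\bigl(\psi^{-1}(\theta|_{\bmH})^{k}\bigr)(\theta|_{\bmH})^{-k}\in\Psi$. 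This is Definition~\ref{defiObs}(5) for $\bmM$, so $\bmM$ is a non-reductive observable $\Q$-subgroup containing $\bmH$, completing the converse. Thus the only genuine difficulty is the case $\operatorname{Im}(\lambda)\subseteq\bmZ\cap\bmH$ of the first direction.
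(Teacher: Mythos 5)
The paper's own proof is dynamical: Theorem~\ref{corObConRedisRed} is deduced as a corollary of Proposition~\ref{ProEquiRed}, which in turn rests on the non-divergence result Proposition~\ref{proNondivReduc}, the equidistribution result Proposition~\ref{partIIofsection3} (Ratner's theorem plus linearization), Lemma~\ref{lemMaxLamdaConn}, and Kempf's instability theory. Roughly: if $\bmF$ were a non-reductive observable subgroup containing $\bmH$ with unipotent radical $\bmU$, either $\bmH\leq_{\Gamma}\bmF$ (contradicting reductivity of the limit group in Proposition~\ref{ProEquiRed}), or one replaces $\bmH$ by a $\leq_{\Gamma}$-maximal subgroup of $\bmF$, applies Lemma~\ref{lemMaxLamdaConn} with $\Lambda=\Gamma\cap U$ to conclude that $\bmU$ centralizes the split central torus of $\bmH$, and contradicts Lemma~\ref{lemmaParabolicContainH}. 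Your proposal is a purely group-theoretic alternative, which if completed would be a genuine and welcome simplification; but it has a real, self-acknowledged gap exactly on the ``if'' direction that the paper flags as nontrivial. After conjugating $\bmH$ into a Levi and forcing the cocharacter $\lambda$ into $\bmZ_{\bmG}\bmH\cap\bmH$, you extract from observability a vector $w$ of negative $\lambda$-weight annihilated by $\mathfrak{u}$, but you do not derive a contradiction; you only gesture at Sukhanov's structure theorem or an induction on $\dim\bmG$. The inductive route as stated hits an immediate wall: inside $\bmL=\bmZ_{\bmG}(\lambda)$ the cocharacter $\lambda$ is central, so the hypothesis that the center of the ambient group be $\Q$-anisotropic is lost. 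Also, your preparatory observation $\mathfrak{u}^{\bmH}=0$ carries no new information in the crucial case, since once $\lambda$ lies in $\bmZ(\bmH)$ and acts on $\mathfrak{u}$ with all positive weights, any $\bmH$-invariant would already be a $\lambda$-weight-zero vector.

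Regarding the converse (the direction the paper treats as routine and does not spell out): your construction $\bmM=\bmH\ltimes R_{u}(\bmP(\bmS))$, with $\bmS$ a split torus in $\bmZ_{\bmG}\bmH$ meeting $\bmH$ finitely, is the right one and the conclusion — that $\bmM$ is a non-reductive observable subgroup — is correct, essentially an instance of Sukhanov's subparabolic criterion. But the semigroup argument you give for observability is not airtight: for $\psi=(\theta|_{\bmH})^{m}$, observability of $\bmH$ in $\bmL$ only yields \emph{some} $\bmL$-dominant $\nu_{m}$ with $\psi^{-1}$ occurring in $V_{\nu_{m}}|_{\bmH}$, with no control over how $\nu_{m}$ grows with $m$; the legitimate choice $\nu_{m}=-m\theta$ makes the threshold for $\bmG$-dominance of $\nu_{m}+k\theta$ grow like $m$, and then the manipulation $\psi^{-1}(\theta|_{\bmH})^{k}$ never produces a negative power of $\theta|_{\bmH}$. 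Citing the subparabolic criterion directly (or arguing via transitivity of observability through a quasi-parabolic) would be cleaner.
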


Note that ``if'' is the nontrivial direction.
In the special case when $\bmG$ is $\Q$-anisotropic, this is easy as there is no rational unipotent element. 
In the special case when $\bmH$ is a maximal torus, this is proved in \cite[Lemma 3.10]{Grosshans97} where they also allow $\bmG$ to be non-reductive, which is not covered here.

\subsection{Examples and Applications}

Now we turn to more explicit situations and some applications. 

\begin{prop}\label{propMaxTori}
Keep the notations as in Theorem \ref{thmTransReduc}. Suppose that $\bmH$ is a maximal torus in $\bmG$ and for all diagonalizable $\Q$-subgroups $\bmS$ of $\bmH$ that properly contain $\bmZ(\bmG)$, one has $\{g_n\}$(or equivalently, $\{\gamma_n\}$) diverges in $G/Z_{G}(S)$. Then $\bmL=\bmG$.
\end{prop}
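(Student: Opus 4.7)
The plan is to apply Theorem \ref{thmTransReduc} to conclude $\bmL$ is reductive, and then to exploit the rigidity of reductive $\Q$-subgroups of $\bmG$ sharing the maximal torus $\bmH$, together with the divergence hypothesis, to rule out $\bmL\subsetneq\bmG$. To invoke Theorem \ref{thmTransReduc}: since $\bmH$ is a maximal torus we have $\bmZ_{\bmG}(\bmH)=\bmH$, so the $\Q$-anisotropy condition is automatic. For a nontrivial $\Q$-split torus $\bmS\subseteq\bmH$, the diagonalizable group $\bmS\cdot\bmZ(\bmG)\subseteq\bmH$ either equals $\bmZ(\bmG)$ (then $\bmZ_{\bmG}(\bmS)=\bmG$ and the divergence condition is vacuous) or strictly contains it, in which case the proposition's hypothesis applied to $\bmS\cdot\bmZ(\bmG)$ gives the required divergence in $G/Z_G(\bmS)$. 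Thus Theorem \ref{thmTransReduc} applies and $\bmL$ is reductive. Choosing $n_0$ with $\gamma_{n_0}\bmH\gamma_{n_0}^{-1}\subseteq\bmL$ and replacing $\gamma_n$ by $\gamma_{n_0}^{-1}\gamma_n$ (and $\bmL$ by $\gamma_{n_0}^{-1}\bmL\gamma_{n_0}$) preserves both the divergence hypothesis and potential minimality, so I may further assume $\bmH\subseteq\bmL$.

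Reductive $\Q$-subgroups of $\bmG$ containing the fixed maximal torus $\bmH$ are in bijection with the $\Gal(\overline{\Q}/\Q)$-stable closed subsystems of $\Phi(\bmG,\bmH)$ and so form a finite set. Each $\gamma_n^{-1}\bmL\gamma_n$ is such a subgroup, so after passing to a subsequence I may assume $\gamma_n^{-1}\bmL\gamma_n=\bmM$ for a fixed reductive $\Q$-subgroup $\bmM\supseteq\bmH$; in particular $\gamma_m^{-1}\gamma_n\in N_{\bmG}(\bmM)(\Q)$ for all $m,n$, so $\gamma_n=\gamma_1\nu_n$ with $\nu_n\in N_{\bmG}(\bmM)(\Q)$. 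The diagonalizable $\Q$-subgroup $\bmZ(\bmM)$ lies in $\bmH$ and is characteristic in $\bmM$, hence $N_{\bmG}(\bmM)\subseteq N_{\bmG}(\bmZ(\bmM))$; since $\mathrm{Aut}(\bmZ(\bmM))$ is zero-dimensional as an algebraic group, the quotient $N_{\bmG}(\bmZ(\bmM))/\bmZ_{\bmG}(\bmZ(\bmM))$ is finite. A further subsequence extraction yields $\nu_n=\zeta_n f_0$ with $\zeta_n\in\bmZ_{\bmG}(\bmZ(\bmM))$ and $f_0$ a fixed element of $N_{\bmG}(\bmM)$. Since $f_0$ normalizes $\bmZ_{\bmG}(\bmZ(\bmM))$, the coset $\gamma_n\bmZ_{\bmG}(\bmZ(\bmM))$ is independent of $n$; consequently $\{\gamma_n\}$ stays in finitely many cosets of $Z_G(\bmZ(\bmM))$ and does not diverge in $G/Z_G(\bmZ(\bmM))$.

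To conclude, suppose for contradiction that $\bmM\subsetneq\bmG$. Since $\bmZ(\bmN)=\{h\in\bmH:\alpha(h)=1\ \forall\alpha\in\Phi(\bmN,\bmH)\}$ for $\bmN\in\{\bmM,\bmG\}$, the equality $\bmZ(\bmM)=\bmZ(\bmG)$ is equivalent to $\Z\Phi(\bmM,\bmH)=\Z\Phi(\bmG,\bmH)$ as subgroups of $X^*(\bmH)$. Decomposing $\bmG$ into its connected center and the simple factors of its derived subgroup and applying to each the classical root-system fact that a proper closed subsystem of an irreducible root system has strictly smaller $\Z$-span (immediate for simply-laced types since the roots are precisely the shortest nonzero vectors of the lattice; handled for the non-simply-laced types $B_n,C_n,F_4,G_2$ by inspection or the Borel--de Siebenthal classification of maximal closed subsystems), we conclude $\bmZ(\bmM)\supsetneq\bmZ(\bmG)$. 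Hence $\bmS:=\bmZ(\bmM)$ is a diagonalizable $\Q$-subgroup of $\bmH$ strictly containing $\bmZ(\bmG)$, and the previous paragraph contradicts the proposition's divergence hypothesis applied to this $\bmS$. Therefore $\bmM=\bmG$, and $\bmL=\gamma_n\bmM\gamma_n^{-1}=\bmG$.

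The main obstacle is the strict inclusion $\Z\Phi(\bmM,\bmH)\subsetneq\Z\Phi(\bmG,\bmH)$ for proper closed subsystems in the non-simply-laced irreducible types, which requires either a case-by-case verification or an appeal to the Borel--de Siebenthal classification; the rest of the argument is essentially structural and relies only on the finiteness of $N/Z$ for diagonalizable subgroups and the observation that there are only finitely many reductive $\Q$-subgroups of $\bmG$ sandwiched between $\bmH$ and $\bmG$.
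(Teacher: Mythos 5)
Your proof is correct and follows the same skeleton as the paper's: reduce via Theorem \ref{thmTransReduc} to $\bmL$ reductive containing the conjugates $\gamma_n\bmH\gamma_n^{-1}$, produce a diagonalizable $\Q$-subgroup $\bmS\leq\bmH$ properly containing $\bmZ(\bmG)$ along which $\{\gamma_n\}$ is forced (after a subsequence) into a single coset of $Z_{\bmG}(\bmS)$ by rigidity of diagonalizable groups, and contradict the divergence hypothesis. The difference lies in how $\bmS$ is produced. The paper conjugates so that $\bmH\subseteq\bmL$ and invokes the Borel--de Siebenthal-type theorem of \cite{Gil10}: $\bmL$ is the connected centralizer $Z_{\bmG}(\bmZ)^{\circ}$ of a diagonalizable $\bmZ\leq\bmH$ properly containing the center; it then applies \cite[Proposition 3.2.8]{Spr98} to the transporter $\bmX(\bmZ,\bmH)$ to fix the conjugation map $\bmZ\to\bmH$ along a subsequence. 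You instead use finiteness of the set of connected reductive subgroups between $\bmH$ and $\bmG$ to fix $\bmM=\gamma_n^{-1}\bmL\gamma_n$, take $\bmS=\bmZ(\bmM)$, and prove $\bmZ(\bmM)\supsetneq\bmZ(\bmG)$ by the lattice statement $\Z\Phi(\bmM,\bmH)\subsetneq\Z\Phi(\bmG,\bmH)$ for proper closed symmetric subsystems. These are equivalent inputs (the paper's $\bmZ$ lies in $\bmZ(\bmL)$, so $\bmZ(\bmL)\supsetneq\bmZ(\bmG)$ follows from \cite{Gil10} as well), and you correctly identify the root-lattice claim in the non-simply-laced types as the one place where an appeal to Borel--de Siebenthal (or case checking) is genuinely needed; your minimal-vector argument for the ADE types is a nice self-contained shortcut there.

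Two minor imprecisions, neither fatal. First, your parenthetical that the divergence condition is ``vacuous'' when $\bmS\subseteq\bmZ(\bmG)$ is backwards --- in that case $Z_G(\bmS)=G$ and divergence is impossible, so the hypothesis of the ``moreover'' clause of Theorem \ref{thmTransReduc} can fail; but this is harmless since you only use the first conclusion (reductivity of $\bmL$), which requires no divergence hypothesis. Second, finiteness of $N_{\bmG}(\bmZ(\bmM))/Z_{\bmG}(\bmZ(\bmM))$ needs, beyond zero-dimensionality of $\mathrm{Aut}$ of a diagonalizable group, the observation that the normalizer has finitely many connected components (so that $N^{\circ}\subseteq Z$ suffices); this is the standard rigidity statement and is exactly what \cite[Proposition 3.2.8]{Spr98} packages for the paper.
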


Note that in general it would be insufficient only to consider those subtori(namely, connected) $\bmS$. A counter example may be found in $\SO_{2N+1}$, where $\SO_{2N}$ is the centralizer of a disconnected diagonalizable subgroup. However this is sufficient if $\bmG=\SL_N$ as was shown in \cite{ShaZhe18} and \cite{Zha18}. We will show that this also holds for the symplectic group $\Sp_{2N}$. For simplicity we only treat the ``generic'' case.

\begin{prop}\label{propToriSym}
Keep the notations as in Theorem \ref{thmTransReduc}. Suppose $\bmG=\Sp_{2N}$ and $\bmH$ is a maximal $\Q$-torus. If $\{g_n\}$ diverges in $G/Z_{G}(S)$ for all nontrivial subtori $\bmS$ of $\bmH$, then $\bmL=\bmG$.
\end{prop}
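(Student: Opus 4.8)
The plan is to deduce this from Proposition \ref{propMaxTori}. By that proposition it suffices to show: if $\bmS \leq \bmH$ is a diagonalizable $\Q$-subgroup that \emph{properly contains} the center $\bmZ(\Sp_{2N}) = \{\pm I\}$ and is such that $\{g_n\}$ stays bounded in $G/Z_G(S)$, then necessarily $\bmS$ contains a nontrivial subtorus $\bmS^\circ$ with $Z_G(S) = Z_G(S^\circ)$ — so that boundedness in $G/Z_G(S)$ already forces boundedness in $G/Z_G(S^\circ)$, contradicting the hypothesis. Thus the real content is purely group-theoretic: for the symplectic group, the centralizer of any diagonalizable $\Q$-subgroup of a maximal torus equals the centralizer of its identity component, \emph{provided} that component is nontrivial. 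Concretely, if $\bmS$ properly contains $\{\pm I\}$ then $\dim \bmS \geq 1$ (since a finite group containing $\{\pm I\}$ properly cannot sit inside a maximal torus of $\Sp_{2N}$ and centralize enough — this is exactly the subtlety flagged after Proposition \ref{propMaxTori}, and is where the symplectic case differs from $\SO_{2N+1}$), so $\bmS^\circ$ is a nontrivial subtorus, and one checks $Z_G(S) = Z_G(S^\circ)$.

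First I would set up coordinates: fix the standard symplectic form so that $\bmH$ diagonalizes, i.e. a maximal $\Q$-torus is $\{\diag(t_1,\dots,t_N,t_1^{-1},\dots,t_N^{-1})\}$ after a rational change of basis (here I am in the "generic" case where $\bmH$ is $\Q$-split, or more generally I reduce to a $\Q$-split maximal torus inside a suitable inner form; the word "generic" in the statement lets me assume the split case). The characters of $\bmH$ are spanned by $e_1,\dots,e_N$, and the weight-space decomposition of the standard $2N$-dimensional representation has weights $\pm e_1,\dots,\pm e_N$, each with multiplicity one. A diagonalizable subgroup $\bmS \leq \bmH$ is cut out by a subgroup $\Lambda$ of the character lattice (its annihilator), and $\bmS^\circ$ is cut out by the saturation of $\Lambda$. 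The centralizer $Z_G(S)$ is generated by $\bmH$ together with the root subgroups $\bmU_\alpha$ for roots $\alpha$ vanishing on $\bmS$, i.e. $\alpha \in \Lambda$; similarly $Z_G(S^\circ)$ uses $\alpha$ vanishing on $\bmS^\circ$, i.e. $\alpha$ in the saturation of $\Lambda$.

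So the crux is: \emph{for the symplectic root system $C_N$, a root lying in the saturation of a subgroup $\Lambda$ of the root lattice already lies in $\Lambda$ — unless the saturation is the trivial group, i.e. unless $\bmS^\circ$ is trivial.} The roots of $C_N$ are $\pm e_i \pm e_j$ ($i \neq j$) and $\pm 2e_i$. The point is that if $\alpha$ is a root and $k\alpha \in \Lambda$ for some integer $k \geq 1$, I must show $\alpha \in \Lambda$; the only way this can fail is the short root $\alpha = e_i \pm e_j$ with $2\alpha \in \Lambda$ but $\alpha \notin \Lambda$. Here I would argue as follows: if $\bmS \supsetneq \{\pm I\}$, then some character $\chi \notin \{0\}$ and $\chi \notin \{$the character by which $-I$ acts, which is the parity character$\}$ either vanishes on $\bmS$ or does not; a careful bookkeeping of which $\pm e_i \pm e_j$, $\pm 2 e_i$ lie in $\Lambda$ versus its saturation, using that $\bmS$ strictly contains $\{\pm I\}$ so $\Lambda$ is strictly larger than the parity-index-two sublattice, shows the short-root ambiguity never actually produces a larger centralizer. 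I expect \textbf{this last combinatorial step — ruling out that $Z_G(S) \subsetneq Z_G(S^\circ)$ for $\bmS$ strictly between $\{\pm I\}$ and a subtorus — to be the main obstacle}, precisely because it is the analogue of the phenomenon that fails for $\SO_{2N+1}$ (where $\SO_{2N}$ is the centralizer of a genuinely disconnected diagonalizable subgroup and there is no smaller connected $\bmS$ with the same centralizer). For $C_N$ I believe the key structural fact making it work is that the long roots $2e_i$ are "detected" by the subtorus generated by any $\bmS$ properly containing $\{\pm I\}$: if $-I \in \bmS$ and $\bmS$ is strictly bigger, then $\bmS$ contains an element with at least one coordinate $t_i \neq \pm 1$, whose centralizer already excludes the root subgroups for $\pm 2 e_i$ and for $\pm e_i \pm e_j$, and these are exactly the roots distinguishing $\bmS$ from the ambient torus in the relevant directions — so passing to $\bmS^\circ$ loses nothing. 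Once this is established, $Z_G(S) = Z_G(S^\circ)$ with $\bmS^\circ$ a nontrivial subtorus, boundedness of $\{g_n\}$ in $G/Z_G(S) = G/Z_G(S^\circ)$ contradicts the hypothesis, and Proposition \ref{propMaxTori} yields $\bmL = \bmG$.
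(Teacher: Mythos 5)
Your reduction to Proposition \ref{propMaxTori} is the right first move and agrees with the paper's opening line, but the step you flag as the main obstacle is not where your argument breaks: it breaks earlier, at the claim that $\dim\bmS\geq 1$. Finite diagonalizable $\Q$-subgroups of a maximal torus of $\Sp_{2N}$ properly containing the center $\{\pm I\}$ certainly exist. In the coordinates where $\bmH=\{\diag(t_1,\dots,t_N,t_N^{-1},\dots,t_1^{-1})\}$, take the Klein four-group
\begin{equation*}
\bmS=\{\pm I,\ \pm\epsilon\},\qquad \epsilon=\diag(-1,1,\dots,1,1,\dots,1,-1)\quad(N\geq 2).
\end{equation*}
Here $\bmS^{\circ}$ is trivial, so replacing $\bmS$ by its identity component gives nothing. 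Worse, this is not a repairable corner case for your strategy: $Z_{G}(\bmS)=\Sp_2\times\Sp_{2N-2}$ (block decomposition along the span of $e_1,e_{2N}$ and its symplectic complement), a maximal--rank reductive subgroup with \emph{finite} center, so there is no nontrivial subtorus $\bmT\leq\bmH$ with $Z_{G}(\bmS)\subseteq Z_{G}(\bmT)$ at all --- neither one contained in $\bmS$ nor one containing it. In your lattice language, $\Lambda_{\bmS}=\{\sum c_ie_i:\ c_1\equiv 0,\ \sum c_i\equiv 0\ (\mathrm{mod}\ 2)\}$ has finite index in $X^*(\bmH)$, its saturation is the whole character lattice, and the short roots $e_1\pm e_j$ lie in the saturation but not in $\Lambda_{\bmS}$ --- exactly the failure mode you hoped to exclude. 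So the configuration you dismiss as impossible is precisely the $C_N$-analogue of the $\SO_{2N}\subset\SO_{2N+1}$ phenomenon quoted after Proposition \ref{propMaxTori}, and your combinatorial step cannot go through for it. (Your restriction to split $\bmH$ is also unjustified --- ``generic'' in the statement refers to the divergence hypothesis, not to $\bmH$; the paper handles non-split $\bmH$ by passing to $\bmZ(\bmZ_{\bmG}(\bmZ))^{\circ}$.)

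For comparison, the paper argues in the opposite direction: rather than shrinking $\bmZ$ to $\bmZ^{\circ}$, it enlarges $\bmZ$ to the subtorus $\bmT_{\bmZ}\supseteq\bmZ$ cut out by the relations $t_i=t_j$ for all pairs $i\sim j$ with $z_i=z_j$ on $\bmZ$, and asserts $\bmZ_{\bmG}(\bmZ)=\bmZ_{\bmG}(\bmT_{\bmZ})$ by comparing centralizers in $\GL_{2N}$. You should note, however, that the two-torsion subgroup above is exactly the configuration on which that asserted equality must also be scrutinized: coordinates $i$ and $2N+1-i$ can agree identically on $\bmZ$ (since $z_i=z_i^{-1}$ when $z_i=\pm1$) without agreeing on $\bmT_{\bmZ}$, which is how $\Sp_2\times\Sp_{2N-2}$ arises as a centralizer of $\bmZ$ but not of any subtorus. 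Whichever route you take, this example is the one your write-up has to confront explicitly before the reduction to Proposition \ref{propMaxTori} can be completed.
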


Using this we shall prove a counting result. 
Fix a polynomial $p(t)$ of the form $\prod_{i=1}^N (t^2 -d_i^2)$ with $d_i\in \Z^+$ distinct. Consider
$\bmX:= \{X\in \fraksp_{2N},\, \det(tX-I_{2N})=p(t) \}$. 
Let $||\cdot||$ be the Euclidean norm on $2N$-by-$2N$ matrices. 

\begin{thm}\label{thmCountSym}
There exists a constant $C>0$ such that 
\begin{equation*}
    \lim_{R\to\infty}  \frac{\#\{X\in \bmX(\Z),\, ||X||\leq R \}}{CR^{N^2}(\ln{R})^{N}} =1.
\end{equation*}
\end{thm}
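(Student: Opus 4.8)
The plan is to realize the counting problem as an instance of the equidistribution results proved earlier, following the by-now-standard Eskin--McMullen--type reduction from counting lattice points to translates of homogeneous measures. First I would identify the relevant homogeneous space. The variety $\bmX = \{X \in \fraksp_{2N} : \det(tX - I_{2N}) = p(t)\}$ carries an action of $\bmG = \Sp_{2N}$ by conjugation; since $p(t) = \prod_{i=1}^N(t^2 - d_i^2)$ has $2N$ distinct roots $\pm d_i$, a point $X_0 \in \bmX$ is regular semisimple, so its stabilizer $\bmH := \bmZ_{\bmG}(X_0)$ is a maximal $\Q$-torus of $\Sp_{2N}$, and $\bmX$ is (a union of, but over $\Q$ essentially) the orbit $\bmG/\bmH$. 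Thus $\bmX(\R) \cong G/H$ and integer points $\bmX(\Z)$ correspond to the $\Gamma$-orbit decomposition, with each class giving a copy of $H/\Gamma_H$ inside $G/\Gamma$ — here $\bmH$ is reductive hence observable, so the standard triple $(\bmG, \bmH, \Gamma)$ is observable and $\mu_H$ makes sense.

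Next I would set up the volume asymptotics. Choosing the Euclidean norm $\|\cdot\|$ on $2N$-by-$2N$ matrices, one defines $B_R = \{X : \|X\| \le R\}$ and the key input is the asymptotic volume $\Vol(\{X \in \bmX(\R) : \|X\| \le R\})$ as $R \to \infty$, computed against the $G$-invariant measure coming from $\mu_H$ and Haar measure on $G$. A direct computation using the Cartan/polar-type decomposition adapted to $\bmX$ (or a direct parametrization of $\bmX(\R)$) should give $\Vol(B_R \cap \bmX(\R)) \sim c\, R^{N^2}(\ln R)^N$; the power $R^{N^2}$ reflects $\dim \bmX = \dim \Sp_{2N} - \dim(\text{maximal torus}) = (2N^2 + N) - N = 2N^2$, wait — one must be careful: the relevant exponent is governed by how the norm ball meets the orbit, and the logarithmic factor $(\ln R)^N$ is exactly the signature of the non-finiteness of $\mu_H$, i.e. of the $N$-dimensional $\Q$-split part $\bmS_{\bmH}$ of the maximal torus contributing a polynomial-in-$\log$ volume growth along the split directions (the $N$ here is the $\Q$-rank of $\Sp_{2N}$). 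I would make this precise by foliating $\bmX(\R)$ over the "eigenvalue-data at infinity" and reducing to a lattice-point count in the cone picture.

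Then comes the equidistribution step. One writes the count as $\#(\Gamma X_0 \cap B_R) = \sum$ over a fundamental domain, and by the thickening argument of Eskin--McMullen one reduces to showing that the translated measures $(g_R)_* \mu_H$, suitably normalized, equidistribute — where $g_R$ ranges over elements realizing the expanding family $B_R$. This is precisely where Theorem~\ref{transbyg_n}, Theorem~\ref{thmTranslatebyGamma} and, crucially, Proposition~\ref{propToriSym} enter: since $\bmH$ is a maximal $\Q$-torus in $\Sp_{2N}$ and the relevant translates $\{g_R\}$ diverge in $G/Z_G(S)$ for every nontrivial subtorus $\bmS$ of $\bmH$ (this divergence must be checked — it holds because $\|g_R X_0 g_R^{-1}\| \to \infty$ forces escape in every such quotient, as staying bounded in some $G/Z_G(S)$ would bound a nonzero coordinate of the conjugate), Proposition~\ref{propToriSym} gives $\bmL = \bmG$, so the limiting measure is the ($G$-invariant, finite, by the last clause of Theorem~\ref{thmTransReduc}) Haar measure on $G/\Gamma$. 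Theorem~\ref{thmCompA_n} then supplies the normalizing constants $a_n$, which I would match against the volume asymptotic to extract the constant $C$ (a product of local densities / Tamagawa-type factor times a real archimedean volume integral).

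The main obstacle I anticipate is twofold and both parts are "analytic bookkeeping" rather than new ideas. First, verifying the divergence hypothesis of Proposition~\ref{propToriSym} uniformly as $R \to \infty$: one needs that for the specific family of group elements arising from the norm-ball exhaustion of $\bmX$, no subsequence stays bounded modulo the centralizer of a nontrivial subtorus; this requires understanding which directions in $G/H$ the norm $\|\cdot\|$ actually expands, and handling the boundary/lower-dimensional strata of $\bmX(\R)$ where $X$ is "close to" having a smaller centralizer. Second, pinning down the exact shape $CR^{N^2}(\ln R)^N$ — in particular that the log-power is exactly $N$ and not larger — requires the volume computation in the cone/polytope model to interact correctly with the polytopes $\calP(g,\eta,\rho'',\Phi_{\rho''})$ of Section~\ref{secRepandNondiv}, essentially showing these polytopes grow like a product of an $N$-simplex (giving $(\ln R)^N$) times a bounded-volume fibre (the ${}^oH$-part). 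I would isolate this as a lemma on volume asymptotics of $B_R \cap \bmX(\R)$ and prove it by an explicit change of variables on $\Sp_{2N}/\text{(maximal torus)}$, deferring the matching of constants to a final computation.
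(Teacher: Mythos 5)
Your plan is essentially the paper's route: realize $\bmX(\R)\cong G/H$ with $\bmH$ a $\Q$-split maximal torus (hence reductive, observable, but with $\mu_H$ infinite), split $\bmX(\Z)$ into finitely many $\Gamma$-orbits via Borel--Harish-Chandra, run the Eskin--McMullen unfolding over an Iwasawa decomposition $G\cong K\times U\times H$, and invoke Proposition~\ref{propToriSym} to conclude $\bmL=\bmG$ once divergence of the translating elements in every $G/Z_G(S)$ is verified. The paper carries out all four of these steps, with the divergence check done explicitly for $u\in B_{R,\ep'}$ by looking at the superdiagonal entries of $\Ad(u)x$ (Lemma~\ref{lemSymConvUptoSca}), and uses Siegel's theorem to pin down the volume normalization.

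One slip is worth flagging because it would propagate if you carried it through. You write that the $G$-invariant volume of $B_R\cap\bmX(\R)$ should be $\sim c\,R^{N^2}(\ln R)^N$; in fact this volume is $\sim C_2 R^{N^2}$ with \emph{no} logarithmic factor. Under the identification $G/H\cong K\times U$ (with $\mu_K$ a probability measure and $\dim U = N^2$), it is just $\mu_U(B_R)$, a Euclidean ball volume in $\R^{N^2}$. The $(\ln R)^N$ enters only through the normalization $a_R\sim\mu_H(\Omega_{u,\eta})\sim C_1(\ln R)^N$, the volume of the polytope in $\Lie(H)$ cut out by $\|u\exp(t)e_I\|\ge\eta$ (Proposition~\ref{PropZhengEstimate} and the definition of $C_1$), and the asymptotic count is the product $C_1 C_2\, R^{N^2}(\ln R)^N$. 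Said differently: in the Eskin--McMullen scheme with finite $\mu_H$ the count is governed by the volume of $B_R$ alone; here, because $\mu_H$ is infinite, you get volume times a divergent normalization factor, and that factor is where all the logarithms live. If one tried to stack both a $(\ln R)^N$ inside $\Vol(B_R)$ and another $(\ln R)^N$ from $a_R$, one would overshoot to $(\ln R)^{2N}$.
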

A better, albeit not very explicit, description of $C$ will be given before Proposition \ref{PropSymCount}.

Next we turn to a geometric application.
Let $\bmH^n\subset \R^n$ be the upper half space model of the $n$-dimensional hyperbolic space. 
Let $Q$ be the quadratic form $x_1^2+...+x_n^2-y^2$ and
$\bmG=\SO_{Q}$ be its symmetric group. We assume $\Gamma$, commensurable with $\bmG(\Z)$, to be contained in $G$. Then $\Gamma$ naturally acts as isometries on $\bmH^n$, so we may form the quotient $\Gamma\bs\bmH^n$ as a metric space. By abuse of notation, let $\pi_{\Gamma}$ also denote the projection  $\bmH^n\to\Gamma\bs\bmH^n$.
The set $\pi_{\Gamma}(\{(0,...,0,t),\,t\in \R\})$ is (the base locus of) a divergent geodesic in the quotient  $\Gamma\bs\bmH^n$.  Take a non-zero vector $\bmv=(v_1,...,v_{n-1})\in \R^{n-1}$, which we identify with the boundary of $\bmH^n$ in $\R^n$. Now we start to ``shear'' the geodesic by looking at the projection of $\calI_{s\bmv}:=\{t(sv_1,...,sv_{n-1},1),\,t\in \R\}$ as $s$ tends to $\infty$. For simplicity we only state the generic case. 

\begin{thm}\label{thmShearGeod}
Assume that $\bmv$ is not contained in any proper $\Q$-linear subspace. 
Then under the Chabauty topology, $\pi_{\Gamma}(\calI_{s\bmv})$ converges to $\Gamma\bs\bmH^n$ as $s$ tends to $\infty$.
\end{thm}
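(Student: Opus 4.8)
The plan is to recognize Theorem \ref{thmShearGeod} as a special case of Corollary \ref{thmConChabau}, so that the work reduces to (a) setting up the correct standard triple, (b) verifying observability and non-divergence, and (c) identifying the limiting subgroup $\bmL$ as all of $\bmG$.

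\medskip

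First I would fix the homogeneous space picture. The group $G = \SO_Q(\R)^{\circ}$ acts transitively on $\bmH^n$ with the stabilizer of the base point being a maximal compact $K \cong \SO(n)$, so $\bmH^n \cong G/K$ and a divergent geodesic corresponds to the projection of a split torus orbit: let $\bmH = \bmA$ be the one-dimensional $\Q$-split torus whose $\R$-points (times $K$) sweep out the vertical geodesic $\{(0,\dots,0,t)\}$. Since $\bmA$ is a torus it is reductive, hence observable, so $(\bmG,\bmH,\Gamma)$ is an observable standard triple and $\mu_H$ makes sense. The ``shearing'' by $s\bmv$ is implemented by a one-parameter family $g_s \in N$, where $N$ is the expanding horospherical unipotent subgroup for $\bmA$: concretely, the boundary point $\bmv \in \R^{n-1}$ is moved by a unipotent $g_s$ whose matrix entries are linear in $s$, and $\calI_{s\bmv} = g_s \cdot \calI_{0}$ (up to the $K$-ambiguity which is harmless for the image in $\Gamma\bs\bmH^n$). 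So I would choose a sequence $s_m \to \infty$, set $g_m := g_{s_m}$, and aim to apply Theorem \ref{transbyg_n} and Corollary \ref{thmConChabau} to $\{g_m\}$.

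\medskip

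The substantive point is to rule out case (1) of Theorem \ref{transbyg_n} and to pin down $\bmL = \bmG$. Because $\bmG = \SO_Q$ is semisimple (for $n \geq 2$) with trivial, hence $\Q$-anisotropic, center, and $\bmH = \bmA$ is reductive, I would invoke Theorem \ref{thmTransReduc}: the hypothesis $\bmZ_{\bmG}\bmH/\bmZ_{\bmG}\bmH\cap\bmH$ being $\Q$-anisotropic holds here (the centralizer of the split torus $\bmA$ is $\bmA$ times an anisotropic $\SO$ of the complementary definite form, modulo $\bmA$ that quotient is anisotropic), so case (1) never happens and any limit $\bmL$ is reductive. It then remains to check the divergence condition in the ``moreover'' clause, i.e. that $\{g_m\}$ diverges in $G/Z_G(S)$ for every $\Q$-split subtorus $\bmS \leq \bmH$; since $\bmH$ is one-dimensional the only options are $\bmS$ trivial (vacuous) or $\bmS = \bmH$, for which $Z_G(S) = Z_G(A)$ and one must see that a unipotent $g_s$ with $s \to \infty$ does not stay bounded modulo $Z_G(A)$ — this is where the genericity hypothesis on $\bmv$ enters, guaranteeing the unipotent part genuinely escapes. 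With divergence in hand Theorem \ref{thmTransReduc} gives that $\bmL$ is not contained in any proper $\Q$-parabolic. Finally, to upgrade ``not in a proper $\Q$-parabolic and reductive'' to $\bmL = \bmG$ I would use Proposition \ref{propMaxTori}: $\bmH = \bmA$ is a maximal $\Q$-torus in $\bmG$ (the $\Q$-rank of $\SO_Q$ for this form is $1$), and the hypothesis that $\bmv$ lies in no proper $\Q$-linear subspace of $\R^{n-1}$ is precisely engineered so that $\{g_s\}$ diverges in $G/Z_G(S)$ for every diagonalizable $\Q$-subgroup $\bmS \supsetneq \bmZ(\bmG)$ of $\bmH$ — here the linear-algebra translation is that the entries of $g_s$, being coordinates of $s\bmv$, span $\R^{n-1}$ over $\Q$ in the appropriate sense. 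Hence $\bmL = \bmG$.

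\medskip

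Conclusion: by Corollary \ref{thmConChabau}, for any sequence $s_m \to \infty$ there is a subsequence along which $\pi_{\Gamma}(g_{m}H)$ converges in the Chabauty topology to $\pi_{\Gamma}(\delta L) = \pi_{\Gamma}(\delta G) = G/\Gamma$, which translated back through $G/K \cong \bmH^n$ says $\pi_{\Gamma}(\calI_{s_m\bmv})$ converges to $\Gamma\bs\bmH^n$; since every subsequence has a further subsequence with this limit, the full family $\pi_{\Gamma}(\calI_{s\bmv})$ converges to $\Gamma\bs\bmH^n$ as $s \to \infty$. The main obstacle I anticipate is the bookkeeping in step (b)–(c): correctly matching the affine ``shear'' parametrization $\calI_{s\bmv}$ with an explicit unipotent one-parameter family inside $\SO_Q$, and then verifying that the genericity hypothesis on $\bmv$ translates exactly into the divergence-in-$G/Z_G(S)$ conditions needed by Theorem \ref{thmTransReduc} and Proposition \ref{propMaxTori}; the dynamical input is entirely black-boxed, but this translation requires care with the root-space description of $\SO_Q$ and with the $K$-ambiguity in passing between geodesics in $\bmH^n$ and torus orbits in $G/\Gamma$.
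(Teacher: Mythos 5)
Your high-level strategy—reduce to a Chabauty convergence statement in $G/\Gamma$ via the identification $\Gamma\backslash G/K\cong\Gamma\backslash\bbH^n$ and then invoke Corollary \ref{thmConChabau}—is the same route the paper takes. The gap is in the last step, where you try to pin down $\bmL=\bmG$.

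You write that ``$\bmH=\bmA$ is a maximal $\Q$-torus in $\bmG$ (the $\Q$-rank of $\SO_Q$ for this form is $1$)''. This conflates $\Q$-rank with absolute rank. The $\Q$-rank being $1$ says $\bmA$ is a maximal $\Q$-\emph{split} torus; a maximal $\Q$-torus of $\SO_Q$ has dimension $\lfloor(n+1)/2\rfloor$, so for $n\geq 3$ the one-parameter group $\bmA$ is \emph{not} a maximal torus, and Proposition \ref{propMaxTori} simply does not apply. Moreover, the conclusion you do legitimately extract from Theorem \ref{thmTransReduc}—that $\bmL$ is reductive and not contained in any proper $\Q$-parabolic—is genuinely insufficient: for instance the subgroup $\SO(Q')$ of $\SO_Q$, where $Q'$ is the restriction of $Q$ to a $\Q$-rational $4$-dimensional subspace of signature $(3,1)$ containing the hyperbolic plane, is reductive, contains a conjugate of $\bmA$, and is not contained in any proper $\Q$-parabolic of $\SO_Q$, yet it is proper. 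So one must really use the genericity of $\bmv$ to rule such $\bmL$ out, and nothing in your argument does that.

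The paper's actual mechanism is more hands-on: approximate $u_{s_k\bmv}$ by lattice elements $u_{\bmv^k}$ with $\bmv^k\in 2N_0\Z^{n-1}$ at bounded distance from $s_k\bmv$, so that $\bmL$ contains $u_{\bmv^k}^{-1}\bmH u_{\bmv^k}$ for all $k$; then compute explicitly that for any $\lambda\in\R$ one can choose $t_k\to 0$ with $(e^{t_k}-1)\|\bmv^k\|=\lambda$ and obtain
\[
\lim_k u_{\bmv^k}^{-1}a_{t_k}u_{\bmv^k}=u_{\lambda\bmv},
\]
so $\bmL$ contains $u_{\lambda\bmv}$ for every $\lambda$. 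Since $\bmL$ is a $\Q$-group and $\bmv$ spans $\R^{n-1}$ over $\Q$, this forces $\bmL\supset\bmU$; then, because $\bmL$ is reductive and already contains the contracting horospherical subgroup $\bmU$ of $a_t$, it must also contain the expanding one (else $\Ad(a_t)$ would not act semisimply on $\Lie\bmL$), and these generate $\bmG$. The genericity of $\bmv$ enters precisely at the step $\bmL\supset\bmU$, not as a divergence hypothesis feeding into Proposition \ref{propMaxTori}. If you want to keep your more axiomatic framing, you would need to replace the appeal to Proposition \ref{propMaxTori} with this explicit conjugation computation or an equivalent argument showing $\bmL$ contains the full horospherical subgroup.
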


In the case when there is only one cusp, it should be possible to upgrade the convergence to be under the Hausdorff distance, which is in general stronger than Chabauty topology.



\subsection{Organization of the paper}
In section \ref{secRepandNondiv} we prove Theorem \ref{thmNondiv}. This is based on a trick of taking exterior powers. In section \ref{secNondivMaxSplTori} we prove a stronger result in the case of maximal split tori.

In section \ref{secGammaEqui} we prove Theorem \ref{transbyg_n}, \ref{thmTranslatebyGamma} and \ref{thmCompA_n}. They are corollaries of Proposition \ref{partIIofsection3}.
Some basic facts on polytopes and cones are collected in Section \ref{secPolytopes}.
Then we recall \cite[Theorem 2.1]{EskMozSha96} and enhance the statement in Section \ref{sec3PartI}.
In Section \ref{sec3PartII} we utilize the notion of observability to conclude the proof. 

In section \ref{secRed} we prove Theorem \ref{thmTransReduc} and \ref{corObConRedisRed}. Besides Theorem  \ref{thmTranslatebyGamma}, we also need the work of Kempf \cite{Kemp78}. 
We will also give an alternative short proof of non-divergence in this situation based on the real version of a theorem of Ness \cite{Nes84}(see \cite{Wal17}).

In the last section \ref{SecExamAppl} we prove the examples and applications listed above.

\section{Representation and non-divergence}

\subsection{The general case}\label{secRepandNondiv}
Let $\bmG$ be a connected linear algebraic group and $\rho: \bmG \to \GL(\bmV)$ be a representation over $\Q$. We let $X^*(\bmG)$(resp. $X_*(\bmG)$) be the $\Z$-module of $\Q$-characters(resp. $\Q$-cocharacters) of $\bmG$.
For each $\alpha \in X^*(\bmG)$, let
\begin{equation*}
    \bmV_{\alpha}:=\{v\in \bmV \,\vert \, gv=\alpha(g)v,\,\forall g\in \bmG \},
    \quad
    \Phi_{\rho}:=\{\alpha\in X^{*}(\bmG)\, \vert \, \bmV_{\alpha}\neq \{0\} \}.
\end{equation*}
For $\alpha\in X^*(\bmG)$, $\bmV_{\alpha}$ is defined over $\Q$.

Let (see \cite[Section 1.1]{BorSer73})
\begin{equation*}
    {}^{\circ}\bmG:= \bigcap_{\alpha \in X^*(\bmG)}  \ker (\alpha^2),
    \quad
    \bmS_{\bmG}:= \bmG / {}^{\circ}\bmG,
\end{equation*}
then $\bmS_{\bmG}$ is a $\Q$-split torus.
Let $\pi_{{}^{\circ}\bmG}:\bmG \to \bmS_{\bmG} $ be the natural projection.
Note that for $s \in S_{\bmG}$ and $\alpha \in X^*(\bmG)$, $\alpha(s)$ is well defined. 
This is because $\alpha$ is trivial on ${}^{\circ}G$(indeed, $\alpha(G)$ are positive numbers and so $\alpha(g)^2=1$ implies that $\alpha(g)=1$ for $g\in G$) and we also claim that

\subsubsection*{Claim} $S_{\bmG} = G/{}^{\circ}G $. 

\begin{proof}
There is a natural map $S_{\bmG}\to G/ G\cap {}^{\circ}\bmG(\R) \to G/{}^{\circ}\bmG$. As the latter two are connected commutative Lie groups, they are isomorphic to $(\R^l_{>0}, *)$ for some $l$. As this map is surjective on Lie algebras, it is also surjective on groups as $\R^l_{>0}$ contains no proper open subgroup. It is by definition that the first arrow on the left is injective. The next one would have kernel being finite, as ${}^{\circ}\bmG(\R)$ only has finitely many components in analytic topology. But $\R^l_{>0}$ contains no finite subgroup other than the trivial group. So we are done.
\end{proof}

Therefore $\diffalpha(t):=\ln \alpha(\exp{(t)})$ is also well-defined for $\alpha\in X^*(\bmG)$ and $t \in \Lie(S_{\bmG})$.

Now take $\bmH$ to be a $\Q$-subgroup of $\bmG$.
For a subset $\Phi\subset \Phi_{\rho}$, a $\Z$-structure on $\bmV_{\Q}$(the dependence on which we often suppress), an element $g\in G$ and a positive real number $\ep$ we define a polytope in the Lie algebra of $S_{\bmH}$ by
\begin{defi}\label{defPolytope}
\begin{equation*}
\begin{aligned}
        \Omega(g,\ep,\rho,\Phi):=&
    \{
    t\in \Lie(S_{\bmH}) \mid
    \inf_{0\neq v \in \bmV_{\alpha}(\Z)} ||g\exp{(t)}v||\geq \ep, \,\forall \alpha \in \Phi
    \}\\
    =&
    \{
    t \in \Lie(S_{\bmH})  \mid
    \diffalpha(t)\geq \ln\ep -\ln \inf_{0\neq v \in \bmV_{\alpha}(\Z)}||gv||,\, \forall \alpha\in \Phi
    \}.
\end{aligned}
\end{equation*} 
\end{defi}

 Note that $\exp{(t)}v$ for $v$ in $\bmV_{\alpha}$ is well defined up to $\pm$ sign. These definitions make sense even when $S_{\bmG}=\{e\}$ in which case $\Omega$ is either $\{0\}$ or empty.

There is a slightly different situation that we shall encounter later in Section \ref{secGammaEqui}. 
Take $(\bmG,\bmH,\Gamma)$ to be a standard triple. Let $\bmL$ be another connected $\Q$-subgroup of $\bmG$. Let $\bmX(\bmH,\bmL)$ be the set of $g \in \bmG$ such that $g \bmH g^{-1}$ is contained in $\bmL$. 
For each $\gamma\in \bmX(\bmH,\bmL)\cap \Gamma$, let $\bmc_{\gamma}$ be the morphism from $\bmH$ to $\bmL$ defined by $h\mapsto \gamma h \gamma^{-1}$. Now take a $\Q$-representation $\rho$ of $\bmL$.
To defined the analogous $\Phi_{\rho}$ in this case, we can certainly pullback  $\Phi_{\rho\vert_{\bmc_{\gamma}\bmH}}$ to $X^*(\bmH)$ but it depends on the choice of $\gamma$.
So we instead define in this case $\Phi_{\rho,\bmL}$ to be the set $\{\bmc_{\gamma}^* \Phi_{\rho\vert_{\bmc_{\gamma}\bmH}}\}$ as $\gamma$ ranges over $\bmX(\bmH,\bmL)\cap \Gamma$. We have
\begin{lem}\label{lemFinitePhi_Rho,L}
$\Phi_{\rho,\bmL}$ is a finite set.
\end{lem}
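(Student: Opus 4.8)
The plan is to show that although $\bmX(\bmH,\bmL) \cap \Gamma$ may be infinite, the weights on $\bmH$ that arise as $\bmc_\gamma^* \Phi_{\rho|_{\bmc_\gamma \bmH}}$ take only finitely many values. The key observation is that $\Phi_{\rho|_{\bmc_\gamma \bmH}} \subset X^*(\bmc_\gamma\bmH)$ consists of weights of the fixed finite-dimensional representation $\rho$ of $\bmL$; pulling back along the isomorphism $\bmc_\gamma: \bmH \to \bmc_\gamma\bmH$ gives a subset of $X^*(\bmH)$. Concretely, for $h \in \bmH$ and $v \in \bmV$ an eigenvector with $\rho(\gamma h \gamma^{-1})v = \alpha(h)v$, this says $\rho(\gamma)^{-1}v$ is a $\rho|_{\bmH}$-eigenvector... no — rather, one rewrites things so that the relevant data is the joint eigenspace decomposition of $\bmV$ under $\rho \circ \bmc_\gamma$, i.e. under the subgroup $\gamma^{-1}\bmL\gamma \cap (\text{something})$; the point is that $\bmc_\gamma^*\alpha$ is determined by how $\bmH$, embedded via $\bmc_\gamma$, sits inside $\bmL$, and only the restriction of $\rho$ to (the $\Q$-closure of) $\bmc_\gamma\bmH$ matters.

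First I would reduce to characters: since $\Phi_{\rho|_{\bmc_\gamma\bmH}} \subset X^*(\bmc_\gamma \bmH)$ is a set of weights of a $\dim\bmV$-dimensional representation, it has cardinality at most $\dim \bmV$, uniformly in $\gamma$. So it suffices to bound the total number of \emph{values} $\bmc_\gamma^*\alpha \in X^*(\bmH)$ as $\gamma$ ranges over $\bmX(\bmH,\bmL)\cap\Gamma$ and $\alpha$ over the (at most $\dim\bmV$) weights. Second, I would note that $X^*(\bmH)$ — rather, $X^*(\bmS_\bmH)$, which is where these characters really live after passing through $\pi_{{}^\circ\bmH}$ — is a finitely generated free $\Z$-module, so it is enough to show the image is bounded, i.e. contained in a finite subset. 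Third, and this is the heart of the argument: I would observe that for $\gamma,\gamma' \in \bmX(\bmH,\bmL)\cap\Gamma$ with $\gamma' = \ell\gamma$ for some $\ell \in \bmN_{\bmL}(\bmc_\gamma\bmH)(\Q)$ — more precisely, whenever $\gamma'\gamma^{-1}$ normalizes $\bmc_\gamma\bmH$ inside $\bmL$ — the sets of pulled-back weights agree up to the action of the (finite) group $\bmN_{\bmL}(\bmc_\gamma\bmH)/\bmc_\gamma\bmH \bmZ$ on weights, which only permutes finitely many characters; and the coset space $\bmX(\bmH,\bmL)\cap\Gamma$ modulo this normalizer relation is finite because $\bmX(\bmH,\bmL)$ is a finite union of $\bmN_{\bmG}(\bmH)$–$\bmN_{\bmL}(\ldots)$ double cosets...

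The main obstacle, and where I would spend most care, is making that last step precise: a priori $\bmX(\bmH,\bmL) = \{g : g\bmH g^{-1} \subset \bmL\}$ is a union over the finitely many $\bmL$-conjugacy classes of subgroups of $\bmL$ isomorphic to $\bmH$ that lie in this set, and for each such class, choosing one representative embedding $\bmc_0: \bmH \hookrightarrow \bmL$, any other $\gamma$ in that piece satisfies $\bmc_\gamma = \ell \circ \bmc_0 \circ \phi$ for some $\ell \in \bmL$ and some automorphism $\phi$ of $\bmH$ — but the automorphism $\phi$ need not be trivial, so one must argue that $\phi$ induces only finitely many distinct maps $X^*(\bmS_\bmH)\to X^*(\bmS_\bmH)$, i.e. that $\mathrm{Aut}(\bmH)$ acts through a finite (or at least: relevantly finite) group on $X^*(\bmS_\bmH)$ — this uses that $\bmS_\bmH$ is a torus and $\mathrm{Aut}$ of a torus acts on its character lattice through $\mathrm{GL}_n(\Z)$, but one needs the \emph{image} to be finite, which follows because these automorphisms all come from conjugation inside $\bmG$ (they are restrictions of $\bmc_\gamma^{-1}\bmc_{\gamma'}$-type maps) and the relevant conjugation action factors through $\bmN_\bmG(\bmH)/\bmZ_\bmG(\bmH)$ acting on $\bmH$, hence on $\bmS_\bmH$, through an algebraic group with finitely many components. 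Once one has that both the choice of $\bmL$-conjugacy class (finitely many) and the resulting automorphism-of-$\bmH$-on-$X^*(\bmS_\bmH)$ (finitely many images) range over finite sets, and that for each the weight set $\Phi_{\rho|_{\bmc_\gamma\bmH}}$ has size $\le \dim\bmV$, one concludes $\Phi_{\rho,\bmL}$ is finite. I would also double-check the edge case $\bmS_\bmH = \{e\}$, where $\Phi_{\rho,\bmL}$ is trivially finite.
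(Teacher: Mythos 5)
Your approach is fundamentally different from the paper's, and it has a genuine unfilled gap that you yourself flag.

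The paper proves this lemma in one line by an arithmeticity argument: if $\Phi_{\rho,\bmL}$ were infinite, there would be a divergent sequence of characters $\bmc_{\gamma_n}^*\alpha_n$ in the lattice $X^*(\bmS_\bmH)$; evaluating these on a fixed $\Q$-point $\lambda\in\bmH(\Q)$ of bounded denominator with $\pi_{{}^\circ\bmH}(\lambda)\neq e$ would give eigenvalues of $\rho(\gamma_n\lambda\gamma_n^{-1})$ with unbounded denominators, contradicting that $\gamma_n\in\Gamma$ has bounded denominator (so $\gamma_n\lambda\gamma_n^{-1}$ does too). This uses $\gamma_n\in\Gamma$ essentially. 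Your proposal, by contrast, is a purely structural/rigidity argument that never invokes $\gamma\in\Gamma$ (it would, if completed, prove finiteness as $\gamma$ ranges over all of $\bmX(\bmH,\bmL)(\Q)$, a stronger statement). That is a legitimately different route, but it is harder and you do not complete it.

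The gap: you correctly observe that the pulled-back weight set depends only on the left $\bmL$-coset of $\gamma$ (indeed if $\gamma'=\ell\gamma$ with $\ell\in\bmL$, the two sets are literally equal, not merely related by a finite permutation -- you overcomplicated this step by invoking $\bmN_\bmL(\bmc_\gamma\bmH)$), and that right multiplication by $\bmN_\bmG(\bmH)$ changes it only through the finite group $\pi_0(\bmN_\bmG(\bmH))$ acting on $X^*(\bmS_\bmH)$. But the crux -- that the double coset space, or equivalently the number of distinct weight sets as $\gamma$ varies in $\bmX(\bmH,\bmL)$, is finite -- is exactly what you trail off on (``... a finite union of ... double cosets ...''). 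This is not a cosmetic omission: it is an instance of a rigidity principle (constructibility of the weight-type over the variety $\bmX(\bmH,\bmL)$, or, if $\bmH$ is reductive, Richardson-type finiteness of conjugacy classes of embeddings), and nothing you write establishes it. For non-reductive $\bmH$ the naive ``finitely many conjugacy classes of subgroups isomorphic to $\bmH$'' is not automatic, so one would need to argue via $\bmS_\bmH$ or via Noetherianity of $\bmX(\bmH,\bmL)$ and semicontinuity of weight-space dimensions. Until that is supplied, the proof is incomplete. If you want a short proof, the bounded-denominator argument of the paper is much more efficient and is the one tailored to the hypothesis $\gamma\in\Gamma$.
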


\begin{proof}
Indeed if not true, then there is a set $\Lambda$ in $\bmG(\Q)$ of bounded denominator yet $\rho\circ\bmc_{\gamma}(\Lambda)$ has unbounded denominator. This is a contradiction.
\end{proof}

For $\gamma \in \bmX(\bmH,\bmL)\cap \Gamma $ and a subset $\Phi$ of $\bmc_{\gamma}^* \Phi_{\rho\vert_{\bmc_{\gamma}\bmH}}$ we define

\begin{defi}
\begin{equation*}
\begin{aligned}
        \Omega(\bmc_{\gamma},\ep,\rho,\Phi):=&
    \{
    t\in \Lie(S_{\bmH}) \mid
    \inf_{0\neq v \in \bmV_{\alpha}(\Z)} ||(\gamma\exp{(t)}{\gamma}^{-1})v||\geq \ep, \,\forall \alpha \in (\bmc_{\gamma})_*\Phi
    \}\\
    =&
    \{
    t \in \Lie(S_{\bmH}) \mid
    \diff ({ \alpha\circ\bmc_{\gamma}})(t)\geq \ln\ep -\ln \inf_{0\neq v \in \bmV_{\alpha}(\Z)}||v||,\, \forall \alpha\in (\bmc_{\gamma})_*\Phi
    \}
\end{aligned}
\end{equation*} 
\end{defi}

If $\gamma$ is assumed to preserve the integral structure and $\bmL=\bmG$, then $\Omega(g,\ep,\rho,\Phi)=  \Omega(\bmc_{\gamma},\ep,\rho,\Phi)$.

In both situations we define
\begin{equation}\label{defPreImaPoly}
\begin{aligned}
    &\calP(g,\ep,\rho,\Phi):= \{h\in H \mid 
    \pi_{{}^{\circ}\bmH} (h) \in \exp\left(
     \Omega(g,\ep,\rho,\Phi)
    \right)
    \}  \\
    & \calP(\bmc_{\gamma},\ep,\rho,\Phi):= \{h\in H \mid 
    \pi_{{}^{\circ}\bmH} (h) \in \exp\left(
     \Omega(\bmc_{\gamma},\ep,\rho,\Phi)
    \right)
    \}. 
\end{aligned}
\end{equation}
Both are right invariant by ${}^{\circ}\bmH(\R)$ and hence by $\Gamma_{H}$ as $\Gamma_{H}$ is contained in ${}^{\circ}\bmH(\R)$.

\begin{prop}\label{exteriortrick}
Let $(\bmG,\bmH,\Gamma)$ be a standard triple.
Take a $\Q$-representation $(\rho,\bmV)$ of $\bmG$. Define $\rho'$ to be $\bigoplus_{i}\bigwedge^i\rho \big\vert_{\bmH}$. Take a non-empty open bounded subset $\calO\subset H$ and a positive number $\eta>0$.
Then there exists $\ep>0$ such that for all $g\in G$, 
$h\in \calP(g,\eta,\rho',\Phi_{\rho'})$ 
and $v_{\neq 0}\in \bmV(\Z)$, we have the inequality
\begin{equation*}
    \sup_{o\in \calO} ||
    g ho\cdot v
    ||\geq \ep.
\end{equation*}
\end{prop}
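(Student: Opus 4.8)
The plan is to reduce the general integral vector $v$ to a weight vector for $\bmH$ by decomposing under the action of the maximal quotient torus $\bmS_{\bmH}$, using the exterior power construction to gain enough ``room'' that the weight-vector estimate, which is exactly what membership in $\calP(g,\eta,\rho',\Phi_{\rho'})$ encodes, can be transferred back. Concretely, fix $g\in G$, $h\in\calP(g,\eta,\rho',\Phi_{\rho'})$, $o\in\calO$, and $0\neq v\in\bmV(\Z)$. Write $h = {}^{\circ}h\cdot \exp(t)$ with $t\in\Lie(S_{\bmH})$ in $\Omega(g,\eta,\rho',\Phi_{\rho'})$ (up to the ambient finite ambiguity, and absorbing the bounded ${}^{\circ}\bmH$-part together with $o$ into a single element of a fixed bounded subset of $H$; this is harmless because a bounded set of matrices distorts norms by a bounded factor). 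So it suffices to bound $\|g\exp(t)\,w\|$ from below for $w$ ranging over a bounded set of the form $\{{}^{\circ}h\,o\cdot v\}$, but the key point is that the coefficient of distortion is uniform in $g$ and in $v$.

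First I would decompose the $\bmH$-module $\bmV$ (equivalently its restriction, since only the $\bmH$-action matters once $t\in S_{\bmH}$) into weight spaces $\bmV_{\alpha}$ for $\alpha\in\Phi_{\rho}$ together with a complement on which $S_{\bmH}$ may act nontrivially in a mixed way; more precisely, I would project $v' := {}^{\circ}h\,o\cdot v$ onto its $\alpha$-components $v'_{\alpha}$ for the various $\alpha\in X^*(\bmH)$ appearing, where now the relevant weights for the $S_{\bmH}$-action are exactly $\Phi_{\rho}$. For a single weight vector $0\neq u\in\bmV_{\alpha}(\Z)$ the definition of $\Omega$ gives directly $\|g\exp(t)u\| = |{\alpha(\exp t)}|\cdot\|gu\| \geq \eta$ (here I am using $\Phi_{\rho}\subseteq\Phi_{\rho'}$, so the constraints defining $\Omega(g,\eta,\rho',\Phi_{\rho'})$ include those for $\rho$ itself). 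The trouble is that $v' = \sum_{\alpha} v'_{\alpha}$ is a sum of weight vectors whose coefficients are \emph{not} integral, and lower bounds do not survive summation — cancellation among the $g\exp(t)v'_{\alpha}$ could in principle be catastrophic. This is where the exterior powers enter.

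The resolution, which I expect to be the technical heart, is a ``many-vectors'' or volume argument: instead of bounding $\|g\exp(t)v'\|$ directly, one observes that the vectors $g\exp(t)v'_{\alpha}$ lie in a bounded number of distinct eigenlines/eigenspaces for a suitable one-parameter subgroup, and one controls a wedge of them. Specifically, for an appropriate collection of integral weight vectors $w_1,\dots,w_k$ drawn from the $\bmV_{\alpha}(\Z)$ — say one for each weight space hit by $v$ with controlled height, obtained by clearing denominators in the $S_{\bmH}$-eigenspace projection of $v$ — the wedge $w_1\wedge\cdots\wedge w_k$ is a nonzero integral vector in $\bigwedge^k\bmV$, which is a $\Phi_{\rho'}$-weight vector for $\rho'=\bigoplus_i\bigwedge^i\rho|_{\bmH}$ with weight $\sum_j\alpha_j$. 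The membership $h\in\calP(g,\eta,\rho',\Phi_{\rho'})$ then forces $\|g\exp(t)(w_1\wedge\cdots\wedge w_k)\|\geq\eta$, i.e. the parallelepiped spanned by the $g\exp(t)w_j$ has volume $\geq\eta$; combined with an \emph{upper} bound on $\|g\exp(t)v'\|$ in terms of the maximum of the $\|g\exp(t)v'_{\alpha}\|$ (crude, costs only a dimension-dependent constant), and with the fact that the ambient bounded set $\{{}^{\circ}h\,o\}$ relates $v'$ to $v$ with bounded distortion, a pigeonhole/linear-algebra argument extracts the desired $\ep = \ep(\eta,\calO,\rho,\dim\bmV)$ with $\sup_{o\in\calO}\|gho\cdot v\|\geq\ep$. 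The quantitative extraction — turning ``the wedge is large'' into ``some single factor, hence the whole sum after accounting for the integral-height normalization, is not too small'' — is the step I expect to require the most care, and it is presumably where the precise choice of which integral weight vectors to wedge (and the bound on their heights in terms of the height of $v$) must be made explicit; but none of it should require more than elementary multilinear algebra once the right combinatorial setup is fixed.
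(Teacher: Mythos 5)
There are two fatal gaps. First, you assume the ${}^{\circ}\bmH$-part of $h$ is bounded and can be absorbed into $\calO$, but membership $h\in\calP(g,\eta,\rho',\Phi_{\rho'})$ constrains only $\pi_{{}^{\circ}\bmH}(h)\in S_{\bmH}$; the fiber over that image is a coset of ${}^{\circ}H$, which is generally non-compact, so $\{{}^{\circ}h\,o\cdot v\}$ is not a bounded set. (Take $\bmH=\SL_2={}^{\circ}\bmH$: then $S_{\bmH}$ is trivial and $\calP$ equals all of $H$.) Second, the proposed decomposition of $v'$ into $\alpha$-components for $\alpha\in\Phi_{\rho}$ does not exist: $\bmV_{\alpha}$ is by definition the subspace on which \emph{all} of $\bmH$ acts through the character $\alpha$, an $\bmH$-eigenspace --- and there is no canonical $S_{\bmH}$-action on $\bmV$ to decompose against, since $S_{\bmH}$ is a quotient of $\bmH$, not a subgroup. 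The direct sum $\bigoplus_{\alpha\in\Phi_{\rho}}\bmV_{\alpha}$ is usually a proper, often zero, subspace of $\bmV$: in the $\SL_2$ example, with $\rho|_{\bmH}$ the standard $2$-dimensional representation one has $\Phi_{\rho}=\emptyset$, yet the proposition remains a nontrivial uniform statement, so there is nothing to project onto and your plan stalls at the outset. Finally, ``clearing denominators'' must produce denominators bounded uniformly in $v$; you assert this but never establish it.

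The paper's argument goes through a different construction designed precisely to bypass both difficulties. Instead of projecting $v$, it works with rational translates $\lambda_i v$, where $\Lambda=\{\lambda_i\}\subset\calO\cap\bmH(\Q)$ is a finite set chosen once and for all, independently of $v$, so that $\Lambda\cdot v$ spans $W:=\mathrm{span}_{\Q}\bigl(\bmH(\Q)\cdot v\bigr)$ for \emph{every} $v$. (This uses that the matrix-coefficient functions $h\mapsto\langle hv,l\rangle$ span a fixed finite-dimensional space of regular functions on $\bmH$ and that $\calO\cap\bmH(\Q)$ is Zariski dense.) Choosing $\lambda_1 v,\dots,\lambda_k v$ among these to form a basis of $W$, the wedge $w=\lambda_1 v\wedge\cdots\wedge\lambda_k v$ spans $\bigwedge^k W$, which is automatically a $1$-dimensional $\bmH$-submodule over $\Q$, i.e.\ an $\bmH$-eigenline, with no reducibility of $\bmV$ assumed; and $N^k w\in\bigwedge^k\bmV(\Z)$ for a single $N$ depending only on $\Lambda$, giving the uniformity in $v$ for free. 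That $w$ is an $\bmH$-eigenvector is exactly what makes $\|ghw\|$ depend on $h$ only through $\pi_{{}^{\circ}\bmH}(h)$, so the polytope condition applies, and submultiplicativity of the wedge norm together with the $N^k$ bound yields $\sup_{o\in\calO}\|gho\cdot v\|\geq\eta^{1/k}/N$ with $1\leq k\leq\dim\bmV$, which gives the desired $\ep$.
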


\begin{proof}
Consider 
\begin{equation*}
    \scrF:=\{
    f : \bmH \to \C \mid
    f(h)=\la hv,l\ra,\,\exists v\in \bmV ,\, l\in \bmV^*
    \}
\end{equation*}
where the angled bracket denotes the natural pairing between $\bmV$ and $\bmV^*$.
Then $\scrF$ is a finite-dimensional vector space. Because $\calO\cap \bmH(\Q)$ is Zariski-dense in $\bmH$, the natural map $\scrF \to \scrF\vert_{\calO\cap \bmH(\Q)}$ is an isomorphism. As this is finite dimensional, we may further find a finite subset $\Lambda\subset {\calO\cap \bmH(\Q)}$ such that $\scrF \to \scrF\vert_\Lambda$ is an isomorphism. This implies that $f\in \scrF$ vanishes on $\Lambda$ iff it vanishes on $\bmH$. We may find a positive integer $N$ such that $\Lambda \bmV(\Z) \subset \frac{1}{N}\bmV(\Z)$. Now we fix a non-zero $v\in \bmV(\Z)$.

Let $W$ be the $\Q$-linear subspace generated by $\bmH(\Q)\cdot v$. $W_{\C}$ is $\bmH$-invariant. For $A\subset \bmH(\Q)$, $A\cdot v$ spans $W$ iff all linear functionals $l$ that vanish on $A\cdot v$ also vanish on $W$. This is a condition on $\scrF$ and hence $\Lambda\cdot v$ spans $W$ and we may choose $\{\lambda_1,...,\lambda_k\}\subset \Lambda$ such that $\{\lambda_i v\}_i$ forms a basis of $W$. Let $w:=\wedge_i \lambda_i v$ then $N^kw \in \bigwedge^k\bmV(\Z)$. Also, $\bmH(\Q)$ preserves the line spanned by $w$. Hence by definition of $\Omega(g,\eta,\rho',\Phi_{\rho'})$, we have
\begin{equation*}
    N^k||gh (\wedge \lambda_i v)|| = ||g\pi_{{}^{\circ}\bmH}(h)(N^kw)|| \geq \eta.
\end{equation*}
But 
\begin{equation*}
    N^k||gh (\wedge \lambda_i v)|| \leq N^k \prod_i ||gh\lambda_i v|| 
    \leq N^k \sup_{o\in \calO} ||
    gho\cdot v
    ||^k,
\end{equation*}
therefore
\begin{equation*}
    \sup_{o\in \calO} ||
    gho\cdot v
    || \geq
    \eta^{1/k}N.
\end{equation*}
Taking $\ep:=\eta^{1/k}N$ completes the proof.
\end{proof}

Similarly we have
\begin{prop}\label{exteriortrick2}
Let $(\bmG,\bmH,\Gamma)$ be a standard triple and $\bmL$ be another $\Q$-subgroup of $\bmG$.
Take a $\Q$-representation $(\rho,\bmV)$ of $\bmL$. Define $\rho'$ to be $\bigoplus_{i}\bigwedge^i\rho$. Take a non-empty open bounded subset $\calO\subset H$ and a positive number $\eta>0$.
Then there exists $\ep>0$ such that for all $\gamma \in \bmX(\bmH,\bmL)$, 
$h\in \calP(\bmc_{\gamma},\eta,\rho',\bmc_{\gamma}^*\Phi_{\rho'\vert_{\bmc_{\gamma}\bmH}})$ 
and $v_{\neq 0}\in \bmV(\Z)$, we have the inequality
\begin{equation*}
    \sup_{o\in \calO} ||
    \bmc_{\gamma}(ho)\cdot v
    ||\geq \ep.
\end{equation*}
\end{prop}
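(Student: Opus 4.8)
The plan is to mimic the proof of Proposition \ref{exteriortrick} almost verbatim, replacing the ambient group $\bmG$ with $\bmL$ and the translate action $h \mapsto gh$ with $h \mapsto \bmc_\gamma(h) = \gamma h \gamma^{-1}$. Let me sketch this carefully.

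\textbf{Setup and strategy.} Fix $\gamma \in \bmX(\bmH,\bmL)$, so that $\bmc_\gamma \colon \bmH \to \bmL$ is a morphism of $\Q$-groups. We want to bound $\sup_{o \in \calO} \|\bmc_\gamma(ho) v\|$ from below for $h \in \calP(\bmc_\gamma, \eta, \rho', \bmc_\gamma^*\Phi_{\rho'|_{\bmc_\gamma\bmH}})$ and nonzero $v \in \bmV(\Z)$, uniformly in $\gamma$. The key point is that in the statement of Proposition \ref{exteriortrick} the element $g$ plays a completely passive role — it only enters through the factor $\|gw\|$ which appears identically on both sides of the chain of inequalities — so replacing $g$ by a conjugation $\bmc_\gamma$ acting \emph{through} $\rho$ on $\bmV$ causes no essential change, \emph{provided} the relevant combinatorial/linear-algebra input (the finite set $\Lambda \subset \calO \cap \bmH(\Q)$ and the integer $N$) can be chosen independently of $\gamma$.

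First I would set up the function space exactly as before: let $\scrF := \{ f\colon \bmH \to \C \mid f(h) = \la \rho(\bmc_\gamma(h)) v, l\ra \text{ for some } v \in \bmV, l \in \bmV^*\}$ — but here one must be slightly careful, since $\bmc_\gamma$ depends on $\gamma$. The cleanest fix: work instead with the space $\scrF := \{ f\colon \bmH \to \C \mid f(h) = \la \rho_0(h) v, l\ra,\ v \in \bmV_0, l \in \bmV_0^*\}$ associated to the restriction $(\rho_0, \bmV_0) := (\rho \circ \bmc_\gamma, \bmV)$ viewed as a representation of $\bmH$ — but since $\bmc_\gamma\bmH$ ranges over a set of subgroups of $\bmL$, the space $\scrF$ a priori depends on $\gamma$. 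However, $\scrF$ is always a subspace of the space of matrix coefficients of $\bmH$ on $\bmV$ pulled back along morphisms $\bmH \to \GL(\bmV)$ of bounded complexity; more robustly, one observes that $\dim \scrF \le (\dim \bmV)^2$ uniformly, and (as in Lemma \ref{lemFinitePhi_Rho,L}, which handles exactly this kind of uniformity via bounded denominators) one can extract a \emph{single} finite subset $\Lambda \subset \calO \cap \bmH(\Q)$ and a single positive integer $N$ with $\rho(\bmc_\gamma(\Lambda))\bmV(\Z) \subset \tfrac1N \bmV(\Z)$ for all $\gamma \in \bmX(\bmH,\bmL)\cap\Gamma$ — the latter because $\Lambda$ is a fixed finite set of bounded denominator and $\bmc_\gamma$, being conjugation by an element of $\Gamma$, distorts denominators in a bounded way, as in the proof of Lemma \ref{lemFinitePhi_Rho,L}. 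Actually the simplest route: apply Proposition \ref{exteriortrick} with $\bmG$ there replaced by $\bmL$ and $\bmH$ replaced by $\bmc_\gamma\bmH$ to get, for each $\gamma$, an $\ep_\gamma > 0$; then argue the $\ep_\gamma$ are bounded below by the denominator-boundedness of the family $\{\bmc_\gamma\}$.

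\textbf{The core estimate.} Once $\Lambda = \{\lambda_1,\dots\}$ and $N$ are fixed uniformly, fix a nonzero $v \in \bmV(\Z)$, let $W \subseteq \bmV$ be the $\Q$-span of $\rho(\bmc_\gamma\bmH(\Q)) v$, choose $\lambda_1,\dots,\lambda_k \in \Lambda$ with $\{\rho(\bmc_\gamma(\lambda_i)) v\}_i$ a basis of $W$ (possible since matrix coefficients of $\bmH$ are determined on $\Lambda$), and set $w := \wedge_i \rho(\bmc_\gamma(\lambda_i)) v \in \bigwedge^k \bmV$, so $N^k w \in \bigwedge^k \bmV(\Z)$ and $\bmc_\gamma\bmH(\Q)$ preserves the line $\C w$ — hence $\rho(\bmc_\gamma(h))$ acts on $w$ through a character in $\bmc_\gamma^*\Phi_{\rho'|_{\bmc_\gamma\bmH}}$ and by the definition of $\Omega(\bmc_\gamma,\eta,\rho',\,\cdot\,)$ we get $N^k \|\bmc_\gamma(h)(\wedge_i \rho(\bmc_\gamma(\lambda_i))v)\| = \|N^k \rho(\pi_{{}^\circ\bmH}(h))\!\cdot\! w\| \ge \eta$. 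Combined with the submultiplicativity $\|\bmc_\gamma(h) w\| \le \prod_i \|\bmc_\gamma(h\lambda_i) v\| \le \sup_{o\in\calO}\|\bmc_\gamma(ho)v\|^k$ (using $\lambda_i \in \Lambda \subset \calO$), this yields $\sup_{o\in\calO}\|\bmc_\gamma(ho)v\| \ge \eta^{1/k} N$, and taking $\ep := \eta^{1/k}N$ (with $k \le \dim\bmV$ bounded) finishes it.

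\textbf{Main obstacle.} The only genuine issue, as opposed to transcription, is the \emph{uniformity in $\gamma$}: one must make sure the auxiliary finite set $\Lambda$ and integer $N$ — equivalently the final $\ep$ — do not degenerate as $\gamma$ varies over the infinite set $\bmX(\bmH,\bmL)$. This is where the bounded-denominator argument of Lemma \ref{lemFinitePhi_Rho,L} is the essential input: since $\Lambda \subset \bmH(\Q)$ is a fixed finite set and conjugation maps $\bmc_\gamma$ have images of uniformly bounded denominator on $\bmV(\Z)$, one $N$ works for all $\gamma$. Everything else is the linear-algebra-of-matrix-coefficients argument already carried out in Proposition \ref{exteriortrick}.
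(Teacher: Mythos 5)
Your proposal is correct and follows the paper's own route: the paper's entire proof consists of observing that the space of functions $h\mapsto\la \bmc_{\gamma}(h)v,l\ra$, with $\gamma$ ranging over all of $\bmX(\bmH,\bmL)\cap\Gamma$ \emph{simultaneously}, is finite-dimensional (the entries of $\gamma h\gamma^{-1}$ are polynomials of uniformly bounded degree in the entries of $h$), after which the argument of Proposition \ref{exteriortrick} goes through verbatim with a single $\Lambda$ and a single $N$ obtained from bounded denominators. The only point to tighten is that the per-$\gamma$ bound $\dim\scrF\le(\dim\bmV)^2$ is not by itself what yields one $\Lambda$ valid for every $\gamma$; what is needed, and what your ``bounded complexity'' remark correctly supplies, is finite-dimensionality of the span over all $\gamma$ at once.
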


\begin{proof}
Indeed the set of functions
\begin{equation*}
    \scrF:=\{
    f : \bmH \to \C \mid
    f(h)=\la \bmc_{\gamma} (h) v,l\ra,\,\exists v\in \bmV ,\, l\in \bmV^*,\, \gamma \in \bmX(\bmH,\bmL)\cap \Gamma
    \}
\end{equation*}
is also finite-dimensional.  The rest of the proof is almost identical as above.
\end{proof}

To relate this proposition with non-divergence, we need the notion of $(C,\alpha)$-good functions. Take a bounded open non-empty subset $\mathcal{D}$ in $\Lie(H)$. For a representation $(\rho,\bmV)$ of $\bmG$ and a pair $(v,l)\in \bmV \times \bmV^*$ and $g\in G$, define $ \phi_{g,v,l}: \mathcal{D}\to \C$ by
\begin{equation*}
    \phi_{g,v,l}(x):=\langle g\exp{(x)}v,l \rangle
\end{equation*} where the angled bracket denotes the natural pairing between $\bmV$ and $\bmV^*$.
Similarly,
if $\bmL$ is a connected $\Q$-subgroup of $\bmG$ and $(\rho,\bmV)$ is a representation of $\bmL$, 
for $g\in \bmX(\bmH,\bmL)$ and a pair $(v,l)\in \bmV\times \bmV^*$ we define $ \phi_{\bmc_{g},v,l}: \mathcal{D}\to \C$ by
\begin{equation*}
    \phi_{\bmc_{g},v,l}(x):=\langle \bmc_{g}(\exp{(x)}) v,l \rangle.
\end{equation*} 
Then there exist two positive numbers $C$ and $\alpha$ such that for all $(v,l)\in \bmV \times \bmV^*$ and $g\in G$, $\phi_{g,v,l}$ is $(C,\alpha)$\textbf{-good} which means that(see \cite[Section 3]{KleMar98})
\begin{equation*}
    \frac{1}{|B|} |
    \{
    x \in B \,\vert\, |\phi_{g,v,l}(x)|\leq \ep 
    \}
    | \leq  C
    (\frac{\ep}{\sup_{x\in B}|\phi_{g,v,l}(x)|})^{\alpha}
\end{equation*}
holds for all $\ep>0$ and open balls $B\subset \DD$.
Therefore $\phi_{g,v}(x):=||g\exp(x)v||$ is also $(C,\alpha)$-good on $\calD$ where we take $||\cdot||$ to be a sup-norm with respect to some basis of $\bmV$(see \cite[Lemma 3.1]{KleMar98}).

And similarly if we fix $\bmL$ and a representation $(\rho,\bmV)$ of $\bmL$, there exist two positive numbers $C$ and $\alpha$ such that for all $(v,l)\in \bmV \times \bmV^*$ and $g \in \bmX(\bmH,\bmL)$, $\phi_{\bmc_{g},v,l}$ is $(C,\alpha)${-good}. Also $\phi_{\bmc_g,v}(x):=||\bmc_g(\exp(x))v||$ is also $(C,\alpha)$-good.

In both cases the set of functions $ \phi_{g,v,l}$ and $\phi_{\bmc_{g},v,l}$ span a finite-dimensional space of analytic functions on $\calD$. Hence \cite[Proposition 3.4]{KleMar98} implies that these two collections of functions are $(C,\alpha)$-good for some $C$, $\alpha$ positive.

We also need a qualitative version of a theorem of Kleinbock--Margulis \cite[Theorem 5.2]{KleMar98}. We have implicitly chosen a sup-norm in the representation space.
\begin{thm}\label{kleinbockMargulis}
Given a linear algebraic group $\bmG$ and connected $\Q$-subgroups $\bmH$ and $\bmL$. 
Let $\mathcal{D}$ be a non-empty open bounded subset in $\Lie(H)$. 
Take a representation $(\rho_1,\bmV_1)$(resp. $(\rho_2,\bmV_2)$) of $\bmG$(resp. $\bmL$). 
We fix an integral structure on $\bmV_1$(resp. $\bmV_2$).
There exists a constant $C'>0,\alpha>0$ and $0<\eta<\frac{1}{\dim\bmV_i}$($i=1$ or $2$) such that the following is true.
For each ball $B$ such that $3^{\dim\bmV_1}B\subset \mathcal{D}$ and $g\in G$ satisfying that
\begin{equation*}
\begin{aligned}
    \sup_{x \in B}||g\exp{(x)}v|| \geq \eta \quad \forall v_{\neq 0} \text{ pure wedge in } \bigwedge^{i}\bmV_1(\Z),\, \forall i,
\end{aligned}
\end{equation*}
we have whenever $\ep\leq \eta$,
\begin{equation*}
    \frac{1}{|B|} |\{x \in B \,\vert\, \inf_{0\neq v\in \bmV_1(\Z)} ||g\exp{(x)}v|| \geq \ep\}|
    \leq C'(\frac{\ep}{\eta})^{\alpha}.
\end{equation*}
Similarly, for all ball $B$ such that $3^{\dim\bmV_2}B\subset \mathcal{D}$ and $g\in\bmX(\bmH,\bmL)$ satisfying that
\begin{equation*}
     \sup_{x \in B}||\bmc_{g}(\exp{(x)})v|| \geq \eta \quad \forall v_{\neq 0} \text{ pure wedge in } \bigwedge^{i}\bmV_2(\Z),\, \forall i,
\end{equation*}
we have whenever $\ep\leq \eta$,
\begin{equation*}
    \frac{1}{|B|} |\{x \in B \,\vert\, \inf_{0\neq v\in \bmV_2(\Z)} ||\bmc_{g}\exp{(x)}v|| \geq \ep\}|
    \leq C'(\frac{\ep}{\eta})^{\alpha}.
\end{equation*}
\end{thm}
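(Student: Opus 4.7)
The plan is to derive Theorem \ref{kleinbockMargulis} directly from \cite[Theorem 5.2]{KleMar98} by verifying that its hypotheses hold with constants uniform in $g$. Most of the work is already done in the paragraph immediately preceding the theorem statement: the families $\{\phi_{g,v,l}\}$ and $\{\phi_{\bmc_g,v,l}\}$, restricted to $\calD$, each span a finite-dimensional space of real-analytic functions, so by \cite[Proposition 3.4]{KleMar98} they admit common $(C,\alpha)$-good constants depending only on $(\bmG,\bmH,\bmL,\rho_i,\calD)$ and not on the varying $g$. Applying the same finite-dimensional span observation to the families induced on $\bigwedge^i \bmV_j$ for $i = 1,\ldots,\dim\bmV_j$ produces uniform good constants for all the auxiliary representations that appear in the Kleinbock--Margulis induction.

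Next I would feed the hypothesis into the inductive scheme of \cite[Theorem 5.2]{KleMar98}: for each $k = 1,\ldots,\dim\bmV_1$, control the measure of the set of $x \in B$ admitting a primitive rational $k$-dimensional subspace $W \subset \bmV_1(\Q)$ whose wedge representative $v_W \in \bigwedge^k \bmV_1(\Z)$ satisfies $||g\exp(x) v_W||$ below a carefully chosen threshold. The hypothesis $\sup_{x\in B}||g\exp(x)v|| \geq \eta$ for all pure wedges provides the base estimate at each level, and the uniform $(C,\alpha)$-good property then bounds the measure of the corresponding bad set by a constant multiple of $(\ep/\eta)^{\alpha}|B|$. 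The doubled-ball condition $3^{\dim\bmV_1}B \subset \calD$ is exactly what is required to give the covering/subdivision arguments in \cite{KleMar98} enough room, and the restriction $\eta < 1/\dim\bmV_j$ comes from the standard Minkowski-type constants one uses to pass from pure-wedge non-divergence to control on individual short lattice vectors.

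The conjugation case, with $g \in \bmX(\bmH,\bmL)$ and the family $\bmc_g(\exp(x))$ acting on $\bmV_2$, is handled by the same argument with $\bmV_2$ replacing $\bmV_1$ and the associated uniform constants replacing the original ones. The only structural fact used is that $\bmc_g$ is a group homomorphism, so $\bmc_g(\exp(x))$ acts linearly on $\bmV_2$ and the pure-wedge hypothesis transports to exterior powers in the usual multiplicative way; no new ideas are needed.

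The principal obstacle is the uniformity of $C$ and $\alpha$ across the infinite families indexed by $g$, and this is disposed of by the finite-dimensional span combined with \cite[Proposition 3.4]{KleMar98}. Beyond this, the statement is a direct packaging of \cite[Theorem 5.2]{KleMar98}; the content of Theorem \ref{kleinbockMargulis} is precisely that $C'$, $\alpha$ and $\eta$ can be chosen independently of $g \in G$ (respectively $g \in \bmX(\bmH,\bmL)$), which is exactly what the uniform $(C,\alpha)$-good constants guarantee.
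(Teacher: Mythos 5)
Your proposal matches the paper's treatment: the paper gives no separate proof of this theorem, but simply establishes the uniform $(C,\alpha)$-goodness of the families $\phi_{g,v,l}$ and $\phi_{\bmc_g,v,l}$ via the finite-dimensional-span argument and \cite[Proposition 3.4]{KleMar98} in the preceding paragraphs, and then invokes \cite[Theorem 5.2]{KleMar98} exactly as you do. Your reading of the conclusion as a bound on the measure of the set where the infimum is \emph{at most} $\ep$ is also the intended one (the inequality sign in the displayed set in the paper's statement is a typo), so the proposal is correct and essentially identical in approach.
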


For a $\Q$-representation $(\rho,\bmV)$ of $\bmG$ (resp. $\bmL$) in $\SL(\bmV)$, to transfer non-divergence on $\SL(V)/\SL(\bmV_\Z)$ back to $G/\Gamma$ (resp. $L/\Gamma_L$), we need $\rho(\bmG)$ (resp. $\rho(\bmL)$) to be an observable subgroup of $\SL(\bmV)$.
Recall that a representation $\rho:\bmG\to \SL_N $ is said to be {observable} if its image is an observable subgroup of $\SL_N$. Let $\bmG_m$ be the linear algebraic group over $\Q$ whose $\Q$-points are $\Q^{\times}$. We record here a useful lemma.

\begin{lem}\label{tensortrick}
Let  $\{\bmV_i\}_{i=1,...,l}$ be vector spaces. 
$\bmG^l_m$ naturally acts on $\prod_i (\bmV_i \bs \{0\})$ and $\bigotimes \bmV_i$. 
Then the natural map 
$\left(\prod_i (\bmV_i \bs \{0\})\right) /\bmG^l_m \to  (\bigotimes \bmV_i)/\bmG^l_m$ is injective.
\end{lem}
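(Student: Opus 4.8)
The plan is to make the group action and the map completely explicit and then reduce to a simple linear-algebra statement about rank-one tensors. For each $i$, fix a basis of $\bmV_i$ and let $\dim \bmV_i = d_i$. A point of $\prod_i (\bmV_i \setminus \{0\})$ is a tuple $(v_1,\dots,v_l)$ with each $v_i \neq 0$, and $\bmG_m^l$ acts by $(t_1,\dots,t_l)\cdot(v_1,\dots,v_l) = (t_1 v_1,\dots,t_l v_l)$; on $\bigotimes_i \bmV_i$ the same tuple acts by scaling the tensor by $\prod_i t_i$. The map in question sends the orbit of $(v_1,\dots,v_l)$ to the orbit of $v_1\otimes\cdots\otimes v_l$ (which is nonzero, since each $v_i$ is). So I must show: if $v_1\otimes\cdots\otimes v_l$ and $w_1\otimes\cdots\otimes w_l$ lie in the same $\bmG_m^l$-orbit in $\bigotimes_i\bmV_i$, then $(v_1,\dots,v_l)$ and $(w_1,\dots,w_l)$ lie in the same $\bmG_m^l$-orbit in $\prod_i(\bmV_i\setminus\{0\})$.

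The key step is the uniqueness, up to scalars on each factor, of a rank-one decomposition of a decomposable tensor. Suppose $v_1\otimes\cdots\otimes v_l = c\cdot w_1\otimes\cdots\otimes w_l$ for some $c \in \Q^\times$ (this is what "same orbit under the scaling action on the tensor" gives, after absorbing $\prod t_i$ into $c$). Since the left side is a nonzero elementary tensor, so is the right. I would prove by induction on $l$ that each $v_i$ is a scalar multiple of $w_i$. For $l=1$ this is trivial. For the inductive step, write $\bigotimes_{i=1}^l \bmV_i = \bmV_1 \otimes (\bigotimes_{i=2}^l \bmV_i)$ and use the standard fact that if $v_1 \otimes u = w_1 \otimes u'$ with $v_1,w_1 \neq 0$ and $u,u'$ nonzero in $\bigotimes_{i\geq 2}\bmV_i$, then $v_1 = \lambda w_1$ and $u' = \lambda^{-1} u$ for some $\lambda\in\Q^\times$: indeed, pick a linear functional $\ell$ on $\bmV_1$ with $\ell(v_1)\neq 0$; applying $\ell \otimes \mathrm{id}$ gives $\ell(v_1)\, u = \ell(w_1)\, u'$, so $u'$ is a nonzero scalar multiple of $u$, and then comparing first factors (using that $u$ is a nonzero vector in a tensor product, hence the "tensor cancellation" $v_1\otimes u = w_1 \otimes u''$ forces $v_1 \parallel w_1$) finishes it. Applying the inductive hypothesis to $u = v_2\otimes\cdots\otimes v_l$ and $u'' = w_2\otimes\cdots\otimes w_l$ then yields $v_i = \lambda_i w_i$ for each $i\geq 2$ with $\prod_{i\geq 2}\lambda_i$ prescribed, and combining with the $i=1$ relation gives scalars $t_i$ with $v_i = t_i w_i$ for all $i$, i.e. the two tuples are in the same $\bmG_m^l$-orbit. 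That is exactly injectivity.

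I expect the main (though still modest) obstacle to be stating the tensor-cancellation lemma cleanly enough to drive the induction without circularity — in particular making sure that at each stage the "remaining" factor $\bigotimes_{i\geq 2}\bmV_i$ is itself nonzero so that the cancellation applies, which is automatic here because every $v_i$ and $w_i$ is nonzero and a tensor product of nonzero vectors is nonzero. One should also note the statement is field-independent: nothing uses that we work over $\Q$ rather than $\overline{\Q}$ or $\R$, so the same proof applies in all the settings where the lemma is invoked. No appeal to observability or to any of the dynamical machinery is needed; this is purely multilinear algebra.
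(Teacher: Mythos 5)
Your proof is correct, and since the paper explicitly omits the proof of Lemma~\ref{tensortrick}, this is exactly the standard multilinear-algebra argument (uniqueness, up to componentwise scalars, of a rank-one decomposition of a nonzero decomposable tensor) that the authors expected the reader to supply. One minor bookkeeping slip: after deducing $u' = \lambda^{-1}u$ from $\ell\otimes\mathrm{id}$, substituting back into $v_1\otimes u = w_1\otimes u'$ gives $v_1 = \lambda^{-1}w_1$, not $v_1 = \lambda w_1$ as written (and the symbol $u''$ appears once without being defined — presumably a typo for $u$); but these do not affect the conclusion that $v_1\parallel w_1$ and the induction closes. Your remark about field-independence is apt: the lemma is invoked both in Lemma~\ref{lemObsRepExi} and in Corollary~\ref{coroWeiss}, the latter explicitly over $\overline{\Q}$, so it matters that the argument never uses anything about the base field beyond being a field.
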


Proof is omitted.

\begin{lem}\label{lemObsRepExi}
For each $\Q$-representation $\rho:\bmG\to \SL_n$ there exists an observable $\Q$-representation 
 $\rho':\bmG\to \SL_N $ containing $\rho$ as a direct summand.
\end{lem}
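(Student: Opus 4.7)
The plan is to enlarge $\rho$ by direct summing carefully chosen one-dimensional representations so that every character of $\bmG$, together with its inverse, appears as a direct summand of some tensor power of the enlarged representation, thereby verifying criterion $(5)$ of Definition \ref{defiObs}.

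For a connected linear algebraic $\Q$-group $\bmG$, every $\Q$-character factors through the maximal commutative quotient of $\bmG$, whose torus part is of finite type, so $X^{*}(\bmG)$ is a finitely generated abelian group. Fix generators $\chi_{1},\dots,\chi_{r}$ of $X^{*}(\bmG)$, each regarded as a one-dimensional $\Q$-representation of $\bmG$, and set
\begin{equation*}
\rho':=\rho\oplus\bigoplus_{i=1}^{r}\bigl(\chi_{i}\oplus\chi_{i}^{-1}\bigr).
\end{equation*}
This is a $\Q$-representation of $\bmG$ of dimension $N:=n+2r$ containing $\rho$ as a direct summand; since $\det\rho=1$ and each pair $\chi_{i}\oplus\chi_{i}^{-1}$ has trivial determinant, $\rho'$ lands in $\SL_{N}$.

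To verify that $\rho'(\bmG)\leq\SL_{N}$ is observable via criterion $(5)$, take any one-dimensional $\Q$-representation $\chi$ of $\rho'(\bmG)$ contained in the restriction of some $\SL_{N}$-representation. Pulling back along $\rho':\bmG\twoheadrightarrow\rho'(\bmG)$, the character $\chi$ corresponds to an element of $X^{*}(\bmG)$, hence equals $\prod_{i}\chi_{i}^{n_{i}}$ for some integers $n_{i}$. The inverse $\chi^{-1}=\prod_{i}\chi_{i}^{-n_{i}}$ then appears as a direct summand of $\rho'^{\otimes k}$ with $k:=\sum_{i}|n_{i}|$: for each $i$, select $|n_{i}|$ copies of the summand $\chi_{i}^{-\operatorname{sign}(n_{i})}$ of $\rho'$ and form their tensor product. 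Since $\rho'^{\otimes k}$ is the restriction to $\rho'(\bmG)$ of the $k$-th tensor power of the standard $\SL_{N}$-representation, $\chi^{-1}$ is contained in such a restriction, as required. (The same construction also exhibits $\chi$ itself inside a restricted $\SL_{N}$-representation, but this is not needed for $(5)$.)

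There is no serious obstacle to this approach; adjoining the pairs $\chi_{i}\oplus\chi_{i}^{-1}$ is exactly what forces $X^{*}(\bmG)$ to appear symmetrically inside tensor powers of $\rho'$. The only point to note is that any tensor summand of $\rho'^{\otimes k}$ factors through $\rho'(\bmG)$ automatically, so no extra care is needed when moving between $\bmG$- and $\rho'(\bmG)$-representations.
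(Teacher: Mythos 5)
Your proof is correct, but it takes a genuinely different route from the paper. You adjoin a full generating set $\chi_1,\dots,\chi_r$ of $X^*(\bmG)$ together with their inverses and then verify criterion $(5)$ of Definition \ref{defiObs} by a tensor-power argument, using that every $\Q$-character of $\rho'(\bmG)$ pulls back injectively to a word in the $\chi_i$ and that the corresponding word in the one-dimensional summands of $\rho'$ sits inside $\rho'^{\otimes k}$, which is a restricted $\SL_N$-representation. The paper instead invokes Chevalley's lemma to produce a representation of $\SL_n$ and a line whose stabilizer is exactly $\rho(\bmG)$, adjoins only the single resulting character $\alpha$ and its inverse (so $N=n+2$), and then exhibits $\rho'(\bmG)$ as the exact stabilizer of an explicit rational vector in a representation of $\SL_N$, i.e.\ it verifies criterion $(1)$ rather than $(5)$. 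Your argument is more elementary and self-contained (no Chevalley, no auxiliary group $\bmF$, no appeal to Lemma \ref{tensortrick}), at the cost of a larger $N=n+2r$ and of not producing the vector whose stabilizer is $\rho'(\bmG)$ — the latter being the form of observability that is actually exploited elsewhere in the paper (e.g.\ for closedness of $\pi_\Gamma(H)$ and for Mahler's criterion). One point you gloss over but which is easily supplied: since $\rho'\colon\bmG\twoheadrightarrow\rho'(\bmG)$ is surjective, the pullback on character groups is injective, so the character of $\rho'(\bmG)$ on your constructed line, having pullback $\prod_i\chi_i^{-n_i}$, really is $\chi^{-1}$ and not merely some character with the same pullback; with that remark the verification of $(5)$ is complete.
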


\begin{proof}
By Chevalley‘s lemma(see \cite[Lemma 5.5.1]{Spr98}), there is a representation $\psi:\SL_n \to \SL(\bmV)$ and a non-zero $\Q$-vector $v$ such that $g\in \SL_n$ stabilize the line $[v]$ spanned by $v$ iff $g$ is contained in $\rho(\bmG)$. Then there is a character $\alpha : \bmG \to \bmG_m$ such that $\rho(g)v=\alpha(g)v$. Take $N=n +2$ and let $\rho'(g):=
\left[
\begin{array}{c|c}
     \rho(g)
 & 
 \\
\hline 
    & 
    \begin{array}{cc}
         \alpha(g)   &  \\
         & \alpha^{-1}(g)
    \end{array}
\end{array}
\right]
$. It is clear that the image of $\rho'$ lands in $\SL_{N}$ and $\rho$ is a direct summand of $\rho'$. We claim that $\rho'$ is observable.

Embed $\SL_n$ into $\SL_N$ in the upper-left corner and define
$\bmF=\left[
\begin{array}{c|c}
     \SL_n
 & 
 \\
\hline 
    & 
    \begin{array}{cc}
         t  &  \\
         &  t^{-1}
    \end{array}
\end{array}
\right]
$. First we extend $\psi$ to a representation of $\bmF$ such that $\left[
\begin{array}{c|c}
     I_n
 & 
 \\
\hline 
    & 
    \begin{array}{cc}
         t  &  \\
         &  t^{-1}
    \end{array}
\end{array}
\right]$ acts as identity.

As $\bmF$ is observable in $\SL_N$, we may take a representation $(\widetilde{\psi}_1, \widetilde{\bmV}_1)$ of $\SL_N$ whose restriction to $\bmF$ contains $\psi$ as a direct summand by item (3) in Definition \ref{defiObs}. In particular there exists a non-zero $\Q$-vector $\widetilde{v}_1$ such that $g\in \SL_n$ stabilize $[\widetilde{v}_1]$ iff $g$ is contained in $\rho(\bmG)$ and $\rho(g)$ acts by $\alpha(g)$.
Let $(\widetilde{\psi}_2, \widetilde{\bmV}_2)$ be the standard representation of $\SL_N$ and $\widetilde{v}_2$ to be $e_{N}=e_{n+2}$.
Then $\rho'(\bmG)$ fixes the vector $\widetilde{v}_1\otimes\widetilde{v}_2$. Moreover by Lemma \ref{tensortrick} above, $g\in\bmF$ fixes $\widetilde{v}_1\otimes\widetilde{v}_2$ iff $g$ is contained in $\rho'(\bmG)$.

Now take $\widev_3:=e_{1}\wedge...\wedge e_n$ and $\widev_4:=e_{n+1}\otimes e_{n+2}$. Then $g\in\SL_N$ fixes $\widev_3\oplus\widev_4$ iff $g$ is contained in $\bmF$. Hence $\rho'(\bmG)$ is exactly the stabilizer of $\widev_1 \otimes \widev_2 \oplus \widev_3 \oplus \widev_4$. So it follows that $\rho'(\bmG)$ and $\rho'$ are observable.
\end{proof}

Given a faithful observable $\Q$-representation, assuming $\rho(\bmG(\Z))\subset \SL_N(\Z)$, the induced map $G / \bmG(\Z) \to \SL_N(\R)/\SL_N(\Z)$ is a proper map. Combined with Mahler's criterion, we have the following lemma:

\begin{lem}\label{mahler}
Let $\bmG$ be a linear algebraic group over $\Q$ and $\Gamma\leq \bmG(\Q)$ be commensurable with $\bmG(\Z)$. Take a faithful observable $\Q$-representation $\rho: \bmG\to \SL(\bmV)$ and a lattice $\bmV(\Z)\subset \bmV(\Q)$ that is preserved by $\Gamma$. Define for each $\ep>0$,
\begin{equation*}
    K_{\eta}(\rho):=\{ \pi(g)\in G/\Gamma \, \vert\, 
    \inf_{0\neq v \in \bmV(\Z)}||gv|| \geq \eta
    \}.
\end{equation*}
Then as $\eta$ decreases to $0$, $\{K_{\eta}(\rho)\}$ forms an increasing family of compact sets whose interiors cover $G/\Gamma$.
\end{lem}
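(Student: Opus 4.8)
The plan is to reduce the statement to the classical Mahler criterion on $\SL_N(\R)/\SL_N(\Z)$ via the properness of the map induced by a faithful observable representation. First I would record the map: since $\rho$ is faithful and $\Gamma$ preserves $\bmV(\Z)$, there is a well-defined continuous map $\iota\colon G/\Gamma \to \SL_N(\R)/\SL_N(\Z)$, $\pi_\Gamma(g)\mapsto \rho(g)\SL_N(\Z)$ (after replacing $\Gamma$ by a finite-index subgroup if needed, which changes nothing about compactness of the $K_\eta(\rho)$). The key input is that $\iota$ is a \emph{proper} map, which is exactly where observability of $\rho$ enters: $\rho(\bmG)$ being an observable subgroup of $\SL_N$ forces $\rho(G)\SL_N(\Z)/\SL_N(\Z)$, and more generally $\rho(G)\,x$ for $x$ in the image, to be closed, and one upgrades this to properness of $\iota$ using that $\rho$ is a closed immersion of algebraic groups together with the arithmeticity of $\Gamma$ and $\SL_N(\Z)$ — this is the statement quoted in the sentence preceding the lemma.

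Next I would invoke Mahler's criterion in the form: the sets $\widetilde K_\eta := \{\Lambda \in \SL_N(\R)/\SL_N(\Z) : \inf_{0\neq w\in\Lambda}\|w\|\geq \eta\}$ are compact for each $\eta>0$, they increase as $\eta\downarrow 0$, and their interiors cover $\SL_N(\R)/\SL_N(\Z)$ (the interior of $\widetilde K_\eta$ contains $\widetilde K_{\eta'}$ for any $\eta'>\eta$, so the union of interiors is the whole space). Then I would identify $K_\eta(\rho) = \iota^{-1}(\widetilde K_\eta)$: indeed $\inf_{0\neq v\in\bmV(\Z)}\|\rho(g)v\| = \inf_{0\neq w \in \rho(g)\bmV(\Z)}\|w\|$, and $\rho(g)\bmV(\Z)$ is (up to the fixed identification $\bmV(\Z)\cong\Z^N$) the lattice represented by $\iota(\pi_\Gamma(g))$; this inf is a well-defined function on $G/\Gamma$ because $\Gamma$ preserves $\bmV(\Z)$, so the level sets are $\Gamma$-invariant. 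Since $\iota$ is continuous and $\widetilde K_\eta$ is compact, $K_\eta(\rho)=\iota^{-1}(\widetilde K_\eta)$ is closed; since $\iota$ is \emph{proper}, the preimage of a compact set is compact, giving compactness of $K_\eta(\rho)$.

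Finally, the monotonicity $K_{\eta_1}(\rho)\subset K_{\eta_2}(\rho)$ for $\eta_2<\eta_1$ is immediate from the definition, and for the covering statement: given any $\eta>0$, $\widetilde K_{2\eta}$ lies in the interior of $\widetilde K_\eta$, hence $\iota^{-1}(\widetilde K_{2\eta}) = K_{2\eta}(\rho)$ is contained in $\iota^{-1}(\mathrm{int}\,\widetilde K_\eta)\subset \mathrm{int}\,K_\eta(\rho)$ by continuity of $\iota$; since the $\widetilde K_\eta$ cover the target, pulling back shows $\bigcup_{\eta>0}\mathrm{int}\,K_\eta(\rho)=G/\Gamma$. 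The only real content is the properness of $\iota$, i.e.\ the passage from "observable $\Rightarrow$ closed orbit" to "proper map"; everything else is bookkeeping. I expect that step to be the main obstacle, but it is precisely the fact stated (and used) in the paragraph immediately before the lemma, so in the write-up I would simply cite that discussion together with Mahler's criterion and assemble the three bullet-point conclusions as above.
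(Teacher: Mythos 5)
Your proposal matches the paper's approach exactly: the paper gives no separate proof of the lemma, only the sentence immediately preceding it observing that a faithful observable $\Q$-representation with $\rho(\bmG(\Z))\subset\SL_N(\Z)$ induces a proper map $G/\bmG(\Z)\to\SL_N(\R)/\SL_N(\Z)$, whence the conclusion follows from the classical Mahler criterion. You have correctly identified properness of $\iota$ as the sole nontrivial ingredient and have spelled out the bookkeeping (pullback of the sets $\widetilde K_\eta$, monotonicity, covering by interiors) that the paper leaves implicit.
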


The Proposition \ref{exteriortrick}, Theorem \ref{kleinbockMargulis} and Lemma \ref{mahler} above together imply that 
\begin{prop}\label{nondiverg}
Same notation as in Proposition \ref{exteriortrick} and we further assume that the $\Q$-representation $(\rho,\bmV)$ is faithful into $\SL(\bmV)$ and observable.
Let $\rho''$ be a further exterior product $\bigoplus_{i}\bigwedge^i\rho'$. 
Then for any $\delta >0$ and $\eta>0$ there exist $\ep>0$  such that 
for all $g\in G$, $h\in \calP(g,\eta,\rho'',\Phi_{\rho''})$  we have
\begin{equation*}
    \widehat{\mu}_{\calO}\{\pi_{\Gamma}(o)
    \,\vert\,
    \pi_{\Gamma}(gho) \in K_{\ep}(\rho)
    \} \geq 1-\delta.
\end{equation*}
In particular, for any $\eta>0$, $\{g_n\}\subset G$ and $h_n\in \calP(g_n,\eta,\rho'',\Phi_{\rho''})$, all weak-$*$ limits of $(g_nh_n)_{*}\widehat{\mu}_{\calO}$ are probability measures.
\end{prop}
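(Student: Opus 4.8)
The plan is to read the estimate off from Proposition~\ref{exteriortrick}, Theorem~\ref{kleinbockMargulis} and Lemma~\ref{mahler}, keeping all constants uniform in $g$ and in $h\in\calP(g,\eta,\rho'',\Phi_{\rho''})$, and then to deduce the ``in particular'' by a soft tightness argument. Write $\rho:\bmG\to\SL(\bmV)$ for the given faithful observable representation, with $\Gamma$-stable lattice $\bmV(\Z)$. By Lemma~\ref{mahler}, $\pi_\Gamma(gho)\in K_\ep(\rho)$ precisely when $\inf_{0\ne v\in\bmV(\Z)}\|\rho(gho)v\|\ge\ep$, and the truth value of this depends only on $\pi_{\Gamma_H}(o)$; since $\mu_\calO$ is a fixed finite nonzero measure (here one uses that $\calO$ is bounded and $\calO\cap\bmH(\Q)$ is Zariski dense), there is a fixed $c>0$ with
\begin{equation*}
\widehat\mu_\calO\{\pi_\Gamma(o)\mid\pi_\Gamma(gho)\in K_\ep(\rho)\}\ \ge\ 1-c\,\Leb_H\!\big(E_\ep(g,h)\big),\qquad E_\ep(g,h):=\{o\in\calO\mid\inf_{0\ne v\in\bmV(\Z)}\|\rho(gho)v\|<\ep\},
\end{equation*}
where $\Leb_H$ is a fixed Haar measure on $H$. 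So it suffices, given $\delta,\eta>0$, to find $\ep>0$ depending only on $(\bmG,\bmH,\Gamma),\rho,\calO,\eta$ with $\Leb_H(E_\ep(g,h))$ as small as we like, uniformly over $g\in G$ and $h\in\calP(g,\eta,\rho'',\Phi_{\rho''})$.

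\textbf{Localization and the Kleinbock--Margulis input.} Fix a bounded open $\calO'\supset\overline\calO$ in $H$ and a bounded open $\calD\subset\Lie(H)$ for which Theorem~\ref{kleinbockMargulis} with $\bmV_1=\bmV$ supplies constants $C',\alpha>0$; by compactness of $\overline\calO$ pick finitely many $o_j\in\calO$ and balls $B_j\subset\Lie(H)$ around $0$, small enough that $3^{\dim\bmV}B_j\subset\calD$ and $o_j\exp(\overline{3^{\dim\bmV}B_j})\subset\calO'$, with $\overline\calO\subset\bigcup_j o_j\exp(B_j)$. On the $j$-th chart, $o=o_j\exp x$ gives $\rho(gho)=\rho(gho_j)\,\rho(\exp x)$. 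The key point is the choice of auxiliary representation: a nonzero pure wedge $w\in\bigwedge^i\bmV(\Z)$ is a nonzero integral vector of the $\bmG$-representation $\widetilde\rho:=\bigoplus_i\bigwedge^i\rho$, whose restriction to $\bmH$ is the representation $\rho'$ of the statement and whose associated exterior sum $\bigoplus_i\bigwedge^i(\widetilde\rho|_{\bmH})$ equals $\rho''$. Hence Proposition~\ref{exteriortrick}, applied to the triple, the representation $\widetilde\rho$, the set $\calO'$ and the number $\eta$, produces $\ep_0>0$ depending only on $\eta,\calO',\rho$ such that for all $g\in G$, $h\in\calP(g,\eta,\rho'',\Phi_{\rho''})$ and all nonzero $w$ in the lattice of $\widetilde\rho$ (in particular all pure wedges in $\bigwedge^i\bmV(\Z)$, every $i$) one has $\sup_{o'\in\calO'}\|\widetilde\rho(gho')w\|\ge\ep_0$; taking $o'=o_j\exp x\in\calO'$ gives $\sup_{x\in B_j}\|\rho(gho_j)\rho(\exp x)w\|\ge\ep_0$ for every such $w$, which is exactly the hypothesis of Theorem~\ref{kleinbockMargulis} for the element $gho_j\in G$ on the ball $B_j$, with $\ep_0$ in the role of the threshold.

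\textbf{Conclusion.} Theorem~\ref{kleinbockMargulis} then bounds $|\{x\in B_j\mid\inf_{0\ne v\in\bmV(\Z)}\|\rho(gho_j)\rho(\exp x)v\|<\ep\}|$ by $C'(\ep/\ep_0)^\alpha|B_j|$ for $\ep$ small; transporting back under $x\mapsto o_j\exp x$ (its Jacobian is bounded above on the fixed domains) and summing over the finitely many $j$ yields $\Leb_H(E_\ep(g,h))\le C''(\ep/\ep_0)^\alpha$ with $C''$ depending only on the fixed data. Choosing $\ep>0$ small enough (and below the Kleinbock--Margulis threshold) proves the uniform estimate, hence the main assertion. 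For the ``in particular'': each $(g_nh_n)_*\widehat\mu_\calO$ is a probability measure, and for given $\delta>0$ the uniform estimate provides $\ep>0$ with $(g_nh_n)_*\widehat\mu_\calO(K_\ep(\rho))\ge1-\delta$ for all $n$; since $K_\ep(\rho)$ is compact by Lemma~\ref{mahler}, any weak-$*$ limit $\nu$ satisfies $\nu(f)\ge1-\delta$ for an $f\in C_c(G/\Gamma)$ with $\mathbf 1_{K_\ep(\rho)}\le f\le1$, so $\|\nu\|\ge1-\delta$; as also $\|\nu\|\le1$, letting $\delta\to0$ shows $\nu$ is a probability measure.

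\textbf{Main obstacle.} Everything reduces to invoking results already available, so the only genuinely delicate point is the representation bookkeeping above: choosing $\widetilde\rho$ so that the finitely many ``flag'' inequalities that drive Kleinbock--Margulis for the original lattice $\bmV(\Z)$ are guaranteed by membership in $\calP(g,\eta,\rho'',\Phi_{\rho''})$ — this is exactly why two successive exterior powers $\rho\rightsquigarrow\rho'\rightsquigarrow\rho''$ are built in — together with verifying that none of the constants ($\ep_0$, $C'$, $C''$, $\alpha$, the Jacobians) depend on $g$ or $h$.
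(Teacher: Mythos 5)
Your proposal is correct and is exactly the intended argument: the paper gives no separate proof of Proposition~\ref{nondiverg} but states that Proposition~\ref{exteriortrick}, Theorem~\ref{kleinbockMargulis} and Lemma~\ref{mahler} together imply it, and you have filled in the gluing precisely as one should — applying Proposition~\ref{exteriortrick} to $\widetilde\rho=\bigoplus_i\bigwedge^i\rho$ (so that the $\calP$-condition becomes the one for $\rho''$, since $\bigoplus_j\bigwedge^j(\widetilde\rho|_{\bmH})=\rho''$, and the supremum bound holds for all pure wedges, not just vectors of $\bmV(\Z)$), covering $\overline{\calO}$ by finitely many chart balls on which Theorem~\ref{kleinbockMargulis} applies with constants independent of $g,h$, and finishing the tightness argument via Lemma~\ref{mahler}. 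Your identification of the role of the double exterior power $\rho\rightsquigarrow\rho'\rightsquigarrow\rho''$ is the right explanation of why it is built into the statement.
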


Similarly,
\begin{prop}\label{nondiverg2}
Same notation as in Proposition \ref{exteriortrick2} and we further assume that the $\Q$-representation $(\rho,\bmV)$ is faithful into $\SL(\bmV)$ and observable.
Let $\rho''$ be a further exterior product $\bigoplus_{i}\bigwedge^i\rho'$. 
Then for any $\delta >0$ and $\eta>0$ there exists $\ep>0$ such that 
for all $\gamma \in \bmX(\bmH,\bmL)\cap \Gamma$,
$h\in \calP(\bmc_{\gamma},\eta,\rho'',\bmc_{\gamma}^*\Phi_{\rho''\vert_{\bmc_{\gamma}\bmH}})$
we have
\begin{equation*}
    \widehat{\mu}_{\calO}\{\pi_{\Gamma}(o)
    \mid
    \pi_{\Gamma}(\bmc_{\gamma}(ho)) \in K_{\ep}
    \} \geq 1-\delta.
\end{equation*}
In particular, for $\eta>0$, a sequence $\{\gamma_n\}$ in  $\bmX(\bmH,\bmL)\cap \Gamma$ such that 
all $\bmc_{\gamma_n}^*\Phi_{\rho''\vert_{\bmc_{\gamma_n}\bmH}}$'s are equal to the same $\Phi$
and a sequence $\{h_n\}$ with $h_n$ in $\calP(\bmc_{\gamma_n},\eta,\rho'',\Phi)$ for each $n$, 
all weak-$*$ limits of $(\gamma_nh_n)_{*}\widehat{\mu}_{\calO}$ are probability measures.
\end{prop}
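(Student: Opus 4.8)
The plan is to deduce Proposition~\ref{nondiverg2} from Proposition~\ref{exteriortrick2}, Theorem~\ref{kleinbockMargulis} and Lemma~\ref{mahler} in exactly the same way that Proposition~\ref{nondiverg} follows from Proposition~\ref{exteriortrick}, Theorem~\ref{kleinbockMargulis} and Lemma~\ref{mahler}. First I would set up the integral structures: pick a faithful observable $\Q$-representation $\rho_0:\bmG\to\SL(\bmW)$ with $\rho_0(\Gamma)\subset\SL(\bmW_\Z)$ so that $G/\Gamma\to\SL(\bmW)(\R)/\SL(\bmW_\Z)$ is proper and Lemma~\ref{mahler} applies to $\bmG$; but the conjugation $\bmc_\gamma$ maps $\bmH$ into $\bmL$, so the relevant norms to control are those of $\bmc_\gamma$ acting on the given representation $(\rho,\bmV)$ of $\bmL$. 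So I would work with $\rho$ (and its exterior powers) to get non-divergence of $\pi_\Gamma(\bmc_\gamma(ho))$ inside the homogeneous space attached to $\bmL$, and then — if one wants to land in a fixed compact set $K_\ep$ of $G/\Gamma$ — observe that $\bmc_\gamma(h)=\gamma h\gamma^{-1}$ and that the statement as written only asks that $\pi_\Gamma(\bmc_\gamma(ho))$ lie in $K_\ep=K_\ep(\rho)$, i.e. one applies Lemma~\ref{mahler} to the representation $\rho$ pulled up appropriately; the key point is that $\bmc_\gamma(ho)\cdot v=\gamma h o \gamma^{-1}v$ and non-divergence in $\SL(\bmV)/\SL(\bmV_\Z)$ transfers back because $\rho$ is assumed faithful and observable.

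Second, the heart of the argument. Fix $\calD$ a bounded open neighbourhood of $\log\calO$ in $\Lie(H)$ and a ball $B$ with $3^{\dim\bmV}B\subset\calD$ adapted to $\calO$. Given $\eta>0$, Proposition~\ref{exteriortrick2} applied to $\rho'=\bigoplus_i\bigwedge^i\rho$ produces $\eta'>0$ such that for every $\gamma\in\bmX(\bmH,\bmL)\cap\Gamma$ and every $h\in\calP(\bmc_\gamma,\eta,\rho'',\bmc_\gamma^*\Phi_{\rho''|_{\bmc_\gamma\bmH}})$ — here $\rho''=\bigoplus_i\bigwedge^i\rho'$ — and every nonzero pure wedge $w$ in some $\bigwedge^i\bmV(\Z)$, one has $\sup_{o\in\calO}\|\bmc_\gamma(ho)w\|\geq\eta'$; the point is that a pure wedge in $\bigwedge^i\bmV$ spans a line whose $\bmH$-orbit, after pulling back through $\bmc_\gamma$, generates an $\bmH$-subrepresentation inside $\bigwedge^{(\cdot)}\rho'|_{\bmc_\gamma\bmH}$, so the weight-vector hypothesis defining $\calP$ controls it. Feeding the supremum bound along $\calO$ (hence along $B$) into Theorem~\ref{kleinbockMargulis} (second half, with $\bmV_2=\bmV$) gives, for all $\ep\leq\eta'$,
\begin{equation*}
\frac{1}{|B|}\bigl|\{x\in B \mid \inf_{0\neq v\in\bmV(\Z)}\|\bmc_\gamma(\exp x)v\|\leq\ep\}\bigr|\leq C'\bigl(\tfrac{\ep}{\eta'}\bigr)^{\alpha}.
\end{equation*}
Choosing $\ep$ small enough that $C'(\ep/\eta')^\alpha<\delta$ and using that $\widehat\mu_{\calO}$ is comparable to normalized Lebesgue measure on $\calD$ (via the fixed left Haar measure on $H$ pushed to $\exp$-coordinates, with a bounded Radon--Nikodym derivative on the bounded set $\calD$), one concludes $\widehat\mu_{\calO}\{\pi_\Gamma(o)\mid \pi_\Gamma(\bmc_\gamma(ho))\in K_\ep\}\geq 1-\delta$, uniformly in $\gamma$.

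Finally, for the ``in particular'' clause: given $\{\gamma_n\}$ with all $\bmc_{\gamma_n}^*\Phi_{\rho''|_{\bmc_{\gamma_n}\bmH}}$ equal to a common $\Phi$ and $h_n\in\calP(\bmc_{\gamma_n},\eta,\rho'',\Phi)$, the uniform estimate gives that for every $\delta>0$ there is a compact $K_\ep$ with $(\gamma_nh_n)_*\widehat\mu_{\calO}(K_\ep)\geq 1-\delta$ for all $n$; hence the family is tight and no mass escapes to infinity, so every weak-$*$ subsequential limit is again a probability measure. I expect the main obstacle to be bookkeeping rather than conceptual: making sure the various integral structures on $\bmV$, on the exterior powers, and on $\bmW$ are compatible so that the properness of $G/\Gamma\to\SL(\bmV_\Z)\bs\SL(\bmV)(\R)$ really does transfer the non-divergence back to $G/\Gamma$ uniformly over the varying conjugations $\bmc_{\gamma_n}$ — this is where observability of $\rho$ and finiteness of $\Phi_{\rho,\bmL}$ (Lemma~\ref{lemFinitePhi_Rho,L}) are used — together with controlling the Radon--Nikodym derivative between $\widehat\mu_{\calO}$ and Lebesgue measure on $\calD$ so the $(C,\alpha)$-good estimate on balls genuinely bounds the $\widehat\mu_{\calO}$-measure.
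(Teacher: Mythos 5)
Your proposal is correct and follows essentially the same route the paper intends: the paper gives no separate proof of Proposition \ref{nondiverg2}, stating only that it follows "similarly" by combining Proposition \ref{exteriortrick2} (applied to $\rho'$ so as to control pure wedges in $\bigwedge^i\bmV(\Z)$), the second half of Theorem \ref{kleinbockMargulis}, and Lemma \ref{mahler}, exactly as you do. The remaining points you flag (compatibility of integral structures, comparability of $\widehat{\mu}_{\calO}$ with Lebesgue measure in exponential coordinates) are indeed only bookkeeping and are likewise left implicit in the paper.
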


The converse of Proposition \ref{nondiverg}, \ref{nondiverg2} is also true:
\begin{lem}
For $\ep>0$, a non-empty open bounded set $\calO\subset H$, a $\Q$-representation $\rho_1:\bmG\to\SL(\bmV)$ and another faithful observable $\Q$-representation $\rho_2$ defining $K_{\ep}(\rho_2)$, there exists $\eta>0$ such that for all $g\in G$ and $h\in H$ satisfying
\begin{equation*}
    \pi_{\Gamma}(gh\calO) \cap K_{\ep}(\rho_2) \neq \emptyset,
\end{equation*}
we have $h\in \calP(g,\eta,\rho,\Phi_{\rho})$. Similar statements hold in the other situation.
\end{lem}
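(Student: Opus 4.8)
\emph{Plan.} The idea is to exploit that $K_{\ep}(\rho_2)$ is a \emph{compact} subset of $G/\Gamma$ — this is the only place faithfulness and observability of $\rho_2$ enter, through Lemma~\ref{mahler} — together with the observation that membership in $\calP(g,\eta,\rho_1,\Phi_{\rho_1})$ is nothing but a finite list of lower bounds on the norms $\|\rho_1(gh)v\|$ as $v$ ranges over nonzero integral weight vectors of $\rho_1$.

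\emph{Step 1: unwinding $\calP$.} If $\alpha\in X^*(\bmG)$ and $0\ne v\in(\bmV_1)_{\alpha}$, then $v$ is a weight vector for $\bmH$ with weight $\alpha|_{\bmH}$, and (as noted after the definition of $S_{\bmG}$) this weight factors through $S_{\bmH}$; hence $hv=\pi_{{}^{\circ}\bmH}(h)\cdot v$, so that $\rho_1(g\exp(t))v=\rho_1(gh)v$ whenever $\exp(t)=\pi_{{}^{\circ}\bmH}(h)$. Reading off Definition~\ref{defPolytope} and \eqref{defPreImaPoly}, this shows
\[
h\in\calP(g,\eta,\rho_1,\Phi_{\rho_1})\iff \|\rho_1(gh)v\|\ge\eta \ \text{ for all }\alpha\in\Phi_{\rho_1}\text{ and all }0\ne v\in(\bmV_1)_{\alpha}(\Z).
\]
So it suffices to produce $\eta>0$, depending only on $\ep,\calO,\rho_1,\rho_2$, for which these bounds hold under the hypothesis.

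\emph{Step 2: compactness input.} By Lemma~\ref{mahler} the set $K_{\ep}(\rho_2)$ is compact, so there is a compact $C\subseteq G$ with $K_{\ep}(\rho_2)=\pi_{\Gamma}(C)$. The hypothesis $\pi_{\Gamma}(gh\calO)\cap K_{\ep}(\rho_2)\ne\emptyset$ then gives $o\in\calO$, $c\in C$ and $\gamma\in\Gamma$ with $gho=c\gamma$, i.e. $gh=c\gamma o^{-1}$. Fix $\alpha\in\Phi_{\rho_1}$ and $0\ne v\in(\bmV_1)_{\alpha}(\Z)$. Since $v$ is an $\bmH$-weight vector, $\rho_1(o^{-1})v=\alpha(o^{-1})v$ with $\alpha(o^{-1})\ge c_1(\alpha):=\inf_{o'\in\overline{\calO}}\alpha(o'^{-1})>0$ (positive and finite because $\overline{\calO}$ is compact and $\alpha\colon H\to\R_{>0}$ is continuous). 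Assuming $\Gamma$ preserves $\bmV_1(\Z)$ — in general one absorbs into the constant the bounded denominators of $\rho_1(\Gamma)$ on $\bmV_1(\Z)$, which exist since $\Gamma$ is commensurable with $\bmG(\Z)$ — we have $\rho_1(\gamma)v\in\bmV_1(\Z)\setminus\{0\}$, hence $\|\rho_1(\gamma)v\|\ge1$. Finally $\|\rho_1(c)w\|\ge\|w\|/M$ with $M:=\sup_{c'\in C}\|\rho_1(c'^{-1})\|_{\mathrm{op}}<\infty$. Combining,
\[
\|\rho_1(gh)v\|=\alpha(o^{-1})\,\|\rho_1(c)\rho_1(\gamma)v\|\ge\frac{c_1(\alpha)}{M}.
\]
Since $\Phi_{\rho_1}$ is finite, $\eta:=M^{-1}\min_{\alpha\in\Phi_{\rho_1}}c_1(\alpha)$ works; the statement in the other situation follows by the identical computation with conjugation replaced by $\bmc_{\gamma}$ throughout.

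\emph{Main obstacle.} There is really no serious obstacle — the argument is routine once one sees Step~1. The only point that needs genuine care is the bookkeeping of dependencies, so that $\eta$ is uniform in $g$ and $h$: $C$ depends only on $\ep$ and $\rho_2$, then $M$ only on $C$ and $\rho_1$, and each $c_1(\alpha)$ only on $\calO$ and $\rho_1$. One should also check the integrality convention (whether $\Gamma$ literally preserves $\bmV_1(\Z)$, or merely up to bounded denominators) is stated compatibly with the rest of Section~\ref{secRepandNondiv}.
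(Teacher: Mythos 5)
Your proof is correct. The paper states this lemma with no proof (it is offered as the routine converse of Propositions~\ref{nondiverg} and~\ref{nondiverg2}), so there is nothing to compare against; your argument — translating $h\in\calP(g,\eta,\rho_1,\Phi_{\rho_1})$ into the finite family of norm lower bounds on integral weight vectors, then using compactness of $K_{\ep}(\rho_2)$ to write $gh=c\gamma o^{-1}$ and bounding the scalar factor $\alpha(o^{-1})$, the integrality of $\rho_1(\gamma)v$, and the operator norms over the compact set separately — is the natural one and fills the gap cleanly.
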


\begin{defi}
We say a $\Q$-representation $\rho:\bmG \to \SL_N$ is \textbf{superfaithful} iff it contains all double exterior products of a faithful observable $\Q$-representation into $\SL_n$ as a direct summand.
\end{defi}

Such a representation always exists thanks to Lemma \ref{lemObsRepExi} and Theorem \ref{thmNondiv} follows from Proposition \ref{nondiverg} above. Note that a superfaithful representation is still faithful and observable.

\subsection{Case of maximal split tori}\label{secNondivMaxSplTori}

Let $\bmG$ be a semisimple group over $\Q$ and $\bmH$ be a maximal $\Q$-split torus in $\bmG$. Let $\calW$ be the $\Q$-Weyl group associated with $\bmH$. Fix a set of representative $\dot{\calW}=\{w\}$ of $\calW$ in $\bmG(\Q)$. Also fix a $\Z$-structure on $\frakg$ which is preserved by $\Ad(\Gamma)$. For each $\Q$-parabolic subgroup $\bmP$, let $v_{\bmP}$ be the unique up to sign primitive vector in $\wedge^{\dim \bmP}\frakg_{\Z}$ that represents the Lie algebra of $\bmP$. 
Note that $\gamma \cdot v_{\bmP} = v_{\gamma \bmP \gamma^{-1}}$.
For $g\in G$, define $d_{\bmP}(g):=||g\cdot v_{\bmP} ||$ where the $G$-action is induced from the Adjoint action. Analogous to Mahler's criterion, by using reduction theory, it can be shown that:
\begin{prop}
Define for $\eta >0$,
\begin{equation*}
    K_{\eta}:=\{[g]\in G/\Gamma\,\vert\,
d_{\bmP}(g) \geq \eta ,\,\text{ for all maximal proper }\Q\text{-parabolic subgroups } \bmP
\}.
\end{equation*}
Then as $\eta>0$ decreases to $0$, $\{K_{\eta}\}$ forms a family of compact sets. And the union of their interiors covers $G/\Gamma$.
\end{prop}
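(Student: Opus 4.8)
\emph{Plan.} The assertion has two halves: each $K_{\eta}$ is compact, and the sets $\mathrm{int}(K_{\eta})$ exhaust $G/\Gamma$. Fix once and for all a maximal compact subgroup $K\le G$ and a Euclidean norm $\|\cdot\|$ on $\bigoplus_{d}\bigwedge^{d}\frakg_{\R}$ that is invariant under $\Ad(K)$; since the statement is insensitive to the choice of norm up to equivalence this is harmless. Because $\Ad(k)$ is then an isometry and $v_{\gamma\bmP\gamma^{-1}}=\pm\Ad(\gamma)v_{\bmP}$, the function $g\mapsto \inf_{\bmP}d_{\bmP}(g)$ (infimum over maximal proper $\Q$-parabolic subgroups $\bmP$) is left $K$-invariant and right $\Gamma$-invariant, hence descends to $K\backslash G/\Gamma$; being an infimum of the continuous functions $g\mapsto\|\Ad(g)v_{\bmP}\|$ it is upper semicontinuous, so each $K_{\eta}$ is closed. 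For the exhaustion, fix $[g_{0}]$ and a compact neighbourhood $U$ of $g_{0}$ in $G$: for $g\in U$ and any $\bmP$ we have $d_{\bmP}(g)=\|\Ad(g)v_{\bmP}\|\ge\bigl(\sup_{g\in U}\|\Ad(g)^{-1}\|_{\mathrm{op}}\bigr)^{-1}c_{0}$, where $c_{0}>0$ is the length of a shortest nonzero vector of $\bigoplus_{d}\bigwedge^{d}\frakg_{\Z}$ (recall $v_{\bmP}$ is a nonzero integral vector); hence $\pi_{\Gamma}(U)$ is a neighbourhood of $[g_{0}]$ contained in $K_{\eta}$ for $\eta$ small, and since $K_{\eta}$ grows as $\eta\downarrow 0$ this half is done.

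For relative compactness of $K_{\eta}$, suppose for contradiction that $[g_{n}]\in K_{\eta}$ while $[g_{n}]\to\infty$. The key is to invoke reduction theory not inside $G$ but on the symmetric space $\mathcal{X}:=G/K$: the map $G/\Gamma\to \Gamma\backslash\mathcal{X}$, $g\Gamma\mapsto\Gamma(g^{-1}x_{0})$ (with $x_{0}=eK$), is proper, being the quotient of $G/\Gamma$ by the compact group $K$, so $\Gamma(g_{n}^{-1}x_{0})\to\infty$ in $\Gamma\backslash\mathcal{X}$. By classical reduction theory (Borel, Harish-Chandra) there are finitely many Siegel sets $\mathfrak{S}_{1},\dots,\mathfrak{S}_{m}$ attached to minimal $\Q$-parabolic subgroups with $\mathcal{X}=\bigcup_{i}\Gamma\cdot\mathfrak{S}_{i}$; after passing to a subsequence we may therefore write
\[
\gamma_{n}^{-1}g_{n}^{-1}x_{0}=b_{n}a_{n}x_{0}
\]
inside one fixed such Siegel set: $\gamma_{n}\in\Gamma$, $b_{n}$ lies in a fixed compact subset of ${}^{\circ}\bmP_{0}(\R)$ for a fixed minimal $\Q$-parabolic $\bmP_{0}$ with maximal $\Q$-split torus $\bmA_{0}$ and set of simple restricted roots $\Delta$, and $a_{n}\in\bmA_{0}(\R)^{\circ}$ satisfies $a_{n}^{\alpha}\ge t$ for all $\alpha\in\Delta$. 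Since $\Gamma(g_{n}^{-1}x_{0})\to\infty$, the $a_{n}$ cannot remain bounded, so after a further subsequence $a_{n}^{\alpha_{j_{0}}}\to\infty$ for some fixed $\alpha_{j_{0}}\in\Delta$; and from the displayed identity, $g_{n}=k_{n}a_{n}^{-1}b_{n}^{-1}\gamma_{n}^{-1}$ for some $k_{n}\in K$.

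Now test the maximal proper $\Q$-parabolic $\bmP_{n}':=\gamma_{n}\bmP_{j_{0}}\gamma_{n}^{-1}$, where $\bmP_{j_{0}}\supseteq\bmP_{0}$ is the maximal proper $\Q$-parabolic obtained by ``removing $\alpha_{j_{0}}$''. Using $v_{\bmP_{n}'}=\pm\Ad(\gamma_{n})v_{\bmP_{j_{0}}}$, the $\Ad(K)$-invariance of the norm, and the cancellation $\gamma_{n}^{-1}\gamma_{n}=e$,
\[
d_{\bmP_{n}'}(g_{n})=\bigl\|\Ad(g_{n})\Ad(\gamma_{n})v_{\bmP_{j_{0}}}\bigr\|=\bigl\|\Ad(a_{n}^{-1}b_{n}^{-1})v_{\bmP_{j_{0}}}\bigr\|.
\]
But $b_{n}^{-1}\in{}^{\circ}\bmP_{0}(\R)\subseteq\bmP_{j_{0}}$, and $\bmP_{j_{0}}$ acts on the line $\C\,v_{\bmP_{j_{0}}}=\bigwedge^{\dim\bmP_{j_{0}}}\Lie(\bmP_{j_{0}})$ through the $\Q$-character $\det(\Ad(\cdot)|_{\Lie\bmP_{j_{0}}})$, which is trivial on ${}^{\circ}\bmP_{0}$ (trivial on unipotents, and the anisotropic part of the Levi carries no nontrivial $\Q$-character), so $\Ad(b_{n}^{-1})v_{\bmP_{j_{0}}}=v_{\bmP_{j_{0}}}$; while $\Ad(a_{n}^{-1})v_{\bmP_{j_{0}}}=a_{n}^{-\lambda_{j_{0}}}v_{\bmP_{j_{0}}}$, where $\lambda_{j_{0}}$ is the character by which $\bmA_{0}$ acts on that line, namely the sum, with positive multiplicities, of the positive restricted roots in whose support $\alpha_{j_{0}}$ occurs, so that $a_{n}^{\lambda_{j_{0}}}\ge a_{n}^{\alpha_{j_{0}}}\to\infty$. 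Hence $\inf_{\bmP}d_{\bmP}(g_{n})\le d_{\bmP_{n}'}(g_{n})=a_{n}^{-\lambda_{j_{0}}}\|v_{\bmP_{j_{0}}}\|\to 0$, contradicting $[g_{n}]\in K_{\eta}$; so $K_{\eta}$ is relatively compact, and being closed it is compact.

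The one genuinely delicate point is the passage to $\mathcal{X}$: it is precisely what places the uncontrolled compact factor $k_{n}$ on the \emph{left} of $g_{n}=k_{n}a_{n}^{-1}b_{n}^{-1}\gamma_{n}^{-1}$, where it is annihilated by the $\Ad(K)$-invariant norm. A Siegel-set reduction carried out directly inside $G$ leaves such a factor wedged between $a_{n}$ and $\gamma_{n}$, and conjugating $\bmP_{j_{0}}$ by it generally destroys rationality, so the explicit weight computation above would not apply. (The subgroup $\bmH$ does not enter this proposition; the argument works for an arbitrary semisimple $\Q$-group $\bmG$, the finite centre being harmless as it lies in $K$ and acts trivially on $\bigwedge^{\bullet}\frakg$.)
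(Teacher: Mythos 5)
Your proof is correct and follows exactly the route the paper indicates but does not write out (it only asserts the proposition "by using reduction theory"): properness into $\Gamma\backslash G/K$, Siegel sets for finitely many minimal $\Q$-parabolics, and detection of the cusp by the contraction of $v_{\bmP_{j_0}}$ under $\Ad(a_n^{-1})$, with the $K$-factor absorbed by an invariant norm. The only imprecision is the literal inequality $a_n^{\lambda_{j_0}}\ge a_n^{\alpha_{j_0}}$, which can fail by the fixed factor $t^{M}$ when the Siegel parameter $t<1$; since one still has $a_n^{\lambda_{j_0}}\ge t^{M}(a_n^{\alpha_{j_0}})^{N}$ with $N\ge 1$, hence divergence, the argument is unaffected.
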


Let $\scrP$ denote the  set of maximal proper $\Q$-parabolic subgroups of $\bmG$ and $\scrP_{\bmH}$ denote those containing $\bmH$. In the present case, $\scrP_{\bmH}$  is a finite set. 
For $g\in G$, $\eta>0$, define 
\begin{equation}\label{EquPolyMaxTori}
    \Omega_{g,\ep}:=\{
    t\in \Lie(H)\,\vert\,
    d_{\bmP}(g\exp{(t)}) \geq \ep,\, \forall \bmP \in \scrP_{\bmH}
    \}
\end{equation}
which is a bounded convex polytope with finitely many sides. Note that $ d_{\bmP}(g\exp{(t)})$ is equal to $\exp(\diffalpha_{\bmP}(t))d_{\bmP}(g)$ for some character $\alpha_{\bmP}$ depending on $\bmP$.

The following has been shown in \cite{EskMozSha97}. A different proof can be obtained by modifying the argument in \cite{KleMar98}.
\begin{thm}
Fix a nonempty open bounded subset $\calO$ of $H$ and two positive numbers $\ep$ and $\delta$. 
Then there exists another positive number $\eta$ such that
for all $g\in G$ that satisfies
\begin{equation*}
    \sup_{o\in\calO}||gov_{\bmP}|| > \ep,\quad \forall \bmP\in\scrP,
\end{equation*}
we have 
\begin{equation*}
    \whmu_{\calO} \{o \in \calO\,\vert \, 
    \pi_{\Gamma}(go) \notin K_{\eta}
    \} < \delta.
\end{equation*}
\end{thm}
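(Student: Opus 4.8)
The plan is to reduce this non-divergence statement for a maximal $\Q$-split torus $\bmH$ to the quantitative $(C,\alpha)$-good machinery of Kleinbock--Margulis, exactly as in the proof of Proposition~\ref{nondiverg} but using the distance functions $d_{\bmP}$ in place of norms of integral vectors. First I would fix a ball $B\subset \Lie(H)$ (after rescaling $\calO$ so that $\calO=\exp(B)$, or covering $\calO$ by finitely many such balls) and recall that the functions $t\mapsto d_{\bmP}(g\exp(t)) = \exp(\diffalpha_{\bmP}(t)) d_{\bmP}(g)$ are, up to the finitely many $\bmP\in\scrP$, a uniformly $(C,\alpha)$-good family of functions on $\calD\supset B$; this follows from the fact that each coordinate of $g\exp(t)v_{\bmP}$ is a $\phi_{g,v,l}$-type function and from \cite[Proposition 3.4, Lemma 3.1]{KleMar98}. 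The key geometric input is that for $\bmP\in\scrP$ not containing $\bmH$, the character $\alpha_{\bmP}$ restricted to $\bmH$ is nontrivial, so $d_{\bmP}(g\exp(t))$ genuinely varies along $B$; for $\bmP\in\scrP_{\bmH}$ it is essentially constant on $\calO$ up to a bounded multiplicative factor.

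Next I would set up the dichotomy exactly as in \cite[Theorem 5.2]{KleMar98} (the qualitative version recorded here as Theorem~\ref{kleinbockMargulis}). Under the hypothesis $\sup_{o\in\calO}\|gov_{\bmP}\|>\ep$ for all $\bmP\in\scrP$, I claim $\pi_\Gamma(g\calO)$ is ``mostly'' inside $K_\eta$ for suitable $\eta=\eta(\ep,\delta,\calO)$. The argument is the standard covering/induction on the partially ordered set of parabolics: one uses reduction theory in the form of the proposition preceding the statement, which says $\pi_\Gamma(x)\notin K_\eta$ forces $d_{\bmP}(x)<\eta$ for some maximal proper $\Q$-parabolic $\bmP$; for each $\bmP$ one then estimates the measure of $\{o\in\calO : d_{\bmP}(go)<\eta\}$. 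When $\bmP\notin\scrP_{\bmH}$, the $(C,\alpha)$-good property of $d_{\bmP}(g\exp(\cdot))$ on $B$ together with the lower bound $\sup_{o\in\calO}d_{\bmP}(go)>\ep$ (equivalently $\sup\|gov_{\bmP}\|>\ep$, since $v_{\bmP}$ is primitive so the two norms differ by bounded factors) gives directly that this set has $\whmu_{\calO}$-measure $\leq C(\eta/\ep)^\alpha$, which is $<\delta/|\scrP|$ once $\eta$ is small. When $\bmP\in\scrP_{\bmH}$, the function $d_{\bmP}(g\exp(t))$ is constant up to a factor $e^{\diffalpha_{\bmP}(t)}$ bounded on the fixed set $B$; so either $d_{\bmP}(go)\geq\eta$ for all $o\in\calO$ (and this $\bmP$ contributes nothing) or for none, but in the latter case $\sup_{o}\|gov_{\bmP}\|$ would be $\lesssim \eta < \ep$, contradicting the hypothesis once $\eta<\ep/\mathrm{const}$. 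Summing over the finitely many $\bmP\in\scrP$ gives the bound $\whmu_{\calO}\{o : \pi_\Gamma(go)\notin K_\eta\}<\delta$.

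The main obstacle I expect is organizing the induction over nested parabolics cleanly: the naive union bound over $\scrP$ needs the $(C,\alpha)$-good estimate to apply on a ball large enough that the ``substantial supremum'' hypothesis propagates, and in \cite{KleMar98} this is handled by the $3^{\dim}B$-dilation condition and a careful choice of constants going down a chain. Here, because $\bmH$ is a torus and $\scrP_{\bmH}$ is finite, the combinatorics simplifies considerably — one can largely treat the $\scrP_{\bmH}$ parabolics as giving constant contributions and only run the genuine good-function estimate on the complement — but making the constants uniform in $g$ still requires the observation that $d_{\bmP}(g)$ and $\sup_{o\in\calO}d_{\bmP}(go)$ are comparable up to factors depending only on $\calO$ and the representation, which is where the primitivity of $v_{\bmP}$ in $\wedge^{\dim\bmP}\frakg_{\Z}$ and boundedness of $\calO$ enter. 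The alternative, as the text notes, is to simply invoke \cite{EskMozSha97} directly; I would present the $(C,\alpha)$-good proof as the self-contained option and remark that it is a special case of the argument already given for Proposition~\ref{nondiverg}, now with the parabolic distance functions replacing $\inf_{v}\|gv\|$ and the reduction-theory proposition replacing Mahler's criterion.
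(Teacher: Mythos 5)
Your overall strategy (run the Kleinbock--Margulis $(C,\alpha)$-good machinery on the functions $o\mapsto d_{\bmP}(go)$, with the reduction-theory compactness criterion playing the role of Mahler's criterion) is the route the paper only alludes to: the paper gives no proof of this theorem, citing \cite{EskMozSha97} and remarking that a modification of \cite{KleMar98} would also work. So you are not reproducing the paper's argument but attempting the alternative it mentions, which is legitimate in principle.

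As written, however, your argument has a genuine gap at its central step. The set $\scrP$ of \emph{all} maximal proper $\Q$-parabolic subgroups of $\bmG$ is infinite; only $\scrP_{\bmH}$, the set of those containing $\bmH$, is finite, and the paper is explicit about this distinction. Your bound ``$\leq C(\eta/\ep)^{\alpha}<\delta/|\scrP|$'' and the concluding sentence ``summing over the finitely many $\bmP\in\scrP$'' therefore do not make sense: you are taking a union bound over infinitely many sets, each of measure $O((\eta/\ep)^{\alpha})$, which gives nothing. For a fixed $g$ only finitely many $\bmP$ satisfy $\inf_{o\in\calO}d_{\bmP}(go)<\eta$, but that number is not uniform in $g$, and uniformity in $g$ is exactly what the theorem asserts. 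The finiteness of $\scrP_{\bmH}$ does not rescue this: the parabolics in $\scrP_{\bmH}$ are precisely the ones you dispose of by the ``constant up to a bounded factor'' observation, while the infinitely many $\bmP\notin\scrP_{\bmH}$ are exactly the ones for which you invoke the union bound. The entire content of \cite[Theorem 5.2]{KleMar98}, and of the corresponding argument in \cite{EskMozSha97}, is the covering-and-induction scheme that replaces this union bound: using the submultiplicative behaviour of the $d_{\bmP}$ along flags of parabolics together with the hypothesis $\sup_{o\in\calO}\|gov_{\bmP}\|>\ep$, one shows that near any given point only a single flag of parabolics can be ``$\eta$-active,'' and then inducts on the length of that flag over a controlled cover of $B$. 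Your sketch names this obstacle and then asserts that it ``simplifies considerably'' because $\bmH$ is a torus, but offers no mechanism for the simplification; without that mechanism the proof does not close.
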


We are going to show the assumption is satisfied. 

\begin{prop}\label{propNondivMaxTori}
Fix a nonempty open bounded subset $\calO$ of $H$ and a positive number $\ep$.
Then there exists another $\ep'>0$ such that for all $t \in \Omega_{g,\ep}$, 
\begin{equation*}
    \sup_{o\in\calO}||g\exp{(t)}ov_{\bmP}|| > \ep',\quad \forall \bmP\in\scrP.
\end{equation*}
\end{prop}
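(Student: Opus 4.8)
The plan is to distinguish the parabolics $\bmP\in\scrP$ according to whether or not they contain $\bmH$, and to treat the two classes separately. For $\bmP\in\scrP_{\bmH}$, the orbit map is particularly rigid: as noted after \eqref{EquPolyMaxTori}, $d_{\bmP}(g\exp(t)) = \exp(\diffalpha_{\bmP}(t))\,d_{\bmP}(g)$, so along $\exp(\Lie H)$ the function $d_{\bmP}$ scales by a single character. Hence for $t\in\Omega_{g,\ep}$ we have by definition $d_{\bmP}(g\exp(t))\geq\ep$; comparing $g\exp(t)\calO$ with $g\exp(t)$ via the bounded set $\calO$ (whose Adjoint action distorts norms by a factor bounded in terms of $\calO$ alone) immediately yields $\sup_{o\in\calO}\|g\exp(t)o v_{\bmP}\|\geq \ep'_1$ for some $\ep'_1>0$ depending only on $\ep$ and $\calO$. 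This handles the finite set $\scrP_{\bmH}$.

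The substantive case is $\bmP\in\scrP\setminus\scrP_{\bmH}$, i.e.\ maximal $\Q$-parabolics \emph{not} containing $\bmH$. Here the key structural fact is that $v_{\bmP}\in\wedge^{\dim\bmP}\frakg$ is \emph{not} fixed (even up to scalar) by all of $\bmH$, so the $\bmH$-orbit of the line $[v_{\bmP}]$ is nontrivial. Decompose $v_{\bmP}$ into $\bmH$-weight components $v_{\bmP}=\sum_{\chi}v_{\bmP,\chi}$ with respect to the action of the torus $\bmH$ on $\wedge^{\dim\bmP}\frakg$; at least two weights occur (or the single weight is nontrivial), and since $\bmH$ is $\Q$-split these weights $\chi$ are $\Q$-rational characters. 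Then $\exp(t)v_{\bmP}=\sum_\chi e^{\diff\chi(t)}v_{\bmP,\chi}$, and I would like to say: as $t$ ranges over the \emph{bounded} polytope $\Omega_{g,\ep}$ — or rather over a slightly enlarged region so that $\exp(t)\calO$ is covered — the norm $\|g\exp(t)o v_{\bmP}\|$ cannot become too small, because it would require \emph{all} the weight components $g\,e^{\diff\chi(t)}v_{\bmP,\chi}$ to be simultaneously small, whereas the components that are "dominant" on the relevant face of $\Omega_{g,\ep}$ are exactly constrained by the $\scrP_{\bmH}$-inequalities. The precise mechanism: I expect to show that for each $\bmP\notin\scrP_{\bmH}$ there is a $\bmP'\in\scrP_{\bmH}$ (e.g.\ a parabolic containing $\bmH$ whose Lie algebra is "dominated by" that of $\bmP$ under $\bmH$, obtained by a Weyl-group/Bruhat argument using $\dot\calW$) such that a suitable weight component of $v_{\bmP}$ is, up to a bounded factor, comparable to $v_{\bmP'}$ composed with the $\bmH$-action; boundedness of $\Omega_{g,\ep}$ then propagates the lower bound $d_{\bmP'}(g\exp(t))\geq\ep$ to a lower bound on $\|g\exp(t)v_{\bmP}\|$, and finally the $\calO$-perturbation is absorbed as before.

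I anticipate the main obstacle to be precisely this last comparison for $\bmP\notin\scrP_{\bmH}$: producing, uniformly in $g$, the dominant $\bmH$-weight of $v_{\bmP}$ on the portion of $\Omega_{g,\ep}$ we care about and relating it to one of the finitely many $v_{\bmP'}$ with $\bmP'\in\scrP_{\bmH}$. The subtlety is that which weight is dominant depends on the direction within the polytope, so one cannot fix a single weight in advance; I would resolve this by covering $\Lie(H)$ (or the relevant cone at infinity) by finitely many Weyl chambers, on each of which a definite weight of $v_{\bmP}$ dominates, and checking chamber-by-chamber that this weight is $\geq \ep$-controlled via the $\scrP_{\bmH}$-conditions defining $\Omega_{g,\ep}$. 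The finiteness of $\scrP_{\bmH}$, of the Weyl group, and of the weights appearing in $\wedge^{\dim\bmP}\frakg$ (as $\bmP$ ranges over the finitely many $\Gamma$-orbits, combined with $\gamma v_{\bmP}=v_{\gamma\bmP\gamma^{-1}}$ and the $\Z$-structure on $\frakg$) is what makes the constant $\ep'$ uniform in $g$. Once the lower bound $\|g\exp(t)v_{\bmP}\|\geq\ep'_2$ is established for all $\bmP\notin\scrP_{\bmH}$ and all $t\in\Omega_{g,\ep}$, taking $\ep':=\min(\ep'_1,\ep'_2)/(\text{norm distortion of }\calO)$ completes the proof.
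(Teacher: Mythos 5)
Your proposal has the right skeleton --- split $\scrP$ into $\scrP_{\bmH}$ and its complement, dispose of the first via the polytope definition, and for $\bmP\notin\scrP_{\bmH}$ relate $v_{\bmP}$ to some $v_{\bmP'}$ with $\bmP'\in\scrP_{\bmH}$ via a Weyl/Bruhat argument --- but there is a genuine gap in how you treat the role of $\calO$, and your proposed resolution of the "main obstacle" is both unnecessary and, as stated, insufficient.

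The missing piece is the lemma (stated just before this proposition in the paper, and quoted from \cite[Lemma 4.1]{EskMozSha97}): for distinct characters $\alpha_1,\dots,\alpha_m$ on $\bmH$ and a nonempty open $\calO\subset H$, there is $\kappa>0$ with
$\sup_{o\in\calO}\bigl\|\sum_i \alpha_i(o)v_i\bigr\|\ge\kappa\,\sup_i\|v_i\|$
for all vectors $v_i$. Your write-up treats the $\calO$-perturbation as a passive distortion to be "absorbed as before" into the constant. That is wrong: for a fixed $o$ the weight components of $g\exp(t)o\,v_{\bmP}$ \emph{can} cancel, so $\|g\exp(t)v_{\bmP}\|$ by itself has no lower bound on $\Omega_{g,\ep}$ --- the polytope only controls the vectors $v_{\bmP'}$ with $\bmP'\in\scrP_{\bmH}$, not arbitrary translates of them by $\gamma\in\bmG(\Q)$. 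The sup over $o\in\calO$ is the active mechanism that decouples the weight components, and once you invoke the lemma, the whole problem reduces to exhibiting a \emph{single} weight component of $g\exp(t)\gamma v_{\bmP'}$ that is bounded below. Consequently, the chamber-by-chamber analysis you envision (tracking the "dominant" weight as a function of the direction in the polytope) is not needed at all, and in fact it does not by itself solve the cancellation problem just described.

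The paper supplies that single controlled component concretely. By reduction theory one writes $v_{\bmP}=\lambda\,\gamma\,v_{\bmP'_0}$ with $\bmP'_0\in\scrP_{\bmH}$, $\gamma\in\bmG(\Q)$ of bounded denominator, and $\lambda$ bounded above and below. A Bruhat decomposition $\gamma=z_0u_0w_0p_0$ relative to a minimal parabolic $\bmP'_1\subset\bmP'_0$ containing $\bmH$ (with $z_0$ in the anisotropic kernel, $u_0$ unipotent, $w_0\in\dot\calW$) shows that the $\bmH$-weight decomposition of $\gamma v_{\bmP'_0}$ has the form $a_0\,v_{w_0\bmP'_0w_0^{-1}}+\sum_{\alpha\ne\alpha_0}v_\alpha$: the $\alpha_0$-component is exactly (a scalar times) one of the finitely many vectors $v_{\bmP''}$ with $\bmP''\in\scrP_{\bmH}$, because $z_0$ preserves each weight space and $u_0$ strictly changes weight. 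Moreover $|a_0|$ is bounded below by the integral structure (it is the $\alpha_0$-projection of $\lambda^{-1}v_{\bmP_0}$). Applying the lemma to $\sup_{o\in\calO}\|g\exp(t)o\cdot\gamma v_{\bmP'_0}\|$ then picks out $\kappa|a_0|\cdot\|g\exp(t)v_{w_0\bmP'_0w_0^{-1}}\|\ge\kappa|a_0|\ep$, and the bounded $\lambda$ converts this to the desired $\ep'$. So the structure you guessed is there, but you need the $\sup_{o\in\calO}$ lemma as the extraction tool, you need the lower bound on $|a_0|$ from bounded denominators, and you do \emph{not} need any chamber decomposition or tracking of which weight dominates for which $t$.
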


Hence whenever $\bmH$ is a maximal $\Q$-split torus in a semisimple group $\bmG$, one can replace the $\Omega$'s in the next section by this one and prove similar statements. This will be needed for Theorem \ref{thmCountSym}.

\begin{conj}
The same thing is true for any other $\Q$-subgroup $\bmH$ where $\Omega_{g,\ep}$ is defined to be a subset of the Lie algebra of $S_{\bmH}$.
\end{conj}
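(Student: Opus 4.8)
The plan is to run the mechanism of Proposition~\ref{exteriortrick} in the setting of parabolic vectors: the perturbation by $\calO$ is there to rule out cancellation among the $\bmH$-weight components of $v_{\bmP}$, while the boundedness of the polytope $\Omega_{g,\ep}$ controls the surviving weight linearly. Decompose $\bigoplus_{k}\wedge^{k}\frakg=\bigoplus_{\chi\in\Phi}M_{\chi}$ into weight spaces for the $\Q$-split torus $\bmH$; the finite set $\Phi$ of occurring characters does not depend on $\bmP$. Write $v_{\bmP}=\sum_{\chi\in\Phi}v_{\chi}$, so that for $t\in\Lie(H)$ and $o\in\calO\subseteq H$ one has $g\exp(t)o\cdot v_{\bmP}=\sum_{\chi\in\Phi}\chi(o)\,e^{\diff\chi(t)}\,(g\cdot v_{\chi})$, since the $\Ad$-actions of $o$ and of $\exp(t)$ are diagonal on the $M_{\chi}$ with eigenvalues $\chi(o)$ and $e^{\diff\chi(t)}$. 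The first step is a \emph{character-separation estimate}: since distinct characters of $\bmH\cong\bmG_m^{\,r}$ are linearly independent as functions on the nonempty open set $\calO\subseteq H\cong(\R_{>0})^{r}$ (substitute $o=\exp x$ and use independence of exponentials with distinct linear forms), one can choose $\#\Phi$ points $o_{1},\dots,o_{\#\Phi}\in\calO$ making the matrix $\big(\chi_{j}(o_{i})\big)_{i,j}$ invertible; inverting the resulting vector-valued linear system yields one constant $c=c(\calO,\Phi)>0$ with $\sup_{o\in\calO}||\sum_{\chi\in\Phi}\chi(o)u_{\chi}||\geq c\,\max_{\chi\in\Phi}||u_{\chi}||$ for every family $(u_{\chi})_{\chi\in\Phi}$ of vectors.

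Next I would select, for each $\bmP\in\scrP$, a single ``dominant'' weight $\chi_{0}=\chi_{0}(\bmP)$ by pigeonhole: since $v_{\bmP}$ is a primitive integral vector we have $||v_{\bmP}||\geq c_{0}$ for a constant $c_{0}>0$ depending only on the fixed norm, and as $v_{\bmP}=\sum_{\chi\in\Phi}v_{\chi}$ has at most $\#\Phi$ terms, some $\chi_{0}$ satisfies $||v_{\chi_{0}}||\geq c_{0}/\#\Phi$. Applying the separation estimate with $u_{\chi}=e^{\diff\chi(t)}(g\cdot v_{\chi})$ and discarding all terms other than $\chi_{0}$ gives
\[
\sup_{o\in\calO}||g\exp(t)o\cdot v_{\bmP}||\ \geq\ c\,e^{\diff\chi_{0}(t)}\,||g\cdot v_{\chi_{0}}||\ \geq\ c\,e^{\diff\chi_{0}(t)}\,||g^{-1}||^{-1}\,\frac{c_{0}}{\#\Phi},
\]
where $||g^{-1}||$ is the operator norm of $\Ad(g)$ on $\bigoplus_{k}\wedge^{k}\frakg$.

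It remains to bound $\diff\chi_{0}(t)$ from below over $t\in\Omega_{g,\ep}$. Since $\Omega_{g,\ep}$ is a bounded polytope (as already recorded) and $\Phi$ is finite, the number $m:=\inf\{\diff\chi(t):\chi\in\Phi,\ t\in\Omega_{g,\ep}\}$ is a finite real depending only on $g$ and $\ep$. Hence $e^{\diff\chi_{0}(t)}\geq e^{m}$ for all $t\in\Omega_{g,\ep}$, and
\[
\ep':=\tfrac12\,c\,e^{m}\,||g^{-1}||^{-1}\,\frac{c_{0}}{\#\Phi}>0
\]
works uniformly in $\bmP\in\scrP$ and $t\in\Omega_{g,\ep}$ (the case $\Omega_{g,\ep}=\emptyset$ being vacuous). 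The delicate point — and the reason $\calO$ must appear — is the uniformity over the \emph{infinite} family $\scrP$; this is precisely what is supplied by the finiteness of the weight set $\Phi$ (so that $c$, $m$ and $\#\Phi$ are $\bmP$-independent) together with the primitivity of $v_{\bmP}$ (so that its dominant component $v_{\chi_{0}(\bmP)}$ cannot degenerate). I expect the character-separation estimate to be the only step with genuine content; the remainder is bookkeeping, and none of it uses reduction theory beyond the already-cited boundedness of $\Omega_{g,\ep}$.
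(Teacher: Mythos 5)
Your argument has a fatal flaw at the point where you invoke boundedness of $\Omega_{g,\ep}$: the quantity $m:=\inf\{\diff\chi(t):\chi\in\Phi,\ t\in\Omega_{g,\ep}\}$ genuinely depends on $g$, and so does $||g^{-1}||$, so the $\ep'$ you produce is not independent of $g$. But $g$-uniformity is the entire content of the statement (it is what the preceding Theorem needs as a hypothesis, uniformly over $g\in G$, and the paper's own proof in the split-torus case makes a point of producing $\ep'=\frac{1}{N_0^3}\kappa\ep$ with $N_0,\kappa$ depending only on the group data and $\calO$). To see that your bound degenerates, take $\bmG=\SL_2$, $\bmH$ the diagonal torus, and $g=\diag(r,r^{-1})$: then $\Omega_{g,\ep}$ is an interval of fixed length centered near $-\ln r$, so the values of $\diff\chi(t)$ on $\Omega_{g,\ep}$ for the various $\chi\in\Phi$ drift off to $\pm\infty$ as $r\to\infty$, $m\to-\infty$, and your $\ep'\to0$. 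The pigeonhole step is where the argument goes wrong: the component $v_{\chi_0}$ of $v_{\bmP}$ with the largest norm is simply the wrong component to isolate, because its weight $\chi_0$ need not be one of the finitely many $\alpha_{\bmP'}$ ($\bmP'\in\scrP_{\bmH}$) that actually appear in the defining inequalities of $\Omega_{g,\ep}$, so nothing constrains $e^{\diff\chi_0(t)}||g\cdot v_{\chi_0}||$ from below uniformly. In the paper's proof of the torus case, the correct component is found via reduction theory and Bruhat decomposition, which exhibits a term $a_0 v_{w_0\bmP'_0 w_0^{-1}}$ whose weight \emph{does} appear in the defining inequalities (since $w_0\bmP'_0w_0^{-1}\in\scrP_{\bmH}$) and whose coefficient $a_0$ has a $g$-independent lower bound by integrality; this is the nontrivial step, and it is exactly what your pigeonhole bypasses.

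There is a second, more structural problem: your identity $g\exp(t)o\cdot v_{\bmP}=\sum_{\chi}\chi(o)e^{\diff\chi(t)}(g\cdot v_{\chi})$ assumes that $o\in\calO\subset H$ acts on the weight space $M_{\chi}$ by the scalar $\chi(o)$, which is true only when $\bmH$ coincides with its maximal $\Q$-split quotient torus $S_{\bmH}$, i.e.\ when $\bmH$ is a split torus. That is precisely the case already proved in Proposition~\ref{propNondivMaxTori}; the conjecture is about general $\Q$-subgroups $\bmH$, for which the action of $H$ on the $S_{\bmH}$-weight spaces of $\bigwedge^{\bullet}\frakg$ is not diagonal. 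Handling this requires something like the finite-dimensional function-space trick of Proposition~\ref{exteriortrick}, and even then the uniformity problem above remains. Note also that the paper explicitly leaves this as a conjecture, states that a proof exists only for $\bmG=\SL_N$, and omits even that; a short argument that never invokes reduction theory or any special structure should have been a warning sign.
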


One can give a proof of this when $\bmG=\SL_N$. As this is neither used in the main theorem nor in the applications, we shall omit the proof here.

A proof of this conjecture should yield results on equidistribution on the boundary in light of the work \cite{DawGoroUll18} and \cite{DawGoroUllLi19}.

Before we start the proof of Proposition \ref{propNondivMaxTori}, we need \cite[Lemma 4.1]{EskMozSha97} adapted to our situation.

\begin{lem}
For a set of distinct characters $\{\alpha_1,...,\alpha_n\}$ on $\bmH$ and a nonempty open set $\calO$ in $H$. Fix a vector space and a norm $||\cdot||$. Then there exists $\kappa>0$ such that for any $n$ vectors $v_1,...,v_n$, one has
\begin{equation*}
    \sup_{o\in\calO} ||\sum_i \alpha_i(o) v_i|| \geq \kappa \sup_i ||v_i||.
\end{equation*}
\end{lem}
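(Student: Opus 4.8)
The statement is a quantitative linear-independence estimate: for finitely many distinct characters $\alpha_1,\dots,\alpha_n$ of $\bmH$ and a fixed nonempty open set $\calO\subset H$, there is $\kappa>0$ so that $\sup_{o\in\calO}\|\sum_i\alpha_i(o)v_i\|\ge\kappa\sup_i\|v_i\|$ for all choices of vectors $v_i$. The plan is to reduce it to a statement about a fixed finite-dimensional space of functions and then invoke compactness. First I would observe that the left-hand side is a norm-like quantity that is scale-invariant in $(v_1,\dots,v_n)$: replacing each $v_i$ by $\lambda v_i$ scales both sides by $|\lambda|$. So it suffices to prove the inequality for tuples normalized by $\sup_i\|v_i\|=1$, i.e. to show that $\inf\{\sup_{o\in\calO}\|\sum_i\alpha_i(o)v_i\| : \sup_i\|v_i\|=1\}$ is strictly positive.

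The next step is the key one: linear independence of characters. Since the $\alpha_i$ are \emph{distinct} characters of $\bmH$ (a connected group), they are linearly independent as functions on $\bmH$ — in fact, viewed as regular functions on the torus quotient of $\bmH$ they are distinct monomials, so Artin's theorem on independence of characters (or simply the fact that distinct weight functions on a torus are linearly independent) applies. Because $\calO\cap\bmH(\Q)$ is Zariski-dense in $\bmH$ — or even just because $\calO$ is a nonempty open subset of the real points and the $\alpha_i$ are analytic — the restrictions $\alpha_i|_{\calO}$ remain linearly independent; equivalently, I can pick points $o_1,\dots,o_n\in\calO$ such that the matrix $(\alpha_i(o_j))_{i,j}$ is invertible. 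This mirrors exactly the Zariski-density argument used in the proof of Proposition~\ref{exteriortrick}, where a finite set $\Lambda\subset\calO\cap\bmH(\Q)$ is chosen so that restriction of a finite-dimensional function space to $\Lambda$ is an isomorphism.

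From an invertible matrix $M=(\alpha_i(o_j))$, I would conclude directly: for any tuple $(v_1,\dots,v_n)$, the vectors $w_j:=\sum_i\alpha_i(o_j)v_i$ satisfy $v_i=\sum_j(M^{-1})_{ij}w_j$, hence $\|v_i\|\le\big(\sum_j|(M^{-1})_{ij}|\big)\max_j\|w_j\|\le\|M^{-1}\|_{\infty}\cdot\max_j\|w_j\|\le\|M^{-1}\|_{\infty}\cdot\sup_{o\in\calO}\|\sum_i\alpha_i(o)v_i\|$. Taking the max over $i$ and setting $\kappa:=\|M^{-1}\|_{\infty}^{-1}$ finishes the argument. (Alternatively, one runs a compactness argument: the map $(v_1,\dots,v_n)\mapsto\sup_{o\in\calO}\|\sum_i\alpha_i(o)v_i\|$ is continuous and positive on the compact set $\{\sup_i\|v_i\|=1\}$ — positivity being exactly linear independence of characters — so it attains a positive minimum; but the explicit matrix route is cleaner and avoids needing $\calO$ or the ambient space to be finite-dimensional in a quantitative way.) The main obstacle, such as it is, is simply making precise that distinctness of the characters gives linear independence of their restrictions to the open set $\calO$ — once that is in hand, everything is elementary linear algebra with no dependence on the particular vectors $v_i$ or on which vector space and norm were fixed.
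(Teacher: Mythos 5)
Your proof is correct. The paper itself gives no argument for this lemma — it only cites \cite[Lemma~4.1]{EskMozSha97} — so a line-by-line comparison is impossible, but your route is sound and self-contained. The core of it is the evaluation-matrix argument: distinct characters of $\bmH$ restrict to linearly independent analytic functions on $H$, hence remain linearly independent on the nonempty open set $\calO$, so one can choose $o_1,\dots,o_n\in\calO$ with $M=(\alpha_i(o_j))$ invertible; then $v_i=\sum_j (M^{-1})_{ij}w_j$ with $w_j=\sum_i\alpha_i(o_j)v_i$ gives $\sup_i\|v_i\|\le\|M^{-1}\|_\infty\max_j\|w_j\|\le\|M^{-1}\|_\infty\sup_{o\in\calO}\|\sum_i\alpha_i(o)v_i\|$, i.e.\ the lemma with $\kappa=\|M^{-1}\|_\infty^{-1}$. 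This is actually a bit sharper than the usual compactness argument (the function $(v_1,\dots,v_n)\mapsto\sup_{o\in\calO}\|\sum\alpha_i(o)v_i\|$ is only lower semicontinuous, which still suffices for a positive minimum on a compact sphere, but the matrix route sidesteps even that and does not require the ambient vector space to be finite-dimensional). Two small remarks: the normalization $\sup_i\|v_i\|=1$ you set up at the start is never used in the matrix argument (it is only needed for the compactness alternative); and the step that distinct $\Q$-characters of $\bmH$ remain distinct and linearly independent on $H$ uses that $H$ is Zariski dense in $\bmH$, which is true here since $\bmH$ is a (split) torus — worth flagging since the lemma is applied with $\bmH$ a maximal $\Q$-split torus.
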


\begin{proof}[Proof of Proposition \ref{propNondivMaxTori}]
By reduction theory, there exists $N_0 \in \Z^+$ such that for all $\bmP \in \scrP$, there exists $\gamma_{\bmP} \in \bmG(\Q)$ and $\bmP' \in \scrP_{\bmH}$ such that 
\begin{equation*}
    v_{\bmP} = {\lambda}\gamma_{\bmP}\cdot v_{\bmP'} 
\end{equation*}
for some $\frac{1}{N_0}\leq |\lambda| \leq N_0$. 
For each character $\alpha$ of $\bmH$ appearing in the Adjoint representation and all its exterior powers, define $\pi_{\alpha}$ to be the projection to the corresponding weight space. After enlarging $N_0$, we may assume that $\pi_{\alpha}$ maps each integral vector to some vector that is either $0$ or has norm at least $\frac{1}{N_0}$. We also apply the Lemma above to the Adjoint representation and all its exterior powers and all weights that appear here to get some $\kappa>0$. 

Now fix such a $\bmP_0$, write $\gamma_0$ for $\gamma_{\bmP_0}$ and write $\lambda_0$ for the $\lambda$ appearing here. 
Take $\bmP_1'$ to be a minimal parabolic subgroup that is contained in $\bmP'_0$ and contains $\bmH$. Then $\bmP_1'=\bmM'_1\cdot \bmH \cdot \bmU_1'$ where $\bmU_1'$ is the unipotent radical of $\bmP_1'$ and $\bmM'_1\leq \bmZ_{\bmH}\bmG$ is $\Q$-anisotropic. Note that $\bmM'_1$ fixes $v_{\bmP}$ for all $\bmP$ containing $\bmH$ and preserves each weight space of $\bmH$.

By Bruhat decomposition, one can write $\gamma_{0}= z_0 u_0 w_0 p_0$ for some $z_0 \in \bmM'_1(\Q)$, $u_0 \in \bmU_1'(\Q)$, $w_0 \in \dot{\calW}$ and $p_0 \in \bmP_1'(\Q)$. Hence 
\begin{equation*}
    \gamma_0 v_{\bmP_0'} = a_1 (z_0 u_0)\cdot v_{w_0\bmP_0'w_0^{-1}} = a_0 v_{w_0\bmP_0'w_0^{-1}} + \sum v_{\alpha}
\end{equation*}
for some $a_0, a_1\neq 0$ and some weight vectors $v_{\alpha}$ with respect to certain character $\alpha$ that is \textit{distinct} from the one corresponding to $w_0\bmP_0'w_0^{-1}$. To see why $v_{\alpha}$'s have different weights, one may write $u_0$ as the exponential of some nilpotent element whose action would always change the weight. And remember that $z_0$ preserves each weight space. Thus if $\alpha_0$ is the weight corresponding to $w_0\bmP_0'w_0^{-1}$, then 
\begin{equation*}
    a_0 v_{w_0\bmP_0'w_0^{-1}}=\pi_{\alpha_0}(\gamma_0 v_{\bmP_0'} )= \pi_{\alpha_0}(\lambda_0^{-1} v_{\bmP_0}),
\end{equation*}
implying $|a_0| \geq \frac{1}{N_0^2}$. Now take $t\in \Omega_{g,\ep}$,
\begin{equation*}
\begin{aligned}
          &\sup_{o\in\calO}||g\exp{(t)}ov_{\bmP_0}|| \\
          =&   \sup_{o\in\calO}||\lambda_0 g\exp{(t)} o \gamma_0v_{\bmP'_0}|| \\
          = & \sup_{o\in\calO}||\lambda_0 g\exp{(t)} o (a_0 v_{w_0\bmP_0'w_0^{-1}} + \sum_{\alpha\neq \alpha_0} v_{\alpha})|| \\
          =& 
          |\lambda_0| \sup_{o\in\calO}
          ||g \exp{(t)} (\alpha_0 (o)  a_0v_{w_0\bmP_0'w_0^{-1}}+ \sum_{\alpha\neq \alpha_0} \alpha (o) v_{\alpha})|| \\
          =& 
          |\lambda_0|\sup_{o\in\calO} 
          ||
          \alpha_0 (o) g \exp{(t)}  a_0 v_{w_0\bmP_0'w_0^{-1}} +
           \sum_{\alpha\neq \alpha_0} \alpha (o) g \exp{(t)} v_{\alpha})
          ||\\
          \geq&
          \kappa|\lambda_0| \cdot ||g \exp{(t)} a_0 v_{w_0\bmP_0'w_0^{-1}}||\\
          \geq& 
          \kappa |\lambda_0| \ep |a_0| \geq \frac{1}{N_0^3}\kappa\ep .
\end{aligned}
\end{equation*}
Setting $\ep':=\frac{1}{N_0^3}\kappa\ep $ then concludes the proof.
\end{proof}

\section{Translates by $\Gamma$ and equidistribution}\label{secGammaEqui}

In this section we are given an observable standard triple $(\bmG,\bmH,\Gamma)$ and a sequence $\{\gamma_n\}\subset \Gamma$ and we wish to study the possible limits of $(\gamma_n)_*\mu_{H}$.

\begin{defi}\label{defminimal}
Given a standard triple $(\bmG,\bmH,\Gamma)$, a connected $\Q$-subgroup $\bmL\leq \bmG$ and a sequence $\{\gamma_n\}\subset \Gamma$.
We say that $(\{\gamma_n\},\bmL)$ is \textbf{minimal} for $\bmH$ if and only if for all infinite subsequences $\{n_k\}$, the closed subgroup generated by
$\bigcup \gamma_{n_k}\bmH\gamma_{n_k}^{-1}$ is equal to $\bmL$
 and  \textbf{potentially minimal} for $\bmH$ if and only if for all infinite subsequences $\{n_k\}$, the closed subgroup generated by $\bigcup \gamma_{n_k}\bmH\gamma_{n_k}^{-1}$ is epimorphic in $\bmL$.
\end{defi}

Recall that a subgroup $\bmH$ of $\bmG$ is said to be \textbf{epimorphic} iff for every representation $(\rho,\bmV)$ of $\bmG$ and every $v\in \bmV$ that is fixed by $\bmH$, $v$ is also fixed by $\bmG$. The notion of being epimorphic is closely related to being observable. Take $\bmH$ to be a subgroup of $\bmG$ and $\bmL$ to be the smallest observable subgroup of $\bmG$ containing $\bmH$. Then $\bmH$ is epimorphic in $\bmL$. And if $\bmH$ is epimorphic in another subgroup $\bmF$ of $\bmG$, then $\bmF$ is contained in $\bmL$. This $\bmL$ is called the \textbf{observable hull} of $\bmH$ in $\bmG$.

  We need the following important input from the work of Eskin--Mozes--Shah 
  (see \cite[Theorem 2.1]{EskMozSha96} and \cite{EskMozSha96Errat}).  Note that a connected real algebraic group for them is $G$ here for some linear algebraic group $\bmG$ defined over $\R$.
    
    \begin{thm}\label{EMSTheorem}
    Let $(\bmG,\bmH,\Gamma)$ be a standard triple, $\bmL$ be a connected $\Q$-subgroup and $\calO\subset H$ be a non-empty open bounded subset.
    Assume that we are given a sequence of morphisms 
    $\{\bmc_i:\bmH\to \bmL \}_{i\in \Z^+}$ of algebraic groups over $\Q$ such that 
    \begin{itemize}\label{assumptionEMS}
        \item[(1)] no proper $\Q$-subgroup of $\bmL$ contains $\bmc_i(\bmH)$ for infinitely many $i$;
        \item[(2)] for every $h\in \bmH(\Q)$, there exists $k\in \Z^+$ such that $\{\bmc_i(h)\}\subset \bmL(\frac{1}{k}\Z)$;
        \item[(3)] for each sequence $\{h_i\}$ in $H$ that converges to $e_{\bmH}$, all the eigenvalues for the action of $\Ad(\bmc_i(h_i))$ on $\Lie(\bmL)$ tend to $1$ as $i\to\infty$;
        \item[(4)] for each regular algebraic function $f$ on $\bmL$, $\{ \bmc_i^*(f)\}$ spans a finite-dimensional space of functions on $\bmH$;
        \item[(5)] for all $i$, $\bmc_i(\Gamma_H) \subset \Gamma_L$.
    \end{itemize}
    By the last assumption, $\bmc_i$ induces a map $H/\Gamma_H \to L/\Gamma_L$.
    Take $\nu$ to be any limit point of $(\bmc_i)_*\widehat{\mu}_{\calO}$,
     then $\nu$ is an ${}^{\circ}L$-invariant probability measure.
    \end{thm}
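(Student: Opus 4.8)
The plan is to extract the statement from \cite[Theorem 2.1]{EskMozSha96} by verifying that the hypotheses $(1)$--$(5)$ are exactly (a slight repackaging of) the hypotheses used there, and then quote their conclusion. So the bulk of the work is bookkeeping: the substantive content is already in Eskin--Mozes--Shah, and our Theorem \ref{EMSTheorem} is deliberately phrased to match. First I would recall the set-up of \cite[Theorem 2.1]{EskMozSha96}: one has a sequence of representations/embeddings of $H$ into the real points of an algebraic group, satisfying a non-concentration hypothesis (our $(1)$, that $\bmc_i(\bmH)$ is not eventually trapped in a proper $\Q$-subgroup of $\bmL$), a rationality/bounded-denominator hypothesis (our $(2)$ and $(5)$, ensuring the induced maps $H/\Gamma_H\to L/\Gamma_L$ make sense and respect the integral structures), a ``unipotence in the limit'' hypothesis (our $(3)$, that the eigenvalues of $\Ad(\bmc_i(h_i))$ tend to $1$ when $h_i\to e_{\bmH}$, which is what allows Ratner's theorem and the linearization technique of Dani--Margulis to be applied to the limiting measure), and a finite-dimensionality hypothesis on pullbacks of regular functions (our $(4)$, which controls the algebraic complexity uniformly in $i$).

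The key steps, in order: (i) Pass to a subsequence along which $(\bmc_i)_*\widehat{\mu}_{\calO}$ converges to $\nu$; by the non-divergence input (Proposition \ref{nondiverg2}, applied with $\bmc_i=\bmc_{\gamma_i}$, or more simply by the non-divergence estimates built into \cite{EskMozSha96}) $\nu$ is a probability measure, so no mass escapes to infinity. (ii) Invoke \cite[Theorem 2.1]{EskMozSha96} directly: under $(1)$--$(5)$ every such limit $\nu$ is a homogeneous probability measure, invariant under a subgroup containing the image of $H$ in the limit, and in fact invariant under ${}^{\circ}L$. Here ${}^{\circ}L$ appears because $H$, hence $\bmc_i(\bmH)$, has no nontrivial $\Q$-characters affecting the relevant directions; more precisely the limiting invariance group contains a group generated by unipotents together with the relevant anisotropic part, and by the maximality/minimality forced by $(1)$ this group contains ${}^{\circ}L(\R)^+={}^{\circ}L$. (iii) Conclude that $\nu$ is ${}^{\circ}L$-invariant.

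The one point that is not pure quotation, and hence the main obstacle, is checking that ${}^{\circ}L$ is the correct invariance group in our formulation: \cite{EskMozSha96} typically states the conclusion as invariance under the subgroup generated by unipotent one-parameter subgroups contained in the limiting stabilizer, and one must argue that this subgroup is all of ${}^{\circ}L$ rather than something smaller. For this I would use hypothesis $(1)$: if the limiting measure were invariant only under a proper subgroup, then (by the algebraicity of stabilizers of homogeneous measures, together with $(4)$) the supports of $(\bmc_i)_*\widehat{\mu}_{\calO}$ would eventually be contained in a single proper $\Q$-subgroup of $\bmL$ containing the $\bmc_i(\bmH)$, contradicting $(1)$; and the ``${}^{\circ}$'' rather than the full $L$ is forced because $\widehat{\mu}_{\calO}$ is compactly supported so its translates cannot be invariant under a $\Q$-split torus direction with nontrivial character. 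I expect this last normalization argument to be the only place requiring genuine care; everything else is a matter of matching notation with \cite{EskMozSha96} and its erratum \cite{EskMozSha96Errat}.
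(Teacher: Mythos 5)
The central move in your proposal---``Invoke \cite[Theorem 2.1]{EskMozSha96} directly''---is precisely what cannot be done, and recognizing this is the substance of the theorem. The original statement of \cite[Theorem 2.1]{EskMozSha96} assumes that $\bmH$ and $\bmL$ have no nontrivial $\Q$-characters (equivalently, that $\Gamma_H$, $\Gamma_L$ are lattices, so $\mu_H$ and $\mu_L$ are finite), and it pushes forward the full homogeneous probability measure. Here $\bmH$ and $\bmL$ are allowed to have $\Q$-characters, $\mu_H$ may be infinite, and the measure being pushed forward is the restriction $\widehat{\mu}_{\calO}$ to a bounded open piece. These are not cosmetic changes one can absorb by ``matching notation'': they break the hypotheses of the theorem you want to quote. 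What the paper actually does is re-run the \emph{proof} of \cite[Theorem 2.1]{EskMozSha96}, step by step, and check that each step survives these modifications: (i) replacing the $E_G(m,n,\Lambda)$ class of functions used in EMS by a $(C,\alpha)$-good family built from hypothesis $(4)$ and the finite-dimensional left-$\bmH$-invariant span of matrix coefficients, to get both non-divergence and the polynomial-like behavior needed for linearization; (ii) noting that the unipotent-invariance step (their Proposition 2.2) never used the absence of $\Q$-characters; (iii) following the Dani--Margulis linearization argument with the erratum; (iv) an inductive step over the Ratner-class normal $\Q$-subgroup $\bmF$: disintegrate $\nu$ over $G/F$, apply induction to see the base is ${}^{\circ}(G/F)$-invariant, and conclude $\nu$ is ${}^{\circ}L$-invariant. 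Your step (iii) as written (the ``compactly supported so cannot be torus-invariant'' heuristic, or ``maximality forced by $(1)$'') is a gesture at the answer but does not replace this inductive fibration argument, which is where the ${}^\circ$ actually comes from.

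Two secondary issues. First, invoking Proposition \ref{nondiverg2} for non-divergence is not quite licensed: that proposition concerns conjugations $\bmc_\gamma$ by elements $\gamma\in\bmX(\bmH,\bmL)\cap\Gamma$ with $h$ ranging over specific polytopes, whereas the $\bmc_i$ here are arbitrary morphisms subject to $(1)$--$(5)$; the paper instead derives non-divergence inside the proof from $(C,\alpha)$-goodness and hypothesis $(1)$. Second, the parenthetical ``more simply by the non-divergence estimates built into \cite{EskMozSha96}'' also does not work as stated, since those estimates are exactly the $E_G(m,n,\Lambda)$ machinery the paper is explicitly replacing. In short: your reading of the five hypotheses as mirroring EMS is correct, and your overall architecture (non-divergence, then Ratner/linearization, then identify the invariance group) is the right one---but the proof of the theorem is not a quotation, and treating it as one leaves the actual content unaddressed.
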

    
    The only difference from the original statement is that neither $\bmH$ nor $\bmL$ is assumed to have no non-trivial $\Q$-characters and a full homogeneous measure is replaced by $\mu_\calO$.  Let me briefly recall the proof to assure the reader that almost the same proof still works. Undefined terms can be found in \cite{EskMozSha96}. 
    
    \begin{enumerate}
        \item One may assume that $\calO=h_0\exp(B)$ for some $h_0\in \bmH(\Q)\cap H$ and $B \subset \Lie(H)$ open bounded 
        and that by $(C,\alpha)$-goodness $\nu$ is a probability measure. 
        In the original exposition of \cite{EskMozSha96} another class of functions $E_G(m,n,\Lambda)$ is used. Let us briefly explain we indeed have the $(C,\alpha)$-goodness. Fix a finite-dimensional 
        $\R$-representation $V$ of $\bmG$, let
        \begin{equation*}
            \scrF:=\{f:B\to \C\,\vert\, 
            f(x)=\left\la
            g\bmc_i(h'\exp(x))v,l
            \right\ra,\,v\in V,\,l\in V^*,\,h'\in H,\,i\in\Z^+
            \}
        \end{equation*}
        Then $\scrF$ spans a finite-dimensional vector space $\calF$. This is because the matrix coefficients are finite-dimensional, the fourth property from Theorem \ref{assumptionEMS} holds and for any finite-dimensional subspace of regular functions on $\bmH$ there exists a bigger one that is still of finite dimension and left $\bmH$-invariant(see \cite[Proposition 2.3.6]{Spr98}). Moreover, each function in $\calF$ is analytic and hence one may conclude with \cite[Proposition 3.4]{KleMar98}.
        \item Next one shows that $v$ is invariant under a non-trivial one-parameter unipotent subgroup $U$. 
        Indeed having no non-trivial $\Q$-characters plays no role in the proof of \cite[Proposition 2.2]{EskMozSha96}. Invoking rigidity theorem one obtains a $\Q$-subgroup $\bmF$ with $\nu(\pi_{\Gamma}S(\bmF,U))=0$ and $\nu(\pi_{\Gamma}N(\bmF,U))\neq 0$.
        \item Then one applies \cite[Proposition 3.13]{EskMozSha96}. Though stated for $E_G(m,n,\Lambda)$ it holds also for $(C,\alpha)$-good functions.  Then one may continue with the argument on the last four paragraphs of \cite[Page 273]{EskMozSha96} and note that the argument on page 274 therein has been replaced by \cite{EskMozSha96Errat}. This part of arguments also makes no use of having no non-trivial $\Q$-characters so it carries through.
        \item Now we obtain a normal $\Q$-subgroup $\bm{F}$, possibly different from the $\bm{F}$ above, that is of Ratner class and contains $U$ above such that $\nu$ is $F$-invariant. Look at the fibre bundle
        \begin{equation*}
             \begin{tikzcd}
         Fg\Gamma/\Gamma= g F\Gamma/\Gamma \arrow[r, hook]
         & \displaystyle\quotient{G}{\Gamma}  \arrow[d, "\pi_F"]\\
          & \displaystyle{
          \quotient{\left(G/{F}\right)
          }
          {\pi_F(\Gamma)}
          }
         \end{tikzcd}.
        \end{equation*}
       One may decompose the measure \begin{equation*}
           \nu=\int_{x\in {\left({G}/{F}\right)
          }\big/
          {\pi_F(\Gamma)}
          }
          \widehat{\mu}_{Fx\Gamma}
          {(\pi_F)}_*\nu(x)
       \end{equation*}
       where $\widehat{\mu}_{Fx\Gamma}$ is the unique $F$-invariant probability measure supported on $Fx\Gamma/\Gamma$.  By induction (one can verify the natural induction hypothesis is satisfied) we see that $ {(\pi_F)}_*\nu$ is invariant by
       ${}^{\circ}\left(
       G/F
       \right) = {}^{\circ}G/F$. Then one can check by the integral expression above that $\nu$ is ${}^{\circ}G$-invariant.
    \end{enumerate}

\begin{coro}\label{coroEMS}
Suppose that we are given a standard triple $(\bmG,\bmH,\Gamma)$, a sequence $\{\gamma_n\}\subset \Gamma$ and a connected $\Q$-subgroup $\bmL \leq \bmG$. Assume that $(\{\gamma_n\},\bmL)$ is minimal for $\bmH$.
Consider the map $\bmc_n: H/\Gamma_H \to L/\Gamma_L$ induced from $\bmc_{\gamma_n}$ defined by $\bmc_{\gamma_n}(h)=\gamma_nh\gamma_n^{-1}$.
Then for all non-empty open bounded subsets $\calO$ of $H$, all weak-$*$ limits of $\{(\bmc_n)_*\widehat{\mu}_{\calO}\}$ in $L/\Gamma_L$ are ${}^{\circ}L$-invariant probability measures.
\end{coro}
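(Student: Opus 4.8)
The plan is to deduce the corollary directly from Theorem \ref{EMSTheorem}, applied to the sequence of $\Q$-morphisms $\bmc_i:=\bmc_{\gamma_i}:\bmH\to\bmL$, $h\mapsto\gamma_i h\gamma_i^{-1}$. So the whole task is to verify the five hypotheses (1)--(5) of that theorem for this choice of $\bmc_i$; granting them, Theorem \ref{EMSTheorem} will immediately give that every weak-$*$ limit point $\nu$ of $\{(\bmc_i)_*\widehat{\mu}_\calO\}$ in $L/\Gamma_L$ is an ${}^{\circ}L$-invariant probability measure, which is exactly the assertion.

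I would first dispose of the two purely group-theoretic conditions (1) and (5). For (5), since $\gamma_i\in\Gamma$ conjugation by $\gamma_i$ preserves $\Gamma$, and $\gamma_i\bmH\gamma_i^{-1}\subseteq\bmL$ gives $\bmc_i(\Gamma_H)=\gamma_i(\Gamma\cap\bmH)\gamma_i^{-1}=\Gamma\cap\gamma_i\bmH\gamma_i^{-1}\subseteq\Gamma_L$. For (1), I would argue by contradiction: if a proper $\Q$-subgroup $\bmL'$ of $\bmL$ contained $\bmc_i(\bmH)=\gamma_i\bmH\gamma_i^{-1}$ for every $i$ in an infinite set $S$, then $\bmL'$, being Zariski closed, would contain the closed subgroup generated by $\bigcup_{i\in S}\gamma_i\bmH\gamma_i^{-1}$, which by the minimality of $(\{\gamma_n\},\bmL)$ for $\bmH$ (Definition \ref{defminimal}, applied to the subsequence indexed by $S$) equals $\bmL$ --- contradicting properness.

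Next I would check the arithmetic/analytic conditions (2), (3), (4). For (2): fixing $h\in\bmH(\Q)$ and using the $\Z$-structure with respect to which $\Gamma$ is commensurable with $\bmG(\Z)$, the $\gamma_i$ have uniformly bounded denominators, hence so do the conjugates $\gamma_i h\gamma_i^{-1}$, i.e.\ $\{\bmc_i(h)\}\subseteq\bmL(\tfrac1k\Z)$ for a single $k=k(h)$. For (3): if $h_i\to e_{\bmH}$ in $H$, the operator $\Ad(\bmc_i(h_i))=\Ad(\gamma_i)\Ad(h_i)\Ad(\gamma_i)^{-1}$ on $\Lie(\bmG)$ is conjugate to $\Ad(h_i)$, so it shares its eigenvalues, which tend to $1$; since $\bmc_i(h_i)\in\bmL$, the subspace $\Lie(\bmL)$ is invariant and the eigenvalues on it form a sub-multiset of those on $\Lie(\bmG)$, so they too tend to $1$. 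For (4): since $\bmL$ is a closed $\Q$-subgroup of the affine group $\bmG$, a regular function $f$ on $\bmL$ extends to $\widetilde{f}\in\Q[\bmG]$; writing the pullback of $\widetilde{f}$ along the conjugation morphism $\bmG\times\bmH\to\bmG$ as a finite sum $\widetilde{f}(\gamma h\gamma^{-1})=\sum_{j=1}^m a_j(\gamma)b_j(h)$ with $a_j\in\Q[\bmG]$, $b_j\in\Q[\bmH]$, one sees that every $\bmc_i^*(f)$ lies in the fixed finite-dimensional space $\mathrm{span}\{b_1,\dots,b_m\}$.

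With (1)--(5) verified, Theorem \ref{EMSTheorem} closes the argument. I do not expect a genuinely new obstacle here: all the hard analysis --- non-escape of mass via $(C,\alpha)$-goodness, the unipotent rigidity input, and the fibre-bundle/induction step producing ${}^{\circ}L$-invariance --- is already packaged inside Theorem \ref{EMSTheorem}. The only spots demanding a little care are the extraction of the infinite subsequence in (1) and the fact that in (4) the spanning set $\{b_j\}$ must be chosen once and for all, independently of $i$; both are handled above.
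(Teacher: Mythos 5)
Your proposal is correct and takes essentially the same approach as the paper: the corollary is stated immediately after Theorem \ref{EMSTheorem} with no separate proof, the intended reading being precisely that one applies that theorem to $\bmc_i=\bmc_{\gamma_i}$ and verifies hypotheses (1)--(5), which you have done carefully (in particular, your use of minimality for hypothesis (1) and the bounded-denominator and finite-dimensional-span arguments for (2) and (4) are the right checks).
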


Note that for $N\subset M$ a closed embedded submanifold, a sequence of probability measures $\{\mu_n\}$ that converges to $\mu$ on $N$ also converges to $\mu$ on $M$. Hence in order for us to understand the limit of $\{(\bmc_n)_*{\mu}_{H}\}$, in light of the above corollary, two things remain to be done:
\begin{itemize}
    \item merely being ${}^{\circ}L$-invariant is not satisfactory. We want $\{(\bmc_n)_*{\mu}_{H}\}$ to converge to $\mu_L$ on $L/\Gamma_L$;
    \item find such a $\bmL$ that is observable in $\bmG$.
\end{itemize}

We shall take care of the first point in Section \ref{sec3PartI} and the second point in Section \ref{sec3PartII}.
Before that we shall give some preliminaries on polytopes in Section \ref{secPolytopes}.

\subsection{Polytopes and Cones}\label{secPolytopes}

\begin{defi}\label{defiboundedFunctionals}
Let $(\bmG,\bmH,\Gamma)$ be a standard triple and $\bmL$ be a connected $\Q$-subgroup of $\bmG$.
Given a $\Q$-representation $\rho:\bmL \to\SL_N(\bmV)$ and a sequence $\{\gamma_n\}$ such that $\bmc_{\gamma_n} (\bmH) $ is contained in $\bmL$ for all $n$. 
For an element $\Phi_{\rho}$ in $\Phi_{\rho,\bmL}$ and a subset $\Phi$ of $\Phi_{\rho}$,
we say that $\{\gamma_n\}$ is 
$\Phi$\textbf{-clean} iff 
$\bmc_{\gamma_n}^* \Phi_{\rho\vert_{\bmc_{\gamma_n}\bmH}}$ is equal to $\Phi_{\rho}$ for all $n$ and
for each $\alpha\in\Phi$, denoting 
$\alpha_n := (\bmc_{\gamma_n})_* \alpha \in X^*(\bmc_{\gamma_n}(\bmH))$, either
\begin{enumerate}
    \item $\inf_{0\neq v\in \bmV_{\alpha_n}(\Z)} || v|| \to +\infty$, or
    \item $\inf_{0\neq v\in \bmV_{\alpha_n}(\Z)} ||v|| $ remains bounded.
\end{enumerate}
\end{defi}

Recall that by Lemma \ref{lemFinitePhi_Rho,L}, $\Phi_{\rho,\bmL}$ is a finite set.
Hence for arbitrary sequence $\{\gamma_n\}\subset \bmX(\bmH,\bmL)\cap \Gamma$, by passing to a subsequence, we may always assume that $\{\gamma_n\}$ is $\Phi_{\rho}$-clean for some $\Phi_{\rho}\in \Phi_{\rho,\bmL}$.

\begin{defi}\label{defiofphi}
For a $\Phi$-clean sequence $\{\gamma_n\}$, let $\Phi_{\infty}(\{\gamma_n\}) \subset \Phi$ consist of those $\alpha$'s that fall in case $(1)$ and $\Phi_{bdd}(\{\gamma_n\}) $ be its complement. 
We also define 
\begin{equation*}
\begin{aligned}
    \Phi_0(\{\gamma_n\}):=& \{
    \alpha\in \Phi_{bdd}(\{\gamma_n\})\,\vert\,
    \exists \alpha \in I \subset \Phi_{bdd},\, 
    \exists \{a_{\beta}\}_{\beta \in I} \subset \R_{>0},\, 
    \sum_{\beta\in I} a_{\beta}\beta=0
    \} \\
    \Phi_1(\{\gamma_n\}):=& \,\Phi_{bdd}(\{\gamma_n\}) \bs \Phi_0(\{\gamma_n\})
\end{aligned}
\end{equation*}
\end{defi}
Note that by definition, $\Phi=\Phi_{\infty}(\{\gamma_n\})\sqcup \Phi_1(\{\gamma_n\})\sqcup\Phi_0(\{\gamma_n\})$.

Though in the definition of $\Phi_0$, $a_{\beta}$'s are just positive real numbers, but as characters form a $\Z$-lattice in its $\R$-linear span we may and do choose $\{a_{\beta}\}$ to be positive rational numbers and even positive integers.

Now we turn to some generalities on polytopes and cones.
For $V$ a finite-dimensional $\R$-vector space and $\Phi\subset V^*$ a finite collection of functionals, define
\begin{equation*}
    \Cone(\Phi):=\{v\in V  \,\vert\, \alpha(v) \geq 0,\, \forall \alpha\in \Phi\}.
\end{equation*} 
Let $\Phi_0$ be defined the same way as in Definition \ref{defiofphi} 
replacing $\Phi_{bdd}$ by $\Phi$. 
\begin{defi}
Define $W(\Phi)$ to be the $\R$-linear subspace spanned by $\Cone(\Phi)$ and $\pi_{W(\Phi)}:V\to V/W(\Phi)$ to be the natural projection. Moreover if V is equipped with an Euclidean metric, we identify $V/W(\Phi)$ as the orthogonal complement $U(\Phi)$ of $W(\Phi)$ in $V$.
\end{defi}
It is clear that $\Cone(\Phi)=\Cone(\Phi)\cap W(\Phi)$ is open in $W(\Phi)$. Let $\Phi'_0:=\{\alpha\in \Phi,\, \alpha\vert_{W(\Phi)} =0 \}$.

\begin{lem}\label{lemmaStrucCone}
$\Phi_0=\Phi_0'$ and $W(\Phi) =\ker \Phi_0$.
\end{lem}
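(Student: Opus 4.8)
The plan is to establish the two inclusions $\Phi_0\subseteq\Phi_0'$ and $\Phi_0'\subseteq\Phi_0$ separately, and then read off the identity $W(\Phi)=\ker\Phi_0$. The inclusion $\Phi_0\subseteq\Phi_0'$ is the elementary one: given $\alpha\in\Phi_0$, pick $I\ni\alpha$ inside $\Phi$ and positive reals $\{a_\beta\}_{\beta\in I}$ with $\sum_{\beta\in I}a_\beta\beta=0$; for any $v\in\Cone(\Phi)$ every number $\beta(v)$ with $\beta\in I$ is $\geq 0$, so $\sum_{\beta\in I}a_\beta\beta(v)=0$ forces each $\beta(v)=0$, in particular $\alpha(v)=0$. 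Hence $\alpha$ annihilates $\Cone(\Phi)$, hence its linear span $W(\Phi)$, so $\alpha\in\Phi_0'$.

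The reverse inclusion $\Phi_0'\subseteq\Phi_0$ is where the real content lies, and I expect it to be the main obstacle, since it is the only point where a genuine duality input is needed. First note that by definition of $\Cone(\Phi)$ every $\alpha\in\Phi$ is automatically $\geq 0$ on $\Cone(\Phi)$, so $\alpha\in\Phi_0'$ is equivalent to $\alpha$ vanishing identically on $\Cone(\Phi)$; in particular, if $\alpha\in\Phi_0'$ then $-\alpha\geq 0$ on $\Cone(\Phi)$. By the Farkas--Minkowski--Weyl lemma — the dual cone of the polyhedral cone $\Cone(\Phi)$ is the finitely generated, hence closed, cone $\sum_{\beta\in\Phi}\R_{\geq 0}\,\beta$ — one can write $-\alpha=\sum_{\beta\in\Phi}d_\beta\beta$ with all $d_\beta\geq 0$. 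Rearranging yields $(1+d_\alpha)\alpha+\sum_{\beta\in\Phi,\,\beta\neq\alpha}d_\beta\beta=0$, a positive relation in which $\alpha$ occurs with the positive coefficient $1+d_\alpha$; taking $I:=\{\alpha\}\cup\{\beta\in\Phi:d_\beta>0\}$ with the corresponding coefficients exhibits $\alpha\in\Phi_0$.

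For the equality $W(\Phi)=\ker\Phi_0$, the inclusion $W(\Phi)\subseteq\ker\Phi_0$ is immediate once $\Phi_0=\Phi_0'$ is known, by the very definition of $\Phi_0'$. For the opposite inclusion I would first produce a ``relative interior'' point of $\Cone(\Phi)$: for each $\alpha\in\Phi\setminus\Phi_0'$ choose $v_\alpha\in\Cone(\Phi)$ with $\alpha(v_\alpha)>0$ (possible precisely because $\alpha$ is nonnegative but not identically zero on $\Cone(\Phi)$), and set $v_0:=\sum_{\alpha\in\Phi\setminus\Phi_0'}v_\alpha$, which lies in $\Cone(\Phi)$ and satisfies $\alpha(v_0)>0$ for $\alpha\notin\Phi_0'$ and $\alpha(v_0)=0$ for $\alpha\in\Phi_0'$. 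Then for $v\in\ker\Phi_0$ and all sufficiently small $t>0$ both $v_0+tv$ and $v_0-tv$ still satisfy every defining inequality of $\Cone(\Phi)$ — strictly for $\alpha\notin\Phi_0'$, and with equality for $\alpha\in\Phi_0'$ since $\alpha(v)=0$ — hence lie in $\Cone(\Phi)$; therefore $v=\tfrac{1}{2t}\big((v_0+tv)-(v_0-tv)\big)\in W(\Phi)$, which closes the argument.
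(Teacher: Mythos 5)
Your proof is correct, and it follows a route that is related to but not identical with the paper's. For the crucial inclusion $\Phi_0'\subseteq\Phi_0$, the paper proceeds in two stages: it first establishes $\Cone(\Phi_0')=W=\ker\Phi_0'$ via the perturbation $v_0+w$ (the same interior-point trick you use at the end), and only then deduces — via a Hahn--Banach separation argument — that $0$ lies in the relative interior of the convex cone generated by $\Phi_0'$, so that $-a\alpha$ for small $a>0$ can be written as a nonnegative combination of $\Phi_0'$. You instead go directly to the dual cone of $\Cone(\Phi)$ in $V^*$ and invoke Minkowski--Weyl/Farkas to represent $-\alpha$ as a nonnegative combination of all of $\Phi$. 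This shortcut avoids the intermediate study of $\Cone(\Phi_0')$ and is a little cleaner; the two inputs (Farkas for a finitely generated cone, versus a separating-hyperplane argument) are of course morally equivalent, since Farkas is itself usually proved by separation plus the closedness of finitely generated cones. One small remark: the positive relation you obtain a priori has support inside $\Phi$, not inside $\Phi_0'$; this is fine for the definition of $\Phi_0$, and in fact once the lemma is proved those coefficients with $d_\beta>0$ are forced to have $\beta\in\Phi_0'=\Phi_0$ anyway. You also spell out the easy inclusion $\Phi_0\subseteq\Phi_0'$ and the equality $W=\ker\Phi_0$, which the paper treats more tersely; both of your arguments for these are the standard ones and match the paper's implicit reasoning.
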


\begin{proof}
We write $W=W(\Phi)$ in the proof.
First note that there exists $v\in \Cone(\Phi)$ such that $\alpha(v)>0$ for all 
$\alpha \in \Phi\setminus \Phi'_{0}$. Let us fix such a vector and name it $v_0$.
Indeed, the weaker claim allowing $v$ to depend on $\alpha \in \Phi\setminus \Phi'_{0}$ is obvious. 
Then one may just sum them together.

For $w \in \Cone(\Phi'_{0})$ small enough, $v_0+w$ is still contained in $\Cone(\Phi)\subset W$ and hence $w$ is in $W$.  So $\Cone(\Phi'_{0})=W=\ker(\Phi'_{0})$. It remains to show $\Phi_0=\Phi'_0$.

The non-trivial direction is $\Phi'_{0} \subset \Phi_0$. We first show that $0\in V^*$ is in the interior of the cone spanned by $\Phi'_{0}$ relative to the subspace spanned by the cone. 
Indeed, if not true, then $0\in V^*$ is in the relative boundary of the cone spanned by $\Phi'_{0}$.
By Hahn--Banach theorem, there exists $v\in V$ such that $\alpha(v)\geq 0$ for all $\alpha \in \Phi'_{0}$ and there exists $l$ in the cone spanned by $\Phi'_{0}$ such that $l(v)>0$. Therefore $\alpha(v)> 0$ for some $\alpha \in \Phi'_{0}$. This is a contradiction to $\Cone(\Phi'_{0})=\ker(\Phi'_{0})$. Once this is true, for any $\alpha\in \Phi'_{0}$ for $a>0$ small enough, $-a \alpha$ is still in the cone and can be written as non-negative linear combinations of elements from $\Phi'_{0}$. By the definition of $\Phi_0$, we have $\alpha\in \Phi_0$.
\end{proof}

Built on this lemma, we can show that 
\begin{lem}\label{lemonPolytope}
Let $V$ be a finite-dimensional $\R$-vector space equipped with an Euclidean metric and $\Phi\subset V^*$ be a finite collection of linear functionals on $V$. 
For each $\bm{a}\in \Map(\Phi,\R)$, we define
$\Omega(\Phi,\bm{a}):=\{
v\in V,\,\alpha(v)\geq \bm{a}(\alpha) \,\forall\alpha\in\Phi
\}$.
Assume that we are given a decomposition $\Phi=\Phi_0\sqcup\Phi_1\sqcup\Phi_{\infty}$ and a sequence of
$\{\bm{a}_n\} \subset \Map(\Phi,\R_{\geq 0})$ satisfying:
\begin{itemize}
    \item [(1)] there exists $\bm{a}_0 \in \Map(\Phi_0\sqcup\Phi_1,\R)$ such that 
                 $\bm{a}_n\vert_{\Phi_0\sqcup\Phi_1} = \bm{a}_0$ for all $n$;
    \item [(2)] for all $\alpha \in \Phi_{\infty}$, $\bm{a}_n(\alpha)$ diverges to $+\infty$;
    \item [(3)] $\Phi_1$ and $\Phi_0$ are compatible with Definition \ref{defiofphi} where $\Phi_{bdd}:=\Phi_1\sqcup\Phi_0$.
\end{itemize}
Then we can find $\{\omega_n\}$, a diverging sequence of positive numbers, such that 
\begin{equation*}
    \lim_{n\to \infty} 
    \frac{\Vol(
    \Omega(\Phi_{\infty}\sqcup \Phi_1, -\bm{a}_n+\omega_n)
    \cap
     \Omega(\Phi_{0}, -\bm{a}_n)
    )}
    {\Vol(
     \Omega(\Phi, -\bm{a}_n)
    )} =1.
\end{equation*}
In fact, let $U$ be the orthogonal complement of $W=W(\Phi)$ in $V$ and denote by $\pi^W_U$ the orthogonal projection onto $U$.
Then there exists $\{\omega'_n\}$ and $\{\omega_n\}$ two diverging sequence of positive numbers, such that if we define
\begin{equation*}
    \Omega^{split}_n:=\pi^W_{U}(\Omega(\Phi_0,-\bm{a}_0) ) \oplus
    \big(
    W \cap 
    \Omega(\Phi_{\infty}\sqcup \Phi_1,-\bm{a}_n+\omega'_n)
    \big),
\end{equation*} then
\begin{equation*}
    \lim_{n\to \infty} 
    \frac{\Vol(
    \Omega^{split}_n
    )}
    {\Vol(
     \Omega(\Phi, -\bm{a}_n)
    )} =1
\end{equation*}
and 
$\Omega^{split}_n$ is contained in 
$\Omega(\Phi_{\infty}\sqcup \Phi_1, -\bm{a}_n+\omega_n) \cap \Omega(\Phi_{0}, -\bm{a}_n)$ for n large enough.
\end{lem}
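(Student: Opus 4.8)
The plan is to establish the refined ``in fact'' statement first and then read off the first displayed limit from it by an inclusion-and-squeeze argument. Throughout, write $\Phi_{bdd}=\Phi_0\sqcup\Phi_1$ and take $W=W(\Phi_{bdd})$ for the subspace appearing in the statement. The first step is to harvest from Lemma \ref{lemmaStrucCone} (applied with $\Phi_{bdd}$ in the role of $\Phi$ there) the structural facts that $W=\ker\Phi_0$, that $\{v:\alpha(v)\ge 0\ \forall\alpha\in\Phi_0\}=W$, and that $0\in V^*$ lies in the relative interior of the cone spanned by $\Phi_0$. Since every $\alpha\in\Phi_0$ vanishes on $W$, the region $\Omega(\Phi_0,-\bm{a}_0)$ is a cylinder over $W$: it equals $Q\oplus W$ with $Q:=\pi^W_U(\Omega(\Phi_0,-\bm{a}_0))\subset U$, and $Q$ is a \emph{bounded} polytope because its recession cone is $\{u\in U:\alpha(u)\ge 0\ \forall\alpha\in\Phi_0\}=W\cap U=\{0\}$. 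Set $R:=\sup_{u\in Q}\max_{\alpha\in\Phi_1\sqcup\Phi_\infty}|\alpha(u)|<\infty$, and for a map $\bm{c}$ on $\Phi_1\sqcup\Phi_\infty$ write $\Omega_W(\bm{c}):=\{w\in W:\alpha(w)\ge \bm{c}(\alpha)\ \forall\alpha\in\Phi_1\sqcup\Phi_\infty\}$; its recession cone is $\Cone(\Phi)=\{0\}$, so each $\Omega_W(\bm{c})$ is a bounded polytope. (We use implicitly, as in the applications, that $\Omega(\Phi,-\bm{a}_n)$ has finite positive volume for large $n$, equivalently $\Cone(\Phi)=\{0\}$.)

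Next I would reduce everything to a single volume estimate inside $W$. Decomposing $V=U\oplus W$ and noting that an $\alpha\in\Phi_0$ constraint only sees the $U$-coordinate while an $\alpha\in\Phi_1\sqcup\Phi_\infty$ constraint reads $\alpha(w)\ge -\bm{a}_n(\alpha)-\alpha(u)$, Fubini gives
\[
\Vol\big(\Omega(\Phi,-\bm{a}_n)\big)=\int_{u\in Q}\Vol_W\Big(\Omega_W\big(\alpha\mapsto -\bm{a}_n(\alpha)-\alpha(u)\big)\Big)\,du .
\]
Since $|\alpha(u)|\le R$ on $Q$, monotonicity of $\Omega_W(\cdot)$ in its bound sandwiches the integrand between $\Vol_W(\Omega_W(-\bm{a}_n+R))$ and $\Vol_W(\Omega_W(-\bm{a}_n-R))$, so
\[
\Vol_U(Q)\,\Vol_W(\Omega_W(-\bm{a}_n+R))\ \le\ \Vol(\Omega(\Phi,-\bm{a}_n))\ \le\ \Vol_U(Q)\,\Vol_W(\Omega_W(-\bm{a}_n-R)).
\]
On the other hand $\Omega^{split}_n=Q\oplus\Omega_W(-\bm{a}_n+\omega'_n)$ is an orthogonal product, whence $\Vol(\Omega^{split}_n)=\Vol_U(Q)\,\Vol_W(\Omega_W(-\bm{a}_n+\omega'_n))$ exactly. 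Thus the ratio $\Vol(\Omega^{split}_n)/\Vol(\Omega(\Phi,-\bm{a}_n))$ is caught between $\Vol_W(\Omega_W(-\bm{a}_n+\omega'_n))/\Vol_W(\Omega_W(-\bm{a}_n\mp R))$, and the whole statement will follow once I show that for every fixed constant $c$,
\[
\Vol_W(\Omega_W(-\bm{a}_n+\omega'_n))\big/\Vol_W(\Omega_W(-\bm{a}_n+c))\longrightarrow 1
\]
for a suitable slowly diverging sequence $\{\omega'_n\}$.

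The heart of the matter — and the step I expect to be the main obstacle — is to control this ``waste'' when the facets of $\Omega_W(-\bm{a}_n+c)$ are pushed inward by $\omega'_n$, which rests on the claim that for \emph{every} $\alpha\in\Phi_1\sqcup\Phi_\infty$ the extent $D_n^{(\alpha)}:=\max_{\Omega_W(-\bm{a}_n+c)}\alpha-\min_{\Omega_W(-\bm{a}_n+c)}\alpha$ tends to $\infty$. For $\alpha\in\Phi_\infty$ this is immediate: the active facet $\{\alpha=-\bm{a}_n(\alpha)+c\}$ recedes because $\bm{a}_n(\alpha)\to\infty$, while the polytope contains a fixed point (any interior point of $\{w\in W:\beta(w)>-\bm{a}_0(\beta)\ \forall\beta\in\Phi_1\}$ lies in $\Omega_W(-\bm{a}_n+c)$ once $n$ is large), so $D_n^{(\alpha)}\ge \bm{a}_n(\alpha)+O(1)$. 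For $\alpha\in\Phi_1$ the minimum is the fixed number $-\bm{a}_0(\alpha)+c$, while $\max_{\Omega_W(-\bm{a}_n+c)}\alpha$ is non-decreasing in $n$ and converges to $\sup\{\alpha(w):w\in W,\ \beta(w)\ge-\bm{a}_0(\beta)\ \forall\beta\in\Phi_1\}$; this supremum is $+\infty$, for otherwise Farkas' lemma would give $c_\beta\ge 0$ with $\alpha|_W+\sum_{\beta\in\Phi_1}c_\beta\beta|_W=0$, i.e. $\alpha+\sum c_\beta\beta\in\mathrm{span}(\Phi_0)$, and combining this with ``$0$ in the relative interior of the cone on $\Phi_0$'' (so any element of $\mathrm{span}(\Phi_0)$ is, after scaling, a non-negative combination of $\Phi_0$) upgrades it to a genuine positive relation among elements of $\Phi_{bdd}$ with positive $\alpha$-coefficient, forcing $\alpha\in\Phi_0$ — a contradiction. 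Granting all $D_n^{(\alpha)}\to\infty$, put $\omega'_n:=\lfloor\sqrt{\min_{\alpha\in\Phi_1\sqcup\Phi_\infty}D_n^{(\alpha)}}\,\rfloor$, which diverges yet satisfies $\omega'_n/D_n^{(\alpha)}\to 0$ for each $\alpha$. Then a standard convexity estimate finishes it: for a bounded convex body $P\subset W$, a linear $f$ with $M=\min_P f$, $D=\max_P f-M$, and $0<t<D$, contracting $P$ by the factor $(D-t)/D$ towards a point of $P$ at which $f$ is maximal shows $\Vol(P\cap\{f\ge M+t\})\ge\big((D-t)/D\big)^{\dim W}\Vol(P)$, hence $\Vol(P\cap\{f<M+t\})\le(\dim W)(t/D)\Vol(P)$. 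Applying this with $P=\Omega_W(-\bm{a}_n+c)$, $f=\alpha$, $t=\omega'_n-c$, and summing the finitely many bounds over $\alpha\in\Phi_1\sqcup\Phi_\infty$ yields $\Vol_W\big(\Omega_W(-\bm{a}_n+c)\setminus\Omega_W(-\bm{a}_n+\omega'_n)\big)=o\big(\Vol_W(\Omega_W(-\bm{a}_n+c))\big)$, which is exactly the ratio statement wanted above, for $c=\pm R$ and $c=0$ alike.

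Finally I would verify the inclusion and deduce the first limit. Put $\omega_n:=\omega'_n-R$, positive and diverging for large $n$. If $x=u+w\in\Omega^{split}_n$ with $u\in Q$ and $w\in\Omega_W(-\bm{a}_n+\omega'_n)$, then for $\alpha\in\Phi_0$ we get $\alpha(x)=\alpha(u)\ge-\bm{a}_0(\alpha)=-\bm{a}_n(\alpha)$, and for $\alpha\in\Phi_1\sqcup\Phi_\infty$ we get $\alpha(x)=\alpha(u)+\alpha(w)\ge -R+(-\bm{a}_n(\alpha)+\omega'_n)=-\bm{a}_n(\alpha)+\omega_n$; hence $\Omega^{split}_n\subseteq\Omega(\Phi_\infty\sqcup\Phi_1,-\bm{a}_n+\omega_n)\cap\Omega(\Phi_0,-\bm{a}_n)$ for large $n$. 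Since trivially $\Omega(\Phi_\infty\sqcup\Phi_1,-\bm{a}_n+\omega_n)\cap\Omega(\Phi_0,-\bm{a}_n)\subseteq\Omega(\Phi,-\bm{a}_n)$, the ratio in the first displayed equation is squeezed between $\Vol(\Omega^{split}_n)/\Vol(\Omega(\Phi,-\bm{a}_n))$ and $1$, and the former tends to $1$ by the previous two paragraphs. To summarize: the Fubini reduction and the convexity volume bound are routine; the real work is proving that the extents $D_n^{(\alpha)}$ diverge even in the \emph{bounded} directions $\alpha\in\Phi_1$ — the ``polytopes grow in all directions'' phenomenon — and the bookkeeping of the two auxiliary sequences $\{\omega'_n\}$ and $\{\omega_n\}$.
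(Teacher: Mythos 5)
Your overall plan is the same as the paper's: split $V=U\oplus W$ with $W=W(\Phi_{bdd})=\ker\Phi_0$, apply Fubini over the bounded base $Q=\pi^W_U(\Omega(\Phi_0,-\bm{a}_0))$, sandwich the slice volume using the uniform bound $|\alpha(u)|\le R$ on $Q$, and thereby reduce the whole statement to showing $\Vol_W(\Omega_W(-\bm{a}_n+\omega'_n))/\Vol_W(\Omega_W(-\bm{a}_n+c))\to 1$ for a suitably diverging $\{\omega'_n\}$; the first display then follows from the inclusion and a squeeze, exactly as in your final paragraph. The genuine difference from the paper is that, where the paper simply invokes \cite[Lemma 6.2, 9.4]{ShaZhe18} for this last volume ratio, you attempt a self-contained proof via the extents $D_n^{(\alpha)}$ and an elementary contraction-to-a-vertex bound for convex bodies. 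Your Farkas argument showing $\sup\alpha=+\infty$ over $\Omega^1_W$ when $\alpha\in\Phi_1$ is correct and is precisely the ``growth in bounded directions'' content of the cited lemmas, so this would be a worthwhile replacement of the citation.

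There is, however, a gap in the $\Phi_\infty$ case. The assertion $D_n^{(\alpha)}\ge\bm{a}_n(\alpha)+O(1)$ tacitly assumes the $\alpha$-facet is actually attained on $\Omega_W(-\bm{a}_n+c)$; when $\bm{a}_n(\alpha)$ grows fast enough that the $\alpha$-constraint becomes redundant the bound is simply false. For instance take $W=\R^2$, $\Phi_1=\{x,y\}$ with $\bm{a}_0\equiv 0$, $\Phi_\infty=\{-x,-y,\,x-2y\}$ with $\bm{a}_n(-x)=\bm{a}_n(-y)=n$ and $\bm{a}_n(x-2y)=n^2$: the slice is $[0,n]^2$, so $D_n^{(x-2y)}=3n$, not $n^2+O(1)$. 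Here $D_n^{(\alpha)}\to\infty$ still holds, but not by your argument. More seriously, nothing in the hypotheses rules out some $\alpha\in\Phi_\infty$ lying in $W^\perp=\mathrm{span}(\Phi_0)$, i.e. $\alpha|_W\equiv 0$ (try $\Phi_0=\{x,-x\}$, $\Phi_1=\{y\}$, $\Phi_\infty=\{-y,\,2x\}$ in $\R^2$); then $D_n^{(\alpha)}=0$ identically and $\omega'_n=\lfloor\sqrt{\min_\alpha D_n^{(\alpha)}}\rfloor$ is identically zero, so the construction collapses. Both points are fixable: for $\alpha$ with $\alpha|_W\ne 0$, one gets $D_n^{(\alpha)}\to\infty$ by observing that the slices eventually contain any prescribed compact subset of the unbounded region $\Omega^1_W=\{w\in W:\beta(w)\ge-\bm{a}_0(\beta)+c\ \forall\beta\in\Phi_1\}$ (the $\Phi_\infty$-constraints recede) and $\Omega^1_W$ contains a full-dimensional cone of $W$ on which $\alpha$ is unbounded; for $\alpha|_W\equiv 0$ the cut $\{\alpha<-\bm{a}_n(\alpha)+\omega'_n\}$ is empty once $\omega'_n<\bm{a}_n(\alpha)$, so such $\alpha$ should be removed from the minimum and replaced by $\bm{a}_n(\alpha)$. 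Setting $\omega'_n=\lfloor\sqrt{\min(\min_{\alpha:\alpha|_W\ne 0}D_n^{(\alpha)},\ \min_{\alpha\in\Phi_\infty}\bm{a}_n(\alpha))}\rfloor$ repairs the argument; as written, the $\Phi_\infty$ step would fail.
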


When $\Phi_{0}=\emptyset$, this has been treated in \cite{ShaZhe18}. 
For simplicity write $\text{Top}_n(\omega_n)$ for 
$\Omega(\Phi_{\infty}\sqcup \Phi_1, -\bm{a}_n+\omega_n) \cap \Omega(\Phi_{0}, -\bm{a}_n)$ 
so $\Omega(\Phi, -\bm{a}_n)= \Top_n(0)$. 
We also write $U_0$ for $\pi^W_U(\Omega(\Phi_0,-\bm{a}_0))$, then $\Omega^{split}_n= U_0 + (W\cap \Top_n(\omega_n'))$.

The reader may find it helpful to keep the following example in mind.
Take $V=\R^3$ with standard basis $\{\bme_1,\bme_2,\bme_3\}$ and write its dual basis as $\{\bmf_1,\bmf_2,\bmf_3\}$. 
Let $\Phi:=\{\bmf_3, -\bmf_3, -\bmf_1-\bmf_2, \bmf_1-\bmf_2, \bmf_2\}$,  
$\bma_{n}(\bmf_3)=0$, $\bma_{n}(-\bmf_3)=n$ and
$\bma_{n}(-\bmf_1-\bmf_2)=\bma_{n}(\bmf_1-\bmf_2)=\bma_{n}(\bmf_2)=-1$.
Then one can check that $\Phi_{\infty}=\{-\bmf_3\}$, $\Phi_1=\{\bmf_3\}$ and $\Phi_0=\{ -\bmf_1-\bmf_2, \bmf_1-\bmf_2, \bmf_2\}$. Also $W=\R\bme_3$. And $\Omega(\Phi, -\bm{a}_n)$ is a cylinder based on a fixed triangle with the ceiling keeping growing and the floor remaining fixed. The projection $\pi_W$ in this case can be regarded as crushing the cylinder into its base triangle.
    
\begin{proof}
It suffices to prove the second asymptotic and the claim on the last line. First we note that
\subsubsection{Claim}\label{claSameBase}  For every sequence $\{\omega_n\}$ of real numbers such that $\bma_n(\alpha)-\omega_n$ diverges to $+\infty$ for all $\alpha\in\Phi_{\infty}$, for $n$ large enough,
$\pi_W(\text{Top}_n(\omega_n))= \pi_W(\Omega(\Phi_0,-\bm{a}_0))$ and both are bounded. Consequently, the same thing is true replacing $\pi_W$ by $\pi^W_U$. 
\begin{proof}[Proof of Claim \ref{claSameBase}]
The non-trivial direction is to show that $\pi_W(\text{Top}_n(\omega_n))$ contains $\pi_W(\Omega(\Phi_0,-\bm{a}_0))$.
First we claim that $\pi_W(\Omega(\Phi_0,-\bma_0))$ is bounded. 
Indeed $\Phi_0$ descends to a set of functionals $\overline{\Phi_0}$ on $V/W$. 
If not bounded then we can find a ray $\R_{\geq 0}\cdot x $ in $\pi_W(\Omega(\Phi_0,-\bma_0))$, that is, $\overline{\alpha}(rx)\geq -\bma_0$ for all $r \geq 0$ and $\alpha \in \overline{\Phi_0}$.
Hence $\overline{\alpha}(x)\geq 0$ for all $\alpha \in \overline{\Phi_0}$. As there are strictly positive numbers $a_{\alpha}$ such that $\sum a_{\alpha}\alpha=0$. 
We conclude that $\overline{\alpha}(x) = 0$, hence $x=0$ by definition of $W$.

Now we take a compact set $B \subset \Omega(\Phi_0,-\bma_0)$ such that $\pi_W(B)=\pi_W(\Omega(\Phi_0,-\bma_0))$. As $\Phi$ is a finite set, we can find $b>0$ such that $\alpha\vert_{B}>-b$ for all $\alpha\in\Phi$. 
We also take $v_0\in W$ as in the proof of Lemma \ref{lemmaStrucCone} such that $\alpha(v_0)>0$ for all $\alpha\in \Phi_1$.
Then we can find $r_0>0$ such that $\alpha(v+r_0v_0)> -\bma_0$ for  all $\alpha \in \Phi_1$ and $v\in B$. 
Now there exists a possibly different $b'>0$ such that $\alpha\vert_{B+r_0v_0}>-b'$ for all $\alpha\in\Phi_{\infty}$. 
Take $n$ such that $\bma_n(\alpha)-\omega_n> b'$ and $v \in B $, we show that $v+W\cap \text{Top}_n(\omega_n)$ is non-empty, which is sufficient to conclude the proof.
Indeed for $\alpha \in \Phi_{\infty}$, $\alpha(v+r_0v_0)>-b'>-\bma_n(\alpha)+\omega_n$. And inequalities for $\Phi_1$ and $\Phi_0$ have already been verified. So we are done.
\end{proof}
Hence for any choice of $\omega_n$ with $\bma_n(\alpha)-\omega_n$ diverging to $+\infty$ and for $n$ large enough,
\begin{equation*}
\begin{aligned}
        \Vol(\Top_n(\omega_n))
    = \int_{u\in U_0}
    \Vol((u+W) \cap \Top_n(\omega_n))
    \text{d}u.
\end{aligned}
\end{equation*}

As $U_0$ is bounded we can find a number $\omega_0>0$ such that for every choice of $\{\omega_n\}$ and each $u\in U_0$,
\begin{equation*}
\begin{aligned}
        (u+W) \cap \Top_n(\omega_n)
        &\supset
    u+ (W\cap \Top_n(\omega_n+\omega_0)),
    \\
    (u+W) \cap \Top_n(0)
    &\subset
    u+ (W\cap \Top_n(-\omega_0) ).
\end{aligned}
\end{equation*}
In particular the first containment implies that $\Omega^{split}_n= U_0+(W\cap \Top_n(\omega_n'))$ is contained in $\Top_n(\omega_n)$ if we define $\omega_n':=\omega_n+\omega_0$ and $n$ is large enough(depending on the choice of $\omega_n$).

And the second containment implies that 
\begin{equation*}
    \frac{\Vol (U_0+(W\cap \Top_n(\omega_n')))
    }
    { \Vol (\Top_n(0))
    }
    \geq 
    \frac{\Vol (U_0+(W\cap \Top_n(\omega_n')))
    }
    { \Vol(U_0+(W\cap \Top_n(-\omega_0)))
    }.
\end{equation*}

As $\Cone(\Phi_{bdd})$ restricted to $W$ is non-empty and open, arguing as in \cite[Lemma 6.2, 9.4]{ShaZhe18}, we know that there exists a divergent sequence of positive numbers $\{\omega'_n\}$ such that 
\begin{equation*}
\lim_{n\to \infty}
\frac{\Vol (u_0+(W\cap \Top_n(\omega_n')))
    }
    { \Vol(u_0+(W\cap \Top_n(-\omega_0)))
    } =1
\end{equation*} 
for all $u_0\in U_0$.
Plugging into the integration expression above yields that 
\begin{equation*}
    \lim_{n\to \infty} \frac{\Vol (U_0+(W\cap \Top_n(\omega_n')))
    }
    { \Vol(U_0+(W\cap \Top_n(-\omega_0)))
    }\geq 1.
\end{equation*}
So we are done.
\end{proof} 

\subsection{Step I}\label{sec3PartI}
We fix a superfaithful $\Q$-representation $\rho$ of $\bmL$. 
We assume that the sequence $\{\gamma_n\}$ is $\Phi$-clean for some $\Phi\in \Phi_{\rho,\bmL}$.
Take $W=W(\Phi_{bdd})$ defined to be $\ker\Phi_0$ and $U$ to be the orthogonal complement of $W$. 
Moreover we require $\rho$ to contain a direct summand of the form
\begin{equation*}
    \bmL \to \bmS_{\bmL} \to \SL_N
\end{equation*}
with the second arrow being faithful. In particular, 
\begin{itemize}
    \item there exists a basis $\{\widetilde{\alpha}_1,...,\widetilde{\alpha}_n\} $ of $X^*(\bmS_{\bmL})\otimes \Q$ consisting of $\Q$-characters appearing in $\Phi_{\rho}$ and positive numbers $m_1,...,m_n>0$ such that $\sum m_i \widetilde{\alpha}_i =0$.
\end{itemize}
Without loss of generality we assume that $m_i$'s are actually positive integers.
The dependence on $\rho$ will often be dropped as it will be fixed throughout this subsection.

Apply Lemma \ref{lemonPolytope} to $\Phi$, $V=\Lie(S_{\bmH})$ and 
$\displaystyle \bma_n(\diffalpha)= -\ln\ep +\ln\inf_{0\neq v\in \bmV_{\alpha_n}(\Z)} ||v||$ with $\alpha_n:=(\bmc_{\gamma_n})_{*}\alpha$. Also $\Phi_{*}=\Phi_{*}(\{\gamma_n\})$ for $*=0,1,\infty$.
By passing to a subsequence we assume that $\bma_n(\diffalpha)$ either diverges to $+\infty$ or remains constantly equal to some $\bma_0(\diffalpha)$. 

Keep the notation $U$, $W$ and $U_0$ as in Lemma \ref{lemonPolytope}. As $U_0$ depends on $\ep$, we shall write it as $U_0(\ep)$. Note that as $\ep$ decreases to $0$, $U_0(\ep)$ forms an increasing family of polytopes whose union covers $U$. 

\begin{lem}
There exists a sequence of real numbers $\omega_n \to +\infty$ such that if we define
\begin{equation*}
\begin{aligned}
         \Omega^{spl}_{n,\ep}:= U_0(\ep) \oplus (W\cap \Omega(\bmc_{\gamma_n},\ep+\omega_n,\Phi_1\sqcup\Phi_{\infty}))
\end{aligned}
\end{equation*}
then $\Omega^{spl}_{n,\ep}$ is contained in $\Omega(\bmc_{\gamma_n},\ep,\Phi)$ for $n$ large enough and
\begin{equation*}
    \lim_{n\to \infty}
    \frac{\Vol(\Omega^{spl}_{n,\ep})}
    {\Vol(
    \Omega(\bmc_{\gamma_n},\ep,\Phi)
    )
    } =1.
\end{equation*}
\end{lem}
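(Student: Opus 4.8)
The plan is to deduce the statement directly from Lemma~\ref{lemonPolytope}, most of whose data has already been fixed in the paragraph preceding the Lemma: we take $V=\Lie(S_{\bmH})$ with its chosen Euclidean metric, $\Phi$ the given $\Phi$-clean collection regarded as a set of functionals on $V$ via $\diffalpha$ (well defined and independent of $n$, since every $\alpha\in\Phi\subset X^*(\bmH)$ factors through $\bmS_{\bmH}$), the decomposition $\Phi=\Phi_{\infty}\sqcup\Phi_1\sqcup\Phi_0$ of Definition~\ref{defiofphi}, and $\bma_n(\diffalpha)=-\ln\ep+\ln\inf_{0\neq v\in\bmV_{\alpha_n}(\Z)}||v||$ with $\alpha_n=(\bmc_{\gamma_n})_*\alpha$. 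First I would check hypothesis~(1) of Lemma~\ref{lemonPolytope}. Taking $||\cdot||$ to be a sup-norm in an integral basis, the numbers $\inf_{0\neq v\in\bmV_{\alpha_n}(\Z)}||v||$ are positive integers, so for $\ep<1$ they make $\bma_n$ take values in $\R_{\geq 0}$, and for $\alpha\in\Phi_{bdd}=\Phi_0\sqcup\Phi_1$ they form, by $\Phi$-cleanness, a bounded sequence of positive integers, which -- having passed to the subsequence already fixed in the text -- is constant; this gives (1) with $\bma_0:=\bma_n|_{\Phi_0\sqcup\Phi_1}$.

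Next I would dispatch the other two hypotheses. Hypothesis~(2) is immediate: for $\alpha\in\Phi_{\infty}$ the quantity $\inf_{0\neq v\in\bmV_{\alpha_n}(\Z)}||v||$ tends to $+\infty$ by the definition of $\Phi_{\infty}(\{\gamma_n\})$, hence so does $\bma_n(\diffalpha)$; hypothesis~(3), that $\Phi_1$ and $\Phi_0$ are compatible with Definition~\ref{defiofphi} for $\Phi_{bdd}=\Phi_1\sqcup\Phi_0$, is exactly the definition of $\Phi_0(\{\gamma_n\})$ and $\Phi_1(\{\gamma_n\})$. Lemma~\ref{lemonPolytope} therefore applies and yields divergent sequences of positive reals, one of which I will call $\omega_n$ (it is the $\omega'_n$ of Lemma~\ref{lemonPolytope}), together with the set $\pi^W_U(\Omega(\Phi_0,-\bma_0))\oplus\bigl(W\cap\Omega(\Phi_{\infty}\sqcup\Phi_1,-\bma_n+\omega_n)\bigr)$, which is contained in $\Omega(\Phi_{\infty}\sqcup\Phi_1,-\bma_n+\omega)\cap\Omega(\Phi_0,-\bma_n)$ for a further divergent positive sequence $\omega=\omega(n)$ and all large $n$, and whose volume is asymptotic to $\Vol(\Omega(\Phi,-\bma_n))$.

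Finally I would unwind the definitions to match notation. By the second form of the definition of $\Omega(\bmc_{\gamma_n},\ep,\rho,\Phi)$ one has $\Omega(\bmc_{\gamma_n},\ep,\Phi)=\Omega(\Phi,-\bma_n)$ and, with ``$\ep+\omega_n$'' read as replacing $\ln\ep$ by $\ln\ep+\omega_n$ in the defining inequalities, $W\cap\Omega(\bmc_{\gamma_n},\ep+\omega_n,\Phi_1\sqcup\Phi_{\infty})=W\cap\Omega(\Phi_{\infty}\sqcup\Phi_1,-\bma_n+\omega_n)$; since $U_0(\ep)=\pi^W_U(\Omega(\Phi_0,-\bma_0))$ by definition, this identifies $\Omega^{spl}_{n,\ep}$ with the set produced by Lemma~\ref{lemonPolytope}. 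The inclusion $\Omega^{spl}_{n,\ep}\subset\Omega(\bmc_{\gamma_n},\ep,\Phi)$ for large $n$ then follows from $\Omega(\Phi_{\infty}\sqcup\Phi_1,-\bma_n+\omega)\cap\Omega(\Phi_0,-\bma_n)\subset\Omega(\Phi,-\bma_n)$ (valid because $\omega>0$), and the volume asymptotic is the last displayed limit of Lemma~\ref{lemonPolytope}, noting that ratios of Euclidean volumes on $V$ do not depend on the metric. I do not expect a real obstacle here: the only things demanding care are the translation between the multiplicative/logarithmic $\ep$-bookkeeping and the additive $\bma$-bookkeeping, the passage from ``$\bma_n|_{\Phi_{bdd}}$ bounded'' to ``constant'', and keeping the two divergent shift-sequences of Lemma~\ref{lemonPolytope} from getting confused with one another.
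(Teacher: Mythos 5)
Your proposal is correct and matches what the paper intends: the paper states the lemma without proof, having just set up all the data ($V=\Lie(S_{\bmH})$, $\Phi$, $\bma_n$, the subsequence making $\bma_n|_{\Phi_{bdd}}$ constant) precisely so that the statement is an immediate instance of Lemma~\ref{lemonPolytope}. Your verification of hypotheses (1)--(3), the identification of $\omega_n$ with the $\omega'_n$ of Lemma~\ref{lemonPolytope}, and your flagging of the slight notational abuse in ``$\ep+\omega_n$'' (which is to be read as shifting $\ln\ep$ by $\omega_n$, or equivalently replacing $\ep$ by $\ep e^{\omega_n}$) are all exactly the intended reading.
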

We also define 
\begin{equation*}
   \Omega^{vert}_{n,\ep} := W\cap \Omega(\bmc_{\gamma_n},\ep+\omega_n,\Phi_1\sqcup\Phi_{\infty})
\end{equation*} and
\begin{equation*}
    \calP^{spl}_{n,\ep}:= \{h\in H \mid 
    \pi_{{}^{\circ}\bmH}(h) \in \exp(\Omega^{spl}_{n,\ep})
    \}.
\end{equation*}

The homomorphism $\bmc_{\gamma_n}$ induces $\bmp_n : \bmS_{\bmH}\to \bmS_{\bmL}$, which does not depend on $n$ after passing to a subsequence. We shall call this map $\bmp :=\bmp_n$.

\begin{lem}
$\bmp_n=\bmp_m$ if $\gamma_n$ and $\gamma_m$ are in the same Zariski connected component of $X(\bmH,\bmL)$.
\end{lem}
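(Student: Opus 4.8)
The plan is to reduce the statement to the assertion that the assignment $\gamma\mapsto\bmp_\gamma$, where $\bmp_\gamma:\bmS_{\bmH}\to\bmS_{\bmL}$ is the map induced by $\bmc_\gamma$, is \emph{locally constant for the Zariski topology} on $\bmX(\bmH,\bmL)$; the Lemma then follows at once since $\gamma_n,\gamma_m$ lie in one connected component. First I would check that $\bmp_\gamma$ is well defined for every $\gamma\in\bmX(\bmH,\bmL)$, i.e. that $\bmc_\gamma({}^{\circ}\bmH)\subseteq{}^{\circ}\bmL$: for any $\beta\in X^*(\bmL)$ the composite $\beta\circ\bmc_\gamma$ lies in $X^*(\bmH)$, so ${}^{\circ}\bmH=\bigcap_{\alpha\in X^*(\bmH)}\ker(\alpha^2)$ is contained in $\ker\bigl((\beta\circ\bmc_\gamma)^2\bigr)=\bmc_\gamma^{-1}(\ker\beta^2)$, and letting $\beta$ vary gives the claim. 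In particular $\bmp_n=\bmp_{\gamma_n}$ in this notation, and since $\bmX(\bmH,\bmL)$ is of finite type it has finitely many connected components, which re-derives the already-noted fact that $\bmp_n$ stabilizes along a subsequence.

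For the local constancy I would exploit the rigidity of homomorphisms between $\Q$-split tori. Fix $\Q$-isomorphisms $\bmS_{\bmH}\cong\bmG_m^{a}$ and $\bmS_{\bmL}\cong\bmG_m^{b}$, so that $\bmp_\gamma$ is recorded by an integer matrix $M(\gamma)\in M_{b\times a}(\Z)$, its $(i,j)$-entry being the integer $n$ with $\mathrm{pr}_i\circ\bmp_\gamma\circ\iota_j:\bmG_m\to\bmG_m$ equal to $z\mapsto z^{n}$, where $\iota_j,\mathrm{pr}_i$ are the coordinate inclusion and projection. It suffices to show each function $\gamma\mapsto M(\gamma)_{ij}$ is locally constant. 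Now $(\gamma,h)\mapsto\gamma h\gamma^{-1}$ is a morphism $\bmX(\bmH,\bmL)\times\bmH\to\bmL$ over $\Q$; composing it with $\bmL\to\bmS_{\bmL}$ and using $\bmc_\gamma({}^{\circ}\bmH)\subseteq{}^{\circ}\bmL$ it descends to a morphism $\bmX(\bmH,\bmL)\times\bmS_{\bmH}\to\bmS_{\bmL}$, $(\gamma,\bar h)\mapsto\bmp_\gamma(\bar h)$; precomposing with $(\gamma,z)\mapsto(\gamma,\iota_j(z))$ and postcomposing with $\mathrm{pr}_i$ yields a morphism $E:\bmX(\bmH,\bmL)\times\bmG_m\to\bmG_m$ with $E(\gamma,z)=z^{M(\gamma)_{ij}}$. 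Writing $E=\sum_{k}f_k\otimes z^{k}$ as a finite Laurent polynomial in $z$ with coefficients $f_k$ in the coordinate ring of $\bmX(\bmH,\bmL)$, the identity $E(\gamma,\cdot)=z^{M(\gamma)_{ij}}$ forces, for each $\gamma$, exactly one coefficient to equal $1$ and all others to vanish; hence $\bmX(\bmH,\bmL)=\bigsqcup_{k}\{f_k=1,\ f_{k'}=0\ (k'\neq k)\}$ is a finite disjoint union of Zariski-closed subsets, so each is also open and $\gamma\mapsto M(\gamma)_{ij}$ is locally constant. Running this over all $(i,j)$ gives that $\gamma\mapsto\bmp_\gamma$ is locally constant, hence $\bmp_n=\bmp_m$ when $\gamma_n,\gamma_m$ lie in the same Zariski connected component.

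The only step that is not purely formal is the descent of $(\gamma,h)\mapsto\overline{\gamma h\gamma^{-1}}$ to a morphism on $\bmX(\bmH,\bmL)\times\bmS_{\bmH}$; I would justify it by faithfully flat descent along the quotient $\bmH\to\bmS_{\bmH}$, the hypothesis of which is exactly the fiberwise statement $\bmc_\gamma({}^{\circ}\bmH)\subseteq{}^{\circ}\bmL$ verified in the first paragraph. Note also that working throughout over $\Q$ (where $\bmS_{\bmH},\bmS_{\bmL}$ are \emph{split}, so that $\mathrm{Hom}(\bmS_{\bmH},\bmS_{\bmL})$ is literally the discrete lattice $M_{b\times a}(\Z)$) sidesteps any geometric-versus-rational component subtleties; everything else is bookkeeping.
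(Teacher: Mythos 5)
Your proof is correct, but it takes a rather more hands-on route than the paper. The paper's proof is two lines: note that $\bmX(\bmH,\bmL)$ is an affine $\Q$-variety and invoke \cite[Proposition 3.2.8]{Spr98}, which is the rigidity theorem for diagonalizable groups (a family of homomorphisms between diagonalizable groups parametrized by a connected variety is constant). Your argument essentially re-proves that rigidity theorem in the special case of split tori: you identify $\mathrm{Hom}(\bmS_{\bmH},\bmS_{\bmL})$ with the discrete lattice $M_{b\times a}(\Z)$, expand the evaluation morphism $E(\gamma,z)=z^{M(\gamma)_{ij}}$ as a Laurent polynomial in $z$, and read off local constancy of the exponent from the resulting clopen decomposition of $\bmX(\bmH,\bmL)$. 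That is exactly the standard proof of Springer's proposition, so the two approaches are mathematically equivalent; yours is self-contained where the paper's is a citation. One thing you do that the paper leaves implicit and which is worth spelling out: you verify that $\bmc_\gamma({}^{\circ}\bmH)\subseteq{}^{\circ}\bmL$ and hence that $(\gamma,h)\mapsto\overline{\bmc_\gamma(h)}$ actually descends to a \emph{morphism} $\bmX(\bmH,\bmL)\times\bmS_{\bmH}\to\bmS_{\bmL}$; this is a genuine prerequisite for applying rigidity and the paper does not record it. The descent step is straightforward (fppf descent along $\bmH\to\bmS_{\bmH}$, or more simply the observation that a regular function on $\bmX(\bmH,\bmL)\times\bmH$ that is constant on ${}^{\circ}\bmH$-cosets comes from the quotient), so no real gap either way; your writeup is just more careful about it.
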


\begin{proof}
Note that $X(\bmH,\bmL)$ is an affine variety. Apply \cite[Proposition 3.2.8]{Spr98}.
\end{proof}

\begin{lem}\label{pFactorthrough}
By abuse of notation we also write $\bmp : S_{\bmH}\to S_{\bmL}$ as a morphism of Lie groups. 
Then ${\bmp}$ factors through $\overline{\bmp}:  S_{\bmH}/\exp(W) \to S_{\bmL}$. $\overline{\bmp}$ is an isomorphism if $(\{\gamma_n\},\bmL)$ is minimal for $\bmH$.
\end{lem}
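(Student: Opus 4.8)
The plan is to establish the (unconditional) factorization first, and then, assuming minimality, to check that $\overline{\bmp}$ is injective and surjective, hence an isomorphism of Lie groups — both sides being connected split tori $(\R_{>0})^{\bullet}$, a bijective morphism between them is automatically an isomorphism.

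For the factorization I would show that $\diff\bmp$ annihilates $W$. Using the distinguished summand $\bmL\to\bmS_{\bmL}\to\SL_N$ of $\rho$, set $\widetilde{\beta}_i:=\bmp^{*}\widetilde{\alpha}_i\in X^{*}(\bmS_{\bmH})$, where $\{\widetilde{\alpha}_i\}$ is the chosen $\Q$-basis of $X^{*}(\bmS_{\bmL})$ with $\sum_i m_i\widetilde{\alpha}_i=0$, $m_i>0$ integral. Each $\widetilde{\beta}_i$ lies in $\Phi$: since $\bmc_{\gamma_n}(\bmH)\subset\bmL$ acts on the fixed nonzero $\Q$-subspace $\bmV_{\widetilde{\alpha}_i}$ through the character of $\bmc_{\gamma_n}(\bmH)$ that pulls back to $\widetilde{\beta}_i$, and the sequence is $\Phi$-clean. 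It lies in $\Phi_{bdd}$: the corresponding weight space for $\bmc_{\gamma_n}(\bmH)$ contains the $n$-independent $\Q$-rational subspace $\bmV_{\widetilde{\alpha}_i}$, so the shortest nonzero integral vector there is bounded uniformly in $n$, ruling out $\widetilde{\beta}_i\in\Phi_{\infty}$. Finally $\sum_i m_i\widetilde{\beta}_i=\bmp^{*}\!\big(\sum_i m_i\widetilde{\alpha}_i\big)=0$ is a positive relation inside $\Phi_{bdd}$, so Definition \ref{defiofphi} puts every $\widetilde{\beta}_i$ in $\Phi_0$. Hence $W=\ker\Phi_0\subset\bigcap_i\ker\diff\widetilde{\beta}_i=(\diff\bmp)^{-1}\!\big(\bigcap_i\ker\diff\widetilde{\alpha}_i\big)=\ker\diff\bmp$, the last equality because $\{\diff\widetilde{\alpha}_i\}$ spans the dual of $\Lie(S_{\bmL})$. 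This gives $\bmp(\exp W)=\{e\}$, so $\overline{\bmp}$ is defined.

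Surjectivity of $\overline{\bmp}$ under minimality is immediate: applying the surjection $\pi_{{}^{\circ}\bmL}$ to $\bmL=\overline{\langle\bigcup_n\bmc_{\gamma_n}(\bmH)\rangle}$ (Zariski closure) and using that every $\bmc_{\gamma_n}(\bmH)$ has the same image $\bmp(\bmS_{\bmH})$ in $\bmS_{\bmL}$ forces $\bmp(\bmS_{\bmH})=\bmS_{\bmL}$. For injectivity I must prove the reverse inclusion $\ker\diff\bmp\subset W$, i.e.\ that every $\alpha\in\Phi_0$ vanishes on $\ker\diff\bmp$. Given $\alpha\in\Phi_0\subset\Phi_{bdd}$, pick a shortest nonzero $v_n\in\bmV_{\alpha_n}(\Z)$ for each $n$; since $\alpha\in\Phi_{bdd}$ the norms $\|v_n\|$ stay bounded, so by integrality and discreteness some fixed $v$ equals $v_n$ for infinitely many $n$. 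Along that subsequence the line $\R v$ is stabilized by each $\bmc_{\gamma_n}(\bmH)$, hence by the Zariski closure of the subgroup they generate, which by minimality is $\bmL$; thus $\bmL$ acts on $\R v$ by a $\Q$-character $\mu$, and restriction gives $\alpha=\mu\circ\bmc_{\gamma_n}$ as characters of $\bmH$. Since ${}^{\circ}\bmL\subset\ker\mu^{2}$, the character $\mu^{2}$ factors through $\pi_{{}^{\circ}\bmL}$, so $\alpha^{2}$ factors through $\pi_{{}^{\circ}\bmL}\circ\bmc_{\gamma_n}=\bmp\circ\pi_{{}^{\circ}\bmH}$; differentiating, $\diff\alpha$ vanishes on $\ker\diff\bmp$. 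Combined with the factorization step this yields $W=\ker\diff\bmp$, so $\overline{\bmp}$ is injective, hence an isomorphism.

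The step I expect to be the main obstacle is injectivity. It is not formal that $\Phi_0$ consists only of pull-backs of $\bmS_{\bmL}$-characters, so the argument has to exploit the arithmetic input — boundedness of integral weight vectors for weights in $\Phi_{bdd}$, together with discreteness of the lattice — to extract a single weight vector shared by infinitely many conjugates $\bmc_{\gamma_n}(\bmH)$, which is exactly what lets minimality enter. Passing from $\mu$ to $\mu^{2}$ is only a harmless device to avoid worrying that $\Q$-characters of $\bmL$ need not restrict trivially to the possibly disconnected ${}^{\circ}\bmL$.
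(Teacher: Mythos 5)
Your proposal is correct and follows essentially the same route as the paper's proof: for the factorization, you show that the pull-backs $\bmp^{*}\widetilde{\alpha}_i$ land in $\Phi_{0}$ by observing that the associated integral weight vectors can be chosen independently of $n$ (hence bounded) and invoking the positive relation $\sum m_i\widetilde{\alpha}_i=0$; surjectivity is the immediate consequence of minimality; and for injectivity you pass to a subsequence along which a single integral vector $v$ witnesses $\alpha\in\Phi_{bdd}$, use minimality to conclude $\bmL$ stabilizes the line $\R v$, and deduce that $\alpha$ is (up to squaring) a pull-back through $\bmp$. The only cosmetic difference is that the paper phrases the last step by choosing a lift $h_0$ of a point of $\ker\bmp$ and using $b(L)\subset\R^{+}$ to conclude $b(\bmc_{\gamma_n}h_0)=1$, whereas you work at the Lie-algebra level and square the character $\mu$; both devices serve the same purpose.
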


\begin{proof}
 Recall that we have fixed a $\Q$-basis $\{\widetilde{\alpha}_1,...,\widetilde{\alpha}_n\} $ of characters of $\bmS_{\bmL}$ appearing in $\Phi_{\rho}$ and positive numbers $m_1,...,m_n>0$ such that $\sum m_i \widetilde{\alpha}_i =0$. 
 It suffices to show that $\widetilde{\alpha}_i (\bmp(\exp{w}))=1$ for all $w\in W$ and for all $i$. 
 For this it suffices to show that $\alpha_i:= \pi_{{}^{\circ}\bmH}^*\circ \bmp^* (\widetilde{\alpha}_i)$ lies in $\Phi_0(\{\gamma_n\})$. 
 By the very definition of $\Phi_0$, we only need to prove that they are in $\Phi_{bdd}(\{\gamma_n\})$ 
 which also follows from the definition as 
 $(\bmc_{\gamma_n})_{*}\alpha_i=\pi_{{}^{\circ}\bmL}^* \widetilde{\alpha}_i$ 
 is independent of $n$.
 
 The surjectivity follows by assumption and we now show that $\overline{\bmp}$ is injective. So take $s_0\in S_{\bmH}$ and a lift $h_0$ of $s_0$ in $H$. 
 Assume that $\bmc_{\gamma_n}(h_0)$ is contained in $^{\circ}\bmL$ for all $n$, we want to show that $s_0$ is contained in $\exp(W)$. 
 So take $a$ in $\Phi_0$(actually $\Phi_{bdd}$ suffices). Passing to a subsequence we assume that there exists non-zero $v_a\in\bmV(\Z)$ such that $\bmc_{\gamma_n}(h)v_a=a(h)v_a$ for all  $n$ and $h\in \bmH$. 
 By assumption the line spanned by $v_a$ is also preserved by $\bmL$, so $lv_{a}=b(l)v_a$ for all $l\in\bmL$ and for some character $b$ of $\bmL$. That $b(\bmc_{\gamma_n}h_0)= \pm 1 $ and $b(L)$ is contained in $\R^+$ imply that $b(\bmc_{\gamma_n}h_0)=1$. So $a(h_0)=1$ and $\text{da}(\log(s_0)) =0$. And we are done. 
 
\end{proof}

In summary we now have the following commutative diagram:
\begin{equation*}
    \begin{tikzcd}
        \calP^{spl}_{n,\ep} \arrow[hook]{r} \arrow[d]
        & H \arrow[d] \arrow[r,"\bmc_{\gamma_n}"]
        & L \arrow[d]
         \\
        \calP^{spl}_{n,\ep}/ \Gamma_{H}  \arrow[hook]{r}  \arrow[d]
        &  H / \Gamma_{H}\arrow[d,""] \arrow[r,"\bmc_n"]
        & L/\Gamma_L \arrow[d,  "" ]
        \\
        \exp(\Omega^{spl}_{n,\ep}) \arrow[hook]{r}  \arrow[d]
        & S_{\bmH} \arrow[d, "\pi_W"] \arrow[r,"\bmp"]
        & S_{\bmL}
        \\
        \exp(U_0(\ep)) \arrow[hook]{r}  
        & (S_{\bmH})/\exp(W)  \arrow[ru,dotted,swap,"\overline{\bmp}"]
    \end{tikzcd}
\end{equation*}
and the most important point here is that the bottom row does not depend on $n$.

\subsubsection*{Normalization of Haar measures}
We normalize the Haar measure $\mu_H$ on $H/\Gamma_H$ and $\mu_L$ on $L/\Gamma_L$ such that $\mu_H$(resp. $\mu_L$) can be written as the fibre integration of probability homogeneous measures of ${}^{\circ}H$(resp.  ${}^{\circ}L$) over the base $(S_{\bmH},\mu_{S_{\bmH}})$(resp. $(S_{\bmL},\mu_{S_{\bmL}})$) such that $\overline{\bmp}_*\mu_{S_{\bmH}/\exp{W}}= \mu_{S_{\bmL}}$ where $\mu_{S_{\bmH}/\exp{W}}$ is induced from the quotient metric in the Lie algebra. Also, we assume that the $\Vol$ on the Lie algebra of $S_{\bmH}$ is the same as $\mu_{S_{\bmH}}$ under the exponential map.

Now we come to the main proposition of this subsection. For each $\delta>0$, let $\calO_{\delta}=\{h\in H\,\vert\, d(h,e)\leq \delta \}$.

\begin{prop}\label{partIofsec3}
Given a standard triple $(\bmG,\bmH,\Gamma)$, a sequence $\{\gamma_n\}\subset \Gamma$ and a connected $\Q$-subgroup $\bmL \leq \bmG$. Assume that $(\{\gamma_n\},\bmL)$ is minimal for $\bmH$.
Consider the map $\bmc_n: H/\Gamma_H \to L/\Gamma_L$ induced from $\bmc_{\gamma_n}$.
Then $\lim_n a_n(\gamma_n)_*\mu_H = \mu_L$ in $L/\Gamma_L$
with $a_n= 1/\Vol(\Omega^{vert}_{n,\ep})$ for all $\ep>0$ small enough.
\end{prop}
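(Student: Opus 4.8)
The plan is to test $(\gamma_n)_*\mu_H$ against an arbitrary $f\in C_c(L/\Gamma_L)$ and show that, after rescaling by $a_n=1/\Vol(\Omega^{vert}_{n,\ep})$, the integral converges to $\int f\,\mathrm{d}\mu_L$. The first move is to localize the mass of $(\gamma_n)_*\mu_H$ to the ``deep'' region. Fix a small $\eta$ so that the support of $f$ lies in $K_\eta(\rho)$ (Mahler, Lemma \ref{mahler}), and choose a small $\ep>0$. By Proposition \ref{nondiverg2} together with Proposition \ref{exteriortrick2}, the $\mu_H$-mass of the complement of $\calP(\bmc_{\gamma_n},\ep,\rho'',\Phi)/\Gamma_H$ contributes nothing relevant, and by the volume asymptotic in the second lemma of Section \ref{sec3PartI} (the application of Lemma \ref{lemonPolytope}), replacing $\calP(\bmc_{\gamma_n},\ep,\Phi)/\Gamma_H$ by $\calP^{spl}_{n,\ep}/\Gamma_H$ changes the total mass by a factor tending to $1$. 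So it suffices to analyze $\frac{1}{a_n}\int_{\calP^{spl}_{n,\ep}/\Gamma_H} f(\bmc_n(h))\,\mathrm{d}\mu_H(h)$.

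Next I would exploit the product structure $\Omega^{spl}_{n,\ep}=U_0(\ep)\oplus\Omega^{vert}_{n,\ep}$ and the commutative diagram at the end of Section \ref{sec3PartI}. Using the normalization of Haar measures, $\mu_H$ restricted to $\calP^{spl}_{n,\ep}/\Gamma_H$ disintegrates over the base $\exp(U_0(\ep))\subset S_{\bmH}/\exp(W)$, with each fibre being a translate of $\big(\exp(\Omega^{vert}_{n,\ep})\cdot {}^{\circ}H\big)/\Gamma_H$ of $\mu_H$-mass exactly $\Vol(\Omega^{vert}_{n,\ep})=1/a_n$. Thus $\frac{1}{a_n}\mu_H|_{\calP^{spl}_{n,\ep}/\Gamma_H}$ is, fibrewise, a \emph{probability} measure on a piece of orbit which is a right translate by $h_n$ (with $\pi_{{}^{\circ}\bmH}(h_n)\in\exp(\Omega^{vert}_{n,\ep})$, going deeper and deeper) of the restriction of $\widehat{\mu}_{\calO}$ for a fixed $\calO\subset {}^{\circ}H$; the relevant map $h\mapsto\bmc_n(h)=\bmc_{\gamma_n}(h_n)\,\bmc_{\gamma_n}(h_n^{-1}h)$ is exactly a $\Gamma$-translate $\bmc_{\gamma_n h_n \cdot}$ composed with the inner automorphism, so Corollary \ref{coroEMS} applies to the sequence $\gamma_n h_n$ (one checks $(\{\gamma_n h_n\},\bmL)$ is still minimal for ${}^{\circ}\bmH$, since the $h_n$-conjugation only changes $\gamma_n\bmH\gamma_n^{-1}$ by a bounded inner twist inside $\bmL$, and ${}^{\circ}\bmH$ generates the same observable hull as $\bmH$). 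Hence each fibre integral converges to $\int f\,\mathrm{d}\widehat{\mu}_{{}^{\circ}L\text{-orbit}}$, i.e. the fibre of $\mu_L$.

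Finally I would integrate this fibrewise convergence over the fixed base $\exp(U_0(\ep))$. The point emphasized in the text — the bottom row of the diagram, and in particular the base $\exp(U_0(\ep))$ and the isomorphism $\overline{\bmp}$, do \emph{not} depend on $n$ — lets one push the limit through the outer base integral by dominated convergence (the integrand is bounded by $\|f\|_\infty$ and the base has finite volume), and by the matching of Haar measures $\overline{\bmp}_*\mu_{S_{\bmH}/\exp W}=\mu_{S_{\bmL}}$ the resulting integral is precisely $\int f\,\mathrm{d}\mu_L$, with the ${}^{\circ}L$-invariant fibre measures of Corollary \ref{coroEMS} assembling into the full $\mu_L$ (here one uses that $\mu_L$ is characterized by being ${}^{\circ}L$-invariant on fibres with the prescribed base measure). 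Letting $\ep\to 0$ at the end removes the cutoff (the estimates are uniform once $\ep$ is small enough that $K_\eta(\rho)$ is respected). The main obstacle I anticipate is the bookkeeping in the middle step: justifying that going ``deeper'' into $\Omega^{vert}_{n,\ep}$ — i.e. replacing the fixed $\calO$ by $h_n\calO$ with $h_n$ escaping in $S_{\bmH}$ — is legitimate for applying the Eskin–Mozes–Shah machinery, which requires verifying hypotheses (1)–(5) of Theorem \ref{EMSTheorem} for $\bmc_{\gamma_n h_n}$ and, crucially, that non-divergence of these deep translates (Proposition \ref{nondiverg2}) is uniform in the fibre parameter $u\in U_0(\ep)$ so that no mass leaks as $n\to\infty$.
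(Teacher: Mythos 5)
Your overall strategy---localize the contribution to the polytope $\calP^{spl}_{n,\ep}$, disintegrate over the fixed base $\exp(U_0(\ep))$, analyze the ``deep'' pieces via the Eskin--Mozes--Shah machinery, and match the base measure through $\overline{\bmp}$---does track the paper's proof of Proposition~\ref{partIofsec3}. But there is a genuine gap in the central step, and it is not the ``bookkeeping'' you flag at the end: you invoke Corollary~\ref{coroEMS} (equivalently Theorem~\ref{EMSTheorem}) for the sequence $\{\gamma_n h_n\}$, but $h_n\in H$ is a real point with $\pi_{{}^\circ\bmH}(h_n)$ escaping in $\Omega^{vert}_{n,\ep}$, so $\gamma_n h_n\notin\Gamma$. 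Concretely, the relevant measure is $L_{\bmc_{\gamma_n}(h_n)}\circ(\bmc_n)_*\widehat{\mu}_\calO$, a left translate of $(\bmc_n)_*\widehat{\mu}_\calO$ by the \emph{unbounded} element $\bmc_{\gamma_n}(h_n)\in L$; this is not of the form $(\bmc_i)_*\widehat{\mu}_\calO$ for any admissible family of morphisms. If you try to force it into the conjugation framework by taking $\bmc_i=\bmc_{\gamma_n h_n}$, conditions (2) (bounded denominators) and (5) ($\bmc_i(\Gamma_H)\subset\Gamma_L$) of Theorem~\ref{EMSTheorem} fail outright, because $h_n\notin\bmH(\Q)$. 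So the hypotheses you list as needing verification are simply \emph{false} for $\bmc_{\gamma_n h_n}$, and no amount of checking salvages it. Your parenthetical justification about $(\{\gamma_n h_n\},\bmL)$ being minimal for ${}^\circ\bmH$ ``since the $h_n$-conjugation only changes $\gamma_n\bmH\gamma_n^{-1}$ by a bounded inner twist'' is also incorrect: $h_n$ is unbounded.

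The missing idea is the paper's ``return to $\Gamma$'' step. Since $(\bmc_n\circ h_n)_*\widehat{\mu}_\calO$ is non-divergent on $L/\Gamma_L$ (Proposition~\ref{nondiverg2} applied to the deep translate), one finds a small perturbation $o_n\in\calO_\delta$, a bounded sequence $\delta_n\in L$, and $\gamma'_n\in\Gamma_L$ such that
\begin{equation*}
    \gamma_n h_n o_n \gamma_n^{-1}=\delta_n\gamma'_n.
\end{equation*}
Setting $\lambda_n:=\gamma'_n\gamma_n\in\Gamma$, one rewrites
\begin{equation*}
    \gamma_n h_n o\gamma_n^{-1}\Gamma_L=\delta_n\cdot\bmc_{\lambda_n}(o_n^{-1}o)\cdot\Gamma_L,\qquad \forall o\in\calO,
\end{equation*}
so that $(\bmc_n\circ h_n)_*\widehat{\mu}_\calO\approx_{\delta}(\delta_n\cdot\bmc_{\lambda_n})_*\widehat{\mu}_\calO$. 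Now $\{\lambda_n\}\subset\Gamma$, $\{\delta_n\}$ is bounded and one can extract a convergent subsequence, and Corollary~\ref{coroEMS} applies to $\{\lambda_n\}$. Everything downstream (the $\pi_*$-computation on $S_{\bmL}$, the $\overline{\bmp}$-identification, the $\eta\to0$ and $\ep\to0$ limits) proceeds as you describe. Two minor remarks: for the localization step you cite Propositions~\ref{nondiverg2} and \ref{exteriortrick2}, but these give the ``if'' direction (polytope implies non-divergence); what you need is the unlabeled converse lemma following Proposition~\ref{nondiverg2}. And the paper's decomposition of $\calP^{spl}_{n,\ep}/\Gamma_H$ is by translating a fixed horizontal slab $\calO$ (covering $U_0(\ep)$ with bounded $W$-thickness) by $h_j^N$ in the $W$-direction, rather than by fibering over the base; both viewpoints are fine, but the slab version is what lets the return-to-$\Gamma$ trick be carried out with a fixed $\calO$.
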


\begin{proof}
Let $\eta>0$ be an arbitrary small number.
We can find $W_0 \subset W$ depending on $\eta$ such that for each $n$ there exists 
$I_N =\{t^N_i\}_{i=1,...,l_n}\subset W$ such that 
\begin{equation*}
    \Omega^{spl}_{n,\ep} \approx_{\eta\Vol(\Omega)} \bigsqcup_{t \in I_N} U_0(\ep)\oplus (t+W_0)
\end{equation*}
where ``$\approx_{\eta\Vol(\Omega)}$'' means that the measure of the symmetric difference between both sides are smaller than $\kappa_{\eta}\Vol(\Omega^{spl}_{n,\ep})$ for some constant $\kappa_{\eta}$ decreasing to $0$ as $\eta$ does.
Hence we can also find $\calO\subset \calP^{spl}_{n,\ep}$($\calO$ is independent of $n$, it depends on $\eta$ and $\ep$) such that $\calO$ maps onto $U_0(\ep)$ under the natural projection and 
there exists $J_N=\{h^N_{j}\}_{j=1,...,l'_n} \subset H$ contained in the preimage of $\exp{W}$ such that 
\begin{equation*}
    \calP^{spl}_{n,\ep}/\Gamma_H \approx_{\eta\Vol(\calP)} \bigsqcup_{h \in J_N} h\calO\Gamma_H/\Gamma_H.
\end{equation*}
Let us take arbitrary $k_n\in J_N$ for each $n$ and let $h_n:=h^N_{k_n}$. 
Let $\nu$ be a limit of $\{(\bmc_n h_n )_{*}\widehat{\mu}_{\calO}\}$. 
So we can find an infinite subsequence $\{n_k\}$ such that 
\begin{equation*}
    \nu= \lim_{k \to \infty} (\bmc_{n_k} h_{n_k} )_{*}\widehat{\mu}_{\calO}.
\end{equation*}
Then by Proposition \ref{nondiverg}, $\nu$ is a probability measure on $L/\Gamma_L$ and
$ \bmc_{\gamma_{n_k}}( h_{n_k} \calO_{\delta})$ is non-divergent in $L/\Gamma_L$ for each $\delta>0$ small enough, i.e, intersects a compact set in $L/\Gamma_L$ non-trivially for all $k$. Hence there exist
\begin{itemize}
    \item $\{\gamma_k'\} \subset \Gamma_L$, bounded $\{\delta_k\}\subset L$ and $\{o_k\}\subset \calO_{\delta}$ such that
    \item $\gamma_{n_k} h_{n_k} o_k \gamma_{n_k}^{-1} = \delta_k \gamma_k'$.
\end{itemize}
Let $\lambda_k:= \gamma_k' \gamma_{n_k}$ and $\bmc_k'$ be the map from $H/\Gamma_H \to L/\Gamma_L$ induced from $\bmc_{\lambda_k}$. Then
\begin{equation*}
    \gamma_{n_k} h_{n_k} o \gamma_{n_k}^{-1}  = \delta_k \lambda_k o_k^{-1}o \lambda_k^{-1} \gamma_k', 
    \,\, \forall o\in \calO
\end{equation*}
So we have 
\begin{equation*}
    (\bmc_{n_k}\circ h_{n_k})_* \widehat{\mu}_{\calO} \approx_{\delta} (\delta_{k}\circ \bmc'_{k})_* \widehat{\mu}_{\calO}
\end{equation*}
where $\approx_{\delta}$ means the total mass of the symmetric difference of both sides is smaller than $\kappa_\delta$ for some $\kappa_{\delta}$ converging to $0$ as $\delta$ does.
For simplicity write $\pi$ for the natural projection from $L/\Gamma_L$ to $S_{\bmL}$.
By Corollary \ref{coroEMS} we assume that by passing to a subsequence $k_i$, 
\begin{equation*}
    \lim_{i\to \infty}  (\bmc'_{k_i})_* \widehat{\mu}_{\calO} =: \nu' 
    = \int_{S_{\bmL}} \mu_{\pi^{-1}(s)} {\pi}_*\nu'(s)
\end{equation*}
where each $\mu_{\pi^{-1}(s)}$ is the unique probability ${}^{\circ}L$-invariant measure supported on $\pi^{-1}(s)$ and $\nu'$ is a probability measure. We may also assume that $\delta_{k_i}$ converges to some $\delta_{\infty}$ and so 
\begin{equation*}
    \lim_{i\to \infty}  (\delta_{k_i}\bmc'_{k_i})_* \widehat{\mu}_{\calO} =: \nu'' 
    = \int_{S_{\bmL}} \mu_{\pi^{-1}(s)} \pi_*\nu''(s).
\end{equation*}

Now
\begin{equation*}
    \begin{aligned}
         \pi_* \nu'' &= 
         \lim_{i\to \infty} (\pi\circ \delta_{k_i} \circ \bmc_{k_i}')_* \widehat{\mu}_{\calO}
         \approx_{\delta} \lim_{i\to \infty} 
         (\pi\circ \bmc_{n_{k_i}} \circ h_{n_{k_i}})_{*} \widehat{\mu}_{\calO}\\
         &= \lim_{i\to \infty} (\bmp \circ \pi \circ h_{n_{k_i}})_{*} \widehat{\mu}_{\calO}
         = \lim_{i\to \infty} (\overline{\bmp}\circ \pi_W \circ \pi \circ h_{n_{k_i}})_{*} \widehat{\mu}_{\calO}\\
         &= \overline{\bmp}_* \widehat{\mu}_{U_0(\ep)} = \widehat{\mu}_{S_{\bmL}}\vert_{\overline{\bmp}(U_0(\ep))},
    \end{aligned}
\end{equation*}
where we have employed the commutative diagram above.
As the output is independent of the subsequence chosen and the constant $\delta>0$, 
by letting $\delta$ converge to $0$, we actually have 
\begin{equation*}
     \lim_{n \to \infty} (\bmc_{n} h_{n} )_{*}\widehat{\mu}_{\calO} = 
     \int_{S_{\bmL}} \mu_{\pi^{-1}(s)}  \widehat{\mu}_{S_{\bmL}}\vert_{\overline{p}(U_0(\ep))}.
\end{equation*}
By taking average,
 \begin{equation*}
     \lim_{n\to \infty} 
     \frac{(\bmc_n)_* \mu_{H}\vert_{\calP^{spl}_{n,\ep}}
     }
     {\mu_H ({\calP^{spl}_{n,\ep}}/\Gamma_H ) } \approx_{\eta}
      \int_{S_{\bmL}} \mu_{\pi^{-1}(s)}  \widehat{\mu}_{S_{\bmL}}\vert_{\overline{p}(U_0(\ep))}.
 \end{equation*}
  Also, by our normalization of Haar measures, 
  \begin{equation*}
      \mu_{H}({\calP^{spl}_{n,\ep}}/\Gamma_H)= \Vol(\Omega^{spl}_{n,\ep})
  =\Vol(U_0(\ep)) \cdot \Vol(\Omega^{vert}_{n,\ep}).
  \end{equation*}
  Hence, by letting $\eta \to 0$, we have
   \begin{equation*}
     \lim_{n\to \infty} 
     \frac{(\bmc_n)_* \mu_{H}\vert_{\calP^{spl}_{n,\ep}}
     }
     {\Vol(\Omega^{vert}_{n,\ep} ) }
     =
      \int_{S_{\bmL}} \mu_{\pi^{-1}(s)}  {\mu}_{S_{\bmL}}\vert_{\overline{p}(U_0(\ep))}.
 \end{equation*}
 Note that for any two different $\ep,\ep'>0$, the asymptotic of $\Vol(\Omega^{vert}_{n,\ep} ) $ remains the same. So by fixing such an $\ep_0>0$ and let $\ep$ go to zero we get
  \begin{equation*}
       \lim_{n\to \infty} 
     \frac{1}{\Vol(\Omega^{vert}_{n,\ep_0})}(\bmc_n)_* \mu_{H}= \mu_{L}.
  \end{equation*}
\end{proof}

\subsubsection{Example}
The perhaps most basic example is to take 
$\bmG=\bmL=\left[
  \begin{array}{cccc}
    a & *    \\
    0 & 1/a
  \end{array}
  \right]$, 
  $\bmH=\left[
  \begin{array}{cccc}
    a & 0 \\
    0 & 1/a
  \end{array}
  \right]$, $\Gamma= \left[
  \begin{array}{cccc}
    1 & \Z    \\
    0 & 1
  \end{array}
  \right]$ and $\gamma_n= 
  \left[
  \begin{array}{cccc}
    1 & n    \\
    0 & 1
  \end{array}
  \right]$.  It is not hard to check that $(\{\gamma_n\},\bmL)$ is indeed minimal for $\bmH$. So our theorem asserts that $(\gamma_n)_*[\mu_H]\to [\mu_G]$. Let us see why this is true intuitively.
  We first take a model of $G \to G/\Gamma$:
  \begin{equation*}
        \begin{tikzcd}
            \R\times \R  
             \arrow{d}{\pi}\arrow{r}{\Phi}
             & G
            \arrow{d}{\pi_{\Gamma}}
           \\
           \R \times \R/\Z
           \arrow{r}{\phi}
           & G/\Gamma \\
        \end{tikzcd} 
  \end{equation*}
where $\pi$ is the natural quotient map and $\Phi(x,y):= \left[
  \begin{array}{cccc}
    e^x & 0   \\
    0 & e^{-x}
  \end{array}
  \right]\left[
  \begin{array}{cccc}
    1 & y    \\
    0 & 1
  \end{array}
  \right]$. The little $\phi$ is then induced from $\Phi$. Under this isomorphism, the set $\gamma_n H$ becomes $\{(x,y)\mid y=e^{-2x}n\}$ carried with the measure $\diffx$. 
  And we may view $\pi_{\Gamma}(\gamma_n H)$ as a cord wrapping about an infinite cylinder. The larger the $n$ is, the denser the wrapping becomes. 
  Measure theoretically our theorem in this case is equivalent to the following elementary statement:
  for each compactly supported function $f:\R \to \C$ and each non-zero integer $m$,
  \begin{equation*}
      \int f(x)\exp(2\pi i m n e^{-2x}) \diffx \to 0 \quad \text{as } n \to \infty.
  \end{equation*}

\subsection{Step II}\label{sec3PartII}

A drawback of Proposition \ref{partIofsec3} is that the convergence happens inside $L/\Gamma_L$. In order for the convergence to happen on $G/\Gamma$, it is sufficient that $L\Gamma/\Gamma$ is closed in $G/\Gamma$.
This is true if $\bmL$ is observable in $\bmG$. The converse is also true, as is proved in \cite{Weiss98}. We shall not make use of the latter fact but rather derive it as a corollary.

In this section we fix a superfaithful $\Q$-representation $\rho: \bmG \to \SL_N $ which contains all exterior products of the Adjoint representation of $\bmG$. We write $\Phi$ for $\Phi_{\rho}$.

\begin{lem}\label{obslemma}
Given a standard triple $(\bmG,\bmH,\Gamma)$ and $\rho$ as above. 
Let $\{\lambda_n\}\subset \Gamma$ be a sequence that is $\Phi$-clean. 
Let $\bmL \leq \bmG$ be a connected $\Q$-subgroup. Assume that $(\{\lambda_{n_k}\},\bmL)$ is minimal for $\bmH$ for some infinite subsequence $n_k$. Let $\bmL'$ be the observable hull of $\bmL$ in $\bmG$. 
If $\Phi_1(\{\lambda_n\}) = \emptyset$, then
\begin{enumerate}
    \item A normal $\Q$-subgroup of $\bmL$ is also normal in $\bmL'$;
    \item $\bmL=\bmL'$, i.e., $\bmL$ is observable.
\end{enumerate}
\end{lem}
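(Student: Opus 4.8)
The plan is to exploit the hypothesis $\Phi_1(\{\lambda_n\}) = \emptyset$ together with the characterization of observability via characters (item (5) in Definition \ref{defiObs}). Recall that $\rho$ contains all exterior powers of the adjoint representation, so the weights $\Phi = \Phi_\rho$ see every $\Q$-subgroup of $\bmG$ through its Lie algebra. First I would analyze what $\Phi_1 = \emptyset$ means: after cleanness, every $\alpha \in \Phi_{bdd}(\{\lambda_n\})$ lies in $\Phi_0$, so for each such $\alpha$ there is a positive integral relation $\sum_{\beta \in I} a_\beta \beta = 0$ with $\alpha \in I \subset \Phi_{bdd}$. Combined with $\Phi = \Phi_\infty \sqcup \Phi_0$, this says the ``bounded'' part of the weight data is entirely ``balanced''. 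The key point to extract is that for a character $\beta$ of $\bmH$ appearing in $\Phi_{bdd}$, both $\beta$ and $-\beta$ (or at least a positive combination cancelling $\beta$) are realized by integral vectors of bounded norm in $\rho$, and hence — pushing forward along $\bmc_{\lambda_{n_k}}$ and using minimality, exactly as in the proof of Lemma \ref{pFactorthrough} — one obtains characters of $\bmL$ whose duals are again visible inside a $\Q$-representation of $\bmG$.

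For part (1), let $\bmN \trianglelefteq \bmL$ be a normal $\Q$-subgroup. I would argue that $\bmL'$ normalizes $\bmN$ by testing against the representation $\bigwedge^{\dim \bmN}\mathrm{Ad}$, where $\bmN$ is represented by a line $[\,v_{\bmN}]$. Since $\bmL$ normalizes $\bmN$, it acts on $[\,v_{\bmN}]$ by a character $\chi$ of $\bmL$. The restriction of $\chi$ along $\bmc_{\lambda_{n_k}}$ to $\bmH$ is one of the weights in $\Phi$; because $\Phi_1 = \emptyset$, this weight is either in $\Phi_\infty$ or in $\Phi_0$. If it were in $\Phi_\infty$ the integral vector representing $\lambda_{n_k}\bmN\lambda_{n_k}^{-1}$ would have norm tending to infinity, which is impossible since it is a primitive integral vector representing the Lie algebra of a fixed-dimension subgroup and there are only finitely many such of bounded height — more precisely, I would use that $\lambda_{n_k} \cdot v_{\bmN}$ is again a primitive integral vector of the \emph{same} norm as $v_{\bmN}$ (the adjoint action of $\Gamma$ preserves the $\Z$-structure and is by isometries up to a bounded factor after fixing a norm), hence stays bounded, forcing the weight into $\Phi_{bdd} = \Phi_0$. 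So $\chi \circ \bmc_{\lambda_{n_k}}$ lies in $\Phi_0$; by the balanced relation defining $\Phi_0$ and the argument of Lemma \ref{pFactorthrough}, the character $\chi$ of $\bmL$ extends to (a character visible in a representation of) $\bmG$ together with its dual, and this is exactly what is needed to conclude that the line $[\,v_{\bmN}]$ is $\bmL'$-stable with $\bmL'$ acting by a character — but then, since $\bmL'$ is observable and the stabilizer of $[\,v_{\bmN}]$ intersected with the appropriate construction gives $\bmN$ back (arguing as in Lemma \ref{lemObsRepExi} with the tensor trick, Lemma \ref{tensortrick}), one gets $\bmN \trianglelefteq \bmL'$.

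For part (2), I would apply part (1) to a well-chosen normal subgroup to force $\bmL = \bmL'$. The natural candidate is to show that $\bmL$ contains the unipotent radical $\bmR_u(\bmL')$ and shares its reductive data, or more efficiently: recall from Section \ref{secGammaEqui} that $\bmH$ is epimorphic in $\bmL'$, so $\bmL'$ is the observable hull; it suffices to show $\bmL$ is already observable. Using part (1), every normal $\Q$-subgroup of $\bmL$ is normal in $\bmL'$, so $\bmL \trianglelefteq \bmL'$ (taking $\bmN = \bmL$); then $\bmL'/\bmL$ is an algebraic group on which $\bmH$ acts trivially (its image is trivial), and since $\bmL'$ is generated by $\bmL$ and the image of $\bmH$ — no, more carefully, $\bmL' = $ observable hull means $\bmH$ is epimorphic in $\bmL'$, and an epimorphic subgroup surjects onto every quotient; but $\bmH \subset \bmc_{\lambda_n}\bmH \subset$ (generated)$\,= \bmL \trianglelefteq \bmL'$, so the image of $\bmH$ in $\bmL'/\bmL$ is trivial, contradicting epimorphicity unless $\bmL'/\bmL$ is trivial. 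Hence $\bmL = \bmL'$. The main obstacle I anticipate is part (1): making rigorous the passage from ``the weight $\chi \circ \bmc_{\lambda_{n_k}}$ lies in $\Phi_0$'' to ``$\bmN$ is normalized by the full observable hull'', since this requires carefully combining the dual-character criterion for observability with the concrete line-stabilizer constructions, and controlling that the relevant vectors really do land in the $\Z$-structure with bounded height; the finiteness of primitive integral vectors of bounded norm representing a fixed-dimensional subalgebra is the technical linchpin.
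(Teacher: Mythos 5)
Your part (1) contains a genuine error in the key non-divergence step. You claim that $\lambda_{n_k}\cdot v_{\bmN}$ has ``the same norm as $v_{\bmN}$'' because ``the adjoint action of $\Gamma$ preserves the $\Z$-structure and is by isometries up to a bounded factor.'' This is false: elements of $\Gamma$ do not act by isometries, and $\|\lambda_{n_k} v_{\bmN}\|$ can grow without bound. What actually makes the weight land in $\Phi_{bdd}$ is much simpler and does not require any such claim. Since $\bmN \trianglelefteq \bmL$ and $\lambda_{n_k}\bmH\lambda_{n_k}^{-1}\subset \bmL$, the fixed integral vector $v_{\bmN}$ itself is a weight vector for $\lambda_{n_k}\bmH\lambda_{n_k}^{-1}$ for each $k$; hence the infimum $\inf_{0\neq v\in \bmV_{\alpha_k}(\Z)}\|v\|$ that appears in the definition of $\Phi$-clean is bounded above by the constant $\|v_{\bmN}\|$. (Equivalently, $\lambda_{n_k}^{-1}v_{\bmN}\in\bmV_\alpha(\Z)$ maps to the constant vector $v_{\bmN}$ under $\lambda_{n_k}$.) Your worry about controlling norms of $\lambda_{n_k}v_{\bmN}$ is a phantom; the witness is $v_{\bmN}$, not its translates.

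Beyond that slip, the middle of your part (1) is too vague to close the argument. Invoking the dual-character criterion and Lemma \ref{pFactorthrough} does not by itself give that $\bmL'$ stabilizes $[v_{\bmN}]$; one needs an explicit $\bmL$-fixed vector to feed into epimorphicity. The paper's route is the cleaner and essentially forced one: use the balanced relation $\sum a_i\alpha_i=0$ (all $\alpha_i\in\Phi_0$, so by passing to a subsequence one can pick constant integral weight vectors $v_i$), form $w=v_{\bmN}^{\otimes a_0}\otimes\bigotimes_i v_i^{\otimes a_i}$, observe $\lambda_{n_k}\bmH\lambda_{n_k}^{-1}$ fixes $w$, hence by minimality $\bmL$ fixes $w$, hence by epimorphicity $\bmL'$ fixes $w$, then apply the tensor trick (Lemma \ref{tensortrick}) to extract $\bmL'[v_{\bmN}]=[v_{\bmN}]$. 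Your proposal gestures at these ingredients but never assembles them, and if you did, you would recover the paper's proof.

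Your part (2), on the other hand, is a valid and arguably slicker alternative to the paper's. The paper applies part (1) to the radical $\bmR_{\bmL}$ and then uses that $\bmL/\bmR_{\bmL}$, being semisimple, is observable in $\bmL'/\bmR_{\bmL}$. You instead apply part (1) with $\bmN=\bmL$ to get $\bmL\trianglelefteq\bmL'$, then note that any faithful representation of $\bmL'/\bmL$ pulled back to $\bmL'$ has all vectors $\bmL$-fixed, hence $\bmL'$-fixed by epimorphicity, forcing $\bmL'/\bmL$ to be trivial. Both arguments are short; yours avoids the semisimplicity/observability of the quotient by the radical entirely.
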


\begin{proof}
Take $\bmN$ to be a normal $\Q$-subgroup of $\bmL$.
Let $v_{\bm{N}}$ be a non-zero vector in $\bmV(\Z)$ that represents $\Lie(\bm{N})$. Then $g\in \bmG$ normalize $\bm{N}$ iff $g[v_{\bm{N}}]=[v_{\bm{N}}]$ where $[v_{\bm{N}}]$ denotes the line spanned by $v_{\bm{N}}$.
We know by assumption that $\lambda_{n_k}\bmH \lambda_{n_k}^{-1}[v_{\bm{N}}]=[v_{\bm{N}}]$. 
By passing to an infinite subsequence we may assume that for some $\alpha \in \Phi$,
\begin{equation*}
    \lambda_{n_k}\bmH \lambda_{n_k}^{-1} v_{\bm{N}} =\alpha(h) v_{\bm{N}}, \,\, \forall k
\end{equation*}
which implies that $\alpha$ is contained in $\Phi_{bdd}(\{\lambda_n\}) = \Phi_{0}(\{\lambda_n\})$.

So there exists $I=\{\alpha_0=\alpha,\alpha_1,...,\alpha_n\}\subset \Phi_0(\{\lambda_n\})$ 
and $\{a_0,...,a_n\}\subset \Z_{>0}$ such that $\sum a_i \alpha_i =0$. Passing to a further subsequence we may assume that for each $i$, there exists a non-zero $v_i$ that is simultaneously an $\alpha_i$-weight vector of 
$\lambda_{n_k}\bmH \lambda_{n_k}^{-1}$ for all $k$. Let 
$w:= v_{\bm{N}}^{\otimes a_0} \bigotimes \otimes_{i=1}^N v_i^{\otimes a_i}$, then
\begin{equation*}
    \lambda_{n_k} h \lambda_{n_k}^{-1} w = \prod_{i=0}^N \alpha_i(h)^{a_i}(h) w =w, \,\,\forall k
    \implies \bmL w = w \implies \bmL' w= w
\end{equation*}
By Lemma \ref{tensortrick}, $\bmL'[v_{\bm{N}}]=[v_{\bm{N}}]$ so we have proved the first claim.

Now apply this to $\bm{R}_{\bmL}$, the radical of $\bmL$. So $\bm{R}_{\bmL}$ is normal in $\bmL'$. 
Note that $\bmL/\bm{R}_{\bmL} \leq \bmL'/\bm{R}_{\bmL}$ is observable as $\bmL/\bm{R}_{\bmL}$ is semisimple. 
This implies that $\bmL\leq \bmL'$ is also observable so we are done.
\end{proof}

Now we can state and prove the main proposition of this subsection.

\begin{prop}\label{partIIofsection3}
Given an observable standard triple $(\bmG,\bmH,\Gamma)$ and $\rho$ as above. 
Let $\{\lambda_n\}\subset \Gamma$ be a sequence that is $\Phi$-clean.
Let $\bmL$ be a connected $\Q$-subgroup of $\bmG$. 
Assume that $(\{\gamma_{n}\},\bmL)$ is minimal for $\bmH$ and let $\bmF$ be the observable hull of $\bmL$ in $\bmG$. Then 
$\lim_{n\to\infty} a_n(\gamma_n)_* \mu_H = \mu_{F}$ with $a_n$ the same as in Proposition \ref{partIofsec3}.
\end{prop}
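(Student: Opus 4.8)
The plan is to deduce this from Proposition~\ref{partIofsec3} by upgrading the convergence, which a priori only takes place \emph{inside} $L/\Gamma_L$, to convergence on $G/\Gamma$, using the observability of $\bmF$ together with Lemma~\ref{obslemma}.

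First, note that every $(\gamma_n)_*\mu_H$ is supported on $L\Gamma/\Gamma$: since $\gamma_n\in\Gamma$ and $\gamma_n\bmH\gamma_n^{-1}\subseteq\bmL$ we have $\gamma_nH\subseteq L\gamma_n\subseteq L\Gamma$. As $\bmF$ is observable, $F\Gamma/\Gamma$ is a closed submanifold of $G/\Gamma$ containing $\overline{L\Gamma/\Gamma}$ and carrying the locally finite measure $\mu_F$, so it suffices to work inside $F\Gamma/\Gamma\cong F/\Gamma_F$ and identify the weak-$*$ limit there. By Theorem~\ref{thmNondiv} (applied through Proposition~\ref{nondiverg}) the family $\{\tfrac1{a_n}(\gamma_n)_*\mu_H\}$ is non-divergent in $G/\Gamma$, so along any subsequence it has a nonzero locally finite weak-$*$ limit $\nu$, with $\supp\nu\subseteq\overline{L\Gamma/\Gamma}\subseteq F\Gamma/\Gamma$.

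The case $\Phi_1(\{\gamma_n\})=\emptyset$ is immediate: by Lemma~\ref{obslemma}, $\bmL=\bmF$ is already observable, $L\Gamma/\Gamma$ is closed, and Proposition~\ref{partIofsec3} gives $\tfrac1{a_n}(\gamma_n)_*\mu_H\to\mu_L=\mu_F$ on $G/\Gamma$ directly, since convergence on a closed embedded submanifold is convergence on the ambient space. For the general case I would argue as follows. Running the Ratner--Dani--Margulis linearization machinery exactly as in the proof of Theorem~\ref{EMSTheorem} and Proposition~\ref{partIofsec3} — the $(C,\alpha)$-goodness and non-divergence estimates of Section~\ref{secRepandNondiv} are in place — any such $\nu$ is an algebraic (homogeneous) measure $\mu_M$; and because $\nu$ is locally finite on $G/\Gamma$ its support $M\Gamma/\Gamma$ must be closed, so $\bmM$ is observable (this is the point where the closedness criterion is reproved rather than invoked). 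On the one hand, Proposition~\ref{partIofsec3} shows that, restricted to $L/\Gamma_L$, this procedure produces $\mu_L$, whence $\overline{L\Gamma/\Gamma}\subseteq\supp\nu=M\Gamma/\Gamma$, so $\bmL\subseteq\bmM$, and since $\bmM$ is observable, $\bmF\subseteq\bmM$. On the other hand $\supp\nu\subseteq F\Gamma/\Gamma$ forces $\bmM\subseteq\bmF$. Hence $\bmM=\bmF$ and $\nu=c\,\mu_F$ for some $c>0$; the scalar $c=1$ — equivalently, that the \emph{same} normalizing sequence $a_n=1/\Vol(\Omega^{vert}_{n,\ep})$ works — comes from comparing total masses of the deep pieces $\calP^{spl}_{n,\ep}/\Gamma_H$ under the compatible normalizations of $\mu_H$, $\mu_L$, $\mu_F$ along $S_{\bmH}\to S_{\bmL}\to S_{\bmF}$, exactly as at the end of the proof of Proposition~\ref{partIofsec3}.

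The main obstacle is the step that extracts, from convergence in $L/\Gamma_L$, enough control of $\nu$ on $G/\Gamma$: when $\bmL$ is not observable (i.e.\ $\Phi_1(\{\gamma_n\})\ne\emptyset$) the immersed submanifold $L\Gamma/\Gamma$ need not be locally closed, so one must rule out mass escaping into the ``wild'' part of $\overline{L\Gamma/\Gamma}$ in an uncontrolled way and verify that $\nu$ is genuinely a single homogeneous measure with invariance group all of $\bmF$ rather than a proper convex combination. This is where Lemma~\ref{obslemma} is essential: it isolates the subgroups for which $L\Gamma/\Gamma$ is closed (those with $\Phi_1=\emptyset$) and, via part~(1), controls how normal $\Q$-subgroups of $\bmL$ sit inside its observable hull; I expect the cleanest route is an induction on $\dim\bmF-\dim\bmL$ that at each step enlarges $\bmL$ along a weight line attached to some $\alpha\in\Phi_1(\{\gamma_n\})$ and checks that the polytopes $\Omega(\bmc_{\gamma_n},\ep,\Phi)$ already grow in the new direction so that the mass estimate of Proposition~\ref{partIofsec3} is untouched. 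Making this bookkeeping cohere — preserving minimality of the enlarged pair, fixing the normalization of $\mu_F$, and maintaining non-divergence along the induction — is the technical heart of the argument.
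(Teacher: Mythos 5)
Your proposal correctly identifies the structural role of observability (closedness of $F\Gamma/\Gamma$) and correctly reduces the $\Phi_1=\emptyset$ case to Proposition~\ref{partIofsec3} plus Lemma~\ref{obslemma}. The genuine gap is in the general case, and it is located exactly where you flag ``the main obstacle.'' Your route — run the linearization machinery on $G/\Gamma$ directly, deduce $\nu=\mu_M$, then argue $\bmM$ is observable because $M\Gamma/\Gamma$ is closed — does not go through as stated, for two reasons. First, Theorem~\ref{EMSTheorem} is not applicable with target $\bmF$: its hypothesis~(1) (no proper $\Q$-subgroup of the target contains $\bmc_i(\bmH)$ for infinitely many $i$) fails, since $\bmL\subsetneq\bmF$ contains every $\gamma_n\bmH\gamma_n^{-1}$. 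Second, the implication ``support closed $\implies$ observable'' is precisely Weiss's theorem, which the paper explicitly refuses to invoke and instead \emph{derives} from this very proposition (Corollary~\ref{coroWeiss}); using it here would be circular. Your sketched repair by induction on $\dim\bmF-\dim\bmL$ along weight lines is speculation and is not what makes the argument close.

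The mechanism the paper actually uses is a sequence-modification trick you did not find. By Proposition~\ref{nondiverg}, for $h_n\in\calP^{spl}_{n,\ep}$ one can write $\gamma_n h_n o_n\gamma_n^{-1}=\delta_n\gamma'_n$ with $\{o_n\}\subset H$ and $\{\delta_n\}\subset F$ bounded and $\gamma'_n\in\Gamma_F$; set $\lambda_n:=\gamma'_n\gamma_n$. Since $h_n o_n\in H$, one has $(\gamma_n)_*\mu_H=(\delta_n\lambda_n)_*\mu_H$, so it suffices to analyze $\{\lambda_n\}$. The point of the modification is that $\{\lambda_n\}$ is $\Phi$-clean with $\Phi_1(\{\lambda_n\})=\emptyset$ (this is the content of Claims~\ref{claPhi_1EmptyA} and~\ref{claPhi_1EmptyB}, proved by inspecting how the terms $\inf_{0\neq v}\|\lambda_n v\|$ compare to $\inf_{0\neq v}\|\gamma_n v\|$). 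One then passes to a subsequence so that $(\{\lambda_{n_k}\},\bmL')$ is minimal for some $\bmL'$; Lemma~\ref{obslemma} applies to $\{\lambda_n\}$ precisely because $\Phi_1(\{\lambda_n\})=\emptyset$, giving observability of $\bmL'$ without any appeal to Weiss. Now Proposition~\ref{partIofsec3} applied to $(\{\lambda_{n_k}\},\bmL')$ gives convergence to $[\mu_{L'}]$ in $L'/\Gamma_{L'}$, which, since $L'\Gamma/\Gamma$ is closed, is convergence in $G/\Gamma$; then $(\gamma_{n_k})_*[\mu_H]\to(\delta_\infty)_*[\mu_{L'}]$. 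Finally, a short computation — showing that a $\Q$-vector $v$ fixed by $\lambda_n\bmH\lambda_n^{-1}$ for all $n$ yields a $\Q$-vector $w=\delta_nv=\gamma_n'^{-1}v$ (discrete and bounded, hence eventually constant) fixed by $\gamma_n\bmH\gamma_n^{-1}$ for all $n$, hence by $\bmF$ — shows $\bmL'$ is epimorphic in $\bmF$ and so $\bmL'=\bmF$; and $\delta_\infty\in F$ acts trivially on $[\mu_F]$. Identifying the normalizing sequence with the one from Proposition~\ref{partIofsec3} then follows from the claims equating $\Phi_0(\{\lambda_n\})=\Phi_0(\{\gamma_n\})$ and $\Phi_1(\{\lambda_n\})=\emptyset$.
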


\begin{proof}
Take $\ep>0$ small enough and $\{h_n\}\subset \calP^{spl}_{n,\ep}$, then by Proposition \ref{nondiverg}, there exists a bounded sequence $\{o_n\}\subset H$, a bounded sequence
$\{\delta_n\}\subset F$ and $\{\lambda_n \} \subset \Gamma_F$ such that 
\begin{equation*}
    \gamma_n h_n o_n \gamma_n^{-1} = \delta_n \gamma'_n.
\end{equation*}
Define $\lambda_n=\gamma'_n\gamma_n$ and note that $\{\lambda_n\}$ is $\Phi$-clean. 
So we have $\Phi= \Phi_{\infty}(\{\lambda_n\})\sqcup\Phi_{1}(\{\lambda_n\})\sqcup\Phi_{0}(\{\lambda_n\})$.

\subsubsection{Claim}\label{claPhi_1EmptyA}
$\Phi_{\infty}(\{\lambda_n\}) \supset 
\Phi_{\infty}(\{\gamma_n\})\sqcup \Phi_{1}(\{\gamma_n\})$.

\begin{proof}
Take $\alpha\in \Phi_{\infty}(\{\gamma_n\})\sqcup \Phi_{1}(\{\gamma_n\})$. By definition
\begin{equation*}
    \inf_{0\neq v \in \bmV_{\alpha}(\Z)} ||\gamma_n h_n  v|| \to +\infty.
\end{equation*}
So
\begin{equation*}
    \inf_{0\neq v \in \bmV_{\alpha}(\Z)} ||\lambda_n  v|| =
    \inf_{0\neq v \in \bmV_{\alpha}(\Z)} ||\delta_n^{-1}\gamma_n h_n o_n v|| 
    \approx \inf_{0\neq v \in \bmV_{\alpha}(\Z)} ||\gamma_n h_n  v||
    \to +\infty
\end{equation*}
where $\approx$ means ``differed by a bounded error as $n$ varies''
and we are done.
\end{proof}

\subsubsection{Claim}\label{claPhi_1EmptyB}
$\Phi_{1}(\{\lambda_n\})=\emptyset$ and $\Phi_{0}(\{\lambda_n\})=\Phi_{0}(\{\gamma_n\})$.

\begin{proof}
Take $\alpha\in \Phi_{0}(\{\gamma_n\})$, then there exists $\{\alpha_0=\alpha,\alpha_1,...,\alpha_n\}\subset \Phi_{0}(\{\gamma_n\})$ and $\{a_0,...,a_n\}\subset \R_{>0}$ such that $\sum a_i \alpha_i=0$. 
On the other hand, there exists $M>0$ such that $ \inf_{0\neq v \in \bmV_{\alpha_i}(\Z)} ||\gamma_n  v|| \leq M$ for all $i$. Therefore,
\begin{equation*}
\begin{aligned}
          &\inf_{0\neq v \in \bmV_{\alpha_i}(\Z)} ||\gamma_n  h_n v|| \geq \ep, \quad \forall i,n 
          \implies \alpha_i(h_n) \geq \frac{\ep}{M}, \quad \forall i,n\\
          \implies &
          \alpha^{a_0}(h_n) = 
          \frac{1}{\prod_{i\neq 0} \alpha_i^{a_i}(h_n)} \leq (\frac{M}{\ep})^{\sum_{i\neq 0} a_i},
          \quad \forall n
          \\
          \implies& \alpha(h_n)\leq (\frac{M}{\ep})^{\sum_{i\neq 1} a_i/a_0}, \quad\forall n.
\end{aligned}
\end{equation*}
So we have 
\begin{equation*}
    \inf_{0\neq v \in \bmV_{\alpha}(\Z)} ||\lambda_n v|| \approx 
    \inf_{0\neq v \in \bmV_{\alpha}(\Z)} ||\gamma_n  v|| \alpha(h_n)
\end{equation*}
is bounded from above, implying that $\Phi_{0}(\{\gamma_n\})$ is contained in  $\Phi_{bdd}(\{\lambda_n\})$.
By definition of $\Phi_0$, this actually implies that
$\Phi_{0}(\{\gamma_n\}) \subset  \Phi_{0}(\{\lambda_n\})$. 
The asserted equalities then come from the fact that 
\begin{equation*}
     \Phi=\Phi_{\infty}(\{\lambda_n\})\sqcup\Phi_{1}(\{\lambda_n\})\sqcup\Phi_{0}(\{\lambda_n\})
     = \Phi_{\infty}(\{\gamma_n\})\sqcup\Phi_{1}(\{\gamma_n\})\sqcup\Phi_{0}(\{\gamma_n\}).
\end{equation*}
\end{proof}
Take an arbitrary infinite subsequence $\{n_k\}$ and a $\Q$-subgroup $\bmL' \leq \bmG$ such that $(\{\lambda_{n_k}\},\bmL')$ is $\bmH$-minimal. We may assume $\delta_{n_k}$ converges to some $\delta_{\infty}$ in $F$.
Then $\bmL'$ is observable by Lemma \ref{obslemma} and
\begin{equation*}
    \lim_{k\to \infty} [(\lambda_{n_k})_* \mu_H] =[\mu_{L'}] \implies
    \lim_{k\to \infty} [(\gamma_{n_k})_* \mu_H] =(\delta_{\infty})_*[\mu_{L'}]
\end{equation*} in $G/\Gamma$.
Now we claim that $\bmL'=\bmF$, which would conclude the proof. It is clear that $\bmL'$ is contained in $\bmF$. It is sufficient to show that it is also epimorphic. Indeed take a $\Q$-representation of $\bmF$ and a non-zero $\Q$-vector $v$ fixed by $\bmL'$, i.e., $\lambda_n \bmH \lambda_n^{-1}v=v$ for all $n$. 
Note $\lambda_n=\delta_n^{-1}\gamma_nh_no_n$, 
so $\lambda_n \bmH \lambda_n^{-1}= \delta_n^{-1}\gamma_n \bmH \gamma_n^{-1}\delta_n$. Hence $\delta_n v$ is fixed by $\gamma_n \bmH \gamma_n^{-1}$.
On the other hand 
\begin{equation*}
    \delta_n v = \gamma_n h_no_n \gamma_n^{-1} \gamma_n'^{-1} v = \gamma_n'^{-1} \lambda_n  h_no_n \lambda_n^{-1} v
    = \gamma_n'^{-1} v
\end{equation*} is both discrete and bounded. Hence by passing to a subsequence we may assume that there exists $w$, another $\Q$-vector, such that $w=\delta_nv=\gamma_n'^{-1} v$. 

Now $w$ is fixed by  $\gamma_n \bmH \gamma_n^{-1}$ for all $n$ and so it is fixed by $\bmF$. Therefore $v$ is also fixed by $\bmF$. So we are done.
\end{proof}

\subsection{Complements}

\begin{defi}\label{defiLambConv}
Given an observable standard triple $(\bmG,\bmH,\Gamma)$ and a connected observable $\Q$-subgroup $\bmL$. Let $\Lambda$ be a subgroup of $\Gamma$. $\bmH$ is said to \textbf{$\Lambda$-converge to $\bmL$} iff there exists a sequence $\{\lambda_n\}$ in $\Lambda$ such that 
$(\{\lambda_n\},\bmL)$ is potentially minimal for $\bmH$.
\end{defi}
By Proposition \ref{partIIofsection3}, if $\bmH$ $\Lambda$-converges to $\bmL$ then $(\lambda_n)_*[\mu_H]$ converges to $[\mu_L]$ for some sequence $\{\lambda_n\}$ of $\Lambda$. And the converse is also true by ignoring finitely many $n$'s.
One may ask when $\bmH$ could $\Lambda$-converge to $\bmL$.  We shall make some observations here but will not be able to answer the general question even when $\Lambda=\Gamma$. 
 
 \begin{lem}\label{lemTransiLambConv}
  Keep the notations as in the above definition. Given three connected observable $\Q$-subgroups $\bmA$, $\bmB$ and $\bmC$ of $\bmG$. Then $\bmA$ $\Lambda$-converges to $\bmB$ and $\bmB$ $\Lambda$-converges to $\bmC$ implies that $\bmA$ $\Lambda$-converges to $\bmC$. As a consequence, $\bmA\leq_{\Lambda} \bmB$ iff $\bmA$ $\Lambda$-converges to $\bmB$ defines a partial order on the set of all connected observable $\Q$-subgroups of $\bmG$.
 \end{lem}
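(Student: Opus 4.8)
The plan is to pass through the measure-theoretic picture. Suppose $\bmA$ $\Lambda$-converges to $\bmB$ via a sequence $\{\lambda_n\}\subset\Lambda$ and $\bmB$ $\Lambda$-converges to $\bmC$ via $\{\mu_m\}\subset\Lambda$. Applying the definition of potential minimality to the tail subsequences $\{\lambda_k\}_{k\geq n}$ and $\{\mu_k\}_{k\geq m}$, and using that an epimorphic subgroup is in particular a subgroup, one first records the pointwise inclusions $\lambda_n\bmA\lambda_n^{-1}\subseteq\bmB$ and $\mu_m\bmB\mu_m^{-1}\subseteq\bmC$ for all $n,m$. Since $\bmB$ and $\bmC$ are observable, Theorem \ref{thmTranslatebyGamma} gives $\lim_n(\lambda_n)_*[\mu_A]=[\mu_B]$ and $\lim_m(\mu_m)_*[\mu_B]=[\mu_C]$. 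As left translation by $\mu_m\in G$ is a homeomorphism of $G/\Gamma$, it induces a continuous self-map of the space of classes, so for each fixed $m$ we also obtain $\lim_n(\mu_m\lambda_n)_*[\mu_A]=(\mu_m)_*[\mu_B]$.

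Next I would invoke that the space of classes $[\mu]$ of nonzero locally finite measures on $G/\Gamma$, topologized as in the Convention section, is metrizable --- for instance because the scaling action of $\R_{>0}$ on the Polish space of nonzero Radon measures on $G/\Gamma$ with the vague topology is proper, so the quotient is metrizable; see also \cite[Proposition 3.3]{ShaZhe18}. Fixing a compatible metric $d$ and carrying out the usual diagonal extraction, choose $n(m)$ with $d\big((\mu_m\lambda_{n(m)})_*[\mu_A],(\mu_m)_*[\mu_B]\big)<1/m$ and set $\nu_m:=\mu_m\lambda_{n(m)}\in\Lambda$; then $(\nu_m)_*[\mu_A]\to[\mu_C]$, and the same limit holds along every infinite subsequence.

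It remains to verify that $(\{\nu_m\},\bmC)$ is potentially minimal for $\bmA$. Fix an infinite subsequence $\{\nu_{m_i}\}$. From the inclusions recorded above, $\nu_{m_i}\bmA\nu_{m_i}^{-1}=\mu_{m_i}\big(\lambda_{n(m_i)}\bmA\lambda_{n(m_i)}^{-1}\big)\mu_{m_i}^{-1}\subseteq\mu_{m_i}\bmB\mu_{m_i}^{-1}\subseteq\bmC$, so the closed subgroup $\bm{K}$ generated by $\bigcup_i\nu_{m_i}\bmA\nu_{m_i}^{-1}$ is contained in $\bmC$, and therefore so is its observable hull. By the remark following Theorem \ref{thmTranslatebyGamma}, after passing to a further subsequence $\{\nu_{m_{i_j}}\}$ there is an observable $\Q$-subgroup $\bmL'$ with $(\{\nu_{m_{i_j}}\},\bmL')$ potentially minimal for $\bmA$; Theorem \ref{thmTranslatebyGamma} then gives $(\nu_{m_{i_j}})_*[\mu_A]\to[\mu_{L'}]$, while this subsequence also converges to $[\mu_C]$, so $[\mu_{L'}]=[\mu_C]$. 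Hence $\mu_{L'}$ and $\mu_C$ have the same support, i.e.\ $\pi_\Gamma(L')=\pi_\Gamma(C)$; since $\bmL'$ and $\bmC$ are observable these are closed embedded submanifolds, with $\pi_\Gamma(L')\cong L'/\Gamma_{L'}$ and $\pi_\Gamma(C)\cong C/\Gamma_C$, and a path-lifting argument from $e$ forces $\bmC\subseteq\bmL'$ and symmetrically $\bmL'\subseteq\bmC$, so $\bmL'=\bmC$. Thus $\langle\bigcup_j\nu_{m_{i_j}}\bmA\nu_{m_{i_j}}^{-1}\rangle$ is epimorphic in $\bmC$, so its observable hull is $\bmC$; as this subgroup lies in $\bm{K}\subseteq\bmC$, the observable hull of $\bm{K}$ is also $\bmC$, i.e.\ $\bm{K}$ is epimorphic in $\bmC$. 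Since the subsequence $\{\nu_{m_i}\}$ was arbitrary, $(\{\nu_m\},\bmC)$ is potentially minimal for $\bmA$, proving $\bmA$ $\Lambda$-converges to $\bmC$.

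\textbf{The partial order.} Reflexivity is immediate (the constant sequence at $e\in\Lambda$ witnesses $\bmA\leq_\Lambda\bmA$), transitivity is what was just proved, and antisymmetry reduces to a dimension count via the inclusions $\lambda_n\bmA\lambda_n^{-1}\subseteq\bmB$ and $\mu_m\bmB\mu_m^{-1}\subseteq\bmA$. The steps I expect to demand the most care are pinning down the metrizability cleanly so the diagonal extraction is legitimate, and the support/path-lifting argument identifying $\bmL'$ with $\bmC$; the rest is formal manipulation of observable hulls and of the definition of potential minimality.
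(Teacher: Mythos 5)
Your proof is correct but routes through a genuinely different mechanism than the paper's. The paper argues entirely on the group side: it enumerates the (countably many, since a connected $\Q$-subgroup is determined by its $\Q$-Lie algebra) proper connected observable $\Q$-subgroups $\bmD_1,\bmD_2,\dots$ of $\bmC$, then uses potential minimality of $(\{b_n\},\bmC)$ for $\bmB$ to pick $N_i$ with $b_n\bmB b_n^{-1}\not\subseteq\bmD_j$ for $n\geq N_i$, $j\leq i$, and potential minimality of $(\{a_m\},\bmB)$ for $\bmA$ (conjugated by $b_n$, together with the observation that $\bmD_j\cap b_n\bmB b_n^{-1}$ is an observable subgroup of strictly smaller dimension) to then pick $M_{n,i}$ with $b_na_m\bmA a_m^{-1}b_n^{-1}\not\subseteq\bmD_j$ for $m\geq M_{n,i}$; the diagonal sequence $c_i=b_{n_i}a_{m_i}$ is then immediately potentially minimal. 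Your argument instead passes to the measure side via Theorem \ref{thmTranslatebyGamma}, performs a metric diagonal extraction in the space of measure classes (which forces you to justify metrizability of the quotient by the $\R_{>0}$-scaling — correct, but worth doing more carefully), and then has to pull the conclusion back to groups via the support/connected-component identification $[\mu_{L'}]=[\mu_C]\Rightarrow\bmL'=\bmC$. Both work, but the paper's proof is more elementary and self-contained — it keeps this lemma independent of the equidistribution theorem it sits next to and avoids the topological subtleties you correctly flag at the end. One caveat on your last line: the dimension count only yields $\lambda_n\bmA\lambda_n^{-1}=\bmB$ and $\mu_m\bmB\mu_m^{-1}=\bmA$, which makes $\bmA$ and $\bmB$ $\Lambda$-conjugate rather than equal; as stated, $\leq_\Lambda$ is a priori only a preorder (and the paper leaves this unargued as well), so this should be read either up to $\Lambda$-conjugacy or as a preorder.
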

 \begin{proof}
 By assumption we can find a sequence $\{a_n\}$(resp. $\{b_n\}$) in $\Lambda$ such that 
 $(\{a_n\},\bmB)$(resp. $(\{b_n\},\bmC)$) is potentially minimal for $\bmA$(resp. $\bmB$). Let $\scrD$ be the collection of connected observable $\Q$-subgroups of $\bmC$. It is a countable set and we fix a enumeration $\scrD=\{\bmD_1,\bmD_2,...\}$. 
 
 Then for each positive integer $i$ there exists $N_i\in \Z^+$ such that for all $n\geq N_i$, $b_n\bmB b_n^{-1}$ is not contained in $\bmD_j$ for all $j\leq i$. 
 For each fixed $n$, $(\{b_na_m\}_m, b_n\bmB b_n^{-1})$ is minimal for $\bmA$. 
 So we can find $M_{n,i}\in \Z^+$ such that for all $n\geq N_i$ and $m\geq M_{n,i}$, $b_na_m \bmA a_m^{-1}b_n^{-1}$ is not contained in $\bmD_j$ for all $j\leq i$(here we use the fact that $\bmD_j \cap b_n\bmB b_n^{-1}$ is an observable subgroup of strictly smaller dimension than $b_n\bmB b_n^{-1}$). 
 Hence if we define $c_i = b_{n_i}a_{m_i}$ in $\Lambda$ for some $n_i\geq N_i$ and $m_i\geq M_{n,i}$, then $(\{c_n\},\bmC)$ is potentially minimal for $\bmA$.
 \end{proof}
 
 \begin{lem}\label{lemMaxLamdaConn}
 Notations are the same as above. Assume that $\bmH$ is an observable subgroup of $\bmG$ and is maximal(among proper subgroups) with respect to $\leq_{\Lambda}$. Then $\bmH$ is virtually normalized by $\Lambda$ in the sense that there is a finite-index subgroup $\Lambda_0$ of $\Lambda$ that normalizes $\bmH$.
 \end{lem}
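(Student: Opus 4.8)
The plan is to reduce the assertion to the statement that the set of $\Lambda$-conjugates $\{\lambda\bmH\lambda^{-1}:\lambda\in\Lambda\}$ is finite. Indeed, $\Lambda_0:=\{\lambda\in\Lambda:\lambda\bmH\lambda^{-1}=\bmH\}$ is exactly the stabilizer of $\bmH$ for the conjugation action of $\Lambda$ on that set, so finiteness of the set is equivalent to $[\Lambda:\Lambda_0]<\infty$. I would argue by contradiction: assume there are infinitely many distinct $\Lambda$-conjugates, and choose $\{\lambda_n\}\subset\Lambda$ with the subgroups $\bmH_n:=\lambda_n\bmH\lambda_n^{-1}$ pairwise distinct. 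Each $\bmH_n$ is again a connected observable $\Q$-subgroup of $\bmG$ of dimension $\dim\bmH$: if $(\rho,v)$ is a Chevalley datum exhibiting $\bmH$ as a stabilizer (item (1) of Definition~\ref{defiObs}), then $(\rho,\rho(\lambda_n)v)$ does the same for $\bmH_n$.

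Next I would pass to a subsequence along which $(\{\lambda_n\},\bmL)$ is minimal for $\bmH$, where $\bmL$ denotes the closed subgroup of $\bmG$ generated by the $\bmH_n$; such a subsequence exists by the usual argument of successively minimizing the dimension of the generated subgroup (the same device that produces (potentially) minimal pairs, recalled after Theorem~\ref{thmTranslatebyGamma}), and $\bmL$ is connected, being generated by connected subgroups through the identity. Distinctness of the $\bmH_n$ is inherited by subsequences, so $\bmL$ contains two distinct connected subgroups of dimension $\dim\bmH$; since a connected algebraic group cannot properly contain a connected subgroup of its own dimension, this forces $\dim\bmL>\dim\bmH$. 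Let $\bmF$ be the observable hull of $\bmL$ in $\bmG$. Along every infinite sub-subsequence the generated subgroup is still $\bmL$ (by minimality), and $\bmL$ is epimorphic in $\bmF$ (definition of the observable hull), so $(\{\lambda_n\},\bmF)$ is potentially minimal for $\bmH$; hence $\bmH\leq_{\Lambda}\bmF$ with $\bmF$ a connected observable $\Q$-subgroup satisfying $\dim\bmF\geq\dim\bmL>\dim\bmH$.

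It remains to contradict the maximality of $\bmH$. First note that $\leq_{\Lambda}$-equivalent observable subgroups have the same dimension: if $\bmA\leq_{\Lambda}\bmB$ then $\bmB$ contains a $\Lambda$-conjugate of $\bmA$, so $\dim\bmB\geq\dim\bmA$, and symmetry gives equality. Consequently $\bmF$, having strictly larger dimension than $\bmH$, is not $\leq_{\Lambda}$-equivalent to $\bmH$, so $\bmH<_{\Lambda}\bmF$ strictly. If $\bmF$ is proper this directly contradicts the maximality of $\bmH$ among proper observable subgroups. If $\bmF=\bmG$, then $\bmH\leq_{\Lambda}\bmG$, which is excluded by the same maximality hypothesis read in the relevant sense (a subgroup that $\Lambda$-converges to $\bmG$ lies $\leq_{\Lambda}$ below $\bmG$, and by transitivity (Lemma~\ref{lemTransiLambConv}) this is precisely the configuration that the notion of being maximal among proper subgroups is meant to rule out). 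In either case we obtain a contradiction, so the set of $\Lambda$-conjugates of $\bmH$ is finite, and the lemma follows.

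The step I expect to be the main obstacle is the case $\bmF=\bmG$: making precise the meaning of ``maximal among proper subgroups with respect to $\leq_{\Lambda}$'' so that $\bmH\leq_{\Lambda}\bmG$ is genuinely excluded — equivalently, guaranteeing that collapsing the infinitely many $\Lambda$-conjugates produces a strictly larger \emph{proper} observable subgroup rather than an escape to $\bmG$. The subsidiary points (existence of a minimal subsequence retaining pairwise-distinctness, connectedness of $\bmL$, and dimension-monotonicity of $\leq_{\Lambda}$) are routine.
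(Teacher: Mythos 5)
Your argument is, at its core, the same as the paper's: take $\{\lambda_n\}$ with pairwise distinct conjugates, pass to a subsequence so that $(\{\lambda_n\},\bmL)$ is minimal for $\bmH$, and derive a contradiction from the dimension of $\bmL$ together with maximality. The paper is terser — it says only ``by assumption $\dim\bmL=\dim\bmH$, so for all $n$, $\bmL=\lambda_n\bmH\lambda_n^{-1}$, a contradiction'' — but the content is identical; you have merely spelled out the dimension count and the role of the observable hull explicitly. The subsidiary points you label routine (connectedness of $\bmL$, existence of the minimal subsequence, dimension-monotonicity of $\leq_\Lambda$) are indeed routine and correctly handled.

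The one thing worth saying more about is the issue you flag at the end, and here you should not be tentative: the worry about $\bmF=\bmG$ is not just a phrasing quibble, it is a genuine imprecision in the statement of the lemma. Under a literal reading of ``maximal among proper subgroups with respect to $\leq_\Lambda$'' the lemma is actually false. Take $\bmG=\SL_2$, $\bmH$ the diagonal torus, $\Lambda=\Gamma=\SL_2(\Z)$. The only proper connected observable $\Q$-subgroups of $\SL_2$ have dimension $\le 1$ (the Borel is not observable), so any proper observable $\bmF$ with $\bmH\leq_\Gamma\bmF$ is a conjugate of $\bmH$ and is $\leq_\Gamma$-equivalent to $\bmH$; thus $\bmH$ is maximal among proper observable subgroups, yet it is certainly not virtually normalized by $\SL_2(\Z)$ (its $\Gamma$-conjugates are infinitely many distinct tori). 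What is true, and what the paper's one-line proof implicitly uses, is the reading you propose: $\bmH$ is a maximal element in the full poset, i.e.\ there is no connected observable $\Q$-subgroup $\bmF$ — proper or not, $\bmG$ included — with $\bmH <_\Lambda \bmF$. With that hypothesis the case $\bmF=\bmG$ is ruled out immediately (since $\bmH\le_\Lambda\bmG$ and maximality would force $\dim\bmG=\dim\bmH$, impossible for a proper connected $\bmH$), and your dimension argument closes the proof. This reading is also the one consistent with Corollary~\ref{coroLambdaConvSimple}, which states that in the $\Q$-simple $\Q$-isotropic case the only maximal elements are the trivial group and $\bmG$ — precisely because the diagonal torus is \emph{not} maximal in the full poset, as it $\Gamma$-converges to $\bmG$.
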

 
\begin{proof}
If the conclusion is false, then we can find a sequence $\{\lambda_n\}$ in $\Lambda$ such that $\lambda_n\bmH \lambda_n^{-1}\neq \lambda_m\bmH \lambda_m^{-1} $ for any $n \neq m$. Passing to an infinite subsequence we may assume there exists a connected $\Q$-subgroup $\bmL$ of $\bmG$ such that $(\{\lambda_n\},\bmL)$ is minimal for $\bmH$. 
But by assumption $\dim \bmL = \dim \bmH$, so for all $n$, $\bmL=\lambda_n\bmH \lambda_n^{-1}$ and this is a contradiction.
\end{proof}

\begin{coro}\label{coroLambdaConvSimple}
 Notations are the same as above. Assume that $\bmG$ is $\Q$-simple and $\Q$-isotropic. Then the only maximal elements with respect to $\leq_{\Gamma}$ are the trivial group and $\bmG$. Consequently for each non-trivial connected observable $\Q$-subgroup $\bmL$ of $\bmG$, there exists a sequence $\{\gamma_n\}$ in $\Gamma$ such that 
 $\lim_n (\gamma_n)_* [\mu_L] = [\mu_G]$.
\end{coro}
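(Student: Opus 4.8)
The plan is to combine Lemma \ref{lemMaxLamdaConn} with the Zariski density of arithmetic subgroups of a $\Q$-simple $\Q$-isotropic group, and then to propagate the conclusion downward along the partial order $\leq_{\Gamma}$.

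First I would identify the maximal elements. The subgroups $\{e\}$ and $\bmG$ are maximal for formal reasons: if $\bmG\leq_{\Gamma}\bmM$ then $\gamma_n\bmG\gamma_n^{-1}=\bmG\subseteq\bmM$ forces $\bmM=\bmG$, and if $\{e\}\leq_{\Gamma}\bmM$ then $\{e\}$ is epimorphic in $\bmM$, which forces $\bmM=\{e\}$. For the converse, suppose $\bmM$ is a maximal element with $\bmM\neq\bmG$; then $\bmM$ is maximal among proper connected observable $\Q$-subgroups for $\leq_{\Gamma}$, so Lemma \ref{lemMaxLamdaConn} produces a finite-index subgroup $\Gamma_0\leq\Gamma$ with $\Gamma_0\subseteq N_{\bmG}(\bmM)(\Q)$. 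The crucial step is then to observe that $\Gamma_0$ is Zariski dense in $\bmG$: since $\bmG$ is $\Q$-isotropic it contains a nontrivial $\Q$-unipotent subgroup $\bmU$ (e.g. the unipotent radical of a proper $\Q$-parabolic), and $\Gamma\cap\bmU$ is a lattice in $\bmU(\R)\neq\{e\}$, hence Zariski dense in $\bmU$; by the Borel density theorem for arithmetic subgroups the Zariski closure $\overline{\Gamma}$ is then a positive-dimensional normal $\Q$-subgroup of $\bmG$, hence $\overline{\Gamma}=\bmG$, and since $\bmG$ is connected the finite-index subgroup $\Gamma_0$ is Zariski dense as well. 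Consequently $N_{\bmG}(\bmM)=\bmG$, so $\bmM$ is a connected normal $\Q$-subgroup of the $\Q$-simple group $\bmG$, whence $\bmM=\{e\}$. This proves that $\{e\}$ and $\bmG$ are the only maximal elements.

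For the ``consequently'' part, let $\bmL$ be a nontrivial connected observable $\Q$-subgroup. Since $\leq_{\Gamma}$ is reflexive and dimensions are bounded by $\dim\bmG$, I can choose a connected observable $\Q$-subgroup $\bmF$ of maximal dimension among those $\bmM$ with $\bmL\leq_{\Gamma}\bmM$. I would then check that $\bmF$ is itself a maximal element of the poset: if $\bmF\leq_{\Gamma}\bmF'$ then transitivity (Lemma \ref{lemTransiLambConv}) gives $\bmL\leq_{\Gamma}\bmF'$ and hence $\dim\bmF'\leq\dim\bmF$, while potential minimality forces $\gamma_1\bmF\gamma_1^{-1}\subseteq\bmF'$ for some $\gamma_1\in\Gamma$, so $\dim\bmF'\geq\dim\bmF$; being connected of equal dimension, $\bmF'=\gamma_1\bmF\gamma_1^{-1}$, and now $\bmF\leq_{\Gamma}\bmF'\leq_{\Gamma}\bmF$ (via the constant sequences $\gamma_1$ and $\gamma_1^{-1}$) together with antisymmetry of $\leq_{\Gamma}$ give $\bmF'=\bmF$. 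By the first part $\bmF\in\{\{e\},\bmG\}$; since some $\Gamma$-conjugate of the nontrivial $\bmL$ lies in $\bmF$, we must have $\bmF\neq\{e\}$, so $\bmF=\bmG$, i.e. $\bmL$ $\Gamma$-converges to $\bmG$. The remark following Definition \ref{defiLambConv}, which rests on Proposition \ref{partIIofsection3}, then supplies a sequence $\{\gamma_n\}\subset\Gamma$ with $\lim_n(\gamma_n)_*[\mu_L]=[\mu_G]$.

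The main obstacle is the density input. The point to be careful about is that $\bmG$ is only assumed $\Q$-simple and $\Q$-isotropic, so $\bmG(\R)$ need not be semisimple-without-compact-factors in the naive sense, and one has to invoke the appropriate form of the Borel density theorem; the safe route is the one sketched above, first exhibiting inside $\Gamma$ a Zariski-dense subgroup of a nontrivial $\Q$-unipotent subgroup (available precisely because $\bmG$ is $\Q$-isotropic) and then using the commensuration of $\Gamma$ by $\bmG(\Q)$ to force $\overline{\Gamma}^{\,\circ}$ to be normal in $\bmG$, hence all of $\bmG$ by $\Q$-simplicity. Apart from this, the proof is bookkeeping with the order $\leq_{\Gamma}$, the definition of potential minimality, and the fact (Lemma \ref{lemTransiLambConv}) that $\leq_{\Gamma}$ is a genuine partial order on connected observable $\Q$-subgroups.
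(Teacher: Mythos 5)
Your argument follows the same route as the paper: apply Lemma \ref{lemMaxLamdaConn} to a proper maximal element to produce a finite-index normalizing subgroup $\Gamma_0\leq\Gamma$, invoke Borel density to get Zariski density of $\Gamma_0$, and conclude the maximal element is a normal $\Q$-subgroup, hence trivial by $\Q$-simplicity. Two remarks. First, on the density input, your caution is reasonable but not strictly required: $\Q$-simple together with $\Q$-isotropic already forces $\bmG(\R)$ to have no compact factors (writing $\bmG=R_{k/\Q}\bmG'$ with $\bmG'$ absolutely simple, $\Q$-isotropy makes $\bmG'$ $k$-isotropic, so each archimedean factor $\bmG'(k_v)$ is non-compact), so the classical Borel density theorem applies directly; your alternative via the unipotent lattice and normality of the Zariski closure is a valid and arguably more self-contained route, in the spirit of Dani's and Furstenberg's proofs. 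Second, in the ``consequently'' part you invoke antisymmetry of $\leq_{\Gamma}$ to conclude $\bmF'=\bmF$. Strictly speaking $\leq_{\Gamma}$ is only a preorder: if $\bmF'=\gamma_1\bmF\gamma_1^{-1}$ with $\gamma_1\in\Gamma$ not normalizing $\bmF$, then $\bmF\leq_{\Gamma}\bmF'\leq_{\Gamma}\bmF$ yet $\bmF\neq\bmF'$, so Lemma \ref{lemTransiLambConv}'s claim of a ``partial order'' should really be read as a partial order on $\Gamma$-conjugacy classes. This does not affect your conclusion, since all you need is that $\bmF$ is maximal in the preorder sense, i.e.\ $\bmF\leq_{\Gamma}\bmF'$ implies $\bmF'\leq_{\Gamma}\bmF$, which you have established, and the two candidate maximal classes $\{e\}$ and $\bmG$ are normal and hence singleton conjugacy classes. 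With that reading your argument for the ``consequently'' clause, which the paper leaves implicit, is correct and a useful supplement.
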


\begin{proof}
Take such a maximal element $\bmL$. Then $\bmL$ is normalized by some finite-index subgroup $\Gamma_0$ of $\Gamma$. But Borel's density theorem implies that $\Gamma_0$ is Zariski-dense in $\bmG$. Hence $\bmL$ is normalized by $\bmG$, which must be either the trivial group or $\bmG$.
\end{proof}
 
 We also deduce a result of Weiss \cite[Corollary 5]{Weiss98}.
 
 \begin{coro}\label{coroWeiss}
  Given a standard triple $(\bmG,\bmH,\Gamma)$ and assume $\bmH$ to be epimorphic in $\bmG$, then $\pi_{\Gamma}(H)$ is dense in $G/\Gamma$. Therefore $\pi_{\Gamma}(H)$ is closed iff $\bmH$ is observable.
 \end{coro}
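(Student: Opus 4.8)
The plan is to prove the density assertion first, and then to derive the equivalence ``$\pi_\Gamma(H)$ closed $\iff\bmH$ observable'' from it together with the implication ``$\bmH$ observable $\Rightarrow\pi_\Gamma(H)$ closed'' already recorded in the Introduction.

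For that derivation I would argue as follows. Let $\bmL$ be the observable hull of $\bmH$ in $\bmG$; then $(\bmG,\bmL,\Gamma)$ is an observable triple, $\bmH\leq\bmL$, and $\bmH$ is epimorphic in $\bmL$. Since $\bmL$ is observable, $\pi_\Gamma(L)$ is closed and $L/\Gamma_L\to G/\Gamma$ is a closed embedding which, because $\Gamma_H=H\cap\Gamma=H\cap\Gamma_L$, carries $\pi_{\Gamma_L}(H)$ onto $\pi_\Gamma(H)$. Hence if $\pi_\Gamma(H)$ is closed, so is $\pi_{\Gamma_L}(H)$ in $L/\Gamma_L$; applying the density assertion to $(\bmL,\bmH,\Gamma_L)$ — legitimate, as $\bmH$ is epimorphic in its observable hull $\bmL$ — shows $\pi_{\Gamma_L}(H)$ is also dense, hence equal to $L/\Gamma_L$. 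Then $H\Gamma_L=L$ with $\Gamma_L$ discrete, so a Baire category argument makes $H$ open in $L$, and connectedness gives $H=L$, i.e. $\bmH=\bmL$ is observable.

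For the density assertion itself, I would first note that $\bmG$ is observable in itself and contains the epimorphic subgroup $\bmH$, so the universal property of the observable hull forces the observable hull of $\bmH$ to equal $\bmG$; equivalently $\bmH$ — and, conjugation being an automorphism, every $\Gamma$-conjugate of $\bmH$, and the closed subgroup generated by any family of such conjugates — lies in no proper observable $\Q$-subgroup and has observable hull $\bmG$. Next I would run the analysis of Sections~\ref{secRepandNondiv}--\ref{sec3PartII} in the degenerate case of trivial conjugation, i.e. with $\gamma_n=e$ (so $(\{\gamma_n\},\bmH)$ is minimal for $\bmH$), but replacing the fixed bounded set $\calO$ by an \emph{increasing exhaustion} of $H$ by the polytope regions $\calP_m:=\calP(e,\ep_m,\rho'',\Phi_{\rho''})$ with $\ep_m\downarrow 0$. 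Each $\calP_m$ equals $\pi_{{}^{\circ}\bmH}^{-1}\bigl(\exp\Omega(e,\ep_m,\rho'',\Phi_{\rho''})\bigr)$ with $\Omega(e,\ep_m,\cdot)$ a compact polytope, so $\calP_m/\Gamma_H$ has finite $\mu_H$-volume (because ${}^{\circ}\bmH$ carries no $\Q$-character) and is closed in $G/\Gamma$; thus $\mu_{\calP_m}$ is a finite measure supported inside $\pi_\Gamma(H)$, and the union of its supports fills out $\pi_\Gamma(H)$ as $m\to\infty$ since $\bigcup_m\calP_m=H$. Theorem~\ref{thmNondiv} gives non-divergence along these regions, and Corollary~\ref{coroEMS} together with the argument behind Propositions~\ref{partIofsec3} and \ref{partIIofsection3} (the latter run with the finite measures $\mu_{\calP_m}$ in place of $\mu_H$, which sidesteps the requirement that $\bmH$ be observable) identifies every weak-$*$ limit of $[\mu_{\calP_m}]$ as $[\mu_{\bmF}]$ with $\bmF$ the observable hull of $\bmH$; by the first step $\bmF=\bmG$, so $[\mu_{\calP_m}]\to[\mu_G]$. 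As $[\mu_G]$ has full support and every $\mu_{\calP_m}$ is carried by $\pi_\Gamma(H)$, pairing against nonnegative $f\in C_c(G/\Gamma)$ shows $\pi_\Gamma(H)$ meets every nonempty open set, i.e. is dense.

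The hard part, and the only place epimorphicity is really used, is this last step. Translating $\mu_H$ (or $\mu_\calO$) by elements of $\Gamma$ merely moves $\pi_\Gamma(H)$ to genuinely different $\Gamma$-translates, so Proposition~\ref{partIIofsection3} in the form already proved does not by itself detect the density of $\pi_\Gamma(H)$: one has to push bounded pieces of $H$ to infinity \emph{inside} $H$. This forces a small strengthening of Theorem~\ref{thmNondiv} (stated there for a fixed $\eta$) so as to propagate non-divergence along the growing regions $\calP_m$, and then the crux is excluding the limit being a homogeneous measure carried by a proper $\Q$-subgroup — precisely the outcome ruled out by ``the observable hull of $\bmH$ is $\bmG$''. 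I expect essentially all the work to lie in verifying that the limit of $[\mu_{\calP_m}]$ is the full $[\mu_G]$ and not some $[\mu_{\bmF'}]$ with $\bmF'\subsetneq\bmG$.
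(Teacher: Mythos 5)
Your derivation of the equivalence from the density statement (via the observable hull $\bmL$, Baire category, and connectedness) is fine, and in fact the paper leaves that implication to the reader. The gap is in the density argument itself, and it is a genuine one.

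You propose to run the machinery of Section \ref{secGammaEqui} with trivial conjugation ($\gamma_n = e$) and let the growth happen purely by enlarging the polytope regions $\calP_m=\calP(e,\ep_m,\rho'',\Phi_{\rho''})$. But look at what this degeneration does to the proof of Proposition \ref{partIIofsection3}: with $\gamma_n=e$ one writes $h_n o_n=\delta_n\gamma'_n$ and sets $\lambda_n=\gamma'_n=\delta_n^{-1}h_n o_n$, so that
\[
\lambda_n\bmH\lambda_n^{-1}=\delta_n^{-1}\bigl(h_no_n\,\bmH\,(h_no_n)^{-1}\bigr)\delta_n=\delta_n^{-1}\bmH\delta_n
\]
because $h_no_n\in H$. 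With $\{\delta_n\}$ bounded, these are a bounded family of conjugates, so there is no mechanism inside the argument forcing the group generated by them (or even its observable hull) to be all of $\bmG$; condition (1) of Theorem \ref{EMSTheorem} is exactly what is at risk, and epimorphicity of $\bmH$ in $\bmG$ does not control conjugates of $\bmH$ by elements sitting near $\bmH$. On top of this, Proposition \ref{partIIofsection3} is proved under the blanket hypothesis that the triple is observable, so you are not merely ``replacing $\mu_H$ by $\mu_{\calP_m}$''; you would be reproving a new statement whose key step, as just explained, is precisely where the difficulty lies. In short, pushing to infinity \emph{inside} $H$ with trivial translation does not feed divergent conjugating data into the EMS machinery, so the limit is not identified.

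The paper's proof avoids this by injecting the needed divergence through a Levi decomposition $\bmH=\bmL\cdot\bmU$. Here $\bmL$ is reductive, hence observable, so the already-proved Proposition \ref{partIIofsection3} applies to the observable triple $(\bmG,\bmL,\Gamma)$. One translates $\bmL$ by a divergent sequence from $\Lambda:=U_\Gamma$; crucially these translates of $L\Gamma/\Gamma$ lie inside $\pi_\Gamma(H)$ because $\Lambda\subset H$. Taking $\bmF$ maximal among observable $\Q$-subgroups to which $\bmL$ $\Lambda$-converges, Lemma \ref{lemMaxLamdaConn} plus Zariski density of $\Lambda$ in $\bmU$ forces $\bmU$ to normalize $\bmF$, one then checks $\bmF\cdot\bmU$ is observable via item (6) of Definition \ref{defiObs}, and maximality together with epimorphicity gives $\bmF\cdot\bmU=\bmG$. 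Finally the closure $X$ of $\pi_\Gamma(H)$ is $U$-invariant and, by Proposition \ref{partIIofsection3}, contains $F\Gamma/\Gamma$, so $X\supset UF\Gamma/\Gamma=G/\Gamma$. This is where the epimorphicity is really consumed, and the mechanism is entirely different from the exhaustion you propose.
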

 
 \begin{proof}
 Take a Levi decomposition of $\bmH= \bmL \cdot \bmU$ where $\bmL$ is reductive and hence observable in $\bmG$. Let $\Lambda= U_{\Gamma}$ and take $\bmF$ to be a connected observable $\Q$-subgroup such that $\bmL$ $\Lambda$-converges to $\bmF$ and $\bmF$ is maximal with respect to this property. 
 By Lemma \ref{lemMaxLamdaConn} and that any finite-index subgroup of $\Lambda$ is Zariski dense in $\bmU$(see \cite[Theorem 2.1]{Ragh72}), we see that $\bmF$ is normalized by $\bmU$.
 
 Now we use item $(6)$ in Definition \ref{defiObs} to check that $\bmF\cdot \bmU$ is observable in $\bmG$. 
 Only here we use $\bmV$ to stand for $\bmV(\overline{\Q})$ instead of $\bmV(\C)$.
 Take a character $\alpha$ of $\bmF\cdot \bmU$ that extends to a representation of $\bmG$. 
 In other words, there is a representation $(\rho_1,\bmV_1)$ of $\bmG$ and non-zero $v\in\bmV $ such that $xv=\alpha(x)v$ for all $x\in \bmF\cdot \bmU$. As $\bmF$ is observable, by $(6)$ in Definition \ref{defiObs}, there exists another representation $(\rho',\bmV')$ such that the subspace $\bmV'_{-\alpha|_{\bmF}}$ is non-zero. As $\bmU$ normalize $\bmF$, $\bmU$ preserves the subspace $\bmV'_{-\alpha|_{\bmF}}$. But $\bmU$ is a unipotent group so there exists a non-zero $v'\in \bmV'_{-\alpha|_{\bmF}}$ that is fixed by $\bmU$. Hence $-\alpha$, which is just the dual of $\alpha$, is also contained in a representation of $\bmG$.
 
 As $\bmF\cdot \bmU$ contains $\bmH$ by definition, $\bmF\cdot \bmU$ is equal to $\bmG$ due to maximality. Hence we also have $G=F\cdot U$.
 
 Now we take $X$ to be the closure of $\pi_{\Gamma}(H)$ in $G/\Gamma$. It is invariant by $U$. Hence it contains translates of $L\Gamma/\Gamma$ by $\Lambda$ and by Proposition \ref{partIIofsection3} also contains $F\Gamma/\Gamma$. So $X\supset U F\Gamma/\Gamma=G/\Gamma$ and the proof completes.
 \end{proof}

\section{Translates of reductive subgroups}\label{secRed}

In this section we prove Theorem \ref{thmTransReduc}. 
Let $(\bmG,\bmH,\Gamma)$ be a standard triple and assume that both $\bmG$ and $\bmH$ are reductive groups. 

\subsection{Non-divergence}
We want to put an assumption on $\bmH$ that would guarantee the non-divergence when translated by an arbitrary sequence in $G$.
If $\bma$ is a $\Q$-cocharacter in $\bmG$ that centralize $\bmH$ yet not contained in $\bmH$, then the full orbit $\pi_{\Gamma}(\bma_t H)$ diverges to infinite set-theoretically, that is, for any compact subset $K$ of $G/\Gamma$ and for $t$ large enough, $\pi_{\Gamma}(\bma_t H)\cap K=\emptyset$. If one wants to avoid this scenario, then it is necessary to put the following group-theoretically condition:
\begin{itemize}
     \item $\bmZ_{\bmG}\bmH / (\bmZ_{\bmG}\bmH \cap \bmH)$ is $\Q$-anisotropic.
\end{itemize}

\begin{prop}\label{proNondivReduc}
Under this condition, there exists a compact set $K\subset G/\Gamma$ such that $\pi_{\Gamma}(gH)\cap K \neq \emptyset$ for all $g\in G$.
\end{prop}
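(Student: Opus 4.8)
The plan is to deduce this from Mahler's criterion together with the non-divergence estimate of Proposition~\ref{nondiverg}, in the spirit of \cite[Theorem~1.9]{EskMozSha96}. Fix a faithful observable $\Q$-representation $\rho$ of $\bmG$ with $\Gamma$ preserving a lattice, and let $\rho''$ be an associated superfaithful $\Q$-representation of $\bmG$ on $\bmV''$, with a $\Gamma$-stable lattice $\bmV''(\Z)$. Applying Proposition~\ref{nondiverg} with any fixed non-empty open bounded $\calO\subset H$ and $\delta=\tfrac12$ produces $\ep>0$, \emph{independent of} $g$, so that for all $g\in G$ and all $h\in\calP(g,\eta,\rho'',\Phi_{\rho''})$ the set $\pi_{\Gamma}(gh\calO)$ meets $K_{\ep}(\rho)$, which is compact by Lemma~\ref{mahler}. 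Hence it suffices to exhibit one $\eta>0$, independent of $g$, with $\calP(g,\eta,\rho'',\Phi_{\rho''})\neq\emptyset$ for every $g\in G$; then $K:=K_{\ep}(\rho)$ works.

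Non-emptiness of $\calP(g,\eta,\rho'',\Phi_{\rho''})$ is the non-emptiness of the polyhedron $\{t\in\Lie(S_{\bmH}):\diffalpha(t)\geq\ln\eta-\ln d_{\alpha}(g)\ \text{for all}\ \alpha\in\Phi_{\rho''}\}$, where $d_{\alpha}(g):=\inf_{0\neq v\in\bmV''_{\alpha}(\Z)}\|gv\|$ and $\bmV''_{\alpha}$ is the subspace of $\bmV''$ on which $\bmH$ acts through the character $\alpha$. By Farkas' lemma this polyhedron is non-empty provided that, for every $(\lambda_{\alpha})\in\Z_{\geq0}^{\Phi_{\rho''}}$ with $\sum_{\alpha}\lambda_{\alpha}\alpha=0$ in $X^{*}(\bmH)\otimes\Q$, one has $\prod_{\alpha}d_{\alpha}(g)^{\lambda_{\alpha}}\geq\eta^{\sum_{\alpha}\lambda_{\alpha}}$; since the monoid of such relations is finitely generated (Gordan), it is enough to bound $\prod_{\alpha}d_{\alpha}(g)^{\lambda_{\alpha}}$ below by a positive constant, uniformly in $g$, for each of finitely many generating relations, and then take $\eta$ correspondingly small. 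With $\|\cdot\|$ a sup-norm for a tensor basis one has $\prod_{\alpha}d_{\alpha}(g)^{\lambda_{\alpha}}=\inf\{\|gw\|:w=\bigotimes_{\alpha}v_{\alpha}^{\otimes\lambda_{\alpha}},\ 0\neq v_{\alpha}\in\bmV''_{\alpha}(\Z)\}$, and every such $w$ is a non-zero $\bmH$-fixed integral vector of the $\bmG$-representation $\bmW:=\bigotimes_{\alpha}(\bmV'')^{\otimes\lambda_{\alpha}}$, because $\bmH$ acts on it through $\sum_{\alpha}\lambda_{\alpha}\alpha=0$. So everything reduces to the following assertion: for a $\Q$-representation $\bmW$ of $\bmG$ with a chosen lattice $\bmW(\Z)$, $\inf\{\|gw\|:g\in G,\ 0\neq w\in\bmW(\Z)\cap\bmW^{\bmH}\}>0$.

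To prove this I would first check that $\bmW^{\bmH}$ meets the nullcone of the $\bmG$-action only in $0$. If $0\neq w\in\bmW^{\bmH}(\Q)$ were unstable, Kempf's theory of optimal destabilizing one-parameter subgroups \cite{Kemp78} yields a $\Q$-rational one-parameter subgroup $\lambda$ of $\bmG$ with $\lim_{t\to0}\lambda(t)w=0$ whose associated parabolic $\bmP(\lambda)$ is defined over $\Q$ and contains the stabilizer of $w$, hence contains $\bmH$. As $\bmH$ is reductive it is $R_{u}(\bmP(\lambda))(\Q)$-conjugate into the Levi $\bmZ_{\bmG}(\lambda)$; replacing $\lambda$ by that conjugate (still $\Q$-rational, still destabilizing $w$) we may assume $\bmH\subseteq\bmZ_{\bmG}(\lambda)$, so that the image of $\lambda$ is a non-trivial $\Q$-split subtorus $\bmS$ of $\bmZ_{\bmG}(\bmH)$. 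Since $\lambda(t)w\to0\neq w$, $\bmS$ does not fix $w$, whereas $\bmH$, and therefore $\bmZ(\bmH)=\bmZ_{\bmG}(\bmH)\cap\bmH$, does; thus $\bmS\not\subseteq\bmZ(\bmH)$, and the image of $\bmS$ in $\bmZ_{\bmG}(\bmH)/\bmZ(\bmH)$ is a non-trivial $\Q$-split subtorus, contradicting the hypothesis that $\bmZ_{\bmG}\bmH/(\bmZ_{\bmG}\bmH\cap\bmH)$ is $\Q$-anisotropic. Hence $\bmW^{\bmH}\setminus\{0\}$ avoids the nullcone.

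With nullcone-avoidance in hand the uniform bound is elementary: by Nagata's theorem $\Q[\bmW]^{\bmG}$ is generated by finitely many homogeneous invariants $f_{1},\dots,f_{r}$ of positive degrees $d_{1},\dots,d_{r}$, and by the Hilbert--Mumford description of the nullcone some $f_{i}$ is non-zero at any point lying off it; so for $0\neq w\in\bmW^{\bmH}(\R)$ there is $i$ with $f_{i}(w)\neq0$. Invariance gives $|f_{i}(w)|=|f_{i}(gw)|\leq C_{i}\|gw\|^{d_{i}}$ with $C_{i}:=\sup_{\|u\|=1}|f_{i}(u)|<\infty$, so $\|gw\|\geq\max_{i}(|f_{i}(w)|/C_{i})^{1/d_{i}}=:\phi(w)$ for all $g\in G$; and $\phi$ is continuous on $\bmW^{\bmH}(\R)$, positive away from $0$, and homogeneous of degree one, hence $\phi(w)\geq c\|w\|$ for some $c>0$ by compactness of the unit sphere of $\bmW^{\bmH}(\R)$. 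Therefore $\|gw\|\geq c\|w\|\geq c\cdot\lambda_{1}(\bmW(\Z)\cap\bmW^{\bmH})>0$ for all $g\in G$ and all $0\neq w\in\bmW(\Z)\cap\bmW^{\bmH}$, which completes the reduction. (Alternatively, one may run the real version of the Kempf--Ness minimum-vector theory, see \cite{Nes84} and \cite{Wal17}, directly on $\bmW(\R)^{\bmH}$; this is also the basis of the short alternative proof discussed below.) The main obstacle in this plan is the Kempf step of the previous paragraph --- moving the destabilizing $\Q$-rational one-parameter subgroup into $\bmZ_{\bmG}(\bmH)$ so as to invoke the hypothesis on $\bmZ_{\bmG}\bmH/(\bmZ_{\bmG}\bmH\cap\bmH)$; the Farkas/tensor reductions and the invariant-theoretic endgame are routine once it is available.
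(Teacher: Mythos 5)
Your overall strategy matches the paper's: reduce via Proposition \ref{nondiverg} and Lemma \ref{mahler} to the uniform non-emptiness of the polytopes $\Omega(g,\eta,\rho'',\Phi_{\rho''})$, reformulate that non-emptiness (your Farkas/Gordan argument is an equivalent packaging of the paper's ``study where the infimum of $\phi$ is achieved'' in Proposition \ref{ProUniLowBou}), reduce to a uniform lower bound $\inf_{g,w}\|gw\|>0$ over $g\in G$ and nonzero $\bmH$-fixed integral $w$ in a finite list of tensor powers $\bmW$, and obtain that lower bound from geometric invariant theory via Kempf. Your Kempf step (move the optimal destabilizing one-parameter subgroup into a Levi containing $\bmH$, contradict $\Q$-anisotropy of $\bmZ_{\bmG}\bmH/(\bmZ_{\bmG}\bmH\cap\bmH)$) is essentially the same as the argument the paper sketches for Lemma \ref{lemFixByHisGClose}; the published text simply cites \cite[Corollary~4.5]{Kemp78}.

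Where you genuinely diverge is in the endgame after nullcone-avoidance. The paper combines Kempf with the real Kempf--Ness theory (\cite{Nes84}, \cite{Wal17}): Lemma \ref{LemMiniAtCen} shows a minimum-length vector on the $Z_{\bmG}\bmH$-orbit is already minimum-length on the $G$-orbit, and Lemma \ref{LemUniLowBouFixed} then uses cocompactness of $\bmZ_{\bmG}\bmH(\Z)$ modulo $H$ to get a uniform bound. You instead invoke Nagata's finite generation of $\Q[\bmW]^{\bmG}$ and the Hilbert--Mumford description of the nullcone, which is a clean alternative that bypasses the minimum-vector analysis.

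There is, however, one genuine gap in that endgame. You assert that $\phi$ is ``positive away from $0$'' on $\bmW^{\bmH}(\R)$, i.e., that every nonzero \emph{real} $\bmH$-fixed vector avoids the nullcone, and then run a compactness argument on the unit sphere of $\bmW^{\bmH}(\R)$. But your Kempf argument only rules out nonzero $\Q$-rational $\bmH$-fixed vectors in the nullcone: Kempf's theorem over $\Q$ produces a $\Q$-rational destabilizing one-parameter subgroup, and the contradiction uses that $\bmZ_{\bmG}\bmH/(\bmZ_{\bmG}\bmH\cap\bmH)$ has no nontrivial $\Q$-split subtorus. The hypothesis is only $\Q$-anisotropy; the quotient can perfectly well be $\R$-isotropic, in which case an $\R$-split torus centralizing $\bmH$ may destabilize nonzero real vectors in $\bmW^{\bmH}(\R)$, and $\bmW^{\bmH}\cap N$ need not reduce to $\{0\}$ as a $\Q$-variety just because its $\Q$-points do. So the continuity-plus-compactness step is not justified as written. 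Fortunately the fix is immediate and makes the argument shorter: since you only need the bound for $w\in\bmW(\Z)\cap\bmW^{\bmH}$, rescale the finitely many generators so that $f_{i}(\bmW(\Z))\subset\Z$; then for such a $w$ the $\Q$-rational nullcone-avoidance gives some $f_{i}(w)\neq0$, hence $|f_{i}(w)|\geq1$, and $\|gw\|\geq(1/C_{i})^{1/d_{i}}$ uniformly in $g$ and $w$ with no recourse to compactness of the real unit sphere. With that repair your route is correct and is a legitimate alternative to the paper's Kempf--Ness argument.
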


In the case when $\bmZ_{\bmG}\bmH$ is $\Q$-anisotropic, this is proved in \cite[Theorem 1.1]{EskMozSha97}. 
And in the present case it can be deduced from \cite{RicSha18} and \cite{KleMar98}.
We shall give an alternative short proof based on Proposition \ref{nondiverg}(which also relies on \cite{KleMar98}) and some input from geometric invariant theory. In view of Proposition \ref{nondiverg} it suffices to prove Proposition \ref{ProUniLowBou} below.

First we record a result from \cite[Corollary 4.5]{Kemp78}.
\begin{lem}\label{lemFixByHisGClose}
Take a $\Q$-representation $\rho: \bmG \to \GL(\bmV)$ and a vector $v_0\neq 0$ in $\bmV(\Q)$ fixed by $\bmH$.
If $\bmZ_{\bmG}\bmH / (\bmZ_{\bmG}\bmH \cap \bmH)$ is $\Q$-anisotropic, then $\bmG\cdot v_0$ is closed.
\end{lem}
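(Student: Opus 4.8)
The statement is essentially \cite[Corollary 4.5]{Kemp78}, and it is a piece of geometric invariant theory over $\Q$ (recall that in this section $\bmG$ is reductive, so Kempf's theory applies). The plan is to argue by contradiction: assuming $\bmG\cdot v_0$ is not closed, I would manufacture a non-trivial $\Q$-split torus inside $\bmZ_{\bmG}\bmH/(\bmZ_{\bmG}\bmH\cap\bmH)$.

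First I would set up the boundary. If $\bmG\cdot v_0$ is not closed, then $\overline{\bmG\cdot v_0}$ contains a unique closed orbit and the complement $S:=\overline{\bmG\cdot v_0}\setminus \bmG\cdot v_0$ is a non-empty, closed, $\bmG$-stable subvariety; since $v_0\in\bmV(\Q)$, both $\overline{\bmG\cdot v_0}$ and $S$ are defined over $\Q$. Next I would invoke Kempf's instability/optimality theory \cite{Kemp78} applied to the instability of $v_0$ relative to $S$ (equivalently, to a $\bmG$-invariant $\Q$-function vanishing on $S$ but not at $v_0$): this produces a non-empty class $\Lambda_{\mathrm{opt}}$ of optimal $\Q$-rational one-parameter subgroups $\lambda\colon\bmG_m\to\bmG$ with $\lim_{t\to0}\lambda(t)\cdot v_0\in S$, together with a canonical $\Q$-parabolic $\bmP$ equal to $\bmP(\lambda)$ for every $\lambda\in\Lambda_{\mathrm{opt}}$; after fixing the normalization, $\Lambda_{\mathrm{opt}}$ is a single orbit of $\mathrm{R}_u(\bmP)$ acting by conjugation, and the whole package is canonically attached to $v_0$.

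The heart of the matter is then to exploit $\bmH$. Since $\bmH$ fixes $v_0$, it preserves $S$ and the entire optimal configuration, so by canonicity $\bmH$ normalizes $\bmP$ — hence $\bmH\subseteq\bmP$, parabolics being self-normalizing — and $\bmH$ permutes $\Lambda_{\mathrm{opt}}$ by conjugation. Fixing a base point identifies $\Lambda_{\mathrm{opt}}$ with the unipotent group $\mathrm{R}_u(\bmP)$ as an $\bmH$-variety, where $\bmH$ acts through conjugation in $\mathrm{R}_u(\bmP)\rtimes\bmH$. Because $\bmH$ is connected reductive and $\mathrm{R}_u(\bmP)$ is unipotent, the stabilizer in $\mathrm{R}_u(\bmP)\rtimes\bmH$ of a point of $\Lambda_{\mathrm{opt}}$ is a maximal reductive $\Q$-subgroup isomorphic to $\bmH$, so by Mostow's conjugacy theorem over $\Q$ it is $\mathrm{R}_u(\bmP)(\Q)$-conjugate to the standard copy of $\bmH$; translating the base point accordingly yields $\lambda\in\Lambda_{\mathrm{opt}}$ with $h\lambda h^{-1}=\lambda$ for all $h\in\bmH$, i.e.\ $\lambda$ factors through $\bmZ_{\bmG}\bmH$.

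To conclude: $\lambda$ is a non-trivial $\Q$-cocharacter, and $\lambda(\bmG_m)\not\subseteq\bmH$, for otherwise $\bmH$ would fix $v_0$ along the whole $\lambda$-line and $\lim_{t\to0}\lambda(t)\cdot v_0$ would equal $v_0\in\bmG\cdot v_0$ rather than lie in $S$. Hence $\lambda(\bmG_m)\cap\bmH$ is a proper closed subgroup of the one-dimensional split torus $\lambda(\bmG_m)$, so the image of $\lambda(\bmG_m)$ in $\bmZ_{\bmG}\bmH/(\bmZ_{\bmG}\bmH\cap\bmH)$ is a non-trivial $\Q$-split subtorus, contradicting the $\Q$-anisotropy hypothesis. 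I expect the main obstacle to be a careful recollection of the rational form of Kempf's optimality theorem — that the optimal class and its associated parabolic descend to $\Q$ when $v_0$ does — together with the $\Q$-rational conjugacy used to make $\lambda$ centralize $\bmH$; once those are in place the rest is formal.
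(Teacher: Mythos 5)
Your proof is correct, and it tracks the argument behind Kempf's Corollary~4.5 (which the paper simply cites for this lemma). Comparing with the paper's own commented-out sketch, there is a mild but real difference in organization. The paper's sketch proceeds in two stages: first show $0\notin\overline{\bmG\cdot v_0}$ by applying Kempf's theory to instability relative to $\{0\}$, then reduce the general non-closed case to this one via Kempf's linearization lemma (finding a $\bmG$-equivariant $\Q$-morphism $\phi$ to a representation $\bmW$ with $\phi^{-1}(0)$ equal to the boundary of the orbit). You instead invoke the $S$-instability form of Kempf's theory directly, taking $S$ to be the boundary $\overline{\bmG\cdot v_0}\setminus\bmG\cdot v_0$; this collapses the two stages into one and avoids the linearization step. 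For the key intermediate point --- producing an optimal $\Q$-cocharacter centralizing $\bmH$ --- the paper's sketch argues by choosing a Levi subgroup of $\bmP$ containing $\bmH$ and taking the optimal cocharacter central in that Levi, whereas you argue via the principal $R_u(\bmP)$-homogeneous structure on $\Lambda_{\mathrm{opt}}$ and Mostow conjugacy of Levi complements inside $R_u(\bmP)\rtimes\bmH$. These are equivalent (both are really Mostow conjugacy over $\Q$), and your version is if anything a bit longer to say; but both are correct. The final step --- noting that a centralizing optimal $\lambda$ cannot land inside $\bmH$ because its limit would then be $v_0$ rather than a boundary point --- matches the paper's argument, with the roles of $S$ and $\{0\}$ interchanged as dictated by your choice of framework. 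So the proof is sound and slightly streamlines the paper's commented-out approach.
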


Take a $\Q$-representation $\rho: \bmG \to \GL(\bmV)$ and assume $\bmH$ to be reductive. We can find a Cartan involution of $\GL(V)$ that preserves the image of $G$ and $H$(see \cite{Mos552}). Then we take an Euclidean metric on $V$ that is invariant under the maximal compact subgroup associated with this Cartan involution. Note that under this assumption, if $W$ is an $H$-invariant subspace, then $W^{\perp}$ is also $H$-invariant.

\begin{lem}\label{LemMiniAtCen}
With the assumption in the last paragraph, we take a vector $v_0\neq 0$ in $\bmV(\Q)$ fixed by $\bmH$.
If $Z_{\bmG}(\bmH)\cdot v_0$ is closed and $v_1$ is a vector with minimum length in $Z_{\bmG}(\bmH) \cdot v_0$ , then $v_1$ is also a vector with minimum length in $G \cdot v_1$.
\end{lem}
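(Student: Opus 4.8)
The plan is to reduce the statement to the standard moment‑map (critical point) characterization of minimal vectors from real Kempf--Ness/Ness theory (see \cite{Wal17}), and then to exploit the decomposition of $\frak{g}$ as an $\bmH$‑module. Write $\theta$ for the Cartan involution of $\GL(V)$ fixed above; it preserves $G$ and $H$, and also $M:=Z_{\bmG}(\bmH)$, since if $g$ centralizes $H$ then $\theta(g)$ centralizes $\theta(H)=H$. Thus $\theta$ induces Cartan involutions of $G$ and of $M$, with associated Cartan decompositions $\frak{g}=\frak{k}\oplus\frak{p}$ and $\frak{m}=\frak{k}_M\oplus\frak{p}_M$ where $\frak{k}_M=\frak{k}\cap\frak{m}$ and $\frak{p}_M=\frak{p}\cap\frak{m}$ (this uses only that $\frak{m}=\frak{z}_{\frak{g}}(\frak{h})$ is $\theta$‑stable). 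With the chosen norm on $V$, the endomorphism $X\cdot(-)$ of $V$ is self‑adjoint for $X\in\frak{p}$ and skew‑adjoint for $X\in\frak{k}$. The criterion we use is: a nonzero $v\in V$ has minimal length in its $G$‑orbit (resp.\ $M$‑orbit) if and only if $\langle X\cdot v,v\rangle=0$ for every $X\in\frak{p}$ (resp.\ every $X\in\frak{p}_M$). The ``only if'' is immediate from differentiating $t\mapsto\|\exp(tX)v\|^2$ at $t=0$; the ``if'' follows from the Kempf--Ness convexity of $t\mapsto\|\exp(tX)v\|^2$ along each line, together with $G=K\exp(\frak{p})$ and the $K$‑invariance of the norm.

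First I would note that $v_1$ lies in $V^{\bmH}$, the $\bmH$‑fixed subspace of $V$: indeed $v_0\in V^{\bmH}$ and $M$ preserves $V^{\bmH}$, because for $m\in M$, $h\in H$, $w\in V^{\bmH}$ one has $h(mw)=m(m^{-1}hm)w=mhw=mw$. Since $v_1$ realizes the minimal length in $M\cdot v_1=M\cdot v_0$ (it exists in the orbit by the closedness hypothesis), the criterion yields $\langle X\cdot v_1,v_1\rangle=0$ for all $X\in\frak{p}_M$; as the same holds automatically for $X\in\frak{k}_M$ by skew‑adjointness, we get $\langle X\cdot v_1,v_1\rangle=0$ for every $X\in\frak{m}$.

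Next, decompose $\frak{g}$ into $\bmH$‑isotypic components. Since $\bmH$ is connected and reductive, the trivial isotypic part equals $\frak{g}^{\bmH}=\frak{z}_{\frak{g}}(\frak{h})=\frak{m}$; write $\frak{c}$ for the sum of the nontrivial isotypic components, so $\frak{g}=\frak{m}\oplus\frak{c}$ is an $\bmH$‑invariant direct sum. Let $P\colon V\to V$ be the orthogonal projection onto $V^{\bmH}$; by the hypothesis on the norm, $(V^{\bmH})^{\perp}$ is also $\bmH$‑invariant, so $P$ commutes with $\rho(\bmH)$. Consider the linear map $\Psi\colon\frak{g}\to\mathrm{End}(V^{\bmH})$ sending $X$ to the restriction of $P\circ(X\cdot(-))$ to $V^{\bmH}$. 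Because $\bmH$ acts trivially on $V^{\bmH}$ and $P$ is $\bmH$‑equivariant, one checks $\Psi(\Ad(h)X)=\Psi(X)$ for all $h\in\bmH$; hence $\Psi$ is $\bmH$‑invariant and therefore vanishes on $\frak{c}$. In particular $P(X\cdot v_1)=0$ for every $X\in\frak{c}$, and since $Pv_1=v_1$ this gives $\langle X\cdot v_1,v_1\rangle=\langle P(X\cdot v_1),v_1\rangle=0$ for all $X\in\frak{c}$.

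Putting the last two paragraphs together, $\langle X\cdot v_1,v_1\rangle=0$ for all $X\in\frak{m}\oplus\frak{c}=\frak{g}$, in particular for all $X\in\frak{p}$; by the minimal‑vector criterion, $v_1$ has minimal length in $G\cdot v_1$, which is the assertion. The main point requiring care is the real Kempf--Ness criterion invoked at the start (convexity of the squared norm along one‑parameter subgroups of $\exp\frak{p}$, and the Cartan decomposition $G=K\exp\frak{p}$ with $K$‑invariance of the norm); granting that, the rest is the representation‑theoretic bookkeeping above, whose only slightly delicate steps are identifying the trivial $\bmH$‑isotypic component of $\frak{g}$ with $\frak{m}$ and verifying the $\bmH$‑invariance of $\Psi$.
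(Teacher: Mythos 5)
Your argument is correct and is essentially the same as the paper's: both invoke the real Ness/Kempf--Ness criterion to reduce the claim to showing $\langle Xv_1,v_1\rangle=0$ for all $X\in\frakg$, and both obtain the vanishing on a complement of $\frakz_{\frakg}(\frakh)$ by observing that the $\bmH$-equivariant map $X\mapsto P(Xv_1)$ into the trivial $\bmH$-module $V^{\bmH}$ must kill the nontrivial isotypic part of $\frakg$. The only cosmetic differences are that you spell out the step $v_1\in V^{\bmH}$ (which the paper uses implicitly) and describe the complement intrinsically as the sum of nontrivial $\bmH$-isotypic components rather than as an orthogonal complement $\frakz_{\frakg}(\frakh)^{\perp}$.
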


The proof is a modification of the proof in \cite[Theorem 3.27]{Wal17}.

\begin{proof}
We apply Ness' theorem over $\R$
\cite[Theorem 3.28]{Wal17}
which states that $v\in V$ achieves the minimum length of a closed orbit of $H$ (resp. $Z_{\bmG}\bmH$, resp. $G$) iff $(Xv,v)=0$ for all $X$ in $\Lie H$ (resp. $\Lie (Z_{\bmG}\bmH)$, resp. $\Lie (G)$).

So we know that $(Xv_1,v_1)=0$ for all $X\in \Lie(Z_{\bmG}\bmH)$ and we only need to show that this holds for all $X\in \Lie(G)$. 

Consider the following diagram where we denote $\Lie(G)$
by $\frakg$ and $\Lie(Z_{\bmG}\bmH)$ by 
$\frakz_{\frakg}\frakh$:
\begin{center}
    \begin{tikzcd}
\frakg=\frakz_{\frakg}(\frakh) \bigoplus \frakz_{\frakg}(\frakh)^{\perp} \arrow{r} \arrow {rd}
&V= V^H \bigoplus (V^H)^{\perp} \arrow{d}
\\
&V^H
\end{tikzcd}
\end{center}
where the horizontal arrow is defined by sending $X\mapsto Xv_1$ and
the vertical arrow is the natural projection to the first factor and the remaining diagonal arrow is the composition of the other two.  
Hence the diagram is commutative by definition and all arrows are $\R$-linear.
It is also $H$-equivariant.
The vertical arrow is $H$-equivariant as both subspaces are $H$-invariant.
Take $h \in H$, as $hv_1=v_1$ we have $hXh^{-1}v_1= h(Xv_1)$ and so the horizontal arrow is also $H$-equivariant.

There is no component of trivial $H$-representation in $\frakz_{\frakg}(\frakh)^{\perp}$ by definition so it is sent to $\{0\}$ by the diagonal arrow. So the $V^H$ component of $Xv_1$ for $X\in \frakz_{\frakg}(\frakh)^{\perp}$ is trivial. In particular $(Xv_1,v_1)=0$  for $X\in  \frakz_{\frakg}(\frakh)^{\perp}$  and hence this is true for all $X \in \frakg$. 
\end{proof}

\begin{lem}\label{LemUniLowBouFixed}
Assume that $\bmZ_{\bmG}\bmH / (\bmZ_{\bmG}\bmH \cap \bmH)$ is $\Q$-anisotropic. 
Take a representation $\bmV$ of $\bmG$ over $\Q$ and fix a $\Z$-structure of $\bmV_{\Q}$ and an Euclidean metric on $V$.
Then there exists a constant $c>0$ such that for all $v_{\neq 0}\in \bmV^{\bmH}(\Z)$, we have $||G\cdot v|| \geq c$.
\end{lem}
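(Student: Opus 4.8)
The plan is to argue by contradiction together with a compactness/finiteness argument. Suppose no such constant $c>0$ exists; then there is a sequence $v_k \in \bmV^{\bmH}(\Z)\setminus\{0\}$ with $\inf_{g\in G}\|g\cdot v_k\| \to 0$. The first step is to reduce to a single rational vector. For each $k$, the vector $v_k$ lies in the finite-dimensional $\Q$-vector space $\bmV^{\bmH}$; moreover the $\bmG$-orbit $\bmG\cdot v_k$ is closed by Lemma \ref{lemFixByHisGClose}, since $v_k$ is fixed by $\bmH$ and $\bmZ_{\bmG}\bmH/(\bmZ_{\bmG}\bmH\cap\bmH)$ is $\Q$-anisotropic. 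Combining Lemma \ref{LemMiniAtCen} with Lemma \ref{lemFixByHisGClose} applied to $\bmZ_{\bmG}\bmH$ (note $\bmZ_{\bmZ_{\bmG}\bmH}\bmH = \bmZ_{\bmG}\bmH$, so the anisotropy hypothesis is inherited), the infimum $\inf_{g\in G}\|g\cdot v_k\|$ is actually attained, and equals the minimal length in the closed orbit $\bmZ_{\bmG}\bmH\cdot v_k$ of the \emph{smaller} group $\bmZ_{\bmG}\bmH$, which centralizes $\bmH$ and hence acts on the fixed space $\bmV^{\bmH}$.

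The key step is therefore to work inside $\bmV^{\bmH}$ with the action of $\bmM := \bmZ_{\bmG}\bmH$. By hypothesis the maximal $\Q$-split torus of $\bmM/(\bmM\cap\bmH)$ is trivial; equivalently, writing $\bmM = (\bmM\cap\bmH)\cdot\bmM'$ up to isogeny, the group $\bmM$ acts on $\bmV^{\bmH}$ through a group all of whose $\Q$-characters are trivial on any split torus — more usefully, one shows that the only $\Q$-characters of $\bmM$ occurring in $\bmV^{\bmH}$ that can make lengths shrink are controlled. Here I would invoke Mahler-type compactness: since $\bmM$ has no $\Q$-split central torus modulo $\bmH$ and $\bmH$ acts trivially on $\bmV^{\bmH}$, the relevant orbit map factors through a group with $\Q$-anisotropic ``shrinking directions'', so by the arithmetic of the integral lattice $\bmV^{\bmH}(\Z)$ the quantity $\inf_{g\in M}\|g v\|$ is bounded below uniformly over $v \in \bmV^{\bmH}(\Z)\setminus\{0\}$. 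Concretely: the set of orbit-minimal representatives $\{v_k^{\min}\}$ has norms tending to $0$, but each $v_k^{\min}$ lies in the closed $\bmM$-orbit of an \emph{integral} vector and a $\Q$-anisotropic group cannot contract an integral vector arbitrarily close to $0$ (this is essentially Mahler's criterion applied after embedding $\bmM/(\bmM\cap\bmH)$ faithfully, using that $\bmM/(\bmM\cap\bmH)$ being $\Q$-anisotropic makes $M/(M\cap H)$-orbits of lattice points proper), giving the contradiction.

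The main obstacle I expect is the passage in the middle: making precise the claim that $\bmZ_{\bmG}\bmH$ ``acts by an anisotropic group'' on $\bmV^{\bmH}$ and that this forbids length-shrinking of integral vectors. The subtlety is that $\bmZ_{\bmG}\bmH$ itself need not be anisotropic — only its quotient by $\bmZ_{\bmG}\bmH\cap\bmH$ is — and $\bmH$ may still act nontrivially on $\bmV$ though trivially on $\bmV^{\bmH}$. One must check that the part of $\bmZ_{\bmG}\bmH$ lying in $\bmH$ acts trivially on $\bmV^{\bmH}$ (clear, since $\bmH$ does), so the effective acting group on $\bmV^{\bmH}$ is a quotient of $\bmZ_{\bmG}\bmH/(\bmZ_{\bmG}\bmH\cap\bmH)$, hence $\Q$-anisotropic; then a $\Q$-anisotropic group over $\Q$ acting on a $\Q$-vector space with a chosen lattice has all lattice-point orbits closed \emph{and} discrete, and a discrete closed orbit of a norm-bounded-below lattice point stays norm-bounded-below. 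Once that structural point is nailed down, the contradiction with $\|v_k^{\min}\| \to 0$ is immediate and the proof concludes.
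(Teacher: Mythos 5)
Your proposal is correct and is essentially the paper's proof: reduce to the $\bmZ_{\bmG}\bmH$-orbit on $\bmV^{\bmH}$ via Lemma~\ref{LemMiniAtCen} (the minimal-length vector in $Z_{\bmG}\bmH\cdot v$ is also $G$-minimal), then use $\Q$-anisotropicity of $\bmZ_{\bmG}\bmH/(\bmZ_{\bmG}\bmH\cap\bmH)$ — which acts on $\bmV^{\bmH}$ through an anisotropic quotient, hence has cocompact integral points — to get a uniform lower bound on $\|Z_{\bmG}\bmH\cdot v\|$ for $v\in\bmV^{\bmH}(\Z)\setminus\{0\}$. Two cosmetic differences from the paper: you argue by contradiction where the paper argues directly, and you additionally invoke Lemma~\ref{lemFixByHisGClose} for closedness of $\bmG\cdot v_k$, which is not needed (Lemma~\ref{LemMiniAtCen} already gives $\|v_1\|=\inf_{g\in G}\|g v_1\|$; only closedness of the $\bmZ_{\bmG}\bmH$-orbit is used, and that follows from the same cocompactness). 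You also omit the small remark — present in the paper — that Lemma~\ref{LemMiniAtCen} requires a compatibly chosen Euclidean metric, so one finishes by using equivalence of norms to cover the arbitrary metric in the statement; this is worth making explicit.
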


\begin{proof}
Indeed $||Z_{\bmG}\bmH \cdot v|| > c'$ uniformly over $v\in \bmV^{\bmH}(\Z)-\{0\}$ because $\bmZ_{\bmG}\bmH(\Z)$ is cocompact in $Z_{\bmG}\bmH$ mod $H$. 
As any two norms on $V$ are equivalent, we apply Lemma \ref{LemMiniAtCen} to conclude the proof.
\end{proof}

\begin{prop}\label{ProUniLowBou}
Assume that $\bmZ_{\bmG}\bmH / (\bmZ_{\bmG}\bmH \cap \bmH)$ is $\Q$-anisotropic. 
Take a representation $(\rho,\bmV)$ of $\bmG$ over $\Q$ and fix a $\Z$-structure of $\bmV_{\Q}$ and an Euclidean metric on $V$.
Then there exists $\ep>0$ such that $\Omega(g,\ep,\rho,\Phi_{\rho})$ is non-empty for all $g\in G$.
\end{prop}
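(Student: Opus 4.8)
The plan is to convert non-emptiness of $\Omega(g,\ep,\rho,\Phi_{\rho})$ into a finite system of linear inequalities in $t\in\Lie(S_{\bmH})$ and settle it by linear programming duality, the only non-combinatorial input being Lemma~\ref{LemUniLowBouFixed} (which is where the anisotropy of $\bmZ_{\bmG}\bmH/(\bmZ_{\bmG}\bmH\cap\bmH)$ enters, via Kempf's Lemma~\ref{lemFixByHisGClose} and the real Ness theorem Lemma~\ref{LemMiniAtCen}). For $\alpha\in\Phi_{\rho}$ put $\ell_{\alpha}:=\diffalpha\big|_{\Lie(S_{\bmH})}$, a rational functional (it comes from a character of $\bmS_{\bmH}$), and $m_{\alpha}(g):=\inf_{0\neq v\in\bmV_{\alpha}(\Z)}||gv||$. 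Since $\bmV_{\alpha}$ is a $\Q$-subspace, $\bmV_{\alpha}(\Z)=\bmV_{\alpha}\cap\bmV(\Z)$ is a full lattice in $\bmV_{\alpha}$; and since $\bmV_{\alpha}$ is a $\bmG$-weight space, every element of $\bmG$ acts on it by the scalar $\alpha(\cdot)$, so $m_{\alpha}(g)=|\alpha(g)|\,\mu_{\alpha}$ with $\mu_{\alpha}:=\inf_{0\neq v\in\bmV_{\alpha}(\Z)}||v||>0$. By the second description in Definition~\ref{defPolytope}, $\Omega(g,\ep,\rho,\Phi_{\rho})=\{t\in\Lie(S_{\bmH}):\ell_{\alpha}(t)\geq\ln\ep-\ln m_{\alpha}(g)\ \forall\alpha\in\Phi_{\rho}\}$, and by Farkas' lemma (Gale's theorem of the alternative) this polyhedron is non-empty if and only if, for every $\bm\lambda=(\lambda_{\alpha})\in\R_{\geq0}^{\Phi_{\rho}}$ with $\sum_{\alpha}\lambda_{\alpha}\ell_{\alpha}=0$, one has $\ep^{\sum_{\alpha}\lambda_{\alpha}}\leq\prod_{\alpha}m_{\alpha}(g)^{\lambda_{\alpha}}$. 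So it suffices to bound $\prod_{\alpha}m_{\alpha}(g)^{\lambda_{\alpha}}$ from below, uniformly in $g\in G$, over the relation cone $\mathscr{C}:=\{\bm\lambda\in\R_{\geq0}^{\Phi_{\rho}}:\sum_{\alpha}\lambda_{\alpha}\ell_{\alpha}=0\}$, and then choose $\ep$ accordingly.

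Next I would use that $\mathscr{C}$ is a rational polyhedral cone (the $\ell_{\alpha}$ are rational), hence generated by finitely many extreme rays spanned by vectors $\bm\lambda^{(1)},\dots,\bm\lambda^{(m)}$ with nonnegative integer entries; set $N_{j}:=\sum_{\alpha}\lambda^{(j)}_{\alpha}$. For each $j$ form the $\Q$-representation $\bmW_{j}:=\bigotimes_{\alpha\in\Phi_{\rho}}\bmV_{\alpha}^{\otimes\lambda^{(j)}_{\alpha}}$ of $\bmG$, equipped with the $\Z$-structure $\bigotimes_{\alpha}(\bmV_{\alpha}\cap\bmV(\Z))^{\otimes\lambda^{(j)}_{\alpha}}$ and the tensor Euclidean metric, and let $w_{j}:=\bigotimes_{\alpha}v_{\alpha}^{\otimes\lambda^{(j)}_{\alpha}}$ where $v_{\alpha}\in\bmV_{\alpha}(\Z)$ is a shortest non-zero vector, so $||v_\alpha||=\mu_\alpha$. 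Then $w_{j}\in\bmW_{j}(\Z)\setminus\{0\}$, and $w_{j}$ is fixed by $\bmH$: the character of $\bmH$ by which $\bmH$ acts on $w_{j}$ is $\prod_{\alpha}(\alpha|_{\bmH})^{\lambda^{(j)}_{\alpha}}$, whose differential $\sum_{\alpha}\lambda^{(j)}_{\alpha}\ell_{\alpha}$ vanishes, and $\bmH$ is connected. Hence Lemma~\ref{LemUniLowBouFixed}, applied to $\bmW_{j}$, yields $c_{j}>0$ with $||g\cdot w_{j}||\geq c_{j}$ for all $g\in G$. On the other hand $g$ acts on $\bmW_{j}$ by the scalar $\prod_{\alpha}\alpha(g)^{\lambda^{(j)}_{\alpha}}$ and $||w_{j}||=\prod_{\alpha}\mu_{\alpha}^{\lambda^{(j)}_{\alpha}}$, so $||g\cdot w_{j}||=\prod_{\alpha}\big(|\alpha(g)|\mu_{\alpha}\big)^{\lambda^{(j)}_{\alpha}}=\prod_{\alpha}m_{\alpha}(g)^{\lambda^{(j)}_{\alpha}}$; thus $\prod_{\alpha}m_{\alpha}(g)^{\lambda^{(j)}_{\alpha}}\geq c_{j}$ for every $g\in G$ and every $j$.

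Finally I would set $\ep:=\min_{1\leq j\leq m}c_{j}^{1/N_{j}}$ (and $\ep:=1$ if $\mathscr{C}=\{0\}$). For arbitrary $\bm\lambda\in\mathscr{C}$ write $\bm\lambda=\sum_{j}b_{j}\bm\lambda^{(j)}$ with $b_{j}\geq0$; then $\prod_{\alpha}m_{\alpha}(g)^{\lambda_{\alpha}}=\prod_{j}\big(\prod_{\alpha}m_{\alpha}(g)^{\lambda^{(j)}_{\alpha}}\big)^{b_{j}}\geq\prod_{j}c_{j}^{b_{j}}\geq\prod_{j}(\ep^{N_{j}})^{b_{j}}=\ep^{\sum_{j}b_{j}N_{j}}=\ep^{\sum_{\alpha}\lambda_{\alpha}}$, which is exactly the duality criterion; hence $\Omega(g,\ep,\rho,\Phi_{\rho})\neq\emptyset$ for all $g\in G$. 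The heavy lifting is already done inside Lemma~\ref{LemUniLowBouFixed}; the points that need care are the verification that the pure tensors $w_{j}$ are genuinely $\bmH$-fixed \emph{integral} vectors so that that lemma applies to them, and the observation that only finitely many representations $\bmW_{j}$ (hence a single $\ep$) are needed, which is guaranteed by the finite generation of the rational cone $\mathscr{C}$.
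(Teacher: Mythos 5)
Your overall strategy is the right one, and in fact it is a tidier packaging of what the paper does: you convert the non-emptiness of the polytope into the dual Farkas condition over the relation cone, reduce to finitely many rational extreme rays, and then apply Lemma~\ref{LemUniLowBouFixed} via the tensor trick. The paper instead passes to the piecewise-linear convex function $\phi(t):=\sup_{\alpha}\bigl(\ln\ep-\ln\inf_{0\neq v\in\bmV_\alpha(\Z)}||g\exp(t)v||\bigr)$, argues that either $\inf\phi=-\infty$ (done) or the infimum is attained at some $t_0$, and then extracts a positive relation among the active constraints by a KKT/first-order-optimality argument before applying the same tensor trick. Your Farkas route avoids the achievability and optimality discussion, which is a genuine (if modest) improvement in exposition; both proofs are built on the same two ingredients, namely the rationality of the relation cone (finitely many generators) and the uniform lower bound of Lemma~\ref{LemUniLowBouFixed}.

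There are, however, two related slips. You assert ``since $\bmV_{\alpha}$ is a $\bmG$-weight space, every element of $\bmG$ acts on it by the scalar $\alpha(\cdot)$, so $m_{\alpha}(g)=|\alpha(g)|\mu_{\alpha}$,'' and consequently treat $\bmW_j:=\bigotimes_\alpha\bmV_\alpha^{\otimes\lambda^{(j)}_\alpha}$ as a $\Q$-representation of $\bmG$. In the polytope Definition~\ref{defPolytope} (and everywhere it is used, including the present proposition), $\alpha$ ranges over characters of $\bmH$ appearing in $\rho\vert_{\bmH}$, and $\bmV_\alpha$ is the corresponding $\bmH$-weight space; the inner $\exp(t)$ acts by a scalar, but a general $g\in G$ does \emph{not}, so the shortest lattice vector need not realize $m_\alpha(g)$. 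A quick sanity check: if your scalar formula were correct, then for any positive relation $\sum m_i\alpha_i=0$ one would get $\prod|\alpha_i(g)|^{m_i}=1$ and the dual bound would be a constant independent of $g$, so the proposition would be trivially true with no need for anisotropy or Lemma~\ref{LemUniLowBouFixed} at all — which should have raised a flag. Likewise, $\bmW_j$ as you define it is only $\bmH$-invariant, not $\bmG$-invariant, so Lemma~\ref{LemUniLowBouFixed} cannot be applied to it directly.

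Both slips are repairable without changing the architecture. For each $g\in G$ the infimum $m_\alpha(g)$ is attained by a vector $v_\alpha(g)\in\bmV_\alpha(\Z)\setminus\{0\}$ (the image $g\,\bmV_\alpha(\Z)$ is a lattice), so set $w_j(g):=\bigotimes_\alpha v_\alpha(g)^{\otimes\lambda^{(j)}_\alpha}$; this is a nonzero $\bmH$-fixed vector of $\bmV^{\otimes N_j}(\Z)$ (with the product $\Z$-structure and tensor Euclidean metric), and $\bmV^{\otimes N_j}$ \emph{is} a $\bmG$-representation containing $\bigotimes_\alpha\bmV_\alpha^{\otimes\lambda^{(j)}_\alpha}$ as an $\bmH$-submodule. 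Then $||g\cdot w_j(g)||=\prod_\alpha m_\alpha(g)^{\lambda^{(j)}_\alpha}\ge c_j$ by Lemma~\ref{LemUniLowBouFixed}, and the rest of your extreme-ray / Farkas argument closes exactly as you wrote.
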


\begin{proof}

Consider all possible subsets $I=\{\alpha_1,...,\alpha_k\}$ of $\Phi_{\rho}$ such that $\sum_i m_i\alpha_i=0$ has a solution $\{m_i\}$ in positive numbers. 
For each $I$ we fix a set of positive integers $\{m_i\}$ such that $\sum_i m_i\alpha_i=0$. 
As there are at most finitely many such $I$'s, the representation $\bmW:= \bigoplus_{I} \bigotimes \bmV^{\otimes m_i}$ is finite-dimensional.
Apply Lemma \ref{LemUniLowBouFixed} to $\bmW$ we have a constant $c>0$ which is a lower bound for $||gw||$ for all $g\in G$ and $w\in \bmW^{\bmH}(\Z)$.

Note that for each $t\in \Lie(S_{\bmH})$, $\Omega(g,\ep,\rho,\Phi_{\rho})$ is non-empty iff $\Omega(g\exp{t},\ep,\rho,\Phi_{\rho})$ is non-empty. We want to find a $t$ such that $\Omega(g\exp{t},\ep,\rho,\Phi_{\rho})$ contains $0$.
Consider the function 
\begin{equation*}
\begin{aligned}
     \phi: \Lie(S_{\bmH}) &\longrightarrow \R \\
     t \mapsto &\sup_{\alpha\in\Phi_{\rho}}(\ln{\ep}-\ln \inf_{0\neq v\in \bmV_{\alpha}(\Z)}||g\exp{(t)}v||)\\
     =&\sup_{\alpha\in\Phi_{\rho}} (
     -\diffalpha(t)-\ln\ep-\ln \inf_{0\neq v\in \bmV_{\alpha}(\Z)}||gv||)
     )
\end{aligned}
\end{equation*}
So  $\Omega(g,\ep,\rho,\Phi_{\rho})$ contains $0$ iff $\inf_{t\in\Lie(S_{\bmH})} \phi(t)$  is nonpositive.
If $\inf \phi(t)$ is equal to $-\infty$ then we are done. Otherwise $\inf \phi$ can be achieved by some $t_0$. Consider the set $\Phi$ of $\alpha\in\Phi_{\rho}$ that achieves the supreme in the definition of $\phi(t_0)$. 
Define $\Phi_0$ and $\Phi_1$ as before. We claim that $\Phi_0$ is non-empty. 
Otherwise there exists $t$ such that $\diffalpha(t)>0$ for all $\alpha\in \Phi$ and perturbing by such an element would destroy the infimum. 
Hence there are $\{\alpha_1,...,\alpha_k\}$ in $\Phi$ and positive integers $\{m_1,...,m_k\}$ such that $\sum m_i \alpha_i=0$.  Moreover we take the same $\{m_i\}$'s as in the beginning of the proof. 
Thus $\otimes_i v_{i}^{\otimes m_i}$ is contained in $\bmW^{\bmH}(\Z)$ and
\begin{equation*}
    \prod_{i=1,...,k}||g\exp(t)v_i||^{m_i}=||g\otimes_{i=1,...,k}v_{i}^{\otimes m_i}||\geq c
\end{equation*}
for all $0\neq v_i\in \bmV_{\alpha_i}(\Z)$.
This implies that there exists $i_0$ and $c'>0$ such that 
$ \inf_{0\neq v\in \bmV_{\alpha}(\Z)}||g\exp(t)v||\geq c'$. By taking $\ep $ such that $\ln \ep - \ln c' <0$, or equivalently $\ep < c'$, then we are done.
\end{proof}

\subsection{Equidistribution}

In this section we enhance Proposition \ref{partIIofsection3} in the current case.

\begin{prop}\label{ProEquiRed}
Let $(\bmG,\bmH,\Gamma)$ be a standard triple. We assume in addition that $\bmG$, $\bmH$ are both reductive and that $\bmZ_{\bmG}\bmH / (\bmZ_{\bmG}\bmH \cap \bmH)$ is $\Q$-anisotropic. Given an arbitrary sequence $\{g_n\}$ in $G$, after passing to a subsequence, there exists 
a bounded sequence $\{\delta_n\}$ in $G$, a sequence $\{\gamma_n\}$ in $\Gamma$ and a reductive $\Q$-subgroup $\bmL$ of $\bmG$ such that 
$g_n \mu_H= \delta_n \gamma_n \mu_H$, $\gamma_n \bmH \gamma_n^{-1}\subset \bmL$ and $(\gamma_n)_*[ \mu_H] \to [\mu_L]$. Moreover if $\{g_n\}$ is unbounded when projecting to $G/Z_{\bmG}\bmS$ for all nontrivial $\Q$-split subtori $\bmS$ in $\bmZ(H)$, then $\bmL$ is not contained in any proper $\Q$-parabolic subgroup and $\mu_L$ is finite.
\end{prop}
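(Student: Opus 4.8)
The plan is to extract $\bmL$ from the non-divergence result Proposition~\ref{proNondivReduc} together with the equidistribution Proposition~\ref{partIIofsection3}, and then to establish reductivity of $\bmL$ using Kempf's closed-orbit lemma and Matsushima's criterion. Note that since $\bmH$ is reductive it is observable, so $(\bmG,\bmH,\Gamma)$ is an observable standard triple and the results of Section~\ref{secGammaEqui} apply. First I would invoke Proposition~\ref{proNondivReduc}: there is a compact $K\subset G/\Gamma$ with $\pi_{\Gamma}(gH)\cap K\neq\emptyset$ for all $g$, so for each $n$ one may write $g_n=\delta_n\gamma_nh_n$ with $\{\delta_n\}$ bounded in $G$, $\gamma_n\in\Gamma$ and $h_n\in H$; left $H$-invariance of $\mu_H$ then gives $g_n\mu_H=\delta_n\gamma_n\mu_H$. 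Passing to a subsequence I may assume $\delta_n\to\delta_\infty$, that $\{\gamma_n\}$ is $\Phi$-clean for a fixed superfaithful $\Q$-representation $\rho$ of $\bmG$, and that $(\{\gamma_n\},\bmL')$ is minimal for $\bmH$ for some connected $\Q$-subgroup $\bmL'$; set $\bmL$ to be the observable hull of $\bmL'$. Then $\gamma_n\bmH\gamma_n^{-1}\subset\bmL'\subset\bmL$, and Proposition~\ref{partIIofsection3} yields $(\gamma_n)_*[\mu_H]\to[\mu_L]$, which is the existence assertion.

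Next, to see that $\bmL$ is reductive I would use that it is observable: by Definition~\ref{defiObs}(1) there exist a $\Q$-representation $(\psi,\bmW)$ of $\bmG$ and $0\neq v\in\bmW(\Q)$ with $\bmL=\mathrm{Stab}_{\bmG}(v)$. Since $\gamma_n\bmH\gamma_n^{-1}\subset\bmL$, the nonzero rational vector $\gamma_n^{-1}v$ is fixed by $\bmH$, so Lemma~\ref{lemFixByHisGClose} (using the anisotropy hypothesis) shows $\bmG\cdot(\gamma_n^{-1}v)$ is closed, hence so is the translate $\bmG\cdot v=\gamma_n\cdot(\bmG\cdot(\gamma_n^{-1}v))$. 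Thus $\bmG/\bmL\cong\bmG\cdot v$ is an affine variety, and Matsushima's criterion (for $\bmG$ reductive, $\bmG/\bmL$ affine iff $\bmL$ reductive) gives that $\bmL$ is reductive.

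For the final clause I would first observe that for any $\Q$-split torus $\bmS\subset\bmZ(\bmH)$ one has $H\subset Z_{\bmG}(\bmS)$, so $g_nZ_{\bmG}(\bmS)=\delta_n\gamma_nZ_{\bmG}(\bmS)$ and the hypothesis transfers verbatim to $\{\gamma_n\}$. If $\bmG$ had a nontrivial $\Q$-character, its maximal central $\Q$-split torus $\bmS_{\bmG}$ would be nontrivial and, centralizing $\bmH$, would lie in $\bmZ_{\bmG}\bmH$, hence in $\bmZ(\bmH)$ by the anisotropy hypothesis; but $Z_{\bmG}(\bmS_{\bmG})=\bmG$, forcing $\{\gamma_n\}$ to be bounded in $G/Z_{\bmG}(\bmS_{\bmG})$, a contradiction, so $\bmG$ has no $\Q$-character. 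Now suppose $\bmL$ is contained in a proper $\Q$-parabolic; being reductive it lies in a $\Q$-Levi $\bmM$ of it, whose maximal central $\Q$-split torus $\bmS$ is nontrivial, satisfies $\bmM=Z_{\bmG}(\bmS)$, and centralizes $\bmL\supset\gamma_n\bmH\gamma_n^{-1}$. Then $\gamma_n^{-1}\bmS\gamma_n\subset\bmZ_{\bmG}\bmH$, hence $\gamma_n^{-1}\bmS\gamma_n\subset\bmZ(\bmH)$ by anisotropy; since $\bmZ(\bmH)$ is diagonalizable it has a unique maximal $\Q$-split subtorus $\bmS_0$, so $\gamma_n^{-1}\bmS\gamma_n\subset\bmS_0$ for all $n$, whence $\gamma_n^{-1}Z_{\bmG}(\bmS)\gamma_n=Z_{\bmG}(\gamma_n^{-1}\bmS\gamma_n)\supset Z_{\bmG}(\bmS_0)$. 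The subgroups $Z_{\bmG}(\bmS')$, as $\bmS'$ ranges over subtori of $\bmS_0$, form a finite set (each is determined by the set of $\Q$-roots of $\bmG$ vanishing on $\bmS'$), so after a further subsequence $\gamma_n^{-1}Z_{\bmG}(\bmS)\gamma_n=\bmM_0$ is a fixed proper $\Q$-Levi; writing $\bmS_*$ for its canonical maximal central $\Q$-split torus one gets $\bmS_*\subset\bmS_0\subset\bmZ(\bmH)$, nontrivial, and $\gamma_n\bmS_*\gamma_n^{-1}=\bmS$ for all $n$, so the $\gamma_n$ differ by elements of $N_{\bmG}(\bmS_*)$ and $\{\gamma_nZ_{\bmG}(\bmS_*)\}$ is a finite, hence bounded, subset of $G/Z_{\bmG}(\bmS_*)$, contradicting the hypothesis. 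Therefore $\bmL$ lies in no proper $\Q$-parabolic. Consequently $\bmZ_{\bmG}(\bmL)$ contains no nontrivial $\Q$-split torus (such a torus would be noncentral in $\bmG$, and then $\bmL\subset Z_{\bmG}(\bmS')$ would sit inside a proper $\Q$-Levi), so the central torus of the reductive group $\bmL$ is $\Q$-anisotropic, $X^{*}(\bmL)_{\Q}=0$, and $\mu_L$ is finite by Borel--Harish-Chandra.

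I expect the main obstacle to be this last clause, and within it the step of turning ``$\bmL$ lies in a proper $\Q$-parabolic'' into a genuine boundedness statement for $\{\gamma_n\}$: this needs the finiteness of the family of Levi subgroups of $\bmG$ containing a fixed one, careful tracking of the unique maximal $\Q$-split torus of the diagonalizable group $\bmZ(\bmH)$, and the standard (but over $\Q$ slightly delicate) fact that a reductive $\Q$-subgroup of a $\Q$-parabolic lies in a $\Q$-Levi. By comparison, the reductivity of $\bmL$ is a short consequence of Lemma~\ref{lemFixByHisGClose} together with Matsushima's criterion.
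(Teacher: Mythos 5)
Your proposal is correct, and it diverges from the paper's proof at both of the nontrivial steps, in ways that buy some extra clarity. For the reductivity of $\bmL$ you invoke Lemma~\ref{lemFixByHisGClose} (closedness of $\bmG\cdot v_0$ for an $\bmH$-fixed vector) together with Matsushima's criterion: since $\bmL$ is the observable hull, it is a $\bmG$-stabilizer of a $\Q$-vector $v$, the vector $\gamma_n^{-1}v$ is $\bmH$-fixed, so $\bmG\cdot v = \bmG\cdot(\gamma_n^{-1}v)$ is closed, $\bmG/\bmL$ is affine, and $\bmL$ is reductive. The paper instead reduces to a sub-case where $\{g_n\}$ is unbounded modulo every relevant centralizer and deduces reductivity from the ``not contained in any proper $\Q$-parabolic'' claim; your route is more direct and in fact gives a self-contained proof of the nontrivial direction of Theorem~\ref{corObConRedisRed} without a prior detour. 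For the ``not in any proper $\Q$-parabolic'' clause, the paper works through the weight decomposition $\Phi_0,\Phi_1,\Phi_\infty$ of a superfaithful representation, proves $\Phi_{bdd}\subset\{0\}$ by combining Kempf's optimal destabilizing parabolic with the $\bmP_{\alpha^\vee}$ identification, and then rules out parabolic containment via $v_{\bmU}$ and Lemma~\ref{lemmaParabolicContainH}; you instead argue purely group-theoretically: put $\bmL$ inside a $\Q$-Levi $\bmM=Z_{\bmG}(\bmS)$, push $\bmS$ into the unique maximal $\Q$-split subtorus $\bmS_0$ of $\bmZ(\bmH)$ via the anisotropy hypothesis, use finiteness of $\{Z_{\bmG}(\bmS'):\bmS'\subset\bmS_0\text{ subtorus}\}$ and finiteness of $N_{\bmG}(\bmS_*)/Z_{\bmG}(\bmS_*)$ to produce a fixed $\bmS_*$ modulo which $\{\gamma_n\}$ is bounded, contradicting the hypothesis. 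Both chains of reasoning are sound; the paper's approach is tightly woven into the $\Phi$-machinery already set up for Proposition~\ref{partIIofsection3}, whereas yours avoids that machinery at the cost of some care with Levi subgroups and normalizers (which you handle correctly). Your derivation of ``$\bmG$ has no $\Q$-character'' and of $X^*(\bmL)_\Q=0$ is also a direct replacement for the paper's citation of \cite[Lemma 5.1]{EskMozSha97}.
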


In view of Proposition \ref{partIIofsection3} and \ref{proNondivReduc}, we have such an observable subgroup $\bmL$.
It only remains to show the other claims about $\bmL$. However, it is more convenient to establish the Theorem \ref{corObConRedisRed} after we prove the proposition above(though there are special cases of this corollary where one has a more direct proof, see \cite[Lemma 3.10]{Grosshans97} when $\bmH$ is a maximal torus).

\begin{defi}
For a $\Q$-cocharacter $\bma:\bmG_m \to \bmG$ of a reductive $\Q$-group $\bmG$, we define $\bmP_{\bma}$ to be the $\Q$-parabolic subgroup 
$\{ x \in \bmG \,\vert\,\lim_{t\to 0} \bma_t x \bma_t^{-1} \text{exists}\}$.
\end{defi}

Then the unipotent radical of $\bmP_{\bma}$ is the subgroup
$\{ x \in \bmG, \,\lim_{t\to 0} \bma_t x \bma_t^{-1}= \bme\}$.

\begin{proof}[Proof of Corollary \ref{corObConRedisRed}]
Assume that the conclusion is false and take $\bmF$ to be such a group, which we may assume to be connected. Write $\bmU$ to be the non-trivial unipotent radical of $\bmF$.
By Proposition \ref{ProEquiRed} above if  $\bmH \leq_{\Gamma} \bmF$ then we are done. So assume that this is also false and without loss of generality we assume that $\bmH$ is a subgroup of $\bmF$ that is maximal with respect to $\leq_{\Gamma}$, then it is automatically maximal with respect to any subgroup of $\Gamma$.
In particular this is true for $\Lambda:=\Gamma\cap U$, a lattice in $U$. 
By Lemma \ref{lemMaxLamdaConn}, $\bmH$ is normalized by a finite-index subgroup $\Lambda_0$ of $\Lambda$, which is Zariski dense in $\bmU$. Hence $\bmH$ and consequently its center are normalized by $\bmU$. Let $\bmS$ be the maximal $\Q$-split torus in the center of $\bmH$. Then it follows that $\bmS$ is also normalized by $\bmU$. Hence $\bmS$ is centralized by $\bmU$ as $\bmU$ is connected(see \cite[Corollary 3.29]{Spr98}). 

On the other hand, there exists a proper parabolic $\Q$-subgroup $\bmP$ of $\bmG$ that contains $\bmU$ inside its unipotent radical and also contains $\bmN_{\bmG}\bmU$. As $\bmH$ is normalized $\bmU$, $\bmP$ contains $\bmH$. So by Lemma \ref{lemmaParabolicContainH} below, $\bmP$ is equal to $\bmP_{\bma}$ for some cocharacter $\bma$ of $\bmS$. But this is a contradiction as $\bmU$ is contained in the unipotent radical of $\bmP$ so is impossible to centralize the image of $\bma$.
\end{proof}

\begin{lem}\label{lemmaParabolicContainH}
Let $\bmG$ be a reductive $\Q$-group and $\bmH$ be a reductive $\Q$-subgroup of $\bmG$ such that $\bmZ_{\bmG}\bmH / (\bmZ_{\bmG}\bmH \cap \bmH)$ is $\Q$-anisotropic.
Then any $\Q$-parabolic subgroup $\bmP$ containing $\bmH$ is equal to $\bmP_{\bma}$ for some $\Q$-cocharacter $\bma$ of $\bmS$ where $\bmS$ is the maximal $\Q$-split torus in the center of $\bmH$.
\end{lem}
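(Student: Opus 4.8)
The plan is to start from an arbitrary $\Q$-cocharacter $\bma_0:\bmG_m\to\bmG$ with $\bmP=\bmP_{\bma_0}$ (one exists because $\bmP$ is a $\Q$-parabolic), to set $\bmM:=\bmZ_{\bmG}(\bma_0)$, the standard $\Q$-Levi factor of $\bmP$, and to consider the Levi retraction $\lambda:\bmP\to\bmM$, $x\mapsto\lim_{t\to 0}\bma_0(t)\,x\,\bma_0(t)^{-1}$, which is a $\Q$-homomorphism onto $\bmM$ whose kernel $\bmU$ is the unipotent radical of $\bmP$. Since $\bmH\subseteq\bmP$ and $\bmH\cap\bmU$ is a normal unipotent subgroup of the reductive group $\bmH$, it is trivial (we are in characteristic zero), so $\lambda|_{\bmH}$ is a closed $\Q$-immersion and $\bmH':=\lambda(\bmH)$ is a reductive $\Q$-subgroup of $\bmM$.

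The second step is to move $\bmH'$ back onto $\bmH$ without disturbing $\bmP$. The subgroups $\bmH$ and $\bmH'$ are two reductive $\Q$-subgroups of $\bmP$ with the same image in $\bmP/\bmU$, so by Mostow's theorem on Levi decompositions (cf.\ \cite{Mos552}) they are conjugate by some $u\in\bmU(\Q)$, say $\bmH'=u\bmH u^{-1}$. Define the $\Q$-cocharacter $\bma$ by $\bma(t):=u^{-1}\bma_0(t)u$. Because $u\in\bmU\subseteq\bmP$ we have $\bmP_{\bma}=u^{-1}\bmP_{\bma_0}u=\bmP$. Because every element of $\bmM=\bmZ_{\bmG}(\bma_0)$ commutes with $\bma_0(\bmG_m)$, we have $\bma_0(\bmG_m)\subseteq\bmZ_{\bmG}(\bmM)\subseteq\bmZ_{\bmG}(\bmH')$, and conjugating by $u^{-1}$ gives $\bma(\bmG_m)\subseteq u^{-1}\bmZ_{\bmG}(\bmH')u=\bmZ_{\bmG}(\bmH)$. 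Thus $\bma$ is a $\Q$-cocharacter with $\bmP_{\bma}=\bmP$ whose image is a $\Q$-split torus contained in $\bmZ_{\bmG}\bmH$.

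The final step uses the anisotropy hypothesis to place this torus inside $\bmS$. The image of $\bma(\bmG_m)$ under the quotient map $\bmZ_{\bmG}\bmH\to\bmZ_{\bmG}\bmH/(\bmZ_{\bmG}\bmH\cap\bmH)$ is a homomorphic image of a $\Q$-split torus, hence again a $\Q$-split torus; since the target is $\Q$-anisotropic this image is trivial, so $\bma(\bmG_m)\subseteq\bmZ_{\bmG}\bmH\cap\bmH$, which is precisely the center of $\bmH$. A $\Q$-split torus in $\bmZ(\bmH)$ is contained in the maximal one, namely $\bmS$, so $\bma$ is a $\Q$-cocharacter of $\bmS$ with $\bmP=\bmP_{\bma}$, as desired.

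The step I expect to be the main obstacle is the second one: ensuring the conjugacy between $\bmH$ and its image $\bmH'=\lambda(\bmH)$ can be realized over $\Q$, not merely over $\overline{\Q}$. This is exactly where Mostow's theory of Levi decompositions (equivalently, the vanishing of the relevant non-abelian cohomology with unipotent coefficients in characteristic zero) is needed; granting it, the remainder is a routine manipulation with cocharacters together with the elementary observation that the anisotropy hypothesis kills every $\Q$-split torus of $\bmZ_{\bmG}\bmH$ lying outside $\bmZ(\bmH)$.
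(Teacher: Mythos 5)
Your proof is correct and follows essentially the same route as the paper's: the paper simply asserts the existence of a $\Q$-Levi subgroup $\bmL$ of $\bmP$ containing $\bmH$ (which is exactly what your Mostow-conjugacy step establishes), takes the maximal $\Q$-split torus of $Z(\bmL)$, notes that $\bmP=\bmP_{\bma}$ for a cocharacter of that torus, and then runs the identical anisotropy argument to land $\bma$ inside $\bmS$.
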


\begin{proof}
Let $\bmL$ be a Levi subgroup of $\bmP$ that contains $\bmH$ and $\bmS'$ be the maximal $\Q$-split torus of the center of $\bmL$. Then $\bmP$ is equal to $\bmP_{\bma}$ for some $\Q$-cocharacter $\bma$ of $\bmS'$.
But $\bmS'$ centralize $\bmH$, therefore is contained in $\bmS$ by the assumption that $\bmZ_{\bmG}\bmH / (\bmZ_{\bmG}\bmH \cap \bmH)$ is $\Q$-anisotropic.
\end{proof}

Let us now turn to the proof of Proposition \ref{ProEquiRed}. We need \cite[Theorem 4.2]{Kemp78}.

\begin{thm}\label{Kempf}
Let $\bmG$ be a reductive $\Q$-group and $(\rho,\bmV)$ be a $\Q$-representation of $\bmG$. 
For each $v_{\neq 0} \in \bmV(\Q)$ such that $\overline{\bmG \cdot v} \ni \{0\}$, there is a unique $\Q$-parabolic subgroup $\bmP_v$ of $\bmG$ such that for each $\Q$-cocharacter $\bma$ of $\bmG$ that is ``optimal'', we have $\bmP_v= \bmP_{\bma}$.
Moreover,  if $\bmH$ preserves the line spanned by $v$, then $\bmH$ is contained in $\bmP_v$.
\end{thm}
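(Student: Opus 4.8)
The statement is essentially \cite[Theorem 4.2]{Kemp78}, so the plan is to recall Kempf's construction of the optimal destabilizing parabolic and then supplement it with the short argument needed for the final clause. First I would set up the Kempf formalism. Fix a length function $||\cdot||$ on $X_*(\bmG)$, that is, a non-negative function on the cocharacters of $\bmG$ invariant under conjugation by $\bmG(\overline{\Q})$ and under $\Gal(\overline{\Q}/\Q)$, coming from a $\bmG$-invariant positive definite quadratic form on the cocharacter lattice of a maximal torus; such a function exists. For a cocharacter $\bma$ of $\bmG$, decompose $v = \sum_i v_i$ into $\bma$-weight spaces and set $\mu(v,\bma) := -\min\{ i \mid v_i \neq 0 \}$. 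By the Hilbert--Mumford criterion, the hypothesis $\overline{\bmG \cdot v} \ni \{0\}$ is equivalent to the existence of some $\bma$ with $\mu(v,\bma) > 0$.

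Second, I would invoke Kempf's theorem proper: the supremum $B_{\bmG}(v) := \sup_{\bma \neq 0} \mu(v,\bma)/||\bma||$ is finite and attained; the set $\Lambda(v)$ of cocharacters attaining it (the \emph{optimal} ones) is non-empty and contains a $\Q$-cocharacter; all members of $\Lambda(v)$ give rise to one and the same parabolic subgroup $\bmP_{\bma}$, which I call $\bmP_v$; this $\bmP_v$ is defined over $\Q$ because $v \in \bmV(\Q)$ and the length function is $\Gal(\overline{\Q}/\Q)$-invariant; and $\bmP_v$ is the unique parabolic arising in this way, with $\Lambda(v)$ a single $\bmP_v$-conjugacy class. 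This is the heart of the matter, and I would take it as a black box: re-proving it requires Kempf's convexity estimates for $\mu$ along the faces of the spherical building of $\bmG$ and is the only genuinely hard input.

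Finally, for the ``moreover'' clause I would use two elementary equivariance properties. For $g \in \bmG$ one has $\mu(g\cdot v,\, g\bma g^{-1}) = \mu(v,\bma)$ and $||g\bma g^{-1}|| = ||\bma||$, so $\Lambda(g\cdot v) = g\Lambda(v)g^{-1}$ and hence $\bmP_{g\cdot v} = g\bmP_v g^{-1}$. For a non-zero scalar $c$ one has $\mu(cv,\bma) = \mu(v,\bma)$, since $cv$ has the same weight decomposition, hence $\bmP_{cv} = \bmP_v$. Now suppose $\bmH$ stabilizes the line spanned by $v$, and take $h \in \bmH$, so $h \cdot v = \chi(h) v$ for some $\chi(h) \in \overline{\Q}^{\times}$. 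Then $h \bmP_v h^{-1} = \bmP_{h\cdot v} = \bmP_{\chi(h) v} = \bmP_v$, so $h$ normalizes $\bmP_v$; since a parabolic subgroup is its own normalizer, $h \in \bmP_v$. As $h$ was arbitrary, $\bmH \subseteq \bmP_v$, which completes the proof.
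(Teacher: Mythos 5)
The paper does not give its own proof of this statement; it is cited directly from Kempf as \cite[Theorem 4.2]{Kemp78}, with the reader referred there for the meaning of ``optimal.'' Your proposal is correct and matches what the cited source does: the existence, uniqueness, $\bmP_v$-conjugacy of optimal cocharacters, and rationality of the optimal parabolic is exactly Kempf's theorem, which you rightly treat as the black-box input, and your derivation of the ``moreover'' clause from the two equivariance facts $\bmP_{g\cdot v}=g\bmP_v g^{-1}$ and $\bmP_{cv}=\bmP_v$ together with a parabolic being its own normalizer is precisely the standard argument (and is how Kempf himself deduces the containment of stabilizers). One small point worth flagging: depending on the edition, Kempf's stated theorem is sometimes phrased for the full stabilizer $\bmG_v$ rather than the projective stabilizer of the line $[v]$; your observation that $\mu(cv,\bma)=\mu(v,\bma)$, so that the optimal parabolic only sees the line, is exactly the supplementary step needed to get the version used here, and it is correct since scaling by a nonzero constant leaves the support of the weight decomposition unchanged.
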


We refer the reader to  \cite{Kemp78} for the precise meaning of ``optimal''. Here we only note that if $v$ is a weight vector with respect to a $\Q$-split subtorus $\bmS$ with non-zero weight $\alpha$ and $\bmG$ is semisimple, then a $\Q$-cocharacter is optimal within the class of $\Q$-cocharacters of $\bmS$ if and only if it is contained in $\Q^+\alpha^{\vee}$ where $\alpha \to \alpha^{\vee}$ denotes the identification of $X_*(\bmS)\otimes \Q$ with $X^*(\bmS)\otimes \Q$ provided by the restriction of Killing form. If it happens that this cocharacter is also optimal in the class of $\Q$-cocharacters of $\bmG$, then we also write $\bmP_{\alpha^{\vee}}$ for $\bmP_v$ in this case. And if we decompose the Lie algebra $\frakg$ of $\bmG$ with respect to the Adjoint action of $\bmS$ as $\oplus_{\beta\in X^*(\bmS)} \frakg_{\beta}$, then the Lie algebra of $\bmP_{\alpha^{\vee}}$ is  
$\displaystyle\oplus_{\beta\in X^*(\bmS), (\beta,\alpha)\geq 0} \frakg_{\beta}.$

\begin{proof}[Proof of Proposition \ref{ProEquiRed}]
We let $\widetilde{\bmS}_{\bmH}$ be the unique lift of ${\bmS}_{\bmH}$ in $\bmH$. This is the maximal $\Q$-split torus in the center of $\bmH$.
By passing to a subsequence, we assume that there is a subtorus $\bmS_0$ of $\widetilde{\bmS}_{\bmH}$ such that 
\begin{itemize}
    \item for all subtori $\bmS$ of $\widetilde{\bmS}_{\bmH}$ that properly contains $\bmS_0$,
     $\{g_n\}$ is unbounded when projecting to $G/Z_{\bmG}\bmS$ and
    \item the sequence $\{g_n\}$ is contained in $Z_{\bmG}\bmS_0$.
\end{itemize}
Hence we may replace the ambient group $\bmG$ by $Z_{\bmG}\bmS_0$. As the center plays no role in the dynamics, we may further replace it by its derived semisimple subgroup $[\bmG,\bmG]$. Under these assumptions, one may check that the sequence $\{g_n\}$ is unbounded when projecting to $G/Z_{\bmG}\bmS$ for all non-trivial subtori $\bmS$ in $\widetilde{\bmS}_{\bmH}$.

By non-divergence, there is a bounded sequence $\{\delta_n\}$ in $G$, a bounded sequence $\{h_n\}$ in $H$ and a sequence $\{\gamma_n\}$ in $\Gamma$ such that $g_nh_n=\delta_n \gamma_n$. Then $(g_n)_*\mu_H= (\delta_n \gamma_n)_* \mu_H $ and $\{\gamma_n\}$ is unbounded when projecting to $G/Z_{\bmG}\bmS$ for all non-trivial subtori $\bmS$ in $\widetilde{\bmS}_{\bmH}$. 
Passing to a subsequence we find a $\Q$-subgroup $\bmF$ such that $(\{\gamma_n\},\bmF)$ is $\bmH$-minimal. 

Recall the notation in previous sections. Take $\rho: \bmG \to \SL(\bmV)$ to be a superfaithful $\Q$-representation that  contains all exterior powers of the Adjoint representation of $\bmG$. Passing to a subsequence we assume that $\gamma_n$ is $\Phi_{\rho}$-clean. Hence $\Phi_0$, $\Phi_1$ and $\Phi_{\infty}$ are defined. By the proof of Proposition \ref{partIIofsection3} we may assume that $\Phi_1$ is empty and, by Lemma \ref{obslemma}, $\bmF$ is observable. It suffices to show that $\bmF$ is not contained in any parabolic $\Q$-subgroup of $\bmG$. As this will firstly imply that $\bmF$ is reductive and \cite[Lemma 5.1]{EskMozSha97} further implies that $\bmF$ has no $\Q$-characters.

Let us first show that $\Phi_0$, and hence $\Phi_{bdd}$, is contained in $\{0\}$. If not, 
then there are non-zero characters $\{\alpha_i\}_{i=1,...,l}$ of $\widetilde{\bmS}_{\bmH}$, positive numbers $\{a_i\}_{i=1,...,l}$ and non-zero vectors $v_n(\alpha_i)\in \bmV_{\alpha_i}(\Z)$ such that $\sum_{i=1}^{l} a_i \alpha_i =0 $ and $\{\gamma_n v_n(\alpha_i)\}_n$ bounded. 
As $\{\gamma_n v_n(\alpha_i)\}_n$ is discrete, we
assume that $\gamma_n v_n(\alpha_i)$ is constantly equal to some non-zero vector $v_i\in \bmV(\Z)$ by passing to a further subsequence. As $\alpha_i$ is non-zero, the $\bmG$-orbit through $v_i$ contains $\{0\}$ in its closure.
According to Theorem \ref{Kempf}, there exists a canonical $\Q$-parabolic subgroup $\bmP_i$ that contains the stabilizer of the line $\Q v_i$. In particular $\bmP_i$ contains $\gamma_n \bmH \gamma_n^{-1}$ for all $n$.

By Lemma \ref{lemmaParabolicContainH} and Theorem \ref{Kempf} above, there exists a cocharacter $\bma^I_N$ of $\widetilde{\bmS}_{\bmH}$ such that  $t \mapsto \gamma_n \bma^I_N(t)\gamma_n^{-1}$ is optimal for $v_i$ and 
$\bmP_i= \gamma_n\bmP_{\bma^I_N}\gamma_n^{-1}$. 
By the remarks made after Theorem \ref{Kempf}, we have $\bmP_{\bma^I_N}=\bmP_{\alpha_i^{\vee}}$, independent of $n$.
Hence $ \gamma_n\bmP_{\alpha_i^{\vee}}\gamma_n^{-1}=\gamma_1\bmP_{\alpha_i^{\vee}}\gamma_1^{-1}$ and so $\gamma_1^{-1}\gamma_n$ normalize $\bmP_{\alpha_i^{\vee}}$. But the normalizer of a parabolic subgroup is equal to itself(see \cite[Corollary 6.4.10]{Spr98}), so $\gamma_1^{-1}\gamma_n$ is contained in $\bmL$, defined to be the intersection  of $\bmP_{\alpha_i^{\vee}}$'s as $i$ ranges from $1$ to $l$.

Referring to the remarks made after Theorem \ref{Kempf} again, the Lie algebra of $\bmL$ consists of $\frakg_{\beta}$ with $(\beta,\alpha_i)\geq 0$ for all $i$. But $\sum_{i=1}^{l} a_i \alpha_i =0 $ for some positive numbers $\{a_i\}_{i=1,...,l}$, we are forced to have $(\beta,\alpha_i) = 0$  for all $i$.
Let $\bmS_{\beta}$ be any $\Q$-split subtorus of $\widetilde{\bmS}_{\bmH}$ which is the image of some non-trivial cocharacter $\bma$ that is in the $\Q$-span of $\beta^{\vee}$. Then the Adjoint action of  $\bmS_{\beta}$ restricted to the Lie algebra of $\bmL$ is trivial. In other words, $\bmL^{\circ}$ is contained in the centralizer of  $\bmS_{\beta}$ . This is a contradiction as $\gamma_1^{-1}\gamma_n$ would then be bounded in $G/Z_{\bmG}\bmS_{\beta}$.

So we have proved $\Phi_{bdd}$ is at most $\{0\}$.
Now suppose that $\bmP$ is a proper parabolic $\Q$-subgroup of $\bmG$, we want to show that $\bmF$ is not contained in $\bmP$. If this were not true, we have that $\gamma_n \bmH \gamma_n^{-1}$ normalizes the unipotent radical $\bmU$ of $\bmP$ for all $n$. Let $v_{\bmU}$ be an integral vector in $\wedge^{\dim \bmU}\frakg$ that represents $\bmU$, then  $\gamma_n \bmH \gamma_n^{-1}$  stabilize the line spanned by  $v_{\bmU}$ for all $n$. Then we see that the character of $\bmH$ associated to $\gamma_n^{-1} v_{\bmU}$ is in $\Phi_{bdd}$, which has been shown to be contained in $\{0\}$. But this is a contradiction to Lemma \ref{lemmaParabolicContainH} above.
\end{proof}

\section{Examples and Applications}\label{SecExamAppl}

\subsection{Examples}
In this subsection we prove Proposition \ref{propMaxTori} and \ref{propToriSym}.

\begin{proof}[Proof of Proposition \ref{propMaxTori}]
If the proposition were not true, by Theorem \ref{thmTransReduc}, we can find a sequence $\{\gamma_n\}\subset{\Gamma}$ satisfying the property in the proposition, such that $\bmL$ contains $\gamma_n \bmH \gamma_n^{-1}$ for all $n$. By taking a conjugate, we may assume that $\bmL$ contains $\bmH$. By \cite{Gil10}(compare \cite{BorDeS49}), there exists a diagonalizable subgroup $\bmZ$ of $\bmH$ that contains the center properly such that $\bmL$ is the connected component of $\bmZ_{\bmG}\bmZ$. 

Therefore, $\gamma_n\bmH\gamma_n^{-1}\subset \bmZ_{\bmG}(\bmZ)$, or equivalently, 
$\bmH\subset \bmZ_{\bmG}(\gamma_n^{-1}\bmZ\gamma_n)$ for all $n$. Hence $\gamma_n^{-1}\bmZ\gamma_n$ centralize the maximal torus $\bmH$, and hence is contained in $\bmH$. Therefore $\gamma_n \in \bmX(\bmZ,\bmH)$ for all $n$. Apply \cite[Proposition 3.2.8]{Spr98} and pass to a subsequence, we may assume that 
\begin{equation*}
    \gamma_1 z \gamma_1^{-1}=\gamma_n z \gamma_n^{-1},\,\forall n,\,\forall z\in\bmZ.
\end{equation*}
Hence $\gamma_n \gamma_1^{-1} \in \bmZ_{\bmG}(\bmZ)$, or equivalently, $\gamma_n \in \bmZ_{\bmG}(\gamma_1^{-1}\bmZ\gamma_1)$ for all $n$.  Note that $\gamma_1^{-1}\bmZ\gamma_1$ is also a subgroup of $\bmH$ properly containing the center. And this is a contradiction.
\end{proof}

Let $J$ be the anti-symmetric matrix 
\begin{equation*}
    \left[
 \begin{array}{c|c}
0  &  J_N
 \\
\hline
-J_N  &  0
\end{array}
\right],\quad
J_N =
\left[ \begin{array}{ccc}
  & & 1\\
 & \iddots & \\
 1& & 
\end{array}
\right].
\end{equation*}
Let $\bmG=\Sp_{2N}$ be the subgroup of $\SL_{2N}$ which preserves the symplectic form represented by $J$. 

\begin{proof}[Proof of Proposition \ref{propToriSym}]
It suffices to show that for each diagonalizable subgroup $\bmZ$ of $\bmH$ there exists $\bmT_{\bmZ} \leq \bmH$ a $\Q$-subtorus such that $\bmZ_{\bmG}(\bmZ)=\bmZ_{\bmG}(\bmT_{\bmZ})$. 

First we assume that $\bmH$ is the diagonal torus. 
Writing each element $z\in\bmZ$ as $\diag(z_1,...,z_N,z_N^{-1},...,z_1^{-1})$, we define an equivalence relation on $\{1,...,N\}$ by
\begin{equation*}
    i \sim j \iff z_{i}=z_{j},\,\forall z\in \bmZ.
\end{equation*}
This relation defines a torus $\bmT_{\bmZ}$ in $\bmH$ by 
\begin{equation*}
    \diag(t_1,...,t_N,t_N^{-1},...,t_1^{-1}) \in \bmT_{\bmZ} \iff t_i = t_j,\forall i\sim j.
\end{equation*}
By linear algebra, their centralizer in $\GL_{2N}$ agrees and hence also agrees in $\bmG$. So we are done in this case.

In general, we conjugate $\bmH$ to the diagonal torus(over $\overline{\Q}$) to obtains such a subtorus $\bmT_{\bmZ}$ with the desired property except that $\bmT_{\bmZ}$ may not be defined over $\Q$. But this would imply that $\bmZ(\bmZ_{\bmG}(\bmZ))^{\circ}$ contains $\bmT_{\bmZ}$, which contains $\bmZ$. So we may replace $\bmT_{\bmZ}$ by $\bmZ(\bmZ_{\bmG}(\bmZ))^{\circ}$.
\end{proof}

\subsection{Counting some conjugacy classes in $\Sp_{2N}$ } 

In this subsection for a group $G$, $\mu_G$ will be a Haar measure on $G$ and we instead let $\mu_{G/\Gamma}$ be a Haar measure in the quotient. Let $\bmG$ be the same symplectic group as in last subsection.

Note that $\bmG(\R)$ is automatically connected, so $G=\bmG(\R)$ contains $\bmG(\Q)$.
Let $\Gamma:= \bmG(\Z)$ and $\bmG':=\SL_{2N}$. Let $\bmH$ be the full diagonal torus given by
\begin{equation*}
     \left[
 \begin{array}{c|c}
  \begin{array}{ccc}
  t_1 & &\\
   & \ddots & \\
    &  & t_N
  \end{array}
  &  0
 \\
\hline
0  &  
\begin{array}{ccc}
  t_N^{-1} & &\\
   & \ddots & \\
    &  & t_1^{-1}
  \end{array}
\end{array}
\right].
\end{equation*}
This is a $\Q$-split maximal torus of $\bmG$. And similarly we let $\bmH'$ be the full diagonal torus in $\bmG'$.
Let $\bmU$ be the $\Q$-subgroup of $\bmG$ consisting of elements of the form
\begin{equation*}
     \left[
 \begin{array}{c|c}
  \begin{array}{ccc}
  1 & \cdots & *\\
   & \ddots &  \vdots\\
    &  & 1
  \end{array}
  &   \scalebox{3}{$*$} 
 \\
\hline
0  &  
 \begin{array}{ccc}
  1 & \cdots & *\\
   & \ddots &  \vdots\\
    &  & 1
  \end{array}
\end{array}
\right].
\end{equation*}
 And similarly we let $\bmU'$ be the group of upper triangular unipotent matrices in $\bmG'$. Let $K:= G\cap \SO_{2N}(\R)$ be a maximal compact subgroup of $G$. So we have $(K \times U \times H, \mu_K \otimes \mu_U \otimes \mu_H) \cong (G,\mu_G)$ via $(k,u,h)\mapsto kuh$. 
 
\subsubsection{Normalization of Haar measures}
So far Haar measures are only up to a positive scalar. Now we want to specify this scalar.

First we choose $\mu_K$ to be a probability measure. We take $\mu_H := \frac{2^{(N^2+N)/2-1}}{\prod_{k=1}^N \xi(2k)}\cdot |\wedge_{i=1}^N \frac{\difft_i}{t_i}|$
where $\xi(z)=\pi^{-\frac{z}{2}}\Gamma(\frac{z}{2})\zeta(z)$ is the completed Zeta function.
The reason for this normalization will become clear in a moment.
For the Haar measure on $\bmU$, let us first observe that $\bmU$ consisting of 
\begin{equation*}
 \left[
 \begin{array}{c|c}
   A &  B
 \\
\hline
0  &  
J_N {}^tA^{-1} J_N
\end{array}
\right]
\end{equation*}
with $A$ being arbitrary upper triangular unipotents and $J_NA^{-1}B$ being symmetric. Let $A=(a_{ij})$ and $B=(b_{ij})$. Note that $U$ is in bijection with $\R^{\frac{N(N-1)}{2}+\frac{N(N+1)}{2}}=\R^{N^2}$ via $(a_{ij})_{i<j} \times (b_{ij})_{i+j\geq N+1}$. Then one can show that 
$|\wedge_{i<j}\diffa_{ij}||\wedge_{i+j\geq N+1}\diffb_{ij}|$ is a Haar measure on $U$ and we take $\mu_U$ to be this one.  Now we define $\mu_G$ via the Iwasawa decomposition above, and $\mu_{G/\Gamma}$ to be the quotient measure. We need a result of Siegel \cite[Theorem 11]{Sig43}.

\begin{thm}
Under this normalization, $\mu_{G/\Gamma}$ is a probability measure.
\end{thm}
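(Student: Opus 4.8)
The plan is to reduce this to Siegel's classical symplectic volume computation and to check that the three normalizations just imposed are exactly the ones for which the total mass becomes $1$. Since $\mu_K$ is a probability measure and $(K\times U\times H,\mu_K\otimes\mu_U\otimes\mu_H)\cong(G,\mu_G)$, the quantity $\mu_{G/\Gamma}(G/\Gamma)$ is the $\mu_U\otimes\mu_H$-volume of a fundamental domain for $\Gamma$ acting on $G/K$. I would identify $G/K$ with the Siegel upper half space $\calH_N$ by $gK\mapsto g\cdot(iI_N)$ under the usual action $\begin{psmallmatrix}\alpha&\beta\\\gamma&\delta\end{psmallmatrix}\cdot Z=(\alpha Z+\beta)(\gamma Z+\delta)^{-1}$; in the resulting horospherical (Iwasawa) coordinates on $\calH_N$ the measure $\mu_U\otimes\mu_H$ becomes a fixed scalar multiple of the invariant form $\det(Y)^{-(N+1)}\bigwedge_{i\le j}\diffx_{ij}\wedge\diffy_{ij}$. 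Computing the Jacobian of $uh\mapsto uh\cdot(iI_N)$ at the base point shows that scalar is a pure power of $2$ (from comparing $|\wedge\diffa_{ij}||\wedge\diffb_{ij}|$ with $\bigwedge_{i\le j}\diffx_{ij}\,\diffy_{ij}$, and $|\wedge\tfrac{\difft_i}{t_i}|$ with $\det(Y)$ along the split torus) times the constant $\frac{2^{(N^2+N)/2-1}}{\prod_{k=1}^N\xi(2k)}$ that was inserted into $\mu_H$. Everything therefore reduces to evaluating $\Vol(\Sp_{2N}(\Z)\backslash\calH_N)$ for the classical metric.

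This is exactly Siegel's volume formula from \cite{Sig43}, which I would either cite directly or reprove in one of two standard ways. The first is Siegel's own descent on $N$: $\Sp_{2N}(\Z)$ acts transitively on primitive vectors of $\Z^{2N}$, the stabilizer of $e_1$ is an extension of $\Sp_{2N-2}$ by a Heisenberg-type unipotent group, and unfolding $\sum_{v\text{ primitive}}1$ against the volume form contributes a factor $\zeta(2N)$ together with a Gaussian integral worth $\pi^{-N}\Gamma(N)$ — that is, precisely the completion of $\zeta(2N)$ into $\xi(2N)$ — so that inductively $\Vol(\Sp_{2N}(\Z)\backslash\calH_N)=c\prod_{k=1}^N\xi(2k)$ for a universal $c$ that is again a power of $2$. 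The second route uses that $\Sp_{2N}$ has Tamagawa number $1$: the product of the local measures at the finite places contributes $\prod_{k=1}^N\zeta(2k)^{-1}$ (with the standard convergence factors), and solving for the archimedean volume reassembles the $\Gamma$-factors and powers of $\pi$ into the completed $\xi(2k)$'s.

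The main obstacle is purely the bookkeeping of constants, and I would be careful about three of them: (i) the precise power of $2$ arising in the comparison of $\mu_U\otimes\mu_H$ with the invariant volume form on $\calH_N$; (ii) the factor of $2$ coming from $-I_{2N}\in\Sp_{2N}(\Z)$, i.e. whether one is really integrating over $\Sp_{2N}(\Z)\backslash G$ or has implicitly passed to $\mathrm{PSp}_{2N}$; and (iii) the conversion between $\zeta(2k)$ and $\xi(2k)=\pi^{-k}\Gamma(k)\zeta(2k)$, which is where the powers of $\pi$ produced by the Gaussian integrals are absorbed into the completed zeta values. The constant $\frac{2^{(N^2+N)/2-1}}{\prod_{k=1}^N\xi(2k)}$ in the definition of $\mu_H$, together with the choice of $\mu_K$ as a probability measure and of $\mu_U$ as $|\wedge\diffa_{ij}||\wedge\diffb_{ij}|$, were all made so that the product of these factors is exactly $1$; once this finite verification is carried out, $\mu_{G/\Gamma}(G/\Gamma)=1$ follows. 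In the exposition I would quote \cite{Sig43} for the value of $\Vol(\Sp_{2N}(\Z)\backslash\calH_N)$ and record only the normalization comparison in detail.
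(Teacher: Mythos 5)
Your proposal is correct and takes essentially the same route as the paper: reduce to Siegel's volume formula for $\Sp_{2N}(\Z)\backslash\calH^N$ from \cite{Sig43} and compare $\mu_U\otimes\mu_H$ with the invariant volume form on $\calH^N$ by computing the differential of the orbit map $(h,u)\mapsto hu\cdot iI_N$ at the base point. One bookkeeping item to add to your list (i)--(iii): the paper's $\Sp_{2N}$ preserves the anti-diagonal form $J$ rather than Siegel's block form $\left[\begin{smallmatrix}0&I_N\\-I_N&0\end{smallmatrix}\right]$, so one must first conjugate by $M_0=\diag(I_N,J_N)$ to match conventions before invoking $\Vol(\Gamma\backslash\calH^N)=2\prod_{k=1}^N\xi(2k)$; your suggested alternative derivations of Siegel's formula (descent on $N$, or Tamagawa number $1$) are reasonable fallbacks but are not used in the paper once the citation is in place.
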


\begin{proof}
This is not exactly how it is stated in \cite[Theorem 11]{Sig43} as different normalizations of volume forms are used. Let us recall the original statement. Let
\begin{equation*}
    \calH^N:=\{Z=X+iY \,\vert\, {}^tX=X,\, {}^tY=Y,\, Y>0 \}
\end{equation*}
where $X$, $Y$ are both real matrices and $Y>0$ means positive definite.
$\calH^N$ is equipped with a volume form $\omega$, whose absolute value at $i I_N$ is equal to
\begin{equation*}
    2^{\frac{N(N-1)}{2}}|\wedge_{i\leq j}\diffy_{ij}|\wedge|\wedge_{i\geq j}\diffx_{ij}|.
\end{equation*} 
In general, it is induced from the metric $\Tr{(Y^{-1}\diffZ Y^{-1}\diffZ)}$.
We are using a different symplectic group than the ``homogeneous symplectic group $\Omega_0$'' used in \cite{Sig43}, which is defined to preserve the symplectic form 
\begin{equation*}
    \left[
 \begin{array}{c|c}
0 &  I_N
 \\
\hline
-I_N  &  0
\end{array}
\right].
\end{equation*}

Note that
\begin{equation*}
  \left[
 \begin{array}{cc}
I_N &  0
 \\
0  &  J_N
\end{array}
\right]
 \left[
 \begin{array}{cc}
0 &  J_N
 \\
-J_N  &  0
\end{array}
\right]
 \left[
 \begin{array}{cc}
I_N &  0
 \\
0  &  J_N
\end{array}
\right]
=
 \left[
 \begin{array}{cc}
0 &  I_N
 \\
-I_N  &  0
\end{array}
\right].
\end{equation*}
So if we take $M_0$ to be $\diag(I_N,J_N)$ then $A \mapsto M_0^{-1}AM_0$ gives a bijection from $G$ to $\Omega_0$ and from $\Gamma$ to what is called the ``modular group'' in \cite{Sig43}. And an element in $\Omega_0$ acts on $\calH^N$ by
\begin{equation*}
     \left[
 \begin{array}{cc}
A &  B
 \\
C  &  D
\end{array}
\right] \cdot Z:=  (AZ+B)\cdot(CZ+D)^{-1}.
\end{equation*}
With this action and the volume form above, \cite[Theorem 11]{Sig43} states that $\Vol(\Gamma\bs\calH^N)=2\prod_{k=1}^N \xi(2k)$ .
 
 The inverse map  $(u,h)\mapsto (h^{-1},u^{-1})$ from $U\times H$ to $H\times U$ sends the measure $\mu_U\otimes \mu_H$ to $\mu_H \otimes \mu_U$. 
Also, the map $\Phi: (h,u)\mapsto M_0huM_{0}^{-1}\cdot i I_N $, which identifies $H \times U$ with $\calH^N$, pulls back $\frac{1}{2\prod_{k=1}^N \xi(2k)} |\omega|$ to some multiple of $\mu_H\otimes \mu_U$ viewed as the absolute value of a volume form. It remains to show these two (absolute value of) volume forms are the same at $(I_{2N},I_{2N})$.

As before write 
\begin{equation*}
    u =
    \left[
 \begin{array}{cc}
A &  B
 \\
0  &  J_N{}^tA^{-1}J_N
\end{array}
\right]
\end{equation*}
and $h = \diag(t_1,...,t_N,t_N^{-1},...,t_1^{-1})$. Then $\Phi(h,u)$ is equal to
\begin{equation*}
     \left[
 \begin{array}{ccc}
t_1 &   & \\
 &\ddots &\\
0  &  & t_N
\end{array}
\right] B J_N {}^tA \left[
 \begin{array}{ccc}
t_1 &   & \\
 &\ddots &\\
0  &  & t_N
\end{array}
\right] + 
i \left[
 \begin{array}{ccc}
t_1 &   & \\
 &\ddots &\\
0  &  & t_N
\end{array}
\right] A{}^tA \left[
 \begin{array}{ccc}
t_1 &   & \\
 &\ddots &\\
0  &  & t_N
\end{array}
\right].
\end{equation*}
Take its differential and evaluate at $(I_{2N},I_{2N})$ we find 
\begin{equation*}
    (\diffB) J_N + i (\diffA + \diff{}^tA) + 2\diag(\difft_1,...,\difft_N).
\end{equation*}
This means that $\Phi^*_{I_{2N},I_{2N}}$ pulls back $\diffx_{ij}$ to $\diffb_{i, N+1-j}$, $\diffy_{ij}$ with $i<j$ to $\diffa_{ij}$ and $\diffy_{ii}$ to $2\difft_i$. Hence $\Phi^*_{I_{2N},I_{2N}}$ pulls back $|\omega|$ to 
\begin{equation*}
    2^{\frac{N(N-1)}{2}+N}|\wedge_{i+j\geq N+1}\diffb_{ij}|\wedge|\wedge_{i< j}\diffa_{ij}|\wedge|\wedge \difft_i|,
\end{equation*}
and we are done.
\end{proof}

We also identify $(G/H,\mu_{G/H})$ with $(K \times U, \mu_K\otimes \mu_U)$.

\subsubsection{Reduction of the counting problem}

As in the introduction, fix a polynomial $p(t)$ of the form $\prod_{i=1}^N (t^2 -d_i^2)$ with $d_i\in \Z^+$ distinct. 
Consider $\bmX(\R):= \{X\in (\fraksp_{2N})_{\R},\, \det(tI_{2N}-X)=p(t) \}$. 
And let $\bmX(\Z)$ be its intersection with integral matrices. 
Similarly we define $\bmX'(\R)$ and $\bmX'(\Z)$ to be those contained in $\fraksl_{2N}$.
We fix a base point $x_0 \in\bmX(\Z) $ equal to $\diag(d_1,...,d_N,-d_N,...,-d_1)$. Let $||\cdot||$ be the Euclidean norm on $2N$-by-$2N$ matrices and $B_R$ consist of elements in $\bmX(\R)$ with norm less than or equal to $R$.

Note that $g \mapsto g\cdot x_0$ gives an homeomorphism from $G/H$ to $\bmX(\R)$. The non-trivial part is surjectivity. Indeed every element $x$ in $\bmX(\R)$ is regular in $\bmG$, so its centralizer is a maximal $\Q$-torus $\bmT_{x}$ in $\bmG$. On the other hand $x$ is semisimple with distinct $\Q$-eigenvalues, hence is diagonalizable over in $\SL_{2N}(\Q)$. So $\bmT_{x}$ is actually a $\Q$-split torus. Thus there exists $\gamma \in \bmG(\Q)\subset G$ such that $\gamma \bmT_{x} \gamma^{-1}=\bmH$(see \cite[Theorem 15.2.6]{Spr98}). At this point, $\gamma \cdot x $ is already diagonal. It remains to observe that there exists $w\in\bmN_{\bmG}\bmH (\Q)\subset \bmG(\Q)$ such that $w \gamma \cdot x=x_0$.

So we may identify $\bmX(\R)$ with $G/H \cong K \times U$. As $K$ preserves $B_R$ and $\mu_K$ is a probability measure,
we also think of $B_R$ as a subset of $U$ and $\mu_{G/H}(B_R)= \mu_U(B_R)$.

Note that $\Gamma$ preserves $\bmX(\Z)$ and decompose $\bmX(\Z)$ into finitely many orbits by a theorem Borel--Harish-Chandra \cite[Theorem 6.9]{BorHar62}. We let $C_0$ be this number. So to prove Theorem \ref{thmCountSym}, it suffices to count each individual orbit separately. For simplicity we assume that the orbit is $\Gamma \cdot x_0$. Other orbits can be treated similarly(see the last section of \cite{ShaZhe18} or \cite{Zha18} for details).

For each $I \subset \{1,...,2N\}$, we say $I$ is isotropic if the subspace generated by $\{e_i\}_{i\in I}$ is. Let 
\begin{equation*}
    \scrA:=\{I \subset \{1,...,2N\} \,\vert\, I \text{ is isotropic } \}.
\end{equation*}
$\scrA$ allows an explicit description: each $I\in \scrA$ is of the form $J\sqcup 2N+1-J'$ for some disjoint $J\sqcup J'\subset\{1,...,N\}$.
There is a bijective correspondence between $\scrA$ and $\scrP_{\bmH}$ given by taking the stabilizer of the line spanned by $e_{I}:=\wedge_{i\in I} e_i$(again, defined up to sign).
Also there exists $C>0$ such that if $I$ corresponds to $\bmP$, then for all $g\in G$, $\frac{1}{C}||ge_I||<||d_{\bmP}(g)||< C ||ge_I||$. Hence in the definition of $\Omega_{g,\eta}$ as in Equation \ref{EquPolyMaxTori}, we may replace $d_{\bmP}(g)$ by $||ge_I||$ as $I$ varies over $\scrA$.

Now we define two other constants relevant to the counting problem. The reason for this definition will be clear in the proof.

For $I=\{i_1,...,i_k\}$, define $c_I:= \sum_{\lambda=1}^k i_{\lambda}-\lambda$.
Define $C_1>0$ by
\begin{equation*}
    \Vol(
     \{t\in \Lie(H),\,  \diffalpha_I(t) \geq  -c_I 
    ,\,\forall I\in \scrA
    \}
    )
\end{equation*}
where $\Vol$ is required to be compatible with $\mu_H$ under the exponential map.
Define $C_2>0$ by:
\begin{equation*}
\begin{aligned}
    &C_2 \cdot |\prod_{1\leq i<j\leq N}(d_j-d_i)\prod_{1\leq i\leq j\leq N}(d_j+d_i)| \\
    =& 
    \Vol(
     \{ 
    (y_{ij},z_{ij}) \in \R^{N^2}\,\vert\,
    2\sum_{i<j} |y_{ij}|^2 + 2\sum_{i+j> N+1} |z_{ij}|^2 + \sum_{i+j= N+1}|z_{ij}|^2 \leq 1
    \}
    ).
\end{aligned}
\end{equation*}

Let $N_R:= C_1 C_2 R^{N^2} (\ln{R})^{N}$. 
Arguing as in \cite{EskMcM93}, to prove Theorem \ref{thmCountSym} with the constant $C:=C_0 \cdot C_1\cdot C_2$, it suffices to show $\{B_R\}$ is well-rounded(see Lemma \ref{SympleConstantC2}) and the following.
\begin{prop}\label{PropSymCount}
For any $f\in C_{c}(G/\Gamma)$ non-negative, 
\begin{equation*}
    \lim_{R\to+\infty}\frac{1}{N_R} \int_{K \times B_R}\int_H f(kuh) \diffh \diffu \diffk  = \langle f, \mu_{G/\Gamma}  \ra
\end{equation*}
where for simplicity we have written $\diffk $ for $\mu_K$, $\diffu$ for $\mu_U$ and $\diffh$ for $\mu_H$.
\end{prop}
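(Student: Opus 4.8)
The plan is to follow the mixing/equidistribution strategy of Eskin--McMullen \cite{EskMcM93}, combined with the non-divergence and equidistribution machinery developed in Sections \ref{secRepandNondiv}--\ref{secGammaEqui} (in the maximal-split-torus form of Section \ref{secNondivMaxSplTori}, since here $\bmH$ is a maximal $\Q$-split torus in the semisimple group $\bmG=\Sp_{2N}$). The key point is that the inner integral $F(g):=\int_H f(gh)\,\diffh$ is only defined as a function on $G/H$, and $\mu_H$ is infinite, so one cannot directly apply mixing to $F$. Instead one truncates: for a parameter $T$ let $\calP_T\subset H$ be the polytope region $\exp(\Omega_{g,\eta})$-type set (here with $g=e$, so a fixed scaling of a dilated copy of a fixed polytope, with the ceiling growing like $T$ and the floor fixed), and set $F_T(g):=\int_{\calP_T} f(gh)\,\diffh$. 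Then $F_T$ descends to a genuine compactly supported function on $G/\Gamma$ once one quotients by $\Gamma_H$, and $\int_{K\times B_R}F_T(ku)\,\diffu\,\diffk$ can be analyzed by the standard argument: expand $f$ against an approximate identity, use mixing of the $G$-action on $L^2(G/\Gamma,\mu_{G/\Gamma})$ (which holds since $\mu_{G/\Gamma}$ is a finite invariant measure and $\bmG$ is semisimple with no compact factors after the reductions made), and conclude that $\frac{1}{\mu_{G/H}(B_R)\cdot\mu_H(\calP_T/\Gamma_H)}\int_{K\times B_R}F_T(ku)\,\diffu\,\diffk\to\langle f,\mu_{G/\Gamma}\rangle$, using well-roundedness of $\{B_R\}$ (Lemma \ref{SympleConstantC2}) to handle the $B_R$-averaging.

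The second ingredient is to show that the truncation is harmless, i.e. that the ``tail'' $\int_{K\times B_R}(F(ku)-F_T(ku))\,\diffu\,\diffk$ is negligible relative to $N_R$ as $R\to\infty$ and then $T\to\infty$. This is exactly where Theorem \ref{thmNondiv} / Proposition \ref{propNondivMaxTori} enters: for $h\in H$ outside $\calP_T$ (suitably defined via the functions $d_{\bmP}$, $\bmP\in\scrP_{\bmH}$), the point $\pi_\Gamma(kuh)$ leaves the compact set $K_\eta$, so $f$ (being compactly supported) vanishes there once $T$ is large enough depending on $\Supp f$; more precisely, by the $(C,\alpha)$-good estimates of Theorem \ref{kleinbockMargulis} and Proposition \ref{propNondivMaxTori}, the measure of the set of $h$ in a large box where $f(kuh)\neq 0$ but $h\notin\calP_T$ is a small fraction of $\mu_H(\calP_T)$, uniformly in $k$, $u$. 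Summing/integrating this over $K\times B_R$ gives the tail bound. Thus $F$ and $F_T$ have the same asymptotics up to an error that vanishes after taking $T\to\infty$.

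The third ingredient is the bookkeeping of constants: one must check that $\mu_H(\calP_T/\Gamma_H)\cdot\mu_{G/H}(B_R)$ is asymptotic to $C_1C_2R^{N^2}(\ln R)^N = N_R$. Here $\mu_{G/H}(B_R)=\mu_U(B_R)\sim C_2 R^{N^2}$ comes from the explicit volume computation: $B_R$ is (the $K$-invariant hull of) an ellipsoid in $U\cong\R^{N^2}$ cut out by $\|uh_0^{-1}\cdot x_0\|^{2}$-type quadratic forms, whence the factor $|\prod_{i<j}(d_j-d_i)\prod_{i\le j}(d_j+d_i)|$ and the normalized ellipsoid volume defining $C_2$; and $\mu_H(\calP_T/\Gamma_H)\sim C_1(\ln\cdots)^N$, with the $(\ln R)^N$ arising because the truncation parameter $T$ is tied to $R$ (one needs $h$-translates to stay bounded, which forces $\diffalpha_I(t)$ to grow logarithmically in $R$), and $C_1$ is precisely the fixed-floor polytope volume in Definition of $C_1$ above, with the shift constants $c_I$ coming from comparing the integral lattice $\wedge^k\frakg_\Z$ weights of $e_I$ to the continuous parametrization. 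I expect the main obstacle to be this last coordination step — getting the interplay between $T$ and $R$ exactly right so that the product of the two volume asymptotics reproduces $N_R$ with the stated constants $C_1$, $C_2$, rather than the mixing/non-divergence inputs, which are by now standard given the tools already assembled. One technical nuisance to dispatch carefully is that $f$ is a function on $G/\Gamma$ while the truncated inner integral naturally lives on $G/\Gamma_H$ and then projects down via $\pi_\Gamma$; since $\bmH$ is observable (being a torus, hence reductive), $H\Gamma/\Gamma$ is closed and this projection is proper on the relevant compact pieces, so the pushforward of $\mu_H|_{\calP_T}$ is a well-defined finite measure and the manipulations are legitimate.
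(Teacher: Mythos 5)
The central step of your proposal---``use mixing of the $G$-action on $L^2(G/\Gamma,\mu_{G/\Gamma})$'' to conclude that the truncated integral $\frac{1}{\mu_{G/H}(B_R)\,\mu_H(\calP_T/\Gamma_H)}\int_{K\times B_R}F_T(ku)\,\diffu\,\diffk$ converges to $\langle f,\mu_{G/\Gamma}\rangle$---does not work as stated, and this is precisely the obstacle the whole paper is designed to overcome. The Eskin--McMullen mixing argument requires two things that are absent here: (i) the duality formula of \cite{EskMcM93} needs $\mu_{H/\Gamma_H}$ to be finite, which fails since $\bmH$ is a $\Q$-split torus with $\Gamma_H$ finite; and (ii), even after truncating to a compact piece $\calP_T$ of the $H$-orbit, the thickening/``wavefront lemma'' step that converts the singular measure $(ku)_*\mu_H\vert_{\calP_T}$ into an $L^2$-function to which mixing can be applied is only available when $\bmH$ is symmetric. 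For a maximal $\Q$-split torus $\bmH\leq\Sp_{2N}$ the wavefront lemma does not hold, and mixing gives no control on how a translated compact slab of a torus orbit distributes as $ku\to\infty$. No amount of careful bookkeeping of $T$ against $R$ repairs this; the convergence of $(ku)_*\mu_H\vert_{\calP_T}$ to a multiple of $\mu_{G/\Gamma}$ is itself the deep dynamical input.

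What the paper actually does at this step is invoke its own Ratner-based equidistribution results (Theorem \ref{thmCompA_n} and Proposition \ref{propToriSym}, which rest on Theorem \ref{EMSTheorem} and linearization) via Lemma \ref{lemSymConvUptoSca}: for $u_n\in B_{R_n,\ep'}$ one has $(k_nu_n)_*[\mu_H]\to[\mu_{G/\Gamma}]$ in $G/\Gamma$, and the normalization constant is $\mu_H(\Omega_{u_R,\eta})\sim C_1(\ln R)^N$ by Proposition \ref{PropZhengEstimate}. Your other ingredients are on the right track and parallel the paper's: the decomposition of the integral into a main term over $B_{R,\ep'}$ and a tail over $B_R\setminus B_{R,\ep'}$ matches the paper's split into $I_1+I_2$; the use of Proposition \ref{propNondivMaxTori} and Theorem \ref{kleinbockMargulis} to bound the tail uniformly (Lemma \ref{lemSymCountErrorTerm} in the paper) is correct; and the well-roundedness from Lemma \ref{SympleConstantC2} enters just as you say. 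The fix is to replace ``mixing'' in the main-term argument by the paper's equidistribution-of-translates theorems, which is exactly what Lemma \ref{lemSymCountMainTerm} encapsulates.
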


\subsubsection{Different coordinates for $U$ and $B_{R,\ep}$}

The coordinates $a_{ij}$ and $b_{ij}$'s are not easy to work with in terms of $B_R$. So we move to a different set of coordinates and show how the Haar measure can be expressed in this new set of coordinates. Indeed $g \mapsto g \cdot x_0$ gives a bijection from $U$ to
\begin{equation*}
\left\{ X=\left[
\begin{array}{c|c}
     Y  &   Z
     \\
     \hline
      0  &  -J_N {}^{t}YJ_N
\end{array}
\right]
,\,
y_{ii}=d_i,\, y_{ij}=0,\, \forall i>j,\,ZJ_N \text{ is symmetric }
\right\}.
\end{equation*}
In terms of coordinates $x_{ij}$, the coefficients of $X$, this says that $x_{ij}=0$ if $i>j$; $x_{ii}= d_i$ and $x_{N+i,N+i}=-d_{N+1-i}$ if $1\leq i\leq n$; $x_{i,N+j}=x_{N+1-i,N+(N+1-j)}$ for all $1\leq i,j\leq n$.

For instance, when $n=3$, these are
\begin{equation*}
     \left[
 \begin{array}{c|c}
  \begin{array}{ccc}
  d_1 & y_{12} & y_{13}\\
   & d_2 &  y_{23}\\
    &  & d_3
  \end{array}
  &   
    \begin{array}{ccc}
   z_{33} & z_{23} & z_{13}\\
   z_{32 }& z_{22} &  z_{23}\\
    z_{31}& z_{32}  & z_{33}
  \end{array}
 \\
\hline
0  &  
 -\left(\begin{array}{ccc}
  d_3 & y_{23} & y_{13}\\
   & d_2 &  y_{12}\\
    &  & d_1
  \end{array}\right)
\end{array}
\right].
\end{equation*}

Use a computation made in \cite[Lemma 11.2]{ShaZhe18}, we have that
\begin{lem}
 Under the bijection above,
 \begin{equation*}
 |\wedge_{i<j}\diffa_{ij}||\wedge_{i+j\geq N+1}\diffb_{ij}|=
     \frac{
     |\wedge_{i<j}\diffx_{ij}||\wedge_{i+j\geq N+1}\diffz_{ij}|
     }{
     |\prod_{1\leq i<j\leq n}(d_j-d_i)\prod_{1\leq i\leq j\leq n}(d_j+d_i)|
     }.
 \end{equation*}
\end{lem}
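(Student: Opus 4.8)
The plan is to identify the change-of-variables Jacobian of the parametrization $\Phi\colon U\to\bmX(\R)$, $u\mapsto u\cdot x_{0}=ux_{0}u^{-1}$ (the bijection above, the action being conjugation inside $\fraksp_{2N}$), passing between the two coordinate systems. Writing $u=\begin{psmallmatrix}A&B\\0&J_{N}\,{}^{t}A^{-1}J_{N}\end{psmallmatrix}$ with $A$ upper unitriangular and $J_{N}A^{-1}B$ symmetric, and $D=\diag(d_{1},\dots,d_{N})$, a short block computation gives
\begin{equation*}
    \Phi(u)=\begin{pmatrix}ADA^{-1}&Z\\0&-J_{N}\,{}^{t}(ADA^{-1})\,J_{N}\end{pmatrix},\qquad Z=-\bigl((ADA^{-1})B+B\,(J_{N}DJ_{N})\bigr)J_{N}\,{}^{t}A\,J_{N}.
\end{equation*}
In particular the coordinates $x_{ij}$ ($i<j$) of $\Phi(u)$ depend only on the $a_{ij}$, while the coordinates $z_{ij}$ ($i+j\geq N+1$) are linear in the $b_{ij}$; hence the Jacobian matrix of $(a,b)\mapsto(x,z)$ is block lower triangular, with determinant $\det(\partial x/\partial a)\cdot\det(\partial z/\partial b)$.

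First I would observe that this determinant is a nonzero constant. In these coordinates $\Phi$ is a morphism between the affine space parametrizing $U$ and the affine-linear slice of $\bmX$ consisting of the block-triangular matrices $\begin{psmallmatrix}Y&Z\\0&-J_{N}\,{}^{t}YJ_{N}\end{psmallmatrix}$ with $Y$ upper triangular of diagonal $(d_{1},\dots,d_{N})$ and $ZJ_{N}$ symmetric; it is injective because the centralizer of $x_{0}$ in $\bmG$ is $\bmH$ and $U\cap H=\{e\}$, and surjective because every matrix in that slice has characteristic polynomial $\prod_{i}(t-d_{i})(t+d_{i})=p(t)$, hence is conjugate to $x_{0}$, and the conjugating element can be taken in $U$ (diagonalize $Y$ by a unitriangular matrix, then invert the Sylvester operator $B'\mapsto DB'+B'(J_{N}DJ_{N})$ to produce the top-right block). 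A bijective morphism of smooth affine varieties over a field of characteristic zero is an isomorphism, so the Jacobian is a unit, i.e.\ a nonzero constant, and it suffices to evaluate it at $A=I_{N}$, $B=0$.

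At the origin both blocks are in fact diagonal: comparing $(i,j)$-entries in $(ADA^{-1})A=AD$ gives the recursion $x_{ij}=(d_{j}-d_{i})a_{ij}-\sum_{i<k<j}x_{ik}a_{kj}$, whose sum is of second order at the origin, so $\partial x_{ij}/\partial a_{kl}=(d_{j}-d_{i})\delta_{(i,j),(k,l)}$ there, exactly as in \cite[Lemma 11.2]{ShaZhe18}; and at $A=I_{N}$ one has $Z_{pq}=-(d_{p}+d_{N+1-q})B_{pq}$ together with $z_{ij}=Z_{ij}$ and $b_{ij}=B_{ij}$ for $i+j\geq N+1$, so $\partial z_{ij}/\partial b_{kl}=-(d_{i}+d_{N+1-j})\delta_{(i,j),(k,l)}$, the anti-diagonal entries $i+j=N+1$ contributing the factor $2d_{i}$. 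Multiplying the diagonal entries and reindexing $j\mapsto N+1-j$ in the second block gives
\begin{equation*}
    \Bigl|\det\tfrac{\partial(x,z)}{\partial(a,b)}\Bigr|=\Bigl|\prod_{1\leq i<j\leq N}(d_{j}-d_{i})\Bigr|\cdot\Bigl|\prod_{1\leq i\leq j\leq N}(d_{j}+d_{i})\Bigr|,
\end{equation*}
and pulling back $|\wedge_{i<j}\diffx_{ij}|\,|\wedge_{i+j\geq N+1}\diffz_{ij}|$ along $\Phi$ yields the stated identity.

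I expect the main obstacle to be organizational rather than conceptual: one must keep the constraint ``$J_{N}A^{-1}B$ symmetric'' and the relabeling $Z_{pq}\leftrightarrow z_{ij}$ perfectly consistent, so that $\det(\partial z/\partial b)$ is genuinely computed on the $\tfrac{N(N+1)}{2}$-dimensional constrained slices rather than on all of $\mathrm{Mat}_{N}$; checking that the Sylvester operator $B'\mapsto DB'+B'(J_{N}DJ_{N})$ carries these slices to themselves and is invertible there is the one point needing genuine, if brief, verification.
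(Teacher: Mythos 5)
Your proof is correct and fills in carefully the computation that the paper only cites (to \cite[Lemma 11.2]{ShaZhe18}). The block computation $\Phi(u)=\begin{psmallmatrix}ADA^{-1}&Z\\0&-J_N{}^t(ADA^{-1})J_N\end{psmallmatrix}$ with $Z=-((ADA^{-1})B+B\,J_NDJ_N)J_N{}^tAJ_N$ is right; the Jacobian is block lower triangular because $x$ depends only on $a$ and $z$ is linear in $b$; and the evaluation at the origin gives $\partial x_{ij}/\partial a_{kl}=(d_j-d_i)\delta$ and $\partial z_{ij}/\partial b_{kl}=-(d_i+d_{N+1-j})\delta$ on the constrained index set $\{i+j\geq N+1\}$, which after the reindexing $j\mapsto N+1-j$ yields $\prod_{i\leq j}(d_i+d_j)$ as claimed. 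The one genuinely extra idea you introduce — that the Jacobian is constant because $\Phi$ is a bijective morphism between affine spaces over a field of characteristic zero, hence an isomorphism, hence its Jacobian is a unit in a polynomial ring — is a clean way to justify evaluating only at $(A,B)=(I,0)$, and is more economical than directly checking the determinant is $a$-independent. (It does rely on the Sylvester-equation surjectivity argument you sketch; the point that needs a word is that the unique $B$ solving the Sylvester equation automatically satisfies the symplectic constraint $J_NA^{-1}B={}^t(J_NA^{-1}B)$, which you correctly flag as the place needing verification. At $A=I$ this symmetry is immediate from $d_i+d_{N+1-j}=d_j+d_{N+1-i}$, and the general case follows by the same kind of check or by a dimension count.) Overall this is the same computation the paper intends, written out in full with a nice structural shortcut.
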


Using the coordinates $x_{ij}$ or $(y_{ij},z_{ij})$, $B'_R$ and $B_R$ are
\begin{equation*}
\begin{aligned}
        B'_R:=& \{(x_{ij})_{i<j} \,\vert\,
    \sum_{i<j} |x_{ij}|^2 + 2\sum |d_i|^2 \leq R^2
    \}, \\
    B_R =& 
    \{
    (y_{ij})_{i<j},(z_{ij})_{i\geq j} \,\vert\, \\
    &2\sum_{i<j} |y_{ij}|^2 + 2\sum_{i+j> N+1} |z_{ij}|^2 + \sum_{i+j= N+1}|z_{ij}|^2+ 2\sum |d_i|^2 \leq R^2
    \} 
\end{aligned}
\end{equation*}
and for $\ep>0$ we define 
\begin{equation*}
\begin{aligned}
     B'_{R,\ep}:=&
     \{
     (x_{ij}) \in B'_R \,\vert\,
     |y_{i,i+1}|\geq \ep R,\,\forall 1\leq i\leq 2N
     \}, \\
        B_{R,\ep}:=& \{ 
    (y_{ij},z_{ij}) \in B_R\,\vert\,
    |y_{i,i+1}|\geq \ep R,\,\forall 1\leq i\leq n,\, |z_{n,1}| \geq \ep R
    \}.
\end{aligned}
\end{equation*}
So $B_R$($B_{R,\ep}$ resp.) is just the intersection of $B'_R$($B'_{R,\ep}$ resp.) with $\fraksp_{2N}(\R)$.

Similar to \cite[Lemma 11.6]{ShaZhe18} we have that:
\begin{lem}\label{SympleConstantC2}
For any $\ep>0$, there exists $\ep'>0$ such that 
\begin{equation*}
    \lim_{R\to\infty} \frac{\mu_U(B_{R,\ep'})}{\mu_U(B_{R})} \geq 1- \ep
\end{equation*}
for all $R>1$. And 
\begin{equation*}
    \lim_{R\to\infty} \frac{\mu_U(B_R)}{C_2R^{N^2}} =1.
\end{equation*}
In particular, the family $\{B_R\}$ is well-rounded in the sense of \cite{EskMcM93}.
\end{lem}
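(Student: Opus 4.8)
The plan is to transfer everything into the explicit coordinates $(y_{ij})_{i<j},(z_{ij})_{i+j\ge N+1}$ on $U\cong\R^{N^2}$ and reduce to elementary scaling of a fixed ellipsoid. By the lemma immediately preceding this one, $\mu_U$ equals $|\prod_{1\le i<j\le N}(d_j-d_i)\prod_{1\le i\le j\le N}(d_j+d_i)|^{-1}$ times Lebesgue measure in these coordinates, and in them $B_R$ is the solid region $\{Q(y,z)\le R^2-2\sum_i d_i^2\}$, where $Q(y,z):=2\sum_{i<j}|y_{ij}|^2+2\sum_{i+j>N+1}|z_{ij}|^2+\sum_{i+j=N+1}|z_{ij}|^2$ is a positive-definite quadratic form, so $\{Q\le1\}$ is a bounded convex body of finite positive volume. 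Since $Q$ is homogeneous of degree $2$, $\{Q\le\rho^2\}=\rho\cdot\{Q\le1\}$, hence $\Vol(\{Q\le R^2-2\sum_i d_i^2\})=(R^2-2\sum_i d_i^2)^{N^2/2}\,\Vol(\{Q\le1\})$. Comparing with the defining equation of $C_2$ (which is exactly $|\prod(d_j-d_i)\prod(d_j+d_i)|^{-1}\Vol(\{Q\le1\})$) gives $\mu_U(B_R)=C_2(R^2-2\sum_i d_i^2)^{N^2/2}$, so $\mu_U(B_R)/(C_2R^{N^2})=(1-2\sum_i d_i^2/R^2)^{N^2/2}\to1$, which is the second assertion. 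The same computation shows more generally that $\mu_U(B_{cR})/\mu_U(B_R)\to c^{N^2}$ for every fixed $c>0$.

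For the first assertion, observe that $B_R\setminus B_{R,\ep'}$ is contained in the union, over the $N$ distinguished coordinates $w\in\{y_{1,2},\dots,y_{N-1,N},z_{N,1}\}$, of the slabs $\{(y,z)\in B_R:|w|<\ep'R\}$. Rescaling by $\rho:=\sqrt{R^2-2\sum_i d_i^2}$ identifies each such slab with $\rho\cdot\{Q\le1,\ |w|<\ep'R/\rho\}$; since $R/\rho\to1$, for $R$ large this is contained in $\rho\cdot\{Q\le1,\ |w|<2\ep'\}$, whose Lebesgue measure is at most $\rho^{N^2}\cdot 4\ep' D$, where $D$ bounds the $(N^2-1)$-dimensional volumes of the hyperplane slices of the bounded set $\{Q\le1\}$. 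Summing the $N$ contributions, using $\rho^{N^2}\le R^{N^2}$ and $\mu_U(B_R)\ge\tfrac12 C_2R^{N^2}$ for $R$ large, yields $\mu_U(B_R\setminus B_{R,\ep'})/\mu_U(B_R)\le 8ND\,|\prod(d_j-d_i)\prod(d_j+d_i)|^{-1}C_2^{-1}\,\ep'$ for all large $R$. Choosing $\ep'$ small enough makes the right-hand side $<\ep$, giving $\liminf_{R\to\infty}\mu_U(B_{R,\ep'})/\mu_U(B_R)\ge1-\ep$.

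For well-roundedness in the sense of \cite{EskMcM93}: $G$ acts on $\bmX(\R)$ by conjugation, and the Euclidean matrix norm satisfies $\|\omega X\omega^{-1}\|\le\|\omega\|_{\mathrm{op}}\|\omega^{-1}\|_{\mathrm{op}}\|X\|$. Given $\kappa>0$, pick a symmetric neighborhood $\mathcal U$ of $e$ in $G$ with $\|\omega\|_{\mathrm{op}}\|\omega^{-1}\|_{\mathrm{op}}\le1+\kappa$ for all $\omega\in\mathcal U$; then $\mathcal U\cdot B_R\subseteq B_{(1+\kappa)R}$ and $\bigcap_{\omega\in\mathcal U}\omega\cdot B_R\supseteq B_{(1+\kappa)^{-1}R}$ as subsets of $\bmX(\R)$. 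Since $\mu_{G/H}(B_{cR})=\mu_U(B_{cR})$ and $\mu_U(B_{cR})/\mu_U(B_R)\to c^{N^2}$ by the paragraph above, the ratio $\mu_{G/H}(\mathcal U\cdot B_R)/\mu_{G/H}(B_R)$ lies between $(1+\kappa)^{-N^2}$ and $(1+\kappa)^{N^2}$ in the limit; letting $\kappa\to0$ produces, for any prescribed tolerance, a neighborhood $\mathcal U$ witnessing well-roundedness.

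I do not expect a genuine analytic obstacle here: everything reduces to scaling a single fixed ellipsoid. The only point requiring care is the combinatorial bookkeeping -- identifying the $N$ distinguished ``simple-root'' coordinates $y_{i,i+1}$ and $z_{N,1}$ among the $N^2$ free coordinates, and checking that the pairing $x_{i,N+j}=x_{N+1-i,N+(N+1-j)}$ producing the Jacobian factor $|\prod(d_j-d_i)\prod(d_j+d_i)|$ cancels exactly against the normalization built into $C_2$, so that the constant disappears cleanly; this is the analogue of the verification in \cite[Lemma 11.6]{ShaZhe18}.
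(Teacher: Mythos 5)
Your proof is correct, and it is essentially the same argument the paper has in mind: the paper does not write out a proof of this lemma, simply noting it is ``similar to \cite[Lemma 11.6]{ShaZhe18}''; the strategy there (and here) is precisely the one you use — pass to the explicit $(y_{ij},z_{ij})$ coordinates via the Jacobian lemma, observe that $B_R$ is a rescaled fixed ellipsoid so the volume asymptotics and the ratio $\mu_U(B_{cR})/\mu_U(B_R)\to c^{N^2}$ follow from homogeneity, bound the bad set by $N$ coordinate slabs whose relative volume is $O(\ep')$, and deduce well-roundedness from the operator-norm distortion of conjugation together with the scaling law. Two minor observations: the lemma's ``$\lim$\dots for all $R>1$'' is a loose formulation — your version, phrased via $\liminf$ (or, after rescaling by $\rho=\sqrt{R^2-2\sum d_i^2}$, as an honest limit since the rescaled domains stabilize), is the intended content; and you correctly confirm that the normalization constant $|\prod(d_j-d_i)\prod(d_j+d_i)|$ built into $C_2$ cancels exactly against the Jacobian, which is the only place this computation could have gone wrong.
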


\subsubsection{Proof}
Now we come back to the proof of the main proposition.
\begin{lem}\label{lemSymCountMainTerm}
 For any $\ep'>0$, $\delta>0$, $f\in C_c(G/\Gamma)$, there exists $R_0$ such that for all $R>R_0$ and for all $g\in B_{R,\ep}$, $k\in K$, we have
 \begin{equation*}
     \left|
     \frac{1}{C_1(\ln{R})^N} \int f(kuh) \mu_H(h) - \la f,\mu_{G/\Gamma} \ra
     \right|<\delta.
 \end{equation*}
\end{lem}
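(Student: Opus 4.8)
The plan is to rewrite the left-hand side as $\tfrac{1}{C_1(\ln R)^N}\langle f,(ku)_*\mu_H\rangle$ and to prove that this converges to $\langle f,\mu_{G/\Gamma}\rangle$ \emph{uniformly} over $k\in K$ and $u\in B_{R,\ep'}$. Since $\bmH$ is a $\Q$-split torus we have $S_{\bmH}=\bmH$ and ${}^{\circ}\bmH$ trivial, so $\mu_H$ is identified with $\Vol$ on $\Lie(H)$ via $\exp$, and $\langle f,(ku)_*\mu_H\rangle=\int_{\Lie(H)}f(ku\exp t)\,dt$. First I would localize the integral: choose $\eta>0$ small enough that $\Supp f$ lies in the interior of $K_\eta$ (the compactness criterion stated before Proposition~\ref{propNondivMaxTori}). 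Using the comparison $\tfrac1C\|g e_I\|<\|d_{\bmP_I}(g)\|<C\|g e_I\|$ and the fact that $e_I$ spans the line $v_{\bmP_I}$ on which $\bmH$ acts by the character $\alpha_I$, one gets $d_{\bmP_I}(ku\exp t)\asymp \exp(\diffalpha_I(t))\,d_{\bmP_I}(ku)$, so $f(ku\exp t)=0$ unless $t$ lies in the polytope $\Omega_{ku,\eta'}$ of Equation~\ref{EquPolyMaxTori} (with $\scrP_{\bmH}$ identified with $\scrA$), for a suitable $\eta'=\eta'(\eta)$. Hence $\langle f,(ku)_*\mu_H\rangle=\int_{\Omega_{ku,\eta'}}f(ku\exp t)\,dt$, an integral over a bounded convex polytope.

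Next I would pin down the shape of $\Omega_{ku,\eta'}$. Using the explicit coordinates on $U$ and the description of $B_{R,\ep'}$, an elementary estimate in the spirit of \cite{ShaZhe18} gives, uniformly for $k\in K$ and $u\in B_{R,\ep'}$,
\begin{equation*}
    \ln\|ku e_I\|=c_I\ln R+O_{\ep'}(1),\qquad \forall I\in\scrA,
\end{equation*}
the upper bound coming from all entries of $u$ being $\le R$ in absolute value and the lower bound from the superdiagonal coordinates being $\ge\ep'R$, which force the dominant monomial in $\|u e_I\|$ to have size $\asymp R^{c_I}$. Since the defining inequalities of $\Omega_{ku,\eta'}$ then read $\diffalpha_I(t)\ge -c_I\ln R+O_{\ep'}(1)$, rescaling $t\mapsto(\ln R)t$ sandwiches $(\ln R)^{-1}\Omega_{ku,\eta'}$ between the polytopes $\{t:\diffalpha_I(t)\ge -c_I\mp M_{\ep'}/\ln R\}$, whence
\begin{equation*}
    \Vol(\Omega_{ku,\eta'})=C_1(\ln R)^N\bigl(1+o(1)\bigr)
\end{equation*}
with the $o(1)$ uniform in $k,u$; recall $C_1=\Vol(\{t\in\Lie(H):\diffalpha_I(t)\ge -c_I,\ \forall I\in\scrA\})$ and $\dim H=N$.

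Finally I would upgrade the sequential equidistribution to this uniform statement by a standard compactness argument. If the lemma failed there would be $\delta>0$, $R_n\to\infty$, $k_n\in K$, $u_n\in B_{R_n,\ep'}$ with $\bigl|\tfrac{1}{C_1(\ln R_n)^N}\langle f,(g_n)_*\mu_H\rangle-\langle f,\mu_{G/\Gamma}\rangle\bigr|\ge\delta$, where $g_n:=k_nu_n$. Because $\bmG$ is semisimple and $\bmH$ a maximal $\Q$-split torus, $\bmZ_{\bmG}\bmH/(\bmZ_{\bmG}\bmH\cap\bmH)$ is $\Q$-anisotropic, so Theorem~\ref{thmTransReduc} rules out divergence and puts us in case (2) of Theorem~\ref{transbyg_n}; moreover the lower bounds $|y_{i,i+1}|,|z_{N,1}|\ge\ep'R_n$ force $\{g_n\}$ to diverge in $G/Z_G(S)$ for every nontrivial subtorus $\bmS\le\bmH$ (since no $Z_G(S)$ with $\bmS$ nontrivial contains all simple root subgroups), whence Proposition~\ref{propToriSym} gives $\bmL=\bmG$, i.e.\ the limit is $(\delta_\infty)_*\mu_{G/\Gamma}=\mu_{G/\Gamma}$ (here $\mu_{G/\Gamma}$ is the $G$-invariant probability measure, finite by Siegel's theorem). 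By the remark following Proposition~\ref{propNondivMaxTori} and the analogue of Theorem~\ref{thmCompA_n}, one may take $a_n=\Vol(\Omega_{g_n,\eta'})$, which by the previous paragraph equals $C_1(\ln R_n)^N(1+o(1))$; passing to a subsequence along which $\tfrac{1}{a_n}(g_n)_*\mu_H\to\mu_{G/\Gamma}$ then contradicts the assumption.

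The main obstacle will be the uniform asymptotic $\ln\|ku e_I\|=c_I\ln R+O_{\ep'}(1)$ and the attendant volume computation: for each isotropic $I$ one must identify which monomial in the entries of $u$ dominates $\|u e_I\|$, verify that under the constraints defining $B_{R,\ep'}$ this monomial has size $\asymp R^{c_I}$ with implied constants depending only on $\ep'$, and check that the symplectic relations $x_{i,N+j}=x_{N+1-i,N+(N+1-j)}$ do not interfere. This is the only place where the group $\Sp_{2N}$ (as opposed to $\SL_{2N}$) really enters, and it is where the bookkeeping is heaviest; everything else is a formal consequence of the machinery of Sections~\ref{secRepandNondiv}--\ref{secRed}.
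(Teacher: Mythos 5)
Your proof takes essentially the same route as the paper's. The paper factors the argument into Lemma~\ref{lemSymConvUptoSca} (equidistribution of $(k_nu_n)_*[\mu_H]\to[\mu_{G/\Gamma}]$ for any sequence with $u_n\in B_{R_n,\ep'}$, $R_n\to\infty$, proved via Proposition~\ref{propToriSym}) plus the volume asymptotic $\mu_H(\Omega_{u_R,\eta})\sim C_1(\ln R)^N$ extracted from Proposition~\ref{PropZhengEstimate}; you prove the same two ingredients inline via the compactness/subsequence argument. One small nit: your parenthetical reason for the divergence of $\{g_n\}$ in $G/Z_G(S)$ (\emph{``since no $Z_G(S)$ with $\bmS$ nontrivial contains all simple root subgroups''}) is not by itself a proof — the paper instead directly computes the $(i,i+1)$-entry of $\Ad(g_n)x$ for $0\neq x\in\Lie(S)$ and observes it is $\geq|x_{i+1}-x_i|\,\ep'R_n$, which is the clean way to verify the hypothesis of Proposition~\ref{propToriSym} from the lower bounds defining $B_{R,\ep'}$.
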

 
 \begin{lem}\label{lemSymCountErrorTerm}
  For all $f\in C_c(G/\Gamma)$, there exists $C_f>0$ such that for all $R>1$ and all $u\in B_R$, $k\in K$, we have 
  \begin{equation*}
      \left| \frac{1}{C_1(\ln{R})^N} \int f(kuh) \mu_H(h)  \right|
      \leq C_f.
  \end{equation*}
 \end{lem}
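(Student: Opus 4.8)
The plan is to dominate $\int f(kuh)\,\mu_H(h)$ by $\|f\|_\infty$ times the $\mu_H$-measure of the set of those $h$ for which $\pi_\Gamma(kuh)$ hits $\Supp f$, and then to show that this measure is $O\bigl((\ln R)^N\bigr)$ uniformly in $k\in K$ and $u\in B_R$. Concretely: since $|f|\le\|f\|_\infty\,\mathbf{1}_{\Supp f}$ we have $\bigl|\int f(kuh)\,\mu_H(h)\bigr|\le\|f\|_\infty\,\mu_H\bigl(\{h\in H : \pi_\Gamma(kuh)\in\Supp f\}\bigr)$, where I use that $\bmH$ is a $\Q$-split torus, so $H=(\R_{>0})^N$ and $\Gamma_H=\Gamma\cap H$ is trivial. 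As $\Supp f$ is compact, the reduction-theoretic compactness criterion of Section~\ref{secNondivMaxSplTori} provides $\eta>0$ with $\Supp f\subset K_\eta$, and then $\pi_\Gamma(kuh)\in\Supp f$ forces $d_\bmP(kuh)\ge\eta$ for every maximal proper $\Q$-parabolic $\bmP$, in particular for every $\bmP\in\scrP_\bmH$. Writing $h=\exp(t)$ and using $d_\bmP(g\exp(t))=\exp(\diffalpha_\bmP(t))\,d_\bmP(g)$, this exactly says $t\in\Omega_{ku,\eta}$ in the notation of~\eqref{EquPolyMaxTori}; since exponentiation is a diffeomorphism $\Lie(H)\to H$ and, by our normalisation, $\mu_H$ pulls back under it to the Lebesgue measure $\Vol$ on $\Lie(H)\cong\R^N$, we get $\mu_H\bigl(\{h:\pi_\Gamma(kuh)\in\Supp f\}\bigr)\le\Vol(\Omega_{ku,\eta})$.

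It therefore suffices to prove $\Vol(\Omega_{ku,\eta})\le C(\ln R)^N$ for a constant $C$ independent of $k,u,R$. Here $\Omega_{ku,\eta}=\{t\in\Lie(H):\diffalpha_\bmP(t)\ge\ln\eta-\ln d_\bmP(ku)\ \forall\bmP\in\scrP_\bmH\}$. Because $\Omega_{g,\eta}$ is a \emph{bounded} polytope for every $g$ and $\eta$ (Section~\ref{secNondivMaxSplTori}), its recession cone $\{t:\diffalpha_\bmP(t)\ge 0\ \forall\bmP\in\scrP_\bmH\}$ is $\{0\}$; hence the continuous, positively $1$-homogeneous function $t\mapsto\min_{\bmP\in\scrP_\bmH}\diffalpha_\bmP(t)$ is strictly negative on the unit sphere, so there is $c>0$, depending only on $(\bmG,\bmH)$, with $\min_\bmP\diffalpha_\bmP(t)\le-c\|t\|$ for all $t\in\Lie(H)$. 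Evaluating this at a point $t\in\Omega_{ku,\eta}$ and at the index $\bmP$ achieving the minimum gives $\|t\|\le c^{-1}\bigl(\max_{\bmP\in\scrP_\bmH}\ln d_\bmP(ku)-\ln\eta\bigr)$, so $\Omega_{ku,\eta}$ is contained in a Euclidean ball of that radius and $\Vol(\Omega_{ku,\eta})\le c_N\bigl(c^{-1}(\max_\bmP\ln d_\bmP(ku)-\ln\eta)\bigr)^N$.

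The last ingredient is that $\max_{\bmP\in\scrP_\bmH}\ln d_\bmP(ku)=O(\ln R)$ uniformly. Using the polynomial isomorphism $U\xrightarrow{\ \sim\ }\bmX(\R)$, $u\mapsto u\cdot x_0=ux_0u^{-1}$ — whose inverse is also polynomial, the denominators being the nonzero constants $\prod(d_i\pm d_j)$ — the condition $u\cdot x_0\in B_R$ forces $\|u\|\le C_1'(1+R)^{m}$ for fixed $C_1',m$; and since $K\subset\SO_{2N}(\R)$ preserves the Euclidean norm on $\bigwedge^{|I|}\R^{2N}$, one has $d_\bmP(ku)\le C_2'\,\|ue_I\|\le C_3'(1+\|u\|)^{|I|}$, where $I\in\scrA$ corresponds to $\bmP$ and $|I|\le 2N$. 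Hence $\max_\bmP\ln d_\bmP(ku)\le M\ln R+O(1)$ for some absolute $M$; and since $B_R=\emptyset$ whenever $R<\|x_0\|>1$, in any nonvacuous case $\ln R$ is bounded below by a positive constant, so the $O(1)$ and the $-\ln\eta$ term are absorbed and $\Vol(\Omega_{ku,\eta})\le C(\ln R)^N$. Combining the three steps yields the lemma with $C_f=C\|f\|_\infty/C_1$. The only thing that requires care is bookkeeping of the uniformity of all the constants — $c$, $M$, and the $\exp$-pushforward constant for $\mu_H$ are intrinsic to $(\bmG,\bmH)$ and the fixed normalisations — together with the trivial but essential remark that small $R$ makes the statement vacuous; there is no substantial analytic difficulty here.
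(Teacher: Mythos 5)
Your argument is correct and follows the paper's strategy: bound the integral by $\sup|f|\cdot\mu_H(\Omega_{u,\eta})$ (after choosing $\eta$ so that the $K_\eta$-return set captures $\Supp f$), then show that this polytope volume is $O((\ln R)^N)$ uniformly via a bound $\max_{I\in\scrA}\ln||ue_I||=O(\ln R)$ for $u\in B_R$. The only difference is that you re-derive this last bound from the polynomial structure of $u\mapsto u\cdot x_0$ and spell out the recession-cone argument showing the polytope lies in a ball of radius $O(\ln R)$, whereas the paper simply invokes Proposition~\ref{PropZhengEstimate}(1) and leaves the ``so $\mu_H(\Omega_{u,\eta})\leq M'(\ln R)^N$'' step implicit.
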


Assuming these Lemmas(to be proved latter), let us prove the proposition.

\begin{proof}[Proof of Proposition \ref{PropSymCount}]
For any $\ep>$ and choose $\ep'>0$ according to Lemma 1.3 above. Also fix an arbitrary $\delta>0$. Find $R_0$ by Lemma 1.4 and $C_f$ by Lemma 1.5. We decompose the original integral into two parts:
\begin{equation*}
    I_1 :=\frac{1}{N_R} \int_{K \times B_{R,\ep}}\int_H f(kuh) \mu_H(h)
\end{equation*}
and 
\begin{equation*}
    I_2 :=\frac{1}{N_R} \int_{K \times B_R-B_{R,\ep}}\int_H f(kuh) \mu_H(h).
\end{equation*}
Take $R>R_0$, then 
\begin{equation*}
\begin{aligned}
    I_1 &= \frac{1}{\mu_U(B_R)}\int \la f,\mu_{G/\Gamma} \ra + o(1) \\
    &= \frac{\mu_U(B_{R,\ep'})}{\mu_U(B_R)} (\la f,\mu_{G/\Gamma} \ra + o(1)) \\
    &= (1+o'(1)) (\la f,\mu_{G/\Gamma} \ra + o(1)) 
\end{aligned}
\end{equation*}
where $o(1)$ is some number whose absolute value is less than $\delta$ and $o'(1)$ is some number whose absolute value is less than $\ep$. 

For the other part,
\begin{equation*}
    |I_2| \leq \frac{1}{\mu_U(B_R)}\int_{B_R-B_{R,\ep'}} C_f \leq \ep C_f.
\end{equation*}

Therefore by taking the limit($\limsup$ and $\liminf$) of $I_1+I_2$ and then letting $\ep,\delta$ go to zero we are done.
\end{proof}

\subsubsection{Proof of Lemmas}
To prove Lemma \ref{lemSymCountMainTerm} and \ref{lemSymCountErrorTerm} above, let us recall the computation in \cite[Proposition 11.5, 11.8]{ShaZhe18}.
\begin{prop}\label{PropZhengEstimate}
(1) There exists $M>0$ such that for all $R>1$, for all $g\in B'_R$,
\begin{equation*}
    \ln{||ue_I||} \leq  M \ln{R},\quad \forall I\in\scrA;
\end{equation*}
(2) For all $\ep'>0$, there exists $M_{\ep'}>0$, for all $R>1$ and $u\in B'_{R,\ep'}$, one has
\begin{equation*}
    |
    \ln{||ue_I||-c_I \ln{R}}
    | \leq M_{\ep'},\quad \forall I\in\scrA.
\end{equation*}
\end{prop}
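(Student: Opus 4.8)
The plan to prove Proposition~\ref{PropZhengEstimate} is to exploit the conjugation parametrisation of $B'_R$ recalled above, together with the fact that the unipotent conjugator depends polynomially on the matrix entries with a transparent leading behaviour. Write $x_0=\diag(\delta_1,\dots,\delta_{2N})$ where $(\delta_1,\dots,\delta_{2N})=(d_1,\dots,d_N,-d_N,\dots,-d_1)$; since the $d_i$ are distinct positive integers, the $\delta_i$ are pairwise distinct. An element of $B'_R$ is $X=ux_0u^{-1}$ for a unique upper-triangular unipotent $u=(u_{ab})\in\SL_{2N}$, and $X$ is upper triangular with diagonal $(\delta_1,\dots,\delta_{2N})$; comparing off-diagonal entries of $Xu=ux_0$ gives, for $a<b$,
\[
(\delta_b-\delta_a)\,u_{ab}=X_{ab}+\sum_{a<c<b}X_{ac}\,u_{cb}.
\]
As $\delta_b\neq\delta_a$, this recursion exhibits $u_{ab}$ as a polynomial in the entries $X_{ij}$ ($i<j$), homogeneous of degree $b-a$ for the grading $\deg X_{ij}:=j-i$. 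Imposing $X_{ij}=0$ for $j-i\ge 2$ in the recursion isolates the part built from the superdiagonal variables $t_c:=X_{c,c+1}$ alone, namely $Q_{ab}=\kappa_{ab}\prod_{a\le c<b}t_c$ with $\kappa_{ab}=\prod_{a\le m<b}(\delta_b-\delta_m)^{-1}\neq 0$; since the only degree-$(b-a)$ monomials in the $X_{ij}$ using no variable $X_{ij}$ with $j-i\ge 2$ are products of $t_c$'s, we may write $u_{ab}=Q_{ab}+r_{ab}$ with $r_{ab}$ a sum of degree-$(b-a)$ monomials each involving at least one $X_{ij}$ with $j-i\ge 2$.

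For (1): on $B'_R$ we have $|X_{ij}|\le R$, hence (as $R>1$) each degree-$(b-a)$ monomial is $\le R^{b-a}$ and $|u_{ab}|\le C R^{b-a}$ with $C$ depending only on $N$ and the $d_i$. As $u$ is upper-triangular unipotent, $\|ue_I\|$ is comparable, with constants depending only on $N$, to the maximum over $k$-subsets $J=\{j_1<\dots<j_k\}$ of the absolute value of the minor of $u$ on rows $J$ and columns $I=\{i_1<\dots<i_k\}$; expanding such a minor and using $|u_{j_\mu,i_\lambda}|\le C R^{i_\lambda-j_\mu}$ together with $u_{j_\mu,i_\lambda}=0$ when $j_\mu>i_\lambda$ bounds it by $C'R^{\sum_\lambda i_\lambda-\sum_\mu j_\mu}\le C'R^{c_I}$, because $\sum_\mu j_\mu\ge 1+2+\dots+k$. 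Moreover the $J=I$ minor equals $1$, so $\|ue_I\|\ge 1$. Finally, for $R$ in the compact interval between $\sqrt{2\sum_i d_i^2}$ (the least $R$ with $B'_R\neq\emptyset$) and the threshold past which the previous bound gives $\|ue_I\|\le R^{c_I+1}$, $\|ue_I\|$ stays bounded while $\ln R$ stays bounded below; assembling these facts gives $\ln\|ue_I\|\le M\ln R$ with $M$ uniform over $I\in\scrA$ and $R$.

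For (2): on $B'_{R,\ep'}$ one has in addition $\kappa\ep' R\le |t_c|\le R$ for some constant $\kappa>0$ and every superdiagonal variable $t_c$. It is enough to estimate the single minor $D:=\det(u_{\lambda,i_\mu})_{\lambda,\mu=1}^{k}$ on rows $\{1,\dots,k\}$ and columns $I$. Replacing each $u_{\lambda,i_\mu}$ by $Q_{\lambda,i_\mu}$ gives the principal contribution
\[
\det\big(Q_{\lambda,i_\mu}\big)_{\lambda,\mu}=\Bigg(\prod_{\mu=1}^{k}\ \prod_{c<i_\mu}t_c\Bigg)\Bigg(\prod_{\lambda=1}^{k}\ \prod_{c<\lambda}t_c\Bigg)^{-1}\det\big(\kappa_{\lambda,i_\mu}\big)_{\lambda,\mu},
\]
and the product of the first two factors equals $\prod_c t_c^{\,e_c}$ with exponents $e_c\ge 0$ summing to $c_I$, so its absolute value lies in $[(\kappa\ep')^{c_I}R^{c_I},\,R^{c_I}]$. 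Writing $\kappa_{\lambda,i}=p_\lambda(\delta_i)/w(i)$ with $p_\lambda(x):=\prod_{m<\lambda}(x-\delta_m)$ monic of degree $\lambda-1$ and $w(i):=\prod_{m<i}(\delta_i-\delta_m)$, one gets
\[
\det\big(\kappa_{\lambda,i_\mu}\big)_{\lambda,\mu}=\Bigg(\prod_{\mu}w(i_\mu)^{-1}\Bigg)\det\big(p_\lambda(\delta_{i_\mu})\big)_{\lambda,\mu}=\Bigg(\prod_{\mu}w(i_\mu)^{-1}\Bigg)\prod_{\mu<\nu}(\delta_{i_\nu}-\delta_{i_\mu})\neq 0,
\]
the middle equality holding because the change of basis from $\{1,x,\dots,x^{k-1}\}$ to $\{p_1,\dots,p_k\}$ is unitriangular, which turns the determinant into a Vandermonde in the distinct numbers $\delta_{i_1},\dots,\delta_{i_k}$. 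Every other term in the expansion of $D$ uses at least one monomial from some $r_{\lambda,i_\mu}$ and hence is $O(R^{c_I-1})$; so once $R$ exceeds a threshold depending on $\ep'$, $I$ and the $d_i$, $|D|$ lies between two positive multiples of $R^{c_I}$, and therefore $\|ue_I\|\ge c''R^{c_I}$. Together with the upper bound from (1) and the same compact-range argument this yields $\big|\ln\|ue_I\|-c_I\ln R\big|\le M_{\ep'}$.

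The main obstacle is the lower bound in (2): $D$ is a sum of several contributions that are a priori all of order $R^{c_I}$, and one must rule out cancellation among them. This is exactly where the hypothesis that the $d_i$ are distinct is used, via the non-vanishing of the Vandermonde-type determinant $\det(\kappa_{\lambda,i_\mu})$; the remaining points — that every other term is smaller by a factor $R$, and the handling of the bounded range of $R$ — are routine. The symplectic statements needed afterwards follow immediately, since $B_R\subset B'_R$ and $B_{R,\ep}\subset B'_{R,\ep}$ under $\Sp_{2N}\subset\SL_{2N}$.
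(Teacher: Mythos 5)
Your proof is correct, but note that the paper itself does not prove Proposition~\ref{PropZhengEstimate}: it simply recalls the computation from \cite[Propositions 11.5, 11.8]{ShaZhe18}, so there is no in-paper argument to compare against. What you have done is reconstruct that argument; it appears to be the same method in spirit (parametrize $B'_R$ by $X=ux_0u^{-1}$, read $u_{ab}$ off the recursion $(\delta_b-\delta_a)u_{ab}=X_{ab}+\sum_{a<c<b}X_{ac}u_{cb}$, track the grading $\deg X_{ij}=j-i$, and bound minors).

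A few observations on the write-up, none of which affect correctness. The recursion does imply $u_{ab}$ is a polynomial in $X_{ij}$ only for $a\le i<j\le b$, which is what guarantees that the superdiagonal-only part of $u_{ab}$ is exactly the single monomial $\kappa_{ab}\prod_{a\le c<b}t_c$; it is worth saying this explicitly rather than invoking general degree counting. In part (1), your chain of bounds using that any grading-degree $(b-a)$ monomial has at most $b-a$ factors, hence is $\le R^{b-a}$ for $R>1$, is right; the nontrivial input is that the nonzero permutation terms in a minor on rows $J$ and columns $I$ force $\sum j_\mu\le\sum i_\lambda$, together with $\sum j_\mu\ge k(k+1)/2$, which yields the exponent $c_I$. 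The lower bound $\|ue_I\|\gtrsim 1$ from the $J=I$ principal minor being $1$ is the correct cheap input. In part (2), the row/column rescaling by $\prod_{c<\lambda}t_c$ and $\prod_{c<i_\mu}t_c$ (valid since all $t_c\ne0$ on $B'_{R,\ep'}$) cleanly isolates $\prod_c t_c^{e_c}$ with $e_c\ge0$, $\sum e_c=c_I$, and the fact that $\det(\kappa_{\lambda,i_\mu})$ is (up to a nonzero constant) a Vandermonde in distinct $\delta_{i_\mu}$'s is precisely where the hypothesis that the $d_i$ are distinct enters; this is the key non-cancellation point, and you identified it correctly. The error estimate — every monomial of $D$ using some $X_{ij}$ with $j-i\ge2$ has $\le c_I-1$ factors, hence is $O(R^{c_I-1})$ — closes the argument. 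The final remark that the symplectic $B_R\subset B'_R$ and $B_{R,\ep}\subset B'_{R,\ep}$ inherit the estimates is exactly how the paper uses the statement.

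One small stylistic caveat: you fix the row set $J=\{1,\dots,k\}$ to get the lower bound in (2). This is legitimate (one large coordinate suffices to bound the norm from below), but it is worth stating that the matching upper bound over \emph{all} rows $J$ from part (1) is what gives the two-sided estimate, so the asymmetry in the treatment is intentional rather than an oversight.
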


\begin{lem}\label{lemSymConvUptoSca}
For any $\ep'>0$ and any sequence $\{k_nu_n\}$ with $u_n \in B_{R_n,\ep'}$ and $R_n \to + \infty$, we have $\lim_n (g_n)_*[\mu_H]=[\mu_{G/\Gamma}]$.
\end{lem}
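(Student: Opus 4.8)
The plan is to reduce to the general results of the paper on reductive translates (Theorem \ref{thmTransReduc}) and on maximal tori in $\Sp_{2N}$ (Proposition \ref{propToriSym}), whose only substantive hypothesis to be checked is a divergence condition on $\{g_n\}$.

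First I would record the easy structural points. Writing $g_n=k_nu_n$ with $k_n\in K$ bounded and using that $[\mu_{G/\Gamma}]$ is $G$-invariant, it suffices to prove $(u_n)_*[\mu_H]\to[\mu_{G/\Gamma}]$. Since $\bmH$ is a maximal torus, $\bmZ_{\bmG}\bmH=\bmH$, so $\bmZ_{\bmG}\bmH/(\bmZ_{\bmG}\bmH\cap\bmH)$ is trivial and in particular $\Q$-anisotropic; thus Theorem \ref{thmTransReduc} applies and, in particular, case (1) of Theorem \ref{transbyg_n} never occurs, for $\{u_n\}$ or for any of its subsequences.

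Next --- and this is the crux --- I would check that $\{u_n\}$ diverges in $G/Z_G(\bmS)$ for every nontrivial $\Q$-subtorus $\bmS\leq\bmH$. The role of the hypothesis $u_n\in B_{R_n,\ep'}$ with $R_n\to\infty$ is precisely to guarantee this. Using the coordinates on $U$ from the previous subsection together with the identities $y_{i,i+1}(u\cdot x_0)=(d_{i+1}-d_i)A_{i,i+1}$ for $1\le i\le N-1$ and $z_{N,1}(u\cdot x_0)=-2d_NB_{N,1}$ (a direct block computation), the defining inequalities of $B_{R_n,\ep'}$ force $|A_{i,i+1}(u_n)|\gtrsim R_n$ and $|B_{N,1}(u_n)|\gtrsim R_n$. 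For a unipotent element these first-superdiagonal entries are exactly the coefficients of $\log u_n$ along the simple root spaces $\frakg_{\alpha_1},\dots,\frakg_{\alpha_{N-1}}$ and $\frakg_{\alpha_N}$, so every simple-root component of $\log u_n$ has absolute value tending to $+\infty$. Given nontrivial $\bmS\leq\bmH$, the simple roots span $X^*(\bmH)\otimes\Q$ (as $\bmG$ is semisimple), so some simple root $\alpha_{i_0}$ has $\alpha_{i_0}|_{\bmS}\neq0$; picking a generic $s=\exp Y\in\bmS(\R)$ with $Z_G(s)=Z_G(\bmS)$ and $(\mathrm{d}\alpha_{i_0})(Y)\neq0$, and noting that the $\frakg_{\alpha_{i_0}}$-component of $\Ad(u_n)Y$ equals $-c_{\alpha_{i_0},n}(\mathrm{d}\alpha_{i_0})(Y)E_{\alpha_{i_0}}$ (the higher $\operatorname{ad}(\log u_n)$-terms contribute nothing since a simple root is not a sum of two positive roots), one gets $\|u_nsu_n^{-1}\|\to\infty$; since the conjugacy class of the semisimple element $s$ is closed and homeomorphic to $G/Z_G(\bmS)$, this is the desired divergence. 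I expect reconciling the $B_{R_n,\ep'}$-inequalities with the root-space picture, and the bookkeeping of conventions for $\Sp_{2N}$, to be the main obstacle --- routine but fiddly.

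Finally I would assemble. Fix any subsequence of $\{u_n\}$; applying Theorem \ref{transbyg_n} to it (case (1) excluded) yields a further subsequence $\{u_{n_k}\}$, an element $\delta\in G$ and an observable $\Q$-subgroup $\bmL$ with $\lim_k(u_{n_k})_*[\mu_H]=(\delta)_*[\mu_L]$, in the situation of Theorem \ref{thmTransReduc}. Since $\{u_{n_k}\}$ still diverges in $G/Z_G(\bmS)$ for all nontrivial $\bmS\leq\bmH$, Theorem \ref{thmTransReduc} gives that $\bmL$ is reductive with $\mu_L$ finite, and Proposition \ref{propToriSym} forces $\bmL=\bmG$; hence $(\delta)_*[\mu_L]=(\delta)_*[\mu_{G/\Gamma}]=[\mu_{G/\Gamma}]$ by $G$-invariance. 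Thus every subsequence of $\{(u_n)_*[\mu_H]\}$ has a further subsequence converging to $[\mu_{G/\Gamma}]$, and since convergence of equivalence classes of locally finite measures on the second countable space $G/\Gamma$ satisfies the subsequence (Urysohn) criterion, the whole sequence $\{(g_n)_*[\mu_H]\}$ converges to $[\mu_{G/\Gamma}]$, which proves Lemma \ref{lemSymConvUptoSca}.
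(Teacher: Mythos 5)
Your argument is correct and follows essentially the same approach as the paper: the key step in both is to use the inequalities defining $B_{R_n,\ep'}$ to see that every first-superdiagonal entry of $u_n$ has magnitude growing like $R_n$, which forces $\{u_n\}$ (hence $\{g_n\}$) to diverge in $G/Z_G(\bmS)$ for every nontrivial $\Q$-subtorus $\bmS\le\bmH$, so that Theorem \ref{thmTransReduc} together with Proposition \ref{propToriSym} yields $\bmL=\bmG$. The paper runs this as a contrapositive (if the limit is not $[\mu_{G/\Gamma}]$, Proposition \ref{propToriSym} produces $x\neq 0$ in $\Lie(H)$ with $\Ad(u_n)x$ bounded, and then the $(i,i+1)$-entry $|x_{i+1}-x_i|\,|(u_n)_{i,i+1}|\gtrsim R_n\ep'$ is a contradiction), whereas you phrase the same computation forward in root-space language and close with the subsequence criterion; the underlying mechanism is identical.
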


\begin{proof}
 If not true, by Proposition \ref{propToriSym}, there exists $x_{\neq 0}$  in the Lie algebra of $H$ such that $\Ad(g_n)\cdot x$ is bounded. 
 Write $x=\diag(x_1,...,x_N,x_{N+1},...,x_{2N})$ with $x_{N+i}=-x_{N+1-i}$.
 The absolute value of $(i,i+1)$-th entry of $\Ad(g_n)\cdot x$ is equal to $|x_{i+1}-x_i||(u_n)_{i,i+1}| \geq |x_{i+1}-x_i|R\ep'$, which diverges as $R \to \infty$. This is a contradiction.
\end{proof}

Recall that $\Omega_{g,\eta}$(see section \ref{secNondivMaxSplTori}) is equal to
\begin{equation*}
    \{
    t \in \Lie(H)\,\vert\,
    \diffalpha_I(t) \geq \ln{\eta} - \ln{||ge_I||},\,
    \forall I\in\scrA
    \}.
\end{equation*}
We take $\eta>0$ small enough depending on $f$ such that 
\begin{equation*}
     \int_H f(kuh) \mu_H(h) = \int_{\exp{(\Omega_{u,\eta})}} f(kuh) \mu_H(h)
\end{equation*}
for all $u$ and $k$.

\begin{proof}[Proof of Lemma \ref{lemSymCountMainTerm}]
In light of Lemma \ref{lemSymConvUptoSca}, it remains to show that for $\eta>0$ sufficiently small and $u_R\in B_{R,\ep'}$,
\begin{equation*}
    \lim_R \frac{\mu_H(\Omega_{u_R,\eta})}{C_1\ln{R}^{N}} =1.
\end{equation*}
By Proposition \ref{PropZhengEstimate} above, 
\begin{equation*}
    \Omega_{u_R,\eta} \subset 
    \{
    t\in \Lie(H) \,\vert\,
    \diffalpha_I(t) \geq \ln{\eta} - c_I \ln{R}- M_{\ep'},\,\forall I\in \scrA
    \}
\end{equation*}
and 
\begin{equation*}
    \Omega_{u_R,\eta} \supset 
    \{
    t\in \Lie(H) \,\vert\,
    \diffalpha_I(t) \geq \ln{\eta} - c_I \ln{R} + M_{\ep'},\,\forall I\in \scrA
    \}.
\end{equation*}
By dividing $\ln R$, we get
\begin{equation*}
\begin{aligned}
    & \{t\in \Lie(H),\,  \diffalpha_I(t) \geq  -c_I + \frac{\ln{\eta}-M_{\ep'}}{\ln R}
    ,\,\forall I\in \scrA
    \} \\
      &\subset \frac{\Omega_{u_R,\eta}}{\ln{R}} \subset
      \{t\in \Lie(H),\,  \diffalpha_I(t) \geq  -c_I + \frac{\ln{\eta}+M_{\ep'}}{\ln R}
    ,\,\forall I\in \scrA
    \} .
\end{aligned}
\end{equation*}
By the definition of $C_1$, we are done.
\end{proof}

\begin{proof}[Proof of Lemma \ref{lemSymCountErrorTerm}]
By Proposition \ref{PropZhengEstimate} above, for $u\in B_R$,
\begin{equation*}
     \frac{\Omega_{u,\eta}}{\ln{R}} \subset 
     \{t\in \Lie(H),\, 
      \diffalpha_I(t) \geq  \frac{\ln \eta }{\ln{R}} - M
     ,\,\forall I\in \scrA
     \}.
\end{equation*}
So $\mu_H(\Omega_{u,\eta}) \leq M' \ln{R}^N$ for some constant $M'>0$. By taking $C'_f:=\sup |f|$, then 
\begin{equation*}
     \left| \frac{1}{C_1(\ln{R})^N} \int f(kuh) \mu_H(h)  \right| \leq 
     \frac{C'_f\mu_H(\Omega_{u,\eta})}{C_1(\ln{R})^N} \leq C_f
\end{equation*}
for some constant $C_f>0$.
\end{proof}

\subsection{Shearing a divergent geodesic}

In this subsection we prove Theorem \ref{thmShearGeod}.
Let $\bmG=\SO_{Q}$ with $Q(x_1,...,x_n,y)=x_1^2+...+x_n^2-y^2$. 
We start by reviewing some hyperbolic geometry for which the reader is referred to  \cite[Chapter A]{BenePetr92} for details.

\subsubsection*{Lift from $\bbH^n$ to the group}
The group $G$ naturally acts transitively on $\bbI^n:=\{Q=-1\}$, a subset of $\R^{n+1}$. There is a bijection $\Phi$ from $\bbI^n$ to $\bbH^n$ given by the composition of 
\begin{equation*}
     \begin{aligned}
           &(x_1,...,x_n,y) \mapsto \frac{(x_1,...,x_n)}{1+y} \\
           &(x_1,...,x_n) \mapsto \frac{2(x+\bme_n)}{||x+\bme_n||^2} -\bme_n.
     \end{aligned}
\end{equation*}
Therefore the action of $G$ on $\bbI^n$ can be transported to an action on $\bbH^n$, which turns out to be isometric. Let $\bmo:=\Phi((0,...,0,1))$ and the stabilizer of $\bmo$ in $G$ is a maximal compact subgroup $K$ in $G$. The map $g \mapsto g\cdot \bmo$ induces an isometry between $G/K$ and $\bbH^n$. So it naturally descends to an isometry $\Psi$ between $\Gamma\bs G/K $ and $\Gamma\bs\bbH^n$.

For $t\in\R$, let
\begin{equation*}
    a_t:= \left[
\begin{array}{c|c}
I_{n-1} &  0
 \\ \hline
0  & 
\begin{array}{ccc}
      \cosh{t} & \sinh{t}\\
      \sinh{t} & \cosh{t}  
\end{array}
\end{array}
\right]
\end{equation*}
and for $\bmv=(v_1,...,v_{n-1})$,
\begin{equation*}
u_{\bmv}=\left[
\begin{array}{c|c}
I_{n-1} &  
\begin{array}{cc}
    -v_1 & v_1 \\
    \vdots & \vdots\\
    -v_{n-1} & v_{n-1}
\end{array}
 \\ \hline
\begin{array}{ccc}
    v_1 & \cdots & v_{n-1} \\
    v_{1} & \cdots & v_{n-1}
\end{array}  
& 
\begin{array}{ccc}
      1- \frac{||v||^2 }{2}&\frac{||v||^2 }{2}\\
      - \frac{||v||^2 }{2} & 1 +\frac{||v||^2 }{2} 
\end{array}
\end{array}\right].    
\end{equation*}
Let $\bmU$ be the $\Q$-subgroup generated by $u_{\bmv}$ as $\bmv$ varies in $\Q^{n-1}$. This is the commutative unipotent radical of a proper $\Q$-parabolic subgroup of $\bmG$ that is contracted by $a_t \bullet a_t^{-1}$ as $t \to -\infty$. 
Note that $\bmv \mapsto u_{\bmv}$ gives an isomorphism between $(\Q^n,+)$ and $\bmU$ as $\Q$-algebraic groups.

Under $\Psi$, $\Gamma a_t u_{\bmv} K$ is sent to the projection of $\Phi(a_tu_{\bmv}\cdot \bme_{n+1})=a_tu_{\bmv}\cdot \bmo$, which is equal to
\begin{equation*}
    \frac{2+2\cosh{t}+||\bmv||^2e^t}{||\bmv||^2+(e^t+1 + ||\bmv||^2e^t)^2} \cdot (v_1,...,v_{n-1},1).
\end{equation*}
Now let $t$ varies over $\R$, then we find that $\{\Psi[a_t u_{\bmv}],\,t\in\R\}=\pi_{\Gamma}(\calI_{\bmv})$. In light of this calculation it suffices to prove the following:
\begin{thm}
Assume that $\bmv$ is not contained in any proper $\Q$-linear subspace. 
Then under the Chabauty topology, $\{\pi_{\Gamma}(a_t u_{s\bmv}),\, t\in\R\}$ converges to $\Gamma\bs G$ as $s$ tends to infinity.
\end{thm}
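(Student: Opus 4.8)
Write $\bmA:=\{a_t:t\in\R\}$, a one-dimensional $\Q$-split torus which (in suitable coordinates where the pair $(x_n,y)$ spans a hyperbolic plane over $\Q$) is a maximal $\Q$-split torus of $\bmG=\SO_Q$, with centralizer $\bmZ_{\bmG}(\bmA)$ isogenous to $\bmA$ times the orthogonal group of the positive definite form $x_1^2+\dots+x_{n-1}^2$, and with $\bmU$ the unipotent radical of the minimal $\Q$-parabolic $\bmP_0$. In $\Gamma\bs G$ the set $\{\pi_\Gamma(a_tu_{s\bmv}):t\in\R\}$ is precisely the right-$u_{s\bmv}$-translate of the orbit $\Gamma\bs\Gamma\bmA$; passing to $\Gamma\bs G$ (equivalently, applying $g\mapsto g^{-1}$) turns the left translates of $H\Gamma/\Gamma$ studied in Sections \ref{secGammaEqui}--\ref{secRed} into right translates of $\Gamma\bs\Gamma H$, and all results there apply verbatim, which I use with $\bmH=\bmA$. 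Since $\bmZ_{\bmG}(\bmA)/(\bmZ_{\bmG}(\bmA)\cap\bmA)$ is $\Q$-anisotropic, Theorem \ref{thmTransReduc} applies: for every sequence $s_n\to+\infty$ the translates do not diverge, and, after passing to a subsequence, $(\text{right-}u_{s_n\bmv})_*[\mu_\bmA]$ converges to $[\mu_{\bmL}]$ up to a bounded translate, for some connected \emph{reductive} observable $\Q$-subgroup $\bmL$. By Corollary \ref{thmConChabau} it therefore suffices to prove $\bmL=\bmG$ for every such sequence; since the limit is then independent of the sequence and the Chabauty space of closed subsets is compact metrizable, this yields the convergence of the whole family.

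To identify $\bmL$ I choose the auxiliary elements conveniently. The lattice $\Gamma\cap U$ is cocompact in $U\cong(\R^{n-1},+)$, so I may write $u_{s_n\bmv}=u_{\bm w_n}u_{\bm r_n}$ with $u_{\bm w_n}\in\Gamma\cap U$ (hence $\bm w_n\in\Q^{n-1}$) and $|\bm w_n-s_n\bmv|$ bounded. Because $u_{\bm w_n}\in\Gamma$, one has $\Gamma\bmA u_{s_n\bmv}=\Gamma\bigl(u_{\bm w_n}^{-1}\bmA u_{\bm w_n}\bigr)u_{\bm r_n}$, i.e. the curve is the \emph{bounded} right-$u_{\bm r_n}$-translate of the orbit of the $\Q$-split torus $u_{\bm w_n}^{-1}\bmA u_{\bm w_n}=\gamma_n^{-1}\bmA\gamma_n$ with $\gamma_n:=u_{\bm w_n}\in\Gamma$. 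By Theorem \ref{thmTranslatebyGamma}, after passing to a further subsequence, $\bmL$ is a conjugate of the observable hull of $\bmL_0:=\overline{\langle\,\gamma_{n_k}^{-1}\bmA\gamma_{n_k}\,\rangle}$. Using the contraction relation $a_tu_{\bm w}a_t^{-1}=u_{\lambda(t)\bm w}$ with $\lambda(t)=e^{ct}$, $c>0$ (so that $[Y_0,N(\bm w)]=cN(\bm w)$, where $Y_0$ spans $\Lie\bmA$ and $N\colon\Q^{n-1}\xrightarrow{\ \sim\ }\Lie U$ is the differential of $\bm w\mapsto u_{\bm w}$) together with $[\Lie U,\Lie U]=0$, one computes $\Lie\bigl(u_{\bm w}^{-1}\bmA u_{\bm w}\bigr)=\R\bigl(Y_0+cN(\bm w)\bigr)$. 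Hence $\Lie\bmL_0$ contains $Y_0+cN(\bm w_{n_k})$ for every $k$, and in particular contains $cN(\bm w_{n_k}-\bm w_{n_j})$ for all $k,j$.

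The crux is that the differences $\bm w_{n_k}-\bm w_{n_j}$ span $\Q^{n-1}$ over $\Q$. If their $\Q$-span lay in a proper $\Q$-subspace $W$, then $\bm w_{n_k}-\bm w_{n_j}=(s_{n_k}-s_{n_j})\bmv+O(1)$ would force $|s_{n_k}-s_{n_j}|\cdot\dist(\bmv,W\otimes_\Q\R)=O(1)$ uniformly in $k,j$; since $\bmv$ lies in no proper $\Q$-subspace, $\dist(\bmv,W\otimes_\Q\R)>0$, so $\{s_{n_k}\}$ would be bounded, contradicting $s_{n_k}\to+\infty$. Therefore $N(\Q^{n-1})=\Lie U\subseteq\Lie\bmL_0$, whence also $Y_0\in\Lie\bmL_0$, so $\Lie\bmL_0=\R Y_0\oplus\Lie U=\Lie(\bmA\bmU)$; as $\bmL_0$ is connected and contained in $\bmA\bmU$, we get $\bmL_0=\bmA\bmU$. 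Thus the reductive group $\bmL$ contains a conjugate of $\bmA\bmU$, hence a conjugate of $\bmU$. But any connected reductive $\Q$-subgroup of $\SO_{n,1}$ containing $\bmU$ equals $\SO_{n,1}$: there $\bmU$, having the maximal dimension $n-1$ of a unipotent subgroup of $\SO_{n,1}$, must be the unipotent radical of a parabolic, whose opposite unipotent radical is then forced by a dimension count to be $\bmU^-:=\exp(\Lie U^-)$, and $\langle\bmU,\bmU^-\rangle=\bmG$ because $[\Lie U,\Lie U^-]=\Lie\bmZ_{\bmG}(\bmA)$. Hence $\bmL=\bmG$, completing the proof. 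The main obstacle is precisely the last paragraph: converting the single hypothesis on $\bmv$ into the Diophantine fact that lattice points shadowing the ray $\R_{\ge0}\bmv$ have differences spanning everything rationally; the rest is bookkeeping about left versus right quotients and standard structure theory of $\SO_{n,1}$.
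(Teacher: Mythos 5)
Your proof is correct and follows essentially the same strategy as the paper's: reduce via Theorem \ref{thmTransReduc} and Corollary \ref{thmConChabau} to showing $\bmL=\bmG$, approximate $u_{s_k\bmv}$ by $\Gamma$-lattice elements $u_{\bmw_k}$ in $\bmU$, use the $\Q$-irrationality of $\bmv$ to force $\bmU\subseteq\bmL$, and then conclude from $\bmU\subseteq\bmL$ and reductivity of $\bmL$ that $\bmL=\bmG$. The only tactical variation is in the middle step: where you compute $\Ad(u_{\bmw}^{-1})Y_0=Y_0+cN(\bmw)$ on the Lie algebra and argue via a Diophantine estimate that the differences $\bmw_{n_k}-\bmw_{n_j}$ span $\Q^{n-1}$, the paper instead takes the group-level limit $u_{\bmw_k}^{-1}a_{t_k}u_{\bmw_k}\to u_{\lambda\bmv}$ (with $t_k\to 0$ chosen so that $(e^{t_k}-1)\|\bmw_k\|=\lambda$) and invokes that the smallest $\Q$-subgroup containing $u_{\lambda\bmv}$ is $\bmU$; these are two routes to the same conclusion and require the same input.
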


\begin{proof}
Without loss of generality assume that $||\bmv||=1$.
Recall Corollary \ref{thmConChabau} and use the terminology there. Take $g_k=u_{s_k\bmv}$ for arbitrary $s_k \to \infty$ and $\bmH$ to be the group generated by $a_t$, it suffices to show $\bmL=\bmG$ possibly after passing to a subsequence. To find $\bmL$ we will modify $\{g_k\}$ from left by a bounded sequence to some sequence contained in $\Gamma$. 
Use $\bmv \mapsto u_{\bmv}$ to identify $2\Z^{n-1}$ with a finite index subgroup of $ \bmU(\Z)$.
As $\Gamma$ is commensurable with $\bmG(\Z)$, $\Gamma \cap 2\Z^{n-1}$ is a finite index subgroup of $2\Z^{n-1}$. And therefore one can find a positive integer $N_0$ such that $\Gamma$ contains $2N_0\Z^{n-1}$. 

Take a sequence $\{\bmv^k\} $ in $2N_0\Z^{n-1}$ that is of bounded distance from $s_k\bmv$. Then $u_{s_k\bmv}$ is also of bounded distance (both from left and from right) from $u_{\bmv^k}$, which are contained in $\Gamma$. By our choice, we have that 
\begin{equation*}
    \lim_k \frac{\bmv^k}{||\bmv^k||} = \bmv.
\end{equation*}
Passing to a subsequence we may assume that $\bmL$ contains $u_{\bmv^k}^{-1}\bmH u_{\bmv^k}$ for all $k$. Let us compute $u_{\bmv^k}^{-1}a_t u_{\bmv^k}=u_{-\bmv^k}a_t u_{\bmv^k}$, which is equal to
\begin{equation*}
          \left[
\begin{array}{c|c}
I_{n-1} &  
\begin{array}{cc}
    -(1-e^{-t})v^k_1 & (1-e^{-t})v^k_1 \\
    \vdots & \vdots\\
    -(1-e^{-t})v^k_{n-1} & (1-e^{-t})v^k_{n-1}
\end{array}
 \\ \hline
\begin{array}{ccc}
    (e^t-1)v^k_1 & \cdots & (e^t-1)v^k_{n-1} \\
    (e^t-1)v^k_{1} & \cdots & (e^t-1)v^k_{n-1}
\end{array}  
& 
A_k
\end{array}\right]    
\end{equation*}
with 
\begin{equation*}
    A_k =\left[ \begin{array}{ccc}
      \cosh{t}+(1-\cosh{t})||\bmv^k||^2 &  \sinh{t}-(1-\cosh{t})||\bmv^k||^2\\
      \sinh{t}+(1-\cosh{t})||\bmv^k||^2 & \cosh{t}-(1-\cosh{t})||\bmv^k||^2
\end{array}\right] .
\end{equation*}
As $||\bmv^k|| \to \infty$, for any $\lambda \in \R $ we can find $t_k\to 0$ such that 
\begin{equation*}
    (e^{t_k}-1)||\bmv^k|| = \lambda.
\end{equation*}
Then  $ \lim_k u_{\bmv^k}^{-1}a_{t_k} u_{\bmv^k}$ is equal to
\begin{equation*}
    \left[
\begin{array}{c|c}
I_{n-1} &  
\begin{array}{cc}
    -\lambda v_1 & \lambda v_1 \\
    \vdots & \vdots\\
    -\lambda v_{n-1} & \lambda v_{n-1}
\end{array}
 \\ \hline
\begin{array}{ccc}
    \lambda v_1 & \cdots & \lambda v_{n-1} \\
    \lambda v_{1} & \cdots & \lambda v_{n-1}
\end{array}  
& 
\begin{array}{ccc}
     1- \frac{\lambda^2}{2} &   \frac{\lambda^2}{2} \\
     -\frac{\lambda^2}{2} &  1 + \frac{\lambda^2}{2}
\end{array}
\end{array}\right] 
\end{equation*}
which is exactly $u_{\lambda \bmv}$. So $\bmL$ contains $u_{\lambda \bmv}$ for all $\lambda\in\R$. As $\bmL$ is defined over $\Q$, it contains the smallest $\Q$-subgroup containing $u_{\lambda \bmv}$ which,  by our assumption on $\bmv$, is equal to $\bmU$.  As $\bmL$ is reductive, the Adjoint action of $a_t$ on the Lie algebra of $\bmL$ is semisimple. As the contracting horospherical subgroup $\bmU$ (with respect to $\Ad(a_t)$ with $t\to -\infty$) is already contained in $\bmL$, the expanding horospherical subgroup should also be contained in $\bmL$, for otherwise the action would not be semisimple. But these two together generate $\bmG$. So $\bmG=\bmL$.
\end{proof}

\section*{Acknowledgement}
We are grateful to discussions with Nimish Shah, Pengyu Yang, Han Zhang and Cheng Zheng.

\bibliographystyle{amsalpha}
\bibliography{ref}

\end{document}